\renewcommand{\tilde}{\widetilde}
\renewcommand{\hat}{\widehat}
\title{A generic classification of locally free representations of affine GLS algebras}
\author{Calvin Pfeifer}
\address{ 
    Calvin Pfeifer: 
    Center for Quantum Mathematics, 
    Department of Mathematics and Computer Science, 
    University of Southern Denmark, 
    Campusvej 55, DK-5230 Odense M, Denmark
} 
\email{capf@sdu.dk}
\date{\today}
\begin{document}

\begin{abstract}
    Throughout, let $K$ be an algebraically closed field of characteristic 0.
    We provide a generic classification of locally free representations 
    of Geiß-Leclerc-Schröer's algebras $H_K(C,D,\Omega)$
    associated to affine Cartan matrices $C$ 
    with minimal symmetrizer $D$ 
    and acyclic orientation $\Omega$.
    Affine GLS algebras are ``smooth'' degenerations of tame hereditary algebras
    and as such their representation theory is presumably still tractable.
    Indeed, we observe several ``tame'' phenomena of affine GLS algebras even though they are in general representation wild.
    For the GLS algebras of type $\widetilde{\BC}_1$ we achieve a classification of all stable representations.
    For general GLS algebras of affine type, 
    we construct a $1$-parameter family of representations
    stable with respect to the defect.
    Our construction is based on a generalized one-point extension technique.
    This confirms in particular $\tau$-tilted versions of the second Brauer-Thrall Conjecture 
    recently raised by Mousavand and Schroll-Treffinger-Valdivieso 
    for the class of GLS algebras. 
    Finally, we show that generically every locally free $H$-module 
    is isomorphic to a direct sum of $\tau$-rigid modules 
    and modules from our $1$-parameter family. 
    This generalizes Kac's canonical decomposition from the symmetric 
    to the symmetrizable case in affine types and we obtain such a decomposition by ``folding" 
    the canonical decomposition of dimension vectors over path algebras. 
    As a corollary we obtain that affine GLS algebras are $E$-tame in the sense of Derksen-Fei and Asai-Iyama.
\end{abstract}

\maketitle

\tableofcontents

\section{Introduction} \label{sec:introduction}

In a series of articles 
\cite{GLS17i}, \cite{GLS18ii}, \cite{GLS16iii}, \cite{GLS18iv}, \cite{GLS18v} and \cite{GLS20}
Geiß-Leclerc-Schröer (abbr. GLS) develop a theory of finite-dimensional algebras $H = H(C,D,\Omega)$ 
defined by quivers $Q = Q(C,\Omega)$ with relations $I = I(C,D,\Omega)$
associated to generalized Cartan matrices $C$
with symmetrizer $D$ and orientation $\Omega$.
Their algebras are $1$-Iwanaga-Gorenstein 
and arise as ``smooth'' degenerations of hereditary algebras $\tilde{H}$.
Therefore, 
they may be seen in non-commutative analogy with singular projective curves
like Kodaira fibres of elliptic surfaces.
The algebras $H$ are often representation wild even if $\tilde{H}$ is representation finite or tame.
Similarly, 
Kodaira fibres may be vector bundle wild \cite{DG01}
while vector bundles over smooth elliptic curves are very well understood by \cite{Ati57}.
Remarkably,
Bodnarchuk-Drozd-Greuel use in \cite{BDG12} matrix problems to classify stable bundles 
on in general vector bundle wild plane degenerations of elliptic curves 
and obtain that these are brick tame in their sense \cite{BD10}.
On the non-commutative side, the structure of rigid modules over $\tilde{H}$
transfers to $\tau$-rigid modules over $H$ \cite{GLS20}.
In particular,
a GLS algebra $H = H(C,D,\Omega)$ is $\tau$-tilting finite in the sense of \cite{DIJ19}
if and only if $C$ is of finite type.
By the work of Demonet-Iyama-Jasso \cite{DIJ19}
and Brüstle-Smith-Treffinger \cite{BST19}
there are then only finitely many stable $H$-modules
and they can be classified explicitly.

\bigskip

The original aim of Geiß-Leclerc-Schröer was a vast generalization 
of many of the connections between path algebras, 
preprojective algebras, Lie algebras and cluster algebras 
from the symmetric to the symmetrizable case.
Classically, non-simply laced types are modeled by species
which generalize path algebras, are still hereditary 
but require the existence of certain finite field extensions of the ground field $K$.
In contrast, GLS algebras $H = KQ/I$ are defined over any ground field $K$
in particular allow to take the complex numbers $K = \bC$.
This has the advantage that varieties of representations
are complex varieties which is in the favour of Geiß-Leclerc-Schröer's effort 
to construct geometrically defined dual semicanonical bases for coordinate algebras of unipotent cells
generalizing their prestigious work in the simply laced case culminating in \cite{GLS11} and \cite{GLS12}.
Inter alia, Geiß-Leclerc-Schröer succeeded to realize 
the universal enveloping algebras of the positive parts of a semisimple finite-dimensional complex Lie algebras (i.e. in finite type)
as convolution algebras of constructible functions on varieties of representations of their algebras $H$.

\bigskip

This fairly complete picture in finite types 
motivates us to pass on to affine types 
where GLS algebras $H$ are often representation wild 
but their hereditary deformations $\tilde{H}$ are still representation tame 
with very well understood module categories by 
\cite{DR76} and \cite{Rin76}. 
The connected valued graphs representing affine Cartan matrices 
are displayed in Table \ref{tab:affine}. 
Assume now that $K$ is algebraically closed of characteristic $0$,
that $C$ is a connected affine Cartan matrix 
of size $n\times n$
and that the symmetrizer $D$ is minimal. 
Let $\bmeta\in\bN^n$ be the primitive null root of $C$. 
The Main Theorem \ref{thm:main} of the present work 
states that every rank vector $\bv\in\bN^n$
can be uniquely written as a sum 
$\bv = \bw + m \bmeta$ 
for some $\bw\in\bN^n$ and $m\geq 0$
such that 
the generic locally free representation of $H$  with rank vector $\bv$ 
is isomorphic to a direct sum of a rigid locally free
representation with rank vector $\bw$ 
and $m$ representations with rank vector $\bmeta$ 
from an explicit $1$-parameter family.
This is a generalized canonical decomposition 
and we obtain such a decomposition by ``folding" 
Kac's canonical decomposition of dimension vectors over path algebras. 
For general symmetrizers, 
say $k D$ with $k\geq 1$,
we expect a similar canonical decomposition of rank vectors to be possible
but involving a $k$-parameter family of locally free representations with rank vector $\bmeta$; see Remark \ref{rem:general_symmetrizer}.
Our proof strategy relies on methods from $\tau$-tilting theory. 
Conversely, 
our $1$-parameter family confirms recent $\tau$-tilted versions of the second Brauer-Thrall Conjecture 
raised in \cite{M22} and \cite{STV21}
for the class of GLS algebras
and shows that affine GLS algebras are $E$-tame in the sense of \cite{DF15} and \cite{AI21}.

\bigskip

After fixing preliminary conventions of general nature,
we proceed with a more detailed overview of the content.
For the definition of GLS algebras we refer to \cite{GLS17i}, \cite{GLS20}
(and to Section \ref{sec:definition_gls} formulated in the language of valued quivers).

\subsection{Conventions} \label{sec:intro_conventions}

Throughout, we fix an algebraically closed ground field $K$ of characteristic $0$.
By an \emph{algebra} $A$ we mean an associative unital $K$-algebra
and by an \emph{$A$-module} we mean a finitely-generated left $A$-module
unless statet otherwise.
We write $\mod(A)$ for the Abelian category of $A$-modules
and $\proj(A)$ for the full subcategory of projective $A$-modules.
For a commutative ring $R$, 
let $\K_0(A)_R := \K_0(A) \otimes_{\bZ} R$ denote the $R$-linear \emph{Grothendieck group} of $\mod(A)$.
Further, $\K_0(A)^+$ denotes the submoniod of classes of $A$-modules in $\K_0(A)$
and $\K_0(A)_R^* := \Hom_{R}(\K_0(A)_R,R)$ the $R$-linear \emph{weight space} of $A$.
We write $\K_0^{\fin}(A)_R$ for the $R$-linear Grothendieck group of the full subcategory of $\mod(A)$
consisting of $A$-modules of finite projective dimension. 
If $A$ is finite-dimensional,
we have the \emph{Euler pairing}
\begin{align} \label{euler_pairing}
    \euler{-,?}_A\colon \K_0^\fin(A)\times \K_0(A) \to \bZ,
    ~~~~
    \euler{V,W}_A := \sum_{i\geq 0} (-1)^i \dimext_A^i(V,W)
\end{align}
where $\dimext^i_A(V,W) :=\dim_K \Ext^i_A(V,W)$ and $\dimhom_A(V,W) := \dimext^0_A(V,W)$ for $V,W\in\mod(A)$.
The Euler pairing induces an embedding
\begin{align} \label{euler_embedding}
    (-)^\vee\colon \K_0^{\fin}(A) \hookrightarrow \K_0(A)^*
\end{align}
which is even an isomorphism because $A$ is a split $K$-algebra.
Further, 
we let $\D:=\Hom_K(-,K)$ be the \emph{standard duality}
and denote by $\tau_A$ the \emph{Auslander-Reiten translation} 
for a finite-dimensional algebra $A$.
We refer to the book \cite{ARS95} for background on the representation theory of finite-dimensional algebras and Auslander-Reiten theory.

\bigskip

A \emph{quiver} $Q = (Q_0,Q_1,s,t)$ consists of a finite set of \emph{vertices} $Q_0$,
a finite set of \emph{arrows} $Q_1$ and
$s,t\colon Q_1 \to Q_0$ associate to an arrow $a$
its \emph{source} $s(a)$ and \emph{target} $t(a)$.
We write $KQ$ for the \emph{path algebra} of $Q$ with coefficients in a field $K$
where we concatenate arrows and paths like our functions from right to left.
We freely identify $KQ$-modules and $K$-linear representations of $Q$.
In particular, if $A \cong KQ/I$ for an admissible ideal $I$,
we canonically identify $\K_0(A) = \bZ Q_0$ with basis elements $\be_i = [S_i]$ 
and $S_i$ the simple $KQ$-module at $i\in Q_0$.
In general, we write $RX$ for the free $R$-module with basis given by the elements of a set $X$.
Given $\bd\in\bN Q_0$ written as $\bd = (d_i)_{i\in Q_0}$, 
let $\bRep(A,\bd)$ be the affine \emph{scheme of representations} of $A$ with dimension vector $\bd$.
We write $\Rep(A,\bd)$ for the affine \emph{variety of representations} of $A$ with dimension vector $\bd$,
that is the reduced variety of $\bRep(A,\bd)$.
We let the product of general linear groups
\begin{align*}
    \GL(K,\bd) := \prod_{i\in Q_0} \GL(K,d_i) 
\end{align*}
act on $\Rep(A,\bd)$ via conjugation
and denote by $\cO(V)$ the $\GL(K,\bd)$-orbit of $V\in\Rep(A,\bd)$. 
Write $\Irr(A,\bd)$ for the set of irreducible components of $\Rep(A,\bd)$ and set
\begin{align*}
    \Irr(A) := \bigsqcup_{\bd\in\bN Q_0} \Irr(A,\bd).
\end{align*}
Given an irreducible variety $\cZ$
and a property $\sfP$ of points of $\cZ$,
we say ``$\cZ$ satisfies $\sfP$ \emph{generically}''
provided there exists a non-empty open hence dense subset $\cU\subseteq \cZ$
such that every $z\in\cU$ satisfies $\sfP$.
In particular,
for a constructible function $f\colon \cZ \to \bN$ 
we write $f(\cZ)\in\bN$ for the generic value of $f$ on $\cZ$.
Finally,
$\dim\cZ$ denotes the \emph{Krull dimension} of $\cZ$.
For further background on varieties and schemes of representations we refer to 
\cite{CB93} and \cite{Gei96}.

\subsection{Locally free modules and $\tau$-tilting theory} \label{sec:intro_lf_tau_tilting}

With Adachi-Iyama-Reiten's $\tau$-tilting theory \cite{AIR14} 
an important new branch of representation theory emerged in 2014.
It generalizes the mutation theory of quivers with potentials \cite{DWZ08}
to arbitrary finite-dimensional algebras $A$
and may be seen from two ``Koszul dual'' perspectives:

On the one hand, 
central to $\tau$-tilting theory \cite{AIR14}
are \emph{$\tau$-rigid modules}, 
those $V\in\mod(A)$ with $\Hom_A(V,\tau_A(V)) = 0$,
a special class of rigid modules.
Any $\tau$-rigid module $V$ is determined by its $\g$-vector
\begin{align} \label{g_vector}
    \g(V) := [P_0] - [P_1] \in \K_0^\fin(A)
\end{align}
where $P_1\to P_0 \to V \to 0$ is a minimal projective presentation.
The $\g$-vectors of $\tau$-rigid modules
together with $\g$-vectors of shifted projective $A$-modules $\g(\Sigma P) := -\g(P)$ for $P\in\proj(A)$
span a fan $\fan(A)$ in $\K_0^\fin(A)_\bR$, 
the \emph{$\g$-vector fan} of $A$ introduced and studied in \cite[Section 6]{DIJ19}.

On the other hand, 
there are \emph{bricks} \cite[Section 4]{DIJ19}, 
that is modules $V\in\mod(A)$ with $\End_A(V)\cong K$, 
and more specifically \emph{stable modules} \cite{BST19}. 
A module $V\in\mod(A)$ is \emph{stable} in the sense of King \cite[Definition 1.1]{K94}
if there exists a weight $\theta\in\K_0(A)^*$
such that
\begin{align} \label{stable}
    \text{$\theta(V) = 0$ and $\theta(U) < 0$ for all proper non-zero submodules $U\subset V$}
\end{align}
This yields Bridgeland's \emph{wall and chamber structure} of the weight space $\K_0(A)_\bR^*$ \cite[Section 6]{B17}.
Brüstle-Smith-Treffinger \cite[Proposition 3.15]{BST19} show that 
the $\g$-vector fan embeds via the Euler pairing (\ref{euler_embedding}) into the wall and chamber structure of $\K_0(A)_\bR^*$.
Their key tool is \emph{Auslander-Reiten's $\g$-vector formula} \cite[Theorem 1.4]{AR85}
\begin{align} \label{ar_g_vector_formula}
    \euler{\g(V),\dimv(U)}_A = \dimhom_A(V,U) - \dimhom_A(U,\tau_A(V))
\end{align}
valid for all $V,U\in\mod(A)$.
If $A$ is $\tau$-tilting finite,
then $\fan(A)$ is \emph{complete} \cite[Theorem 5.4, Corollary 6.7]{DIJ19} 
i.e. covers all of $\K_0(A)_\bR^*$.
This allows to classify all stable modules via \emph{mutation} as developed in \cite{AIR14}.
For $\tau$-tilting infinite $A$,
only weights outside the $\g$-vector fan are left to consider.
To understand those, we invoke Geiß-Leclerc-Schröer's \emph{generically $\tau$-reduced components}
from \cite[Section 1.5]{GLS12} 
(where they are called strongly reduced and are defined for certain Jacobi algebras)
as natural generalizations of $\tau$-rigid modules.
These are irreducible components $\cZ\in\Irr(A)$
with
\begin{align} \label{tau_reduced}
    c_A(\cZ) = \dimhom_A^\tau(\cZ) := \min \{\dimhom_A(V,\tau_A(V)) \mid V \in \cZ\}
\end{align}
where $c_A(\cZ)$ denote the \emph{generic number of parameters} of $\cZ$
\begin{align} \label{generic_number_parameters}
    c_A(\cZ) := \min \{\dim \cZ - \dim \cO(V) \mid V \in \cZ\}.
\end{align}

We apply this general theory to GLS algebras.
It was already observed in \emph{Demonet's Lemma} \cite[Lemma 6.2]{GLS20} 
that $\tau$-tilting theory 
occurs naturally in the representation theory of GLS algebras;
namely an $H$-module $V$ is $\tau$-rigid if and only if $V$ is rigid and \emph{locally free}.
An $H$-module $V$ is said to be \emph{locally free} if $V(i) := e_i V$ is a free module over $H(i) := e_i H e_i$ 
for every $i\in Q_0$ and corresponding idempotent $e_i\in H$.
With the help of Plamondon's classification of generically $\tau$-reduced components \cite[Theorem 1.2]{Pla13}, 
we show that Demonet's Lemma generalizes to the geometric set up:

\begin{theorem}[Theorem \ref{thm:tau_reduced_locally_free}]
    Let $H$ be a GLS algebra. 
    The following are equivalent for $\cZ\in \Irr(H)$:
    \begin{enumerate}[label = (\roman*)]
        \item The component $\cZ$ is generically $\tau$-reduced.
        \item The component $\cZ$ is generically locally free.
    \end{enumerate}
\end{theorem}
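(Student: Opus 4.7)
The plan is to prove the two implications separately, both revolving around the generic $\g$-vector and the key identity
\begin{align*}
\dimhom_H(V,\tau_H V) = \dimext^1_H(V,V) \qquad (\star)
\end{align*}
valid for every locally free $H$-module $V$. Here $(\star)$ extends Demonet's Lemma from the rigid case and should follow from the $1$-Iwanaga-Gorenstein property of $H$ (which forces locally free modules to have projective dimension at most one) together with the Auslander-Reiten duality formula.

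For the implication (ii) $\Rightarrow$ (i), I would invoke that the locally free locus $\Rep_{\mathrm{lf}}(H,\br) \subseteq \Rep(H,\bd)$ is a smooth open subvariety, a fact already established in the GLS series. Hence every locally free module $V$ is a smooth point of $\Rep(H,\bd)$, and any component $\cZ$ through $V$ satisfies $\dim\cZ = \dim\cO(V) + \dimext^1_H(V,V)$. Assuming $\cZ$ is generically locally free and picking $V\in\cZ$ generic, $(\star)$ yields
\begin{align*}
c_H(\cZ) = \dim\cZ - \dim\cO(V) = \dimext^1_H(V,V) = \dimhom_H(V,\tau_H V) = \dimhom_H^\tau(\cZ),
\end{align*}
which is precisely (i).

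For the converse (i) $\Rightarrow$ (ii), the strategy is to parametrize both families of components by the same set $\K_0^\fin(H)=\bZ Q_0$ via the generic $\g$-vector. On one side, Plamondon's classification \cite[Theorem 1.2]{Pla13} (applicable as $H$ is Jacobian-like) realizes generically $\tau$-reduced components injectively in $\bZ Q_0$ through $\cZ\mapsto \g(\cZ)$. On the other side, for any $g\in \bZ Q_0$, I would decompose $g = [P_+]-[P_-]$ with projectives $P_\pm$ having no common summand and form the cokernel $V_g$ of a generic morphism $P_-\to P_+$; the projectives being locally free, one expects $V_g$ to be locally free with minimal projective presentation $P_-\to P_+$, so that $\g(V_g)=g$. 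The component containing $V_g$ is then generically locally free and, by the previous step, generically $\tau$-reduced. Consequently the $\g$-vector map restricted to generically locally free components is already surjective onto Plamondon's image, and by his injectivity it must agree with his bijection; every generically $\tau$-reduced component is therefore generically locally free.

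The main obstacle, in my view, lies in the cokernel construction of Step two: one must verify both that the generic $f\colon P_-\to P_+$ has locally free cokernel and that $P_-\to P_+$ is actually the minimal projective presentation of $V_g$. Both points rest on a matrix analysis of morphisms between free modules over the truncated polynomial rings $H(i) = K[\epsilon_i]/(\epsilon_i^{c_i})$ that govern the idempotent summands of $H$, and on ruling out splitting of unwanted trivial summands in the generic image.
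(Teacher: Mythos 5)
Your proof of the implication ``(ii) $\Rightarrow$ (i)'' is essentially the same as the paper's: once one knows a locally free $V$ has $\pdim_H(V)\leq 1$, Voigt's isomorphism gives $\dim\cZ - \dim\cO(V) = \dimext^1_H(V,V)$ at the generic point, and Auslander--Reiten duality in the $\pdim\leq 1$ case gives $\dimext^1_H(V,V)=\dimhom_H(V,\tau_H V)$; chaining these yields $c_H(\cZ)=\dimhom^\tau_H(\cZ)$. One small caveat: ``$\Rep_\lf(H,\br)$ is a smooth open subvariety of the reduced variety'' does not by itself imply $V$ is a smooth point of the \emph{scheme} $\bRep(H,\bd)$ (think of $K[x]/(x^2)$ at dimension $1$). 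The correct route, which the paper takes, is that $\pdim_H(V)\leq 1$ directly forces smoothness of the scheme at $V$.

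Your proof of ``(i) $\Rightarrow$ (ii)'', however, has a genuine gap, and it sits exactly where you suspect. You want, for an arbitrary $g=[P_+]-[P_-]\in\K_0^\fin(H)$, the generic cokernel $V_g$ of $P_-\to P_+$ to be locally free with minimal presentation $P_-\to P_+$, so that $\g(V_g)=g$. This is false in general: for $g=-\g(P_j)$ the generic cokernel of $P_j\to 0$ is $0$, which has $\g$-vector $0\neq g$, and more generally whenever the generic map $P_-\to P_+$ fails to be injective (or the cokernel acquires a projective summand) the presentation is not minimal and $\g(V_g)\neq g$. So the surjectivity claim onto $\K_0^\fin(H)$ does not hold, and the deduction that $\g$ restricted to generically locally free components hits every $\g(\cZ)$ for $\cZ\in\Irr^\tau(H)$ is left unproved. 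The paper sidesteps this entirely: instead of trying to realize $\g(\cZ)$ directly by a cokernel, it \emph{decomposes} $\g(\cZ)=\sum_{i\in I}r_i\,\g(E_i)-\sum_{j\in J}s_j\,\g(P_j)$ (the paper's Claim \ref{claim:positive_g_vector_decomposition}, proved by an upper-triangularity argument in an admissible ordering), takes the manifestly generically locally free component $\cZ(\br)$ for $\br$ supported on $I$ with entries $r_i$, and then applies the \emph{full} strength of Plamondon's Theorem~\ref{thm:plamondon}, namely $\Psi(\g(\cZ(\br))-\g(S))=\cZ(\br)$ whenever $\dimhom_H(S,\cZ(\br))=0$. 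With $S$ the projective with $\g(S)=\bs$ supported on $J$, the disjointness $I\cap J=\emptyset$ guarantees $\dimhom_H(S,\cZ(\br))=0$, so $\cZ=\Psi(\g(\cZ))=\Psi(\g(\cZ(\br))-\g(S))=\cZ(\br)$. In short: the paper uses the shifted-projective part of Plamondon's theorem precisely because not every $\g$-vector (in fact not even every $\g(\cZ)$ a priori) arises from a cokernel construction directly; your plan would require re-deriving this cancellation by hand, which is harder than it looks. If you want to repair your argument, you should (a) restrict from the start to $g$ of the form $\g(\cZ)$ for $\cZ\in\Irr^\tau(H)$, and (b) incorporate the shifted-projective correction via the paper's Claim, after which you will essentially have reproduced the paper's proof.
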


The generically locally free components are described in \cite[Section 3.1]{GLS18ii}:
Every locally free $H$-module $V$ has an associated
\emph{rank vector} $\rkv(V) = (\rk_i(V))_{i\in Q_0}$ 
with entries $\rk_i(V)$ the rank of the free $H(i)$-module $V(i)$ for $i\in Q_0$.
To distinguish rank vectors from dimension vectors
our rank vectors live in $\bZ\Gamma_0$ for a set $\Gamma_0 = Q_0$
and dimension vectors live as usual in $\bZ Q_0$.
The symmetrizer $D$ then defines a linear embedding 
$D\colon \bZ\Gamma_0 \hookrightarrow \bZ Q_0$ 
such that $\dimv(V) = D(\rkv(V))$ for all locally free $V\in\mod(H)$.
Later, we define GLS algebras in terms of valued quivers $\Gamma$
and $\Gamma_0$ will be the set of vertices
justifying our ad hoc convention here.
For every rank vector $\br\in \bN \Gamma_0$
the subset
\begin{align} \label{lf_rep_variety}
    \Rep_\lf(H,\br) := \{V\in\Rep(H,D\br) \mid \text{$V$ is locally free}\} \subseteq \Rep(H,D\br)
\end{align}
is open, irreducible and smooth of dimension
\begin{align} \label{lf_rep_varitey_dim}
    \dim \Rep_{\lf}(H,\br) = \dim\GL(K,D\br) - q_{DC}(\br)
\end{align}
where $q_{DC}$ denotes the quadratic \emph{Tits form} associated to $DC$. 
Therefore, any generically locally free component is of the form
\begin{align}
    \cZ(\br) := \overline{\Rep_\lf(H,\br)}
\end{align}
for some rank vector $\br\in\bZ \Gamma_0$.

\subsection{Generic classification of locally free representations}

We call a GLS algebra $H = H(C,D,\Omega)$ \emph{affine} 
if $C$ is a generalized Cartan matrix of affine type.
They are characterized by the existence of a \emph{primitive null root} $\bmeta\in\bN \Gamma_0 \setminus\{0\}$, 
minimal with $q_{DC}(\bmeta) = 0$.
Via the identification $\rkv\colon \K_0^\fin(H)\xrightarrow{\sim} \bZ\Gamma_0$ 
and the Euler pairing (\ref{euler_embedding}), 
one obtains the \emph{defect} $\partial := \euler{\bmeta, - }_H \in \K_0(H)^*$
which already plays a crucial role in the representation theory of affine quivers.
We can now state our Main Theorem:

\begin{theorem}[Theorem \ref{thm:generic_classification}] \label{thm:main}
    Let $H = H(C,D,\Omega)$ with $C$ a connected affine Cartan matrix 
    and $D$ its minimal symmetrizer.
    For any rank vector $\bv\in \bN \Gamma_0$ exist unique $m\geq 0$ and $\bw\in\bN \Gamma_0$
    such that
    \begin{align} \label{folded_kac}
        \cZ(\bv) = \overline{\cZ(\bmeta)^m \oplus \cZ(\bw)}
    \end{align}
    and
    \begin{align} \label{generic_classification}
        \cZ(\bmeta) = \overline{\bigcup_{\lambda\in\bP^1} \cO(V_\lambda)} &&
        \cZ(\bw) = \overline{\cO(W)}
    \end{align}
    for some $\tau$-rigid $W\in\mod(H)$
    and a $\bP^1$-family of $\partial$-semistable $V_\lambda\in\mod(H)$.
\end{theorem}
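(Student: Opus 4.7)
\emph{Strategy.} My plan is to ``fold'' Kac's canonical decomposition from the unfolded hereditary algebra $\tilde{H}$ to obtain the uniqueness of the pair $(m,\bw)$, to construct the generic locally free module in $\cZ(\bv)$ as a direct sum of a $\tau$-rigid summand and $m$ copies from an explicit $1$-parameter family, and finally to match these two descriptions by an orbit-dimension count. Concretely, any rank vector $\bv\in\bN\Gamma_0$ corresponds via the symmetrizer $D$ to a dimension vector over $\tilde{H}$, whose canonical decomposition (by Kac's theorem) is unique and, in affine type, has imaginary part a non-negative multiple of $\bmeta$. Folding back across $D$ extracts a unique $\bv = \bw + m\bmeta$ with $\bw$ a non-negative integral combination of real Schur roots.

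\emph{The $\bP^1$-family of defect-semistable modules.} For the imaginary part I build a family $\{V_\lambda\}_{\lambda\in\bP^1}$ of locally free $H$-modules with $\rkv(V_\lambda)=\bmeta$ via the generalized one-point extension technique promised in the abstract: fix an extending vertex $i_0$ of the underlying affine valued graph and glue a $\bP^1$-family of extension classes onto a suitable locally free module of smaller rank over a truncated GLS algebra. Local freeness reduces to a vertexwise verification on the glued extension data. For $\partial$-semistability, note that $\bmeta$ lies in the radical of the symmetric Tits form, hence $\partial(V_\lambda) = \euler{\bmeta,\bmeta}_H = 0$; for a proper locally free submodule $U\subset V_\lambda$ the defect $\partial(U) = \euler{\bmeta,\rkv(U)}_H$ is then computed via the Auslander-Reiten formula (\ref{ar_g_vector_formula}) and shown to be non-positive using the explicit shape of $V_\lambda$. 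Density of the family in $\cZ(\bmeta)$ follows by matching the dimension of the modular parameter space against the formula (\ref{lf_rep_varitey_dim}).

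\emph{Real part and assembly.} For the rigid summand, the preceding theorem on the coincidence of generically $\tau$-reduced and generically locally free components, combined with the GLS transfer of rigid modules from $\tilde{H}$ to $H$, yields a $\tau$-rigid locally free $W$ with $\rkv(W)=\bw$ whose orbit is dense in $\cZ(\bw)$. To conclude (\ref{folded_kac}), I compare the dimension of $\cZ(\bv)$ from (\ref{lf_rep_varitey_dim}) with the expected dimension of $\overline{\cZ(\bmeta)^m \oplus \cZ(\bw)}$. Bilinearity of $q_{DC}$ via the Euler pairing, together with $q_{DC}(\bmeta)=0$, reduces the matching to the vanishing $\Ext^1_H(V_\lambda,W) = \Ext^1_H(W,V_\lambda) = 0$ for generic $\lambda$, which follows from $\tau$-rigidity of $W$ combined with $\partial$-semistability of $V_\lambda$ through (\ref{ar_g_vector_formula}) and Auslander-Reiten duality. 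Irreducibility of $\cZ(\bv)$ then upgrades matching dimensions to equality of closures.

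\emph{Main obstacle.} The technical heart lies in the construction of the $\bP^1$-family for arbitrary affine type and in proving its density in $\cZ(\bmeta)$. Since affine GLS algebras are in general representation wild, this family cannot hope to parameterize all indecomposables of rank $\bmeta$; it must be extracted by a carefully chosen deformation of the homogeneous tube of $\tilde{H}$, and a uniform such construction across non-symmetric affine types like $\widetilde{\BC}_1$ is delicate. Verifying simultaneously local freeness throughout the family, the correct $\partial$-semistability inequality, and the generic $\Ext$-vanishing against the rigid summands is where the bulk of the work will lie.
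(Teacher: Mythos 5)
Your overall architecture matches the paper's: fold Kac's canonical decomposition to get $\bv = m\bmeta + \bw$, construct the $\bP^1$-family $V_\lambda$ via a generalized one-point extension, realize $\cZ(\bw)$ by a dense $\tau$-rigid orbit, and then verify the direct sum decomposition via Ext-vanishing. But there is a genuine gap at the crux of the assembly step.

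You assert that $\Ext^1_H(V_\lambda,W) = \Ext^1_H(W,V_\lambda) = 0$ ``follows from $\tau$-rigidity of $W$ combined with $\partial$-semistability of $V_\lambda$.'' This is false as stated. Already for the Kronecker algebra: take $W = P_1$ (a $\tau$-rigid preprojective) and $V_\lambda$ a quasi-simple regular module; then $\Ext^1(V_\lambda,P_1) \cong \D\Hom(P_1,V_\lambda) \neq 0$, even though $P_1$ is $\tau$-rigid and $V_\lambda$ is $\partial$-semistable. The missing ingredient is that when $m \neq 0$, the rigid summand $W$ produced by the canonical decomposition is not merely $\tau$-rigid but \emph{regular}, so that $\partial(W) = 0$. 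This is exactly what makes the Euler-form argument close: combining $\Hom$-orthogonality with $\partial(W)=0$ yields $0 = \euler{\bmeta,\rkv(W)}_\Gamma = -\dimext^1_H(V_\lambda,W)$. And proving the $\Hom$-orthogonality $\Hom_H(W,V_\lambda) = 0 = \Hom_H(V_\lambda,W)$ itself is nontrivial: the paper's argument (Corollary~\ref{cor:regular_hom_ortho}) uses $\partial$-semistability of all regular $\tau$-rigid modules (Proposition~\ref{prop:regular_tau_rigid}) and a dimension comparison of $\partial$-stable subfactors. Your proposal establishes neither the regularity of $W$ nor the $\Hom$-orthogonality, yet both are indispensable.

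A second, smaller imprecision: Kac's canonical decomposition is a theorem about path algebras over an algebraically closed field, so it cannot be applied directly to the species $\tilde{H}$ over $L = K((\varepsilon))$. The paper first passes to $\bar{H} = \bar{L}\bar{Q}$, applies Kac there, and then descends via the Galois action (Theorem~\ref{thm:rigid_bijections} and Lemma~\ref{lem:folded_kac_decomposition}). Notably, the Galois-invariance of the real summand of Kac's decomposition, and the absence of preprojective/preinjective summands, are what produce both the descent to a rigid $\tilde{H}$-module and its regularity. Your phrase ``folding back across $D$'' conflates the $D$-embedding $\bZ\Gamma_0 \hookrightarrow \bZ Q_0$ with the genuinely different unfolding $\Pi^*_0 \colon \bZ\Gamma_0 \to \bZ\bar{Q}_0$ and the associated Galois descent, which is where the technical content of the folding lies.
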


The expression in (\ref{folded_kac})
is the direct sum of irreducible components as defined in e.g. \cite{CBS02},
and means that the generic element of $\cZ(\br)$
is isomorphic to a direct sum of $m$ elements of $\cZ(\bmeta)$ with an element of $\cZ(\bw)$.
This is a generalized Kac decomposition
and we do obtain the sum decomposition $\br = m\bmeta + \bw$ 
by ``folding'' the corresponding Kac decomposition over an unfolded path algebra
introduced in Section \ref{sec:unfolded}.

\begin{remark}\label{rem:general_symmetrizer}
    Let $C$ be a connected affine Cartan matrix
    with minimal symmetrizer $D$.
    A general symmetrizer for $C$
    is of the form $kD$ for some $k\geq 1$.
    The same canonical decomposition as in (\ref{folded_kac})
    should be possible over $H_k := H(C,kD,\Omega)$.
    However, we expect in this case
    \begin{align} \label{higher_symmetrizer}
        \cZ(\bmeta) = \overline{\bigcup_{\underline{\mu}\in \bA^k} \cO(V'_{\underline{\mu}})}
    \end{align}
    for a $k$-parameter family of representations 
    $V'_{\underline{\mu}} \in \Rep_{\lf}(H_k,\bmeta)$ for $\underline{\mu}\in\bA^k$ with the following properties:
    \begin{enumerate}[label = (\roman*)]
        \item $\End_{H_k}(V'_{\underline{\mu}}) \cong K[X]/\ideal{X^k}$ where $X$ is variable;
        \item $V'_{\underline{\mu}}$ is free as an $\End_{H_k}(V'_{\underline{\mu}})$-module;
        \item the top of $V'_{\underline{\mu}}$ as an $\End_{H_k}(V'_{\underline{\mu}})$-module
        is isomorphic to $V_\lambda$ for some $\lambda\in\bA^1$ as in (\ref{generic_classification}).
    \end{enumerate}
\end{remark}

\subsection{Affine type $\tilde{\BC}_1$} \label{sec:intro_bc1}

There are two affine Cartan matrices of rank $1$, namely
\begin{align*}
    C = \rsm{2 & -2 \\ -2 & 2} &&
    \text{and} &&
    C = \rsm{2 & -1 \\ -4 & 2}.
\end{align*}
The former is symmetric and its GLS algebra (for the minimal symmetrizer) is the Kronecker algebra
which plays a core role for all simply laced types.
The other is symmetrizable of type $\tilde{\A}_{1,2}$ in Moody's notation \cite[Table of Euclidean Matrices]{Moo68},
$\tilde{\A}_{1,1}$ in Dlab-Ringel's notation \cite[p. 3]{DR76}
and $A^{(2)}_2$ in Kac's notation \cite[Table Aff 2]{K90}.
We choose Macdonald's name $\tilde{\BC}_1$ \cite[p. 103]{Mac71},
because the corresponding GLS algebra $H$ fits naturally in the $\tilde{\BC}_{n\geq 2}$ family.
Indeed we will see that the algebra $H$ is of similar importance to the GLS algebras of type $\tilde{\BC}_n$
as the Kronecker algebra is to the path algebras of simply laced types.
In particular, $H$ is not only an important example of an affine GLS algebra
but does play a key role in our proof of the Main Theorem \ref{thm:main}.
For the minimal symmetrizer $D = \rsm{4 & 0 \\ 0 & 1}$ and up to duality $H = KQ/I$ is given by the quiver with relations
\[
    \begin{tikzcd}
        Q\colon ~
        2 \ar[r, "\alpha"] &
        1 \ar[loop, out=30, in=-30, distance=5ex, "\varepsilon"] &&&&
        I := \ideal{\varepsilon^4}.
    \end{tikzcd}
\]
We achieve a classification of all stable modules over the GLS algebras $H$ of affine type 
$\tilde{\BC}_{1}$.
The indecomposable $\tau$-rigid $H$-modules are well known to be precisely the preprojective and preinjective ones
i.e. Auslander-Reiten translations of projectives and injectives.
The support $\tau$-tilting exchange quiver and the $\g$-vector fan of $H$ is sketched in Figure \ref{fig:exchange_quiver_and_fan}.
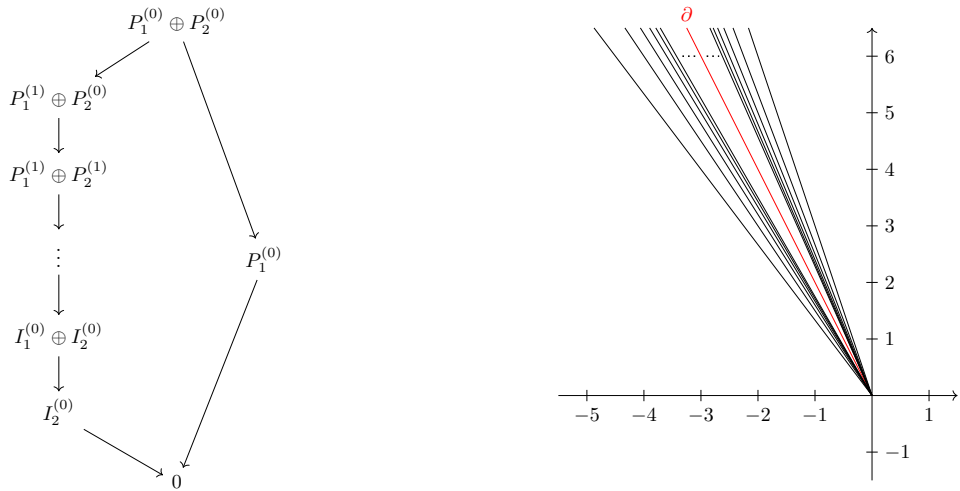
\begin{figure}[h]
    \centering
    \begin{multicols}{2}
        \scalebox{.75}{
            \begin{tikzcd}[column sep = .1em, ampersand replacement = \&]
                \& 
                P_1^{(0)} \oplus P_2^{(0)}
                \ar[dl] \ar[dddr] 
                \& 
                \\
                P_1^{(1)} \oplus P_2^{(0)} 
                \ar[d] 
                \&\& 
                \\
                P_1^{(1)} \oplus P_2^{(1)} 
                \ar[d] 
                \&\& 
                \\
                \vdots 
                \ar[d] 
                \&\& 
                P_1^{(0)}
                \ar[dddl] 
                \\
                I_1^{(0)} \oplus I_2^{(0)} \ar[d] 
                \&\& 
                \\
                I_2^{(0)} \ar[dr] 
                \&\& 
                \\
                \& 
                0
                \&
            \end{tikzcd}
        }
        
        \scalebox{.75}{
            \begin{tikzpicture}[domain=0:6.5]
                \draw[->] (-5.5,0) -- (1.5,0) node[right] {};
                \draw[->] (0,-1.5) -- (0,6.5) node[above] {};
                \foreach \x in {1,-1,-2,-3,-4,-5}
                    \draw[-] (\x,.1) -- (\x,-.1) node[below] {$\x$};
                \foreach \y in {-1,1,2,3,4,5,6}
                    \draw[-] (-.1,\y) -- (.1,\y) node[right] {$\y$};
                \draw[color=red]    plot (-1/2*\x,\x)       node[above] {$\partial$};
                \draw[] plot (-1/3*\x,\x)       node[right] {};
                \draw[] plot (-2/5*\x,\x)       node[right] {};
                \draw[] plot (-3/7*\x,\x)       node[right] {};
                \draw[] plot (-3/8*\x,\x)       node[right] {};
                \draw[] plot (-5/12*\x,\x)      node[right] {};
                \draw[] plot (-7/16*\x,\x)      node[right] {};
                \draw[] plot (-2/3*\x,\x)       node[right] {};
                \draw[] plot (-3/5*\x,\x)      node[right] {};
                \draw[] plot (-4/7*\x,\x)      node[right] {};
                \draw[] plot (-3/4*\x,\x)       node[right] {};
                \draw[] plot (-5/8*\x,\x)      node[right] {};
                \draw[] plot (-7/12*\x,\x)      node[right] {};
                \draw[dotted,thick] (-2.9,6) -- (-2.68,6);
                \draw[dotted,thick] (-3.1,6) -- (-3.39,6);
            \end{tikzpicture}
        }
    \end{multicols}
    \caption{
        Part of the support $\tau$-tilting exchange quiver (left)
        and the $\g$-vector fan (right) 
        for $H$ of affine type $\tilde{\BC}_{1}$.
        }
    \label{fig:exchange_quiver_and_fan}
\end{figure}
The red ray is spanned by the defect $\partial = (-1,2) \in \bR^2$
and does not belong to the $\g$-vector fan
but is the limit of walls in there.
By the work of \cite[Proposition 3.13]{BST19} and \cite[Theorem 3.8]{J15}, 
it remains to classify $\partial$-stable $H$-modules and we have

\begin{theorem}[Corollary \ref{cor:defect_stable}]
    Let $V\in\mod(H)$ be $\partial$-stable.
    Then $V \cong \bar{V}_\infty$ or $V\cong V_\lambda$ for some $\lambda\in K$ where
    \[
        \begin{tikzcd}[ampersand replacement=\&]
            \bar{V}_{\infty}\colon 
            ~
            K \ar[r,"\rsm{1\\0}"] 
            \&
            K^2 
            \ar[loop, out=30, in=-30, distance=5ex, "\rsm{0 & 0 \\ 1 & 0}"]
        \end{tikzcd},
        ~~~~ ~~~~ ~~~~ ~~~~
        \begin{tikzcd}[ampersand replacement=\&]
            V_\lambda\colon 
            ~ 
            K^2 
            \ar[rr,"\rsm{1 & 0 \\ 0 & 1 \\ 0 & \lambda \\ 0 & 0}"] 
            \&\&
            K^4 \ar[loop, out=30, in=-30, distance=5ex, "\rsm{0 & 0 & 0 & 0 \\
            1 & 0 & 0 & 0 \\ 0 & 1 & 0 & 0 \\ 0 & 0 & 1 & 0}"] 
        \end{tikzcd}.
    \]
\end{theorem}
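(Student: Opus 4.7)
The plan is to exploit the stability inequality $\partial(U)<0=\partial(V)$ directly for all proper nonzero $U\subsetneq V$. Since $\partial(V)=-\dim V(1)+2\dim V(2)$ vanishes, $\dim V=(2n,n)$ for some $n\geq 1$, and the inequality reads $\dim U(1)>2\dim U(2)$. Three canonical submodules extract the basic structure: the submodule $(0,\ker\alpha)\subseteq V$ forces $\alpha$ to be injective, so that $W:=\alpha(V(2))$ has dimension $n$; the submodule generated by $V(2)$ forces the generation condition $V(1)=W+\varepsilon W+\varepsilon^2 W+\varepsilon^3 W$; and for $n\geq 2$ the submodule generated by any single $w\in W$ is proper and yields $\dim\sum_{i\geq 0}\varepsilon^i w\geq 3$, hence (since $\varepsilon^4=0$) $\varepsilon^2 w\neq 0$, i.e.\ $W\cap\ker\varepsilon^2=0$.

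Using the identity $\dim\ker\varepsilon^2=\sum_i\min(\mu_i,2)$ for the Jordan type $(\mu_i)$ of $\varepsilon$ on $V(1)$ together with the bound $\dim\ker\varepsilon^2\leq\dim V(1)-\dim W=n$ (valid for $n\geq 2$), a short combinatorial check over partitions of $2n$ into parts of size at most $4$ forces $n$ to be even with Jordan type $(4)^{n/2}$; in particular every odd $n\geq 3$ is immediately excluded. For $n=1$, generation together with $\dim V(1)=2$ and $\dim W=1$ forces $\varepsilon$ to be a single Jordan block of size $2$ and $W$ a cyclic generator, pinning down $V\cong\bar V_\infty$ up to isomorphism. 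For $n=2$ we have $V(1)\cong K[\varepsilon]/\langle\varepsilon^4\rangle$ as an $H(1)$-module, and $W\subset V(1)$ is a $2$-dimensional subspace complementary to $\varepsilon^2 V(1)=\ker\varepsilon^2$; writing a basis $W=\mathrm{span}(1+a\varepsilon^2+b\varepsilon^3,\,\varepsilon+c\varepsilon^2+d\varepsilon^3)$ and normalizing via the residual $(K[\varepsilon]/\langle\varepsilon^4\rangle)^{*}\times\GL_2$-action (first killing $a,b$ by $u=1+x_2\varepsilon^2+x_3\varepsilon^3$, then reducing $(c,d)$ by $u=1+x_1\varepsilon$) yields the single continuous invariant $\lambda=d-c^2\in K$, recovering the family $V_\lambda$.

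The main obstacle is ruling out even $n\geq 4$. Here the Jordan type $(4)^{n/2}$ is admissible and the basic stability tests do not suffice. The strategy is to produce, for any $(V(1),\varepsilon,W)$ satisfying the preceding conditions with $n\geq 4$, a proper submodule $U\subsetneq V$ with $\partial(U)=0$, which violates the strict inequality required for $\partial$-stability; equivalently, a subspace $W'\subsetneq W$ of some dimension $1\leq k<n$ with $\dim\sum_{i=0}^{3}\varepsilon^i W'=2k$. Such a $W'$ exhibits $V$ as a non-trivial direct sum of smaller $\partial$-semistable modules drawn from the family $\{V_\lambda\mid\lambda\in K\}\cup\{\bar V_\infty\}$, so that $V$ is not a brick and a fortiori not $\partial$-stable. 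Establishing the existence of such a destabilizing $W'$ for every $W\subset V(1)$ of dimension $n\geq 4$ complementary to $\ker\varepsilon^2$ and spanning $V(1)$ under $\varepsilon$ is the technical heart of the argument; it is expected to follow from a careful analysis of the position of $W$ relative to the block decomposition $V(1)\cong (K[\varepsilon]/\langle\varepsilon^4\rangle)^{n/2}$, with a natural source of splittings coming from involutions of the block indexing that preserve $W$.
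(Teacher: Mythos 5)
Your argument for $n=1$ and $n=2$ is essentially sound and parallels the paper's explicit normal-form computations, and your preliminary deductions (injectivity of $\alpha$, the generation condition, $W\cap\ker\varepsilon^2=0$ for $n\geq 2$, and the resulting constraint that $\varepsilon$ has Jordan type $(4)^{n/2}$ with $n$ even) correctly reproduce, in slightly different form, the content of the paper's Lemma \ref{lem:bc1_brick}. But there is a genuine gap at the point you yourself flag: for even $n\geq 4$ you do not actually produce the destabilizing subspace $W'\subsetneq W$ with $\dim\sum_{i=0}^3\varepsilon^i W'=2\dim W'$; you only state that it is ``expected to follow from a careful analysis'' of the position of $W$ relative to the block decomposition, with a hoped-for ``natural source of splittings.'' That is precisely the hard part of the claim, and an existence statement of this kind for an arbitrary $n$-dimensional $W\subset V(1)$ complementary to $\ker\varepsilon^2$ and $\varepsilon$-generating is not obvious; as stated, the proposal does not prove it.

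The paper closes this gap by a different, global argument rather than a pointwise construction. Lemma \ref{lem:dimension} computes $\dim\Rep(H,\bd)$, and Proposition \ref{prop:generic_defect_semistable} uses this to show that for any $\bd$ with $\partial(\bd)=0$, the orbits of the explicit modules $\bar V_\infty^s\oplus\bigoplus_{i=1}^r V_{\lambda_i}$ (with $d_2=2r+s$, $0\leq s<2$, pairwise distinct $\lambda_i$) sweep out a dense subset of $\Rep(H,\bd)$. In particular, once $d_2\geq 3$ the generic representation with vanishing defect is decomposable, hence not a brick; and since $\dimend_H$ is upper semicontinuous, the brick locus is open, so the absence of a brick generically forces the absence of bricks altogether in dimension $\bd$. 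This kills all $n\geq 3$ (odd and even alike) in one stroke and avoids having to exhibit a destabilizing $W'$ for every admissible $W$. If you want to salvage your approach, you would need to actually carry out the combinatorial/linear-algebraic construction of $W'$ for even $n\geq 4$; otherwise, the cleaner route is the dimension-count argument of Proposition \ref{prop:generic_defect_semistable} combined with openness of the brick locus.
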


Further, this gives rise to a generic classification of representations with vanishing defect 
(see Proposition \ref{prop:generic_defect_semistable})
and settles our Main Theorem \ref{thm:main} in this case (see Corollary \ref{cor:generic_locally_free}).
As an application, 
the generic classification allows for an explicit computation of locally free Caldero-Chapoton functions.
Together with a recent formula of Mou \cite[Theorem 1.1]{LMou22} 
and comparing with Sherman-Zelevinsky's canonical basis \cite[Theorem 2.8]{SZ04},
we obtain a ``generic basis'' in the spirit of \cite[Theorem 5]{GLS12}
for the (coefficient-free) cluster algebra $\cA(1,4)$ 
(see Remark \ref{rem:generic_basis}).

\subsection{A 1-parameter family of stable modules}

For any affine GLS algebra $H$ with minimal symmetrizer,
we construct an interesting $1$-parameter family of locally free $H$-modules
$(V_\lambda)_{\lambda\in\bP^1}$ of rank $\bmeta$ 
which are in general $\partial$-semistable and generically $\partial$-stable.
These are the modules appearing in the Main Theorem \ref{thm:main}.
They should be seen as degenerations of the homogeneous quasi-simple $\tilde{H}$-modules.

\begin{theorem}[Theorem \ref{thm:null_family}]
    Let $H = H(C,D,\Omega)$
    for an affine Cartan matrix $C$
    with minimal symmetrizer $D$.
    There are $V_\lambda\in \mod(H)$ for $\lambda\in\bP^1$ 
    with the following properties:
    \begin{enumerate}[label = (\roman*)]
        \item 
        Each $V_\lambda$ for $\lambda\in \bP^1$ is locally free with $\rkv(V_\lambda) = \bmeta$,
        \item
        for all $\lambda\neq \mu \in \bP^1$ is $\Hom_H(V_\lambda,V_\mu) = 0 = \Ext^1_H(V_\lambda,V_\mu)$,
        \item 
        for all $\lambda\in\bA^1$ is $\End_H(V_\lambda) \cong K$,
        \item 
        for all $\lambda\in\bP^1$ is $\Hom_H(V_\lambda,H) = 0$,
        \item 
        each $V_\lambda$ for $\lambda\in\bP^1$ is $\partial$-semistable,
        \item 
        for almost all $\lambda\in\bP^1$ is $\tau_H(V_\lambda) \cong V_\lambda$,
        \item 
        almost all $V_\lambda$ for $\lambda\in\bP^1$ are $\partial$-stable.
    \end{enumerate}
\end{theorem}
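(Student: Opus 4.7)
The plan is to construct the family $(V_\lambda)_{\lambda\in\bP^1}$ by induction on the number of vertices $|\Gamma_0|$, using the generalized one-point extension technique advertised in the introduction, and then to verify the seven properties in turn. The base cases are the Kronecker algebra, where the classical homogeneous Kronecker pencils provide $V_\lambda$ and all seven properties are classical, and the algebra of type $\tilde{\BC}_1$, where the explicit matrices from Section \ref{sec:intro_bc1} produce the family and all seven properties can be checked by direct matrix computation. For the inductive step, I would single out a vertex $i_0\in\Gamma_0$ whose removal leaves a connected proper affine subdiagram $\Gamma'$ of smaller rank (or an appropriate union of finite-type pieces), take the $V_\lambda$ family already constructed over the GLS algebra of $\Gamma'$, and extend it at $i_0$ by adjoining a free $H(i_0)$-module of the rank prescribed by $\bmeta$, with the structure maps at $i_0$ chosen polynomial in $\lambda$ in a shape dictated by the null root.

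Properties (i)--(iv) then follow from the construction combined with Euler form considerations. Since $H$ is $1$-Iwanaga-Gorenstein, locally free $H$-modules have projective dimension at most one, so
\[
    \euler{V_\lambda,V_\mu}_H = \dimhom_H(V_\lambda,V_\mu) - \dimext_H^1(V_\lambda,V_\mu),
\]
and this evaluates to the symmetric Tits-form value $q_{DC}(\bmeta,\bmeta) = 0$; hence any $\Hom$-vanishing automatically upgrades to the corresponding $\Ext^1$-vanishing. It therefore suffices to establish $\Hom_H(V_\lambda,V_\mu) = 0$ for $\lambda\neq\mu$ (giving (ii)), $\End_H(V_\lambda)\cong K$ for $\lambda\in\bA^1$ (giving (iii)) and $\Hom_H(V_\lambda,H) = 0$ (giving (iv)); each can be read off the induction by analyzing morphisms at the freshly extended vertex $i_0$, where indecomposability is forced by the chosen shape of the structure maps.

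For (v), $\partial(V_\lambda) = \euler{\bmeta,\bmeta}_H = 0$ is automatic, and $\partial(U)\leq 0$ for every submodule $U\subset V_\lambda$ would be verified by a case analysis on possible rank vectors of $U$, exploiting that the positive real roots of $C$ pair non-negatively with $\bmeta$ under the symmetrized Tits form. Properties (vi) and (vii) are open conditions on $\bP^1$, so it suffices to establish each at a generic point: $\partial$-stability (vii) follows from $\partial$-semistability (v) together with the brick property (iii) by the standard $\tau$-tilting argument relating bricks to stable modules, while for (vi) I would use that $\tau_H V_\lambda$ is again locally free of rank $\bmeta$ (invoking the locally free Auslander-Reiten formula and the $1$-Iwanaga-Gorenstein property), again $\partial$-stable, and hence matches some $V_{\sigma(\lambda)}$ for a self-map $\sigma$ of a cofinite subset of $\bP^1$; comparing with the base cases should force $\sigma = \mathrm{id}$ on a dense open set. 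The main obstacle will be the inductive construction itself: ensuring that the one-point extension preserves local freeness with rank vector exactly $\bmeta$, while keeping the endomorphism ring and submodule lattice tightly controlled enough to verify (ii)--(v) uniformly across all affine types, is the technical heart of the argument.
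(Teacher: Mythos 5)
Your proposed inductive framework has a fatal structural flaw. You propose to single out a vertex $i_0 \in \Gamma_0$ "whose removal leaves a connected proper affine subdiagram $\Gamma'$ of smaller rank", and then to induct on $|\Gamma_0|$. But deleting a vertex from a connected affine Dynkin diagram essentially never yields another affine diagram --- the full subgraph on $\Gamma_0\setminus\{i_0\}$ is of \emph{finite} type (this is precisely what characterizes affine diagrams). So there is no smaller affine GLS algebra to induct on, and the inductive step cannot even get started. The paper does not induct at all: it removes the extending vertex $0$ so that the remainder $\Gamma'$ is of \emph{finite} type, takes the unique $\tau$-rigid $V'$ with $\rkv(V') = \bmeta'$, and shows via a Bongartz-type construction (Lemma~\ref{lem:regular_bimodule_bongartz}) that the filtration subcategory $\cB = \filt(S_0, V'')$ (with $V''$ the top of $V'$ over $\End_H(V')$) is equivalent to $\mod(B)$ for a concretely computed rank-one algebra $B = B(\Upsilon)$ depending only on the extending type $\Upsilon$ (Lemma~\ref{lem:regular_bimodule_quiver_relations}). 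This is a one-shot reduction, not a recursion, and the heavy lifting --- computing the extending bimodule and identifying $B$ with the Kronecker algebra, a specific gentle algebra, the type $\tilde{\G}_{2,1}$ extending algebra, or the $\tilde{\BC}_1$ GLS algebra --- is missing from your plan.

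Two further gaps in the verification of the properties. First, for (v) you suggest "a case analysis on possible rank vectors of $U$, exploiting that the positive real roots pair non-negatively with $\bmeta$", but not all submodules $U\subset V_\lambda$ have real-root rank vectors (indeed $U$ need not be indecomposable or locally free), and the sign of $\langle\bmeta,\rkv(U)\rangle_\Gamma$ is not controlled by such a root-theoretic statement. The paper instead exploits $\Hom$/$\Ext^1$-orthogonality with another member $V_\mu$ of the family: from $\Ext^1_H(U,V_\mu)=0$ and the Euler-form identity one gets $\partial(U) = -\dimhom_H(U,V_\mu)\le 0$. Second, for (vii) you appeal to a "standard $\tau$-tilting argument relating bricks to stable modules", but a brick need not be stable for any weight. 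What the paper actually proves and needs is Lemma~\ref{lem:generalized_semibrick_semistability}: a $\Hom$-orthogonal family of modules with local endomorphism rings $K[\delta]/\langle\delta^{m_i}\rangle$ whose direct sum is $\tau$-invariant is $\g$-semistable (and stable where $m_i=1$) for the $\g$-vector weight of the sum, proved by a trace-pairing argument. This lemma is also what makes (vi) usable to deduce (vii); without it that step does not follow.
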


In particular, for the class of GLS algebras, 
this confirms recent $\tau$-tilted versions of the second Brauer-Thrall conjecture 
raised in \cite[Conjecture 1.3.(2)]{M22} and \cite[Conjecture 2]{STV21} (see also \cite{Pfe23bt}).
Our construction uses a generalized one-point extension technique
inspired by Ringel's approach to representations of affine path algebras \cite{Rin84}.
The modules $V_\lambda$ live in a wide subcategory $\cB\subseteq \mod(H)$
which is equivalent to a module category $\cB \simeq\mod(B)$ 
for a finite-dimensional algebra $B$.
The arising algebras $B$ may be of independent interest.
Moody \cite{M69} associates a \emph{tier number} $t(C)$
to every affine Cartan matrix $C$
(this is Kac's \emph{twisting number} from \cite{K90}).
Let $C'$ be the transposed matrix of $C$.
If $t({C'}) = 1$, 
in particular if $C$ is simply-laced, 
then $B$ is the Kronecker algebra.
If $C$ is of type $\tilde{\BC}_{n\geq 1}$, 
then $B$ is isomorphic to a GLS algebra of type $\tilde{\BC}_1$.
If $t(C') = 2$ and $C$ is not of type $\tilde{\BC}_{n\geq 1}$, 
then $B \cong KQ/I$ is a well known gentle algebra with
\[
    \begin{tikzcd}
        Q \colon &
        0 \ar[loop, in = 150, out = -150, distance = 5ex, "\delta_0"] \ar[r,"\beta"] &
        1 \ar[loop, in = 30, out = -30, distance = 5ex, "\delta_1"']
        &&&&
        I = \ideal{\delta_0^2,\delta_1^2}.
    \end{tikzcd}
\]
If $t(C') = 3$, i.e. $C$ is of type $\tilde{\G}_{2,1}$, 
then $B\cong KQ/I$ is given by
\[
    \begin{tikzcd}
        Q \colon &
        0 \ar[loop, in = 150, out = -150, distance = 5ex, "\delta_0"] \ar[r,"\beta"] &
        1 \ar[loop, in = 30, out = -30, distance = 5ex, "\delta_1"']
        &&&&
        I = \ideal{\delta_0^3,\delta_1^3, \delta_1^2\beta + \delta_1\beta\delta_0 + \beta\delta_0^2}.
    \end{tikzcd}
\]
In all cases $B$ deforms to a tame hereditary algebra, 
but the latter two are not GLS algebras.

\subsection{$\tau$-tilting tameness}

Recently, some notions of ``tameness'' in $\tau$-tilting theory arose e.g.
\cite{BST19}, \cite{AY23}, \cite{AI21}.
As a first hint that affine GLS algebras $H$ have a ``tame'' $\tau$-tilting theory,
it follows from the work of \cite{GLS20} and \cite{DR76} that $H$ is $\g$-tame in the sense of 
Aoki-Yurikusa \cite[Definition 1.2]{AY23}
that means the $\g$-vector fan $\fan(H)$ is dense in $\K_0^\fin(H)_\bR$.
This was our motivation to initiate the present work,
because that leaves only few weights to consider,
most prominently the defect.
Plamondon-Yurikusa prove in \cite{PY23} that representation tame algebras are $\g$-tame.
An important ingredient in their proof is a result of 
Geiß-Labardini-Fragoso-Schröer \cite[Theorem 3.2]{GLFS22} and \cite[Corollary 1.7]{GLFS23}
which states that representation tame algebras are $E$-tame 
in the sense of Derksen-Fei \cite[Definition 4.6]{DF15} and Asai-Iyama \cite[Definition 6.3]{AI21}.
More precisely,
it is easy to see that representation tame algebras
are what we call \emph{generically $\tau$-reduced tame} (see e.g. \cite[Lemma 3]{CC15inv}):

\begin{definition}
    Let $A$ be a finite-dimensional (basic) algebra.
    Then $A$ is said to be \emph{generically $\tau$-reduced tame}
    if $c_A(\cZ)\leq 1$
    for all generically indecomposable and $\tau$-reduced components $\cZ \in \Irr(A)$.
\end{definition}

It then follows from \cite[Theorem 1.5]{GLFS23} that any generically $\tau$-reduced tame algebra is $E$-tame.
As a consequence of our Main Theorem we obtain:

\begin{corollary}[Corollary \ref{cor:tau_reduced_tame}]
    Let $H = H(C,D,\Omega)$ with $C$ affine.
    If $D$ is minimal, then $H$ is generically $\tau$-reduced tame.
    For general $D$ is $H$ $E$-tame.
\end{corollary}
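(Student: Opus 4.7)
The plan is to deduce the corollary from the Main Theorem \ref{thm:main} together with Theorem \ref{thm:tau_reduced_locally_free}, which identifies the generically $\tau$-reduced components of $H$ with the generically locally free components $\cZ(\bv)$. Throughout this sketch let $\cZ\in\Irr(H)$ be an arbitrary generically indecomposable and generically $\tau$-reduced component, so that $\cZ = \cZ(\bv)$ for some rank vector $\bv\in\bN\Gamma_0$.

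Assuming $D$ is minimal, I would apply the Main Theorem to decompose $\cZ(\bv) = \overline{\cZ(\bmeta)^m \oplus \cZ(\bw)}$ where $\cZ(\bw) = \overline{\cO(W)}$ for some $\tau$-rigid $W\in\mod(H)$, so that a generic element of $\cZ$ is of the form $V_{\lambda_1} \oplus \cdots \oplus V_{\lambda_m} \oplus W$ with the $\lambda_i\in\bP^1$ pairwise distinct. Generic indecomposability then forces either $m=0$ with $W$ itself indecomposable, in which case $\cZ = \overline{\cO(W)}$ carries a dense orbit and $c_H(\cZ)=0$; or $m=1$ and $W=0$, in which case $\cZ = \cZ(\bmeta)$. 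In the latter case the formula (\ref{lf_rep_varitey_dim}) combined with $q_{DC}(\bmeta) = 0$ yields $\dim \cZ = \dim \GL(K, D\bmeta)$, while a generic brick $V_\lambda$ furnished by Theorem \ref{thm:null_family} has $\End_H(V_\lambda)\cong K$ and hence $\dim \cO(V_\lambda) = \dim \GL(K, D\bmeta) - 1$, giving $c_H(\cZ) = 1$. Either way $c_H(\cZ)\leq 1$, proving that $H$ is generically $\tau$-reduced tame. $E$-tameness for minimal $D$ then follows by the implication \emph{generically $\tau$-reduced tame} $\Rightarrow$ \emph{$E$-tame} from \cite[Theorem 1.5]{GLFS23}, as noted in the paragraph preceding the statement.

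For a general symmetrizer $D = kD_{\min}$ with $k\geq 2$, the direct strategy breaks down because the anticipated $k$-parameter null root family (Remark \ref{rem:general_symmetrizer}) would have $c_H(\cZ(\bmeta)) = k$, so $H$ is not generically $\tau$-reduced tame in the strict sense. I nevertheless expect $E$-tameness by first extending the Main Theorem to general $D$ via a $k$-parameter family of pairwise Hom- and Ext-orthogonal bricks of rank $\bmeta$, and then observing that the $E$-invariant between two generic members of that family vanishes. The relevant $E$-invariants of pairs of generic $\tau$-reduced components then still vanish generically, which is the condition required in the definitions of $E$-tameness of Derksen-Fei and Asai-Iyama. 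The main obstacle is the construction and pairwise orthogonality analysis of this $k$-parameter family, which does not follow formally from the minimal case and constitutes the principal new work required for the general-$D$ assertion.
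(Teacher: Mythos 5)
Your treatment of the minimal symmetrizer case is correct and follows essentially the paper's route: reduce to generically indecomposable $\tau$-reduced components via Theorem \ref{thm:tau_reduced_locally_free} and the Main Theorem, observe that such a component is either $\overline{\cO(W)}$ for $W$ indecomposable $\tau$-rigid (so $c_H = 0$) or $\cZ(\bmeta)$ (so $c_H = 1$). Your dimension count for $c_H(\cZ(\bmeta)) = 1$ is a valid alternative to the paper's chain of equalities $c_H(\cZ(\bmeta)) = \dimhom_H(V_\lambda,\tau_H(V_\lambda)) = \dimext^1_H(V_\lambda,V_\lambda) = \dimend_H(V_\lambda) = 1$; both work. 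The deduction of $E$-tameness for minimal $D$ from \cite[Theorem 1.5]{GLFS23} is also as in the paper.

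The genuine gap is the general symmetrizer case, and you correctly diagnose that your own route does not close it: extending the Main Theorem to a $k$-parameter family of rank-$\bmeta$ modules is exactly the content of Remark \ref{rem:general_symmetrizer}, which the paper only states as an expectation, not a theorem, so nothing you can lean on there. The paper instead disposes of general $D$ in one line by citing \cite[Equivalence (4.2)]{EJR18} (Eisele--Janssens--Raedschelders): the algebras $H(C,kD,\Omega)$ and $H(C,D,\Omega)$ differ by an ideal generated by central elements contained in the radical, and the reduction theorem of loc.\ cit.\ identifies their two-term silting/$\tau$-tilting theories compatibly with $\g$-vectors, so $E$-tameness transfers from the minimal-symmetrizer quotient to $H(C,kD,\Omega)$. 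Note also that the corollary deliberately claims only $E$-tameness, not generic $\tau$-reduced tameness, for general $D$ --- consistent with your (correct) observation that $c_H(\cZ(\bmeta))$ is expected to equal $k$ there. Without the \cite{EJR18} reduction (or an equivalent argument), the second assertion of the corollary remains unproved in your proposal.
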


\begin{table}
    \centering
        \scalebox{.8}{
            \begin{tabular}{ l r r >{\centering}m{5.5cm} c c c c c }

    Type 
    &
    Rank 
    &
    Tier
    &
    Valued Graph
    &
    Null Root
    \\ 
    \hline
    \\

    $\tilde{\A}_1$
    &
    1
    &
    1
    & 
    \begin{tikzpicture}
        \node (a1) at (0,0) {$\bullet$};
        \node (a2) at (1,0) {$\circ$} edge [-] node[above] {$2\mid 2$} (a1);
    \end{tikzpicture}
    &
    $\sm{1 & 1}$
    \\[7mm]

    $\tilde{\A}_{n}$ 
    &
    $n\geq 2$
    &
    1 
    &
    \begin{tikzpicture}
        \node (a1) at (0,0) {$\bullet$};
        \node (b1) at (1,.5) {$\bullet$};
        \node (b2) at (2,.5) {};
        \node (b3) at (3,.5) {};
        \node (b4) at (4,.5) {$\bullet$};
        \node (c1) at (1,-.5) {$\bullet$};
        \node (c2) at (2,-.5) {};
        \node (c3) at (3,-.5) {};
        \node (c4) at (4,-.5) {$\bullet$};
        \node (a2) at (5,0) {$\circ$};
        \draw[-] (a1) to (b1);
        \draw[-] (a1) to (c1);
        \draw[-] (b1) to (b2);
        \draw[-] (c1) to (c2);
        \draw[dotted] (b2) to (b3);
        \draw[dotted] (c2) to (c3);
        \draw[-] (b3) to (b4);
        \draw[-] (c3) to (c4);
        \draw[-] (b4) to (a2);
        \draw[-] (c4) to (a2);
    \end{tikzpicture}
    &
    $\sm{ & 1 & \cdots & 1 \\ 1 & & &  & 1 \\ & 1 & \cdots & 1}$
    \\[7mm]

    $\tilde{\B}_{n}$ 
    &
    $n\geq 2$
    &
    2
    &
    \begin{tikzpicture}
        \node (a1) at (0,0) {$\bullet$};
        \node (a2) at (1,0) {$\bullet$} edge [-] node[above] {$1\mid 2$} (a1);
        \node (a3) at (2,0) {} edge [-] (a2);
        \node (a4) at (3,0) {} edge [dotted] (a3);
        \node (a5) at (4,0) {$\bullet$} edge [-] (a4);
        \node (a6) at (5,0) {$\circ$} edge [-] node[above] {$2\mid 1$} (a5);
    \end{tikzpicture}
    &
    $\sm{1 & 1 & \cdots & 1 & 1}$
    \\ [7mm]

    $\tilde{\C}_{n}$ 
    &
    $n\geq 2$
    & 
    1
    &
    \begin{tikzpicture}
        \node (a1) at (0,0) {$\bullet$};
        \node (a2) at (1,0) {$\bullet$} edge [-] node[above] {$2\mid 1$} (a1);
        \node (a3) at (2,0) {} edge [-] (a2);
        \node (a4) at (3,0) {} edge [dotted] (a3);
        \node (a5) at (4,0) {$\bullet$} edge [-] (a4);
        \node (a6) at (5,0) {$\circ$} edge [-] node[above] {$1\mid 2$} (a5);
    \end{tikzpicture}
    &
    $\sm{1 & 2 & \cdots & 2 & 1}$
    \\ [7mm]

    $\tilde{\D}_{n}$ 
    &
    $n\geq 4$
    & 
    1
    &
    \begin{tikzpicture}
        \node (a1) at (0,-.5) {$\bullet$};
        \node (a2) at (1,0) {$\bullet$} edge [-] (a1);
        \node (a3) at (2,0) {} edge [-] (a2);
        \node (a4) at (3,0) {} edge [dotted] (a3);
        \node (a5) at (4,0) {$\bullet$} edge [-] (a4);
        \node (a6) at (5,-.5) {$\bullet$} edge [-] (a5);
        \node (b1) at (0,.5) {$\bullet$} edge [-] (a2);
        \node (b2) at (5,.5) {$\circ$} edge [-] (a5);
    \end{tikzpicture}
    &
    $\sm{1 & & & & 1\\ & 2 & \cdots & 2 \\1 & & & & 1}$
    \\ [7mm]

    $\tilde{\BC}_{1}$ 
    &
    1
    & 
    2
    &
    \begin{tikzpicture}
        \node (a1) at (0,0) {$\bullet$};
        \node (a2) at (1,0) {$\circ$} edge [-] node[above] {$1\mid 4$} (a1);
    \end{tikzpicture}
    &
    $\sm{2 & 1}$
    \\ [7mm]

    $\tilde{\BC}_{n}$
    & 
    $n\geq 2$
    &
    2
    &
    \begin{tikzpicture}
        \node (a1) at (0,0) {$\bullet$};
        \node (a2) at (1,0) {$\bullet$} edge [-] node[above] {$1\mid 2$} (a1);
        \node (a3) at (2,0) {} edge [-] (a2);
        \node (a4) at (3,0) {} edge [dotted] (a3);
        \node (a5) at (4,0) {$\bullet$} edge [-] (a4);
        \node (a6) at (5,0) {$\circ$} edge [-] node[above] {$1\mid 2$} (a5);
    \end{tikzpicture}
    &
    $\sm{2 & 2 & \cdots & 2 & 1}$
    \\ [7mm]

    $\tilde{\BD}_{n}$ 
    &
    $n\geq 3$
    &
    1 
    &
    \begin{tikzpicture}
        \node (a1) at (0,0) {$\bullet$};
        \node (a2) at (1,0) {$\bullet$} edge [-] node[above] {$1\mid 2$} (a1);
        \node (a3) at (2,0) {} edge [-] (a2);
        \node (a4) at (3,0) {} edge [dotted] (a3);
        \node (a5) at (4,0) {$\bullet$} edge [-] (a4);
        \node (a6) at (5,-.5) {$\bullet$} edge [-] (a5);
        \node (b1) at (5,.5) {$\circ$} edge [-] (a5);
    \end{tikzpicture}
    &
    $\sm{ & & & & 1 \\ 2 & 2 & \cdots & 2 \\ & & & & 1}$
    \\ [7mm]

    $\tilde{\CD}_{n}$ 
    &
    $n\geq 3$
    & 
    2
    &
    \begin{tikzpicture}
        \node (a1) at (0,0) {$\bullet$};
        \node (a2) at (1,0) {$\bullet$} edge [-] node[above] {$2\mid 1$} (a1);
        \node (a3) at (2,0) {} edge [-] (a2);
        \node (a4) at (3,0) {} edge [dotted] (a3);
        \node (a5) at (4,0) {$\bullet$} edge [-] (a4);
        \node (a6) at (5,-.5) {$\bullet$} edge [-] (a5);
        \node (b1) at (5,.5) {$\circ$} edge [-] (a5);
    \end{tikzpicture}
    &
    $\sm{ & & & & 1 \\ 1 & 2 & \cdots & 2 \\ & & & & 1}$
    \\ [7mm]

    $\tilde{\E}_{6}$ 
    &
    6
    & 
    1
    &
    \begin{tikzpicture}
        \node (a1) at (3,0) {$\bullet$};
        \node (a2) at (2,0) {$\bullet$} edge [-] (a1);
        \node (a3) at (1,0) {$\bullet$} edge [-] (a2);
        \node (a4) at (2,-.5) {$\bullet$} edge [-] (a3);
        \node (a5) at (3,-.5) {$\bullet$} edge [-] (a4);
        \node (b1) at (2,.5) {$\bullet$} edge [-] (a3);
        \node (c1) at (3,.5) {$\circ$} edge [-] (b1);
    \end{tikzpicture}
    &
    $\sm{ & 2 & 1 \\ 3 & 2 & 1 \\ & 2 & 1}$
    \\ [7mm]

    $\tilde{\E}_{7}$  
    & 
    7
    &
    1
    &
    \begin{tikzpicture}
        \node (a1) at (3,.5) {$\circ$};
        \node (a2) at (2,.5) {$\bullet$} edge [-] (a1);
        \node (a3) at (1,.5) {$\bullet$} edge [-] (a2);
        \node (a4) at (0,0) {$\bullet$} edge [-] (a3);
        \node (a5) at (1,0) {$\bullet$} edge [-] (a4);
        \node (a6) at (2,0) {$\bullet$} edge [-] (a5);
        \node (a7) at (3,0) {$\bullet$} edge [-] (a6);
        \node (b1) at (1,-.5) {$\bullet$} edge [-] (a4);
    \end{tikzpicture}
    & 
    $\sm{ & 3 & 2 & 1 \\ 4 & 3 & 2 & 1 \\ & 2}$
    \\ [7mm]

    $\tilde{\E}_{8}$  
    & 
    8
    &
    1
    &
    \begin{tikzpicture}
        \node (a1) at (2,0) {$\bullet$};
        \node (a2) at (1,0) {$\bullet$} edge [-] (a1);
        \node (a3) at (0,0) {$\bullet$} edge [-] (a2);
        \node (a4) at (1,.5) {$\bullet$} edge [-] (a3);
        \node (a5) at (2,.5) {$\bullet$} edge [-] (a4);
        \node (a6) at (3,.5) {$\bullet$} edge [-] (a5);
        \node (a7) at (4,.5) {$\bullet$} edge [-] (a6);
        \node (a8) at (5,.5) {$\circ$} edge [-] (a7);
        \node (b1) at (1,-.5) {$\bullet$} edge [-] (a3);
    \end{tikzpicture}
    &
    $\sm{ & 5 & 4 & 3 & 2 & 1\\ 6 & 4 & 2 \\ & 3}$
    \\ [7mm]

    $\tilde{\F}_{4,1}$  
    & 
    4
    &
    1
    &
    \begin{tikzpicture}
        \node (a1) at (4,0) {$\circ$};
        \node (a2) at (3,0) {$\bullet$} edge [-] (a1);
        \node (a3) at (2,0) {$\bullet$} edge [-] (a2);
        \node (a4) at (1,0) {$\bullet$} edge [-] node[above] {$1\mid 2$} (a3);
        \node (a5) at (0,0) {$\bullet$} edge [-] (a4);
    \end{tikzpicture}
    &
    $\sm{2 & 4 & 3 & 2 & 1}$
    \\ [7mm]

    $\tilde{\F}_{4,2}$  
    & 
    4
    &
    2
    &
    \begin{tikzpicture}
        \node (a1) at (0,0) {$\bullet$};
        \node (a2) at (1,0) {$\bullet$} edge [-] (a1);
        \node (a3) at (2,0) {$\bullet$} edge [-] node[above] {$2\mid 1$} (a2);
        \node (a4) at (3,0) {$\bullet$} edge [-] (a3);
        \node (a5) at (4,0) {$\circ$} edge [-] (a4);
    \end{tikzpicture}
    &
    $\sm{1 & 2 & 3 & 2 & 1}$
    \\ [7mm]

    $\tilde{\G}_{2,1}$  
    & 
    2
    &
    1
    &
    \begin{tikzpicture}
        \node (a1) at (2,0) {$\circ$};
        \node (a2) at (1,0) {$\bullet$} edge [-] (a1);
        \node (a3) at (0,0) {$\bullet$} edge [-] node[above] {$1\mid 3$} (a2);
    \end{tikzpicture}
    &
    $\sm{3 & 2 & 1}$
    \\ [7mm]

    $\tilde{\G}_{2,3}$  
    & 
    2
    &
    3
    &
    \begin{tikzpicture}
        \node (a1) at (0,0) {$\bullet$};
        \node (a2) at (1,0) {$\bullet$} edge [-] node[above] {$3\mid 1$} (a1);
        \node (a3) at (2,0) {$\circ$} edge [-] (a2);
    \end{tikzpicture}
    &
    $\sm{1 & 2 & 1}$
    \\ [7mm]
\end{tabular}
        }
    \caption{
        The affine valued graphs.
        The name $\tilde{\BC}_1$ is from \cite[p. 103]{Mac71} 
        and called 
        $\tilde{\A}_{1,2}$ in \cite[Table of Euclidean Matrices]{Moo68}, 
        $\tilde{\A}_{1,1}$ in \cite[p. 3]{DR76}
        and $A_2^{(2)}$ in \cite[Table Aff 2]{K90}.
        The remaining symbols, but not their indices, are those from \cite[p. 3]{DR76}.
        The first index equals the rank i.e. the number of vertices minus $1$.
        The second index, if needed, equals the tier number defined in \cite{M69}
        and coincides with the twisting number in \cite{K90}
        appearing there as superscripts.
        Our choice of extending vertex is highlighted in white and coincides with those in \cite[Table 2]{M69}.
        We also list their primitive null roots and arrange their coefficients in the shape of the valued graph.
    }
    \label{tab:affine}
\end{table}

\newpage

\section{Preliminaries} \label{sec:vq_combinatorics}

In this section we collect some necessary preliminaries for the theory of GLS algebras.
Originally, GLS algebras $H = H(C,D,\Omega)$ are associated to 
generalized Cartan matrices $C$ with symmetrizer $D$ and orientation $\Omega$ (see \cite{GLS17i}).
Our construction of a family of stable modules over affine GLS algebras in Section \ref{sec:stable_family}
is based on a generalized one-point extension technique.
Therefore, it will be more convenient for us to work with valued quivers $\Gamma$ as in \cite{DR76}
and we recall their combinatorics in Section \ref{sec:valued_quivers}.
For us valued quivers always come with a symmetrizer and, as the name suggests, an orientation.
Thus, the data of a valued quiver $\Gamma$ will be equivalent to a triple $(C,D,\Omega)$.
Further, almost all algebras studied here are tensor algebras of modulated simple quivers
and we fix our notation in Section \ref{sec:modulation}.
Modulations and their representations generalize the theory of species
and are systematically developed in \cite{Li12}.

\subsection{Local algebra} \label{sec:local}

Let $K$ be an algebraically closed field of characteristic $0$
and fix a formal variable $\epsilon$. 
For each $k\geq 1$ consider the \emph{ring of formal power series} and
the \emph{field of Laurent series}
\begin{align*}
    R_k := K[[\epsilon^{1/k}]] &&
    L_k := K((\epsilon^{1/k}))
\end{align*} 
Set $R := R_1$ and $L := L_1$.
The field of Laurent series is a \emph{quasi-finite field} 
in the sense of \cite{Ser79}
thus enjoys particularly nice properties:
The field $L_k$ is up to isomorphism the unique field extension of $L$ 
of degree $[L_k:L] = k$
and its Galois group is cyclic $G_k:=\Gal(L_k/L)\cong \bZ/k$ 
with generator $\sigma_k \colon \varepsilon^{1/k} \mapsto \zeta_k\varepsilon^{1/k}$
for a fixed primitive $k^{th}$ root of unity $\zeta_k\in K$.
In particular, 
the algebraic closure of $L$ is the \emph{field of Puiseux series}
\begin{align*}
    \overline{L} = L_\infty := \varprojlim_{k\geq 0} L_k
\end{align*}
and its Galois group is isomorphic to the group of profinite integers $\bar{G} = G_\infty:=\Gal(\bar{L}/L)\cong\hat{\bZ}$.
Recall the canonical short exact sequence of abelian groups
\begin{align} \label{chinese_remainder_sequence}
    0 \to G_m \xrightarrow{\iota} G_k \times G_l \xrightarrow{\pi} G_n \to 0
\end{align}
for all $1\leq k,l < \infty$, $m = \lcm(k,l)$ and $n = \gcd(k,l)$.
This gives rise to a well defined isomorphism of rings
\begin{align} \label{tensor_product_of_fields}
    L_k \otimes_{L} L_l \xrightarrow{\sim} \prod_{g\in G_n} L_m, ~ 
    x \otimes y \mapsto (g_1(x) \cdot g_2(y))_{g\in G_n}
\end{align}
where we canonically identify $G_n \cong (G_k\times G_l)/G_m$ 
by means of the short exact sequence (\ref{chinese_remainder_sequence}).

\subsection{Valued quivers} \label{sec:valued_quivers}

A \emph{simple graph} is a tuple $\Gamma = (\Gamma_0,\Gamma_1)$ consisting of
a set $\Gamma_0$ and a subset $\Gamma_1\subseteq \Gamma_0 \times \Gamma_0$
such that the following holds:
\begin{enumerate}[label = (\roman*)]
    \item If $(j,i)\in \Gamma_1$, then $(i,j)\in \Gamma_1$;
    \item for all $i\in\Gamma_0$ is $(i,i)\not \in \Gamma_1$.
\end{enumerate}
Elements of $\Gamma_0$ are called \emph{vertices},
and elements of $\Gamma_1$ are called \emph{halfedges}.
A \emph{valuation} $\nu$ on a simple graph $\Gamma$ 
assigns to every halfedge a positive integer:
\begin{align*}
    \nu:\Gamma_1\to\bZ_{>0},~ (i,j) \mapsto \nu_{ij}.
\end{align*}
A \emph{symmetrizer} $\bc:\Gamma_0\to\mathbb{Z}_{>0}, i \mapsto c_i$
for a valuation $\nu$ assigns to every vertex a positive integer  
such that for all $(j,i)\in\Gamma_1$ holds
\begin{align*}
    c_i\nu_{ij} = c_j\nu_{ji}.
\end{align*}
A \emph{valued graph} is a tuple $\Gamma = (\Gamma_0,\Gamma_1, \nu, \bc)$
consisting of a simple graph $(\Gamma_0,\Gamma_1)$ 
and a valuation $\nu$ 
with symmetrizer $\bc$.
A valued graph is \emph{simply laced} provided $c_i = 1$ for all $i\in\Gamma_0$.
A \emph{simple quiver} $\Gamma = (\Gamma_0, \Gamma_1, \Omega)$ 
is a simple graph $(\Gamma_0,\Gamma_1)$ 
together with an \emph{orientation} $\Omega$,
that is a subset $\Omega\subseteq \Gamma_1$ 
such that either $(i,j)\in\Omega$ or $(j,i)\in\Omega$ for each $(i,j)\in\Gamma_1$.
Write $i\to j$ provided $(j,i)\in \Omega$ 
and $\rightsquigarrow$ for the transitive closure of the relation $\to$.
Finally, a \emph{valued quiver} $\Gamma = (\Gamma_0,\Gamma_1,\Omega,\nu,\bc)$ 
is a simple quiver $(\Gamma_0,\Gamma_1,\Omega)$
together with a valuation $\nu$ 
and a symmetrizer $\bc$.
Throughout, we will assume that all our valued quivers satisfy
\begin{enumerate}[label = (\roman*)]
    \item the set of vertices is finite 
    i.e. $\norm{\Gamma_0} < \infty$,
    \item there are no oriented cycles 
    i.e. there is no $i\in\Gamma_0$ with $i \rightsquigarrow i$.
\end{enumerate}

Given a valued quiver $\Gamma$,
consider the lattice $\bZ\Gamma_0$ with standard basis $(\bmalpha_i \mid i\in\Gamma_0)$ 
and equipped with the bilinear \emph{Ringel form} $\euler{-,-}_\Gamma\colon \bZ \Gamma_0\times \bZ \Gamma_0 \to \bZ$,
its \emph{symmetrization} $\cartan{-,-}_\Gamma\colon \bZ \Gamma_0\times \bZ \Gamma_0 \to \bZ$ 
and associated quadratic \emph{Tits form} $q_\Gamma\colon \bZ \Gamma_0 \to \bZ$ defined by
\begin{align} \label{ringel_form}
    \euler{\bv,\bw}_\Gamma := \sum_{i \in \Gamma_0} c_iv_iw_i - \sum_{i\to j \in \Omega} c_i\nu_{ij}v_iw_j,
    &&
    \cartan{\bv,\bw}_\Gamma := \euler{\bv,\bw}_\Gamma + \euler{\bw,\bv}_\Gamma,
    &&
    q_\Gamma(\bv) := \euler{\bv,\bv}_\Gamma
\end{align}
for $\bv,\bw\in\bZ\Gamma_0$ written as $\bv = (v_i)_{i\in Q_0}$ and $\bw = (w_i)_{i\in Q_0}$.
The set of non-negative vectors is 
\begin{align*}
    \bN\Gamma_0 := \{\bv \in \bZ\Gamma_0 \mid \text{For all $i\in\Gamma_0$ is $v_i \geq 0$} \}.
\end{align*}
Further, write $\bR\Gamma_0 := \bZ\Gamma_0 \otimes_{\bZ} \bR$ 
for the $\bR$-vector space with basis $(\bmalpha_i \mid i\in\Gamma_0)$
and all forms in \ref{ringel_form} naturally extend from $\bZ\Gamma_0$ to $\bR\Gamma_0$.
For an anisotropic $\bv\in\bZ\Gamma_0$ define the reflection
\begin{align*}
    s_\bv\colon \bR\Gamma_0 \to \bR\Gamma_0 && s_\bv(\bw) := \bw - \frac{\cartan{\bw,\bv}_\Gamma}{q_\Gamma(\bv)} \cdot \bv
\end{align*}
The \emph{Weyl group} of $\Gamma$ is the subgroup $\W(\Gamma)\subseteq \Aut_\bZ(\bZ\Gamma_0)$ 
generated by all \emph{simple reflections} $s_i:=s_{\bmalpha_i}$ for $i\in\Gamma_0$.
The sets of \emph{real} and \emph{positive real roots} of $\Gamma$ are
\begin{align*}
    \Delta_\re(\Gamma) := \bigcup_{i\in\Gamma_0} \W(\Gamma)\bmalpha_i &&
    \Delta_\re^+(\Gamma) := \Delta_\re(\Gamma) \cap \bN\Gamma_0
\end{align*}
An ordering $\Gamma_0 = \{i_1,\dots,i_n\}$ is \emph{admissible}
if there exists a path $i_s \rightsquigarrow i_t$ in $\Gamma$
only if $s\geq t$.
The \emph{Coxeter transformation} is then defined as
\begin{align*}
    \Phi_\Gamma := s_{i_n} \cdots s_{i_1} \in \W(\Gamma).
\end{align*}
Note that $\Phi_\Gamma$ only depends on $\Omega$ 
and not on the choice of the admissible ordering.

\subsection{Modulations of simple quivers} \label{sec:modulation}

A \emph{modulation} $\cM$ of a simple quiver $\Gamma = (\Gamma_0,\Gamma_1,\Omega)$ is a tuple
\begin{align*}
    \left( \cM(i), \cM(j,i) \mid i\in\Gamma_0, (j,i)\in\Omega \right)
\end{align*}
consisting of a $K$-algebra $\cM(i)$ for every $i\in\Gamma_0$ 
and a $K$-vector space $\cM(j,i)$ for every $(j,i)\in\Omega$ 
with the structure of a right $\cM(i)$-module and a left $\cM(j)$-module
such that $K$ acts centrally.
For any $l\geq 0$ define
\begin{align*}
    \cM(\Gamma_0) := \prod_{i\in \Gamma_0} \cM(i)
    &&
    \cM(\Gamma_1) := \bigoplus_{(j,i)\in\Omega} \cM(j,i)
    &&
    \cM(\Gamma_l) := \underbrace{\cM(\Gamma_1) \otimes_{\cM(\Gamma_0)} \cdots \otimes_{\cM(\Gamma_0)} \cM(\Gamma_1)}_\text{l times}
\end{align*}
then $\cM(\Gamma_0)$ is naturally a $K$-algebra and 
$\cM(\Gamma_l)$ for $l\geq 1$ admits the structure of a $\cM(\Gamma_0)$-bimodule 
by considering $\cM(j,i)$ as 
a trivial right $\cM(k)$-module for any $k\in\Gamma_0\setminus\{i\}$
and a trivial left $\cM(k)$-module for any $k\in\Gamma_0\setminus\{j\}$.
The \emph{tensor algebra} of $\cM$ is then defined as
\begin{align*}
    \cM(\Gamma) := \bigoplus_{l\geq 0} \cM(\Gamma_l)
\end{align*}
with multiplication given by the natural maps 
$\cM(\Gamma_l)\otimes_{\cM(\Gamma_0)}\cM(\Gamma_{l'}) \rightarrow \cM(\Gamma_{l+l'})$ 
for $l,l'\geq 0$. 
A \emph{representation} of $\cM(\Gamma)$ is a tuple
\begin{align*}
    V = (V(i),V(j,i) \mid i\in\Gamma_0, (j,i)\in \Omega)
\end{align*}
consisting of $V(i) \in\mod(\cM(i))$ for every $i\in \Gamma_0$
and $V(j,i)\in\Hom_{\cM(j)}(\cM(j,i)\otimes_{\cM(i)}V(i), V(j))$ for every $(j,i)\in\Omega$.
A homomorphism $f\colon V \to W$ between representations of $\cM(\Gamma)$
is a tuple $f=(f_i \mid i\in\Gamma_0)$ with $f(i)\in\Hom_{\cM(i)}(V(i),W(i))$ for $i\in\Gamma_0$
such that for every $(j,i)\in\Omega$ the following diagram commutes
\[
    \begin{tikzcd}
        \cM(j,i) \otimes_{\cM(i)} V(i) 
        \ar[rr,"\text{$V(j,i)$}"] 
        \ar[d,"\id\otimes f(i)"'] 
        &&
        V(j) \ar[d,"f(j)"]
        \\
        \cM(j,i) \otimes_{\cM(i)} W(i) \ar[rr,"\text{$V(j,i)$}"] &&
        W(j)
    \end{tikzcd}
\]
This defines the category of representation $\rep(\cM(\Gamma))$.
There is a well known and natural equivalence of categories 
(see e.g. \cite[Theorem 3.2]{Li12})
\begin{align*}
    \mod(\cM(\Gamma)) \xrightarrow{\sim} \rep(\cM(\Gamma)).
\end{align*}
We will freely identify $\cM(\Gamma)$-modules and representations of $\cM(\Gamma)$.

\section{Representations of valued quivers} \label{sec:definition_gls}

Beside their algebras $H$,
Geiß-Leclerc-Schröer define in \cite{GLS20}
an order $\hat{H}$ over the formal power series ring $R = K[[\varepsilon]]$
which may be seen as a formal family of algebras
whose special fibre recovers the GLS algebra $H \cong \hat{H} \otimes_R K$
and whose generic fibre $\tilde{H} := \hat{H} \otimes_R L$ 
is a species over the field of Laurent series $L := K((\varepsilon))$
in the sense of \cite[Section 7]{Gab73}.
Geiß-Leclerc-Schröer explore in \cite{GLS17i} that representations of $H$ 
behave in many aspects like representations of the species $\tilde{H}$
when restricting attention to the class of \emph{locally free} $H$-modules 
whose basic properties are summarized Section \ref{sec:lf}.
The definition of GLS algebras, orders and species from \cite{GLS20}
is recalled in the language of valued quivers in Section \ref{sec:gls_modulation}.
In Section \ref{sec:unfolded},
we add to this triple a fourth algebra $\bar{H} := \tilde{H} \otimes_L \bar{L}$
which is isomorphic to an ordinary path algebra $\bar{H} \cong \bar{L} \bar{Q}$
over the field of Puiseux series $\bar{L}$ .
We refer to $\bar{H}$ as the \emph{unfolded path algebra} of $H$.
Intuitively, $\bar{H}$ is an honest deformation of $H$ 
and helps to understand the species $\tilde{H}$
in terms of a more elementary path algebra.
In particular, we describe rigid $\tilde{H}$-modules
in terms of rigid $\bar{H}$-modules via Galois descent in Section \ref{sec:tau_rigid}. 
We were inspired by \cite{GRM22}
where Galois descent was used 
to describe simple regular modules over the GLS species of type $\tilde{\BC}_1$.
The final Section \ref{sec:tau_reduced} is dedicated to 
generically $\tau$-reduced components for GLS algebras. 
There we show that generically $\tau$-reduced components 
are precisely the generically locally free ones
and obtain some stronger characterizations
for strongly primitive $\Gamma$.

\subsection{GLS modulations} \label{sec:gls_modulation}

Throughout, we fix a valued quiver $\Gamma = (\Gamma_0,\Gamma_1,\Omega,\nu,\bc)$.
Associate to $\Gamma$ the following integers 
defined for every $(j,i)\in\Gamma_1$
\begin{align*}
    g_{ji} := \gcd(\nu_{ji},\nu_{ij})
    &&
    f_{ji} := \frac{\nu_{ji}}{g_{ji}}.
\end{align*}
In \cite{GLS17i} and \cite{GLS20} three algebras associated to $\Gamma$ are introduced.
They are defined in terms of modulations $\hat{H}, \tilde{H}$ and $H$ of the underlying simple quiver $\Gamma$
given by
\begin{align*}
    \hat{H}(i) &:= K[[\varepsilon_i]] &&& 
    \hat{H}(j,i) &:= K[[\varepsilon_i,\varepsilon_j]]^{g_{ji}} \\
    \tilde{H}(i) &:= K((\varepsilon_i)) &&&
    \tilde{H}(j,i) &:= K((\varepsilon_i,\varepsilon_j))^{g_{ji}} \\
    H(i) &:= K[\varepsilon_i]/(\varepsilon_i^{c_i}) &&&
    H(j,i) &:= \left(K[\varepsilon_i,\varepsilon_j]/(\varepsilon_i^{c_i},\varepsilon_j^{c_j},\varepsilon_i^{f_{ji}}-\varepsilon_j^{f_{ij}})\right)^{g_{ji}}
\end{align*}
for $i\in\Gamma_0$ and $(j,i)\in\Omega$.
In particular, $H(j,i)$ is a free left $H(i)$-module of rank $\nu_{ij}$ 
and a free right $H(j)$-module of rank $\nu_{ji}$.
The \emph{GLS order} $\hat{H}(\Gamma)$, 
\emph{GLS species} $\tilde{H}(\Gamma)$ and 
\emph{GLS algebra} $H(\Gamma)$ 
are defined as the respective tensor algebras.

\begin{theorem}\cite[Theorem 1.2, Proposition 6.4]{GLS17i} \cite[Proposition 4.1, 4.3 and 4.5]{GLS20} \label{thm:gls_algebras}
    Let $\Gamma$ be a valued quiver.
    \begin{enumerate}[label=(\roman*)]
        \item $\hat{H}(\Gamma)$ is an $R$-order in $\tilde{H}(\Gamma)$ with $\gldim(\hat{H}(\Gamma))\leq 2$.
        \item $\tilde{H}(\Gamma)\cong \hat{H}(\Gamma) \otimes_R L$ is an $L$-species, in particular a hereditary finite-dimensional $L$-algebra.
        \item $H(\Gamma)\cong \hat{H}(\Gamma) \otimes_R K$ is a finite-dimensional $K$-algebra and $1$-Iwanaga-Gorenstein. 
    \end{enumerate}
\end{theorem}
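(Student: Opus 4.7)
The plan is to verify each claim by analyzing the local pieces $\hat{H}(i)$ and $\hat{H}(j,i)$ individually, and then assembling the three assertions through the tensor algebra construction of Section \ref{sec:modulation}. The key technical observation is that the relation $\varepsilon_i^{f_{ji}} = \varepsilon_j^{f_{ij}}$ encoded in the bimodules, together with the symmetrizer identity $c_i\nu_{ij} = c_j\nu_{ji}$, provides a single uniformizer $\varepsilon = \varepsilon_i^{c_i}$ that embeds $R = K[[\varepsilon]]$ compatibly into every local algebra $\hat{H}(i) = K[[\varepsilon_i]]$.

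For (i), I would first use this uniformizer to realize $\hat{H}(i)$ as a free $R$-module of rank $c_i$, and check analogously that $\hat{H}(j,i)$ is a finite free $R$-module, free on both sides with the expected ranks $\nu_{ij}$ and $\nu_{ji}$. Summing over $\Gamma$ then exhibits $\hat{H}(\Gamma)$ as a finitely generated torsion-free $R$-algebra, i.e.\ an $R$-order in the $L$-algebra $\tilde{H}(\Gamma) = \hat{H}(\Gamma) \otimes_R L$. For the global dimension bound I would produce an explicit length-$2$ projective resolution of each simple $\hat{H}$-module at a vertex $i$: the first map $P_1\to P_0$ encodes the arrows terminating at $i$, and the second map $P_2\to P_1$ encodes the local relations at $i$ coming from the uniformizer $\varepsilon_i$; regularity of the one-dimensional local rings $\hat{H}(i)$ then prevents further syzygies.

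For (ii), I would check that $-\otimes_R L$ commutes with the tensor algebra construction, with $\hat{H}(i)\otimes_R L = L_{c_i}$ a degree $c_i$ field extension of $L$ and $\hat{H}(j,i)\otimes_R L$ the expected $(L_{c_j},L_{c_i})$-bimodule obtained from the decomposition (\ref{tensor_product_of_fields}). This exhibits $\tilde{H}(\Gamma)$ as an $L$-species in the sense of Gabriel, and acyclicity of $\Omega$ together with the general theory of species yields heredity. For the first half of (iii), the analogous base change $-\otimes_R K$ recovers $H(i)$ and $H(j,i)$ piecewise, hence $H(\Gamma)$ globally, with finite-dimensionality immediate from the $R$-ranks computed in (i).

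The main obstacle I expect is the $1$-Iwanaga-Gorenstein property in (iii), which asks that the injective dimension of $H$ over itself be at most $1$ on both sides. I would approach this in two complementary ways. The direct route is to construct an explicit injective coresolution $0 \to He_i \to I_0 \to I_1 \to 0$ of each indecomposable projective $H$-module $He_i$ using the duality $\D = \Hom_K(-,K)$ and the arrow structure of $\Gamma$; symmetry then delivers the same bound on the right. The more conceptual route is to exploit the flat deformation picture: since $\varepsilon\in R$ is a non-zero-divisor in $\hat{H}$ and $\hat{H}/\varepsilon\hat{H} = H$, a standard dimension-shift in a minimal projective resolution combined with the bound $\gldim(\hat{H}) \leq 2$ from (i) forces the injective dimension of $H$ over itself to drop by exactly one, yielding the required estimate.
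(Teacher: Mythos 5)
This statement is not proved in the paper: Theorem~\ref{thm:gls_algebras} is a citation to \cite{GLS17i} and \cite{GLS20}, so there is no ``paper's own proof'' to compare against. Your outline is nevertheless a reasonable reconstruction of the strategy used there: parts (i), (ii) and the first half of (iii) follow exactly the local-to-global analysis of the modulation together with base change, and those steps are sound.

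The genuine gap is in your ``flat deformation'' route to $1$-Iwanaga-Gorenstein. The standard change-of-rings theorem for a central regular element $\varepsilon \in \mathrm{rad}(\hat H)$ acting as a non-zero-divisor on a module $M$ says $\pdim_{\hat H/\varepsilon\hat H}(M/\varepsilon M) = \pdim_{\hat H}(M)$ --- it preserves the dimension, it does not lower it by one. Applying this to the relevant module $\hat H^\vee := \Hom_R(\hat H, R)$, one gets $\D H \cong \hat H^\vee \otimes_R K$ (since $\hat H$ is $R$-free) and hence $\idim_H(H) = \pdim_{H^{\op}}(\D H) = \pdim_{\hat H^{\op}}(\hat H^\vee)$. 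From $\gldim(\hat H) \leq 2$ alone this only yields $\leq 2$, i.e.\ $2$-Iwanaga-Gorenstein. To drop to $\leq 1$ you would need the additional fact that $\hat H^\vee$, being an $\hat H$-lattice, has projective dimension at most $1$; this is a non-trivial property of the specific GLS order and does not follow formally from $\gldim \leq 2$ plus the deformation picture. (Note also: if you could shave off a full degree as you claim, you would get $\idim_H(H) = 0$, i.e.\ self-injectivity, which is false in general.) Your first route --- an explicit injective coresolution of each $He_i$ using $\D$ and the combinatorics of $\Gamma$ --- is the one that actually works and is essentially what \cite{GLS17i} does. A smaller imprecision: in your sketch of the $\gldim(\hat H) \leq 2$ bound, the second syzygy of the simple at $i$ is governed by the commutation relations $\varepsilon_j^{f_{ij}}\alpha_{ji}^{(g)} - \alpha_{ji}^{(g)}\varepsilon_i^{f_{ji}}$ from the bimodule structure, not by a ``local relation from the uniformizer $\varepsilon_i$'' (there is no relation $\varepsilon_i^{c_i}=0$ in $\hat H$, only in $H$); the correct input is the standard tensor-algebra resolution combined with $\gldim \hat H(i) = 1$.
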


Form now on we write $\cH := \cH(\Gamma)$ for any of the GLS modulations $\cH\in\{H,\hat{H},\tilde{H}\}$
whenever the valued quiver $\Gamma$ is clear from the context.
The various module categories for different GLS modulations are related by push-forwards along the canonical morphisms of tensor algebras:
\begin{equation} \label{reduction_localization}
    \begin{tikzcd}
        \hat{H}
        \ar[d,"\delta"] 
        \ar[r,"\iota"]
        &
        \tilde{H}
        \\
        H
    \end{tikzcd}
    ~~~~ ~~~~ ~~~~ ~~~~ 
    \begin{tikzcd}
        \mod \hat{H}
        \ar[d,"\delta^*"] 
        \ar[r,"\iota^*"]
        &
        \mod \tilde{H}
        \\
        \mod H
    \end{tikzcd}
    ~~~~ ~~~~ ~~~~ ~~~~
    \begin{tikzcd}
        \iota^* := - \otimes_{R} L
        \\
        \delta^* := - \otimes_{R} K
    \end{tikzcd}
\end{equation}
The functor $\delta^*$ is called \emph{reduction} and 
$\iota^*$ is \emph{localization}.
Localization is an exact functor
while reduction has more delicate properties.
We refer to \cite[Section 5]{GLS20} for a detailed study of their properties.

\bigskip

GLS algebras have presentations in terms of quivers with relations
which is the initial definition given in \cite[Section 1.4]{GLS17i}:

\begin{proposition}\cite[Proposition 6.4]{GLS17i} \label{prop:quiver_with_relations}
    Let $\Gamma$ be a valued quiver.
    Then $H\cong KQ/I$ where $Q = Q(\Gamma)$ is the quiver with vertices and arrows
    \begin{align*}
        Q_0 := \Gamma_0
        &&
        Q_1 
        := \{ \alpha_{ji}^{(g)} \mid \text{$(j,i)\in\Omega$ and $1\leq g\leq g_{ji}$}\}
        \cup \{ \varepsilon_i \mid i\in\Gamma_0\}
    \end{align*}
    for which sources and targets are given as the indices suggest
    \begin{align*}
        s(\alpha_{ji}^{(g)}) = i && t(\alpha_{ji}^{(g)}) = j && s(\varepsilon_i) = i = t(\varepsilon_i).
    \end{align*}
    and the ideal $I = I(\Gamma)$ is generated by the relations 
        \begin{enumerate}[label=(H\arabic*)]
            \item 
            $\varepsilon_i^{c_i} = 0$ 
            for every $i\in\Gamma_0$;
            \item 
            $\varepsilon_j^{f_{ij}}\alpha_{ji}^{(g)} = \alpha_{ji}^{(g)}\varepsilon_i^{f_{ji}}$
            for every $(j,i)\in\Omega$ and $1\leq g\leq g_{ji}$.
        \end{enumerate}
\end{proposition}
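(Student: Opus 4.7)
The plan is to construct an explicit algebra homomorphism $\Phi\colon KQ \to H(\Gamma)$, check it kills the generators of the ideal $I$, and then verify the induced map $\bar{\Phi}\colon KQ/I \to H(\Gamma)$ is bijective by a dimension count against a normal form. First I would define $\Phi$ on the generators of the path algebra by sending the vertex idempotent $e_i$ to the unit $1 \in H(i) \subseteq H(\Gamma_0)$, the loop $\varepsilon_i$ to the element $\varepsilon_i \in K[\varepsilon_i]/(\varepsilon_i^{c_i}) = H(i)$, and the arrow $\alpha_{ji}^{(g)}$ to the element $e^{(g)} \in H(j,i)$ corresponding to the unit $1$ in the $g$-th factor of $H(j,i) = \bigl(K[\varepsilon_i,\varepsilon_j]/(\varepsilon_i^{c_i},\varepsilon_j^{c_j},\varepsilon_i^{f_{ji}}-\varepsilon_j^{f_{ij}})\bigr)^{g_{ji}}$ and zero elsewhere. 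Since the targets of loops and arrows land in the correct idempotent components, this extends uniquely to a $K$-algebra map $\Phi\colon KQ \to H(\Gamma)$.

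Next I would check that (H1) and (H2) lie in the kernel. For (H1), $\Phi(\varepsilon_i^{c_i}) = \varepsilon_i^{c_i} = 0$ holds by definition of $H(i)$. For (H2), the left action of $\varepsilon_j$ and the right action of $\varepsilon_i$ on $e^{(g)} \in H(j,i)$ give respectively $\varepsilon_j^{f_{ij}}$ and $\varepsilon_i^{f_{ji}}$ in the $g$-th component, and these coincide by the relation $\varepsilon_i^{f_{ji}} = \varepsilon_j^{f_{ij}}$ defining that component; hence $\Phi$ factors through $\bar{\Phi}\colon KQ/I \to H(\Gamma)$.

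For surjectivity, I would use that the tensor algebra $H(\Gamma) = \bigoplus_{l\geq 0} H(\Gamma_l)$ is generated as a $K$-algebra by $H(\Gamma_0)$ and $H(\Gamma_1)$. The degree-zero piece $H(\Gamma_0) = \prod_i K[\varepsilon_i]/(\varepsilon_i^{c_i})$ is evidently hit by the $e_i$ and $\varepsilon_i$, and each summand $H(j,i)$ of $H(\Gamma_1)$ is generated as an $H(j)$-$H(i)$-bimodule by the elements $e^{(g)}$, hence is contained in the image. For injectivity, I would compare Hilbert series vertex-by-vertex. On the modulation side, $H(j,i)$ is free of rank $f_{ij}$ as a left $H(i)$-module with basis $\{\varepsilon_j^b e^{(g)} : 0 \leq b < f_{ij},\, 1 \leq g \leq g_{ji}\}$, so $\dim_K H(j,i) = c_i \nu_{ij}$, and by tensoring, $\dim_K e_{i_l} H(\Gamma_l) e_{i_0} = c_{i_0} \nu_{i_1 i_0} \nu_{i_2 i_1} \cdots \nu_{i_l i_{l-1}}$ summed over the $g$-data. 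On the path-algebra side, I would establish the normal form: using (H2) repeatedly, every path from $i_0$ to $i_l$ admits a unique representative
\[
    \varepsilon_{i_l}^{b_l} \alpha_{i_l i_{l-1}}^{(g_l)} \varepsilon_{i_{l-1}}^{b_{l-1}} \cdots \alpha_{i_1 i_0}^{(g_1)} \varepsilon_{i_0}^{b_0}
\]
subject to $0 \leq b_k < f_{i_k i_{k-1}}$ for $1 \leq k \leq l$ and $0 \leq b_0 < c_{i_0}$, which yields exactly the same count.

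The main obstacle is showing that these normal forms span and are linearly independent in $KQ/I$: the spanning part follows from an inductive reduction using (H2) to push powers of $\varepsilon$ across the arrows (and (H1) to truncate), while linear independence requires the above dimension comparison and the observation that $\bar{\Phi}$ maps distinct normal forms to distinct tensor monomials in $H(\Gamma_l)$. Once spanning, linear independence, and surjectivity are in place, dimensions agree in every bigraded component and $\bar{\Phi}$ is the desired isomorphism.
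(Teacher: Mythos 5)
The paper does not prove this proposition itself --- it is quoted verbatim from \cite[Proposition 6.4]{GLS17i} --- so there is no internal argument to compare against. Your strategy (write down the explicit morphism $KQ \to H(\Gamma)$, check that $I$ maps to zero, establish surjectivity from the tensor algebra generators, and close with a normal-form dimension count) is a reasonable and standard way to establish such presentations.

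That said, the dimension count contains a systematic index error that must be corrected before it can be regarded as a proof. The relation (H2) reads $\varepsilon_j^{f_{ij}}\alpha_{ji}^{(g)} = \alpha_{ji}^{(g)}\varepsilon_i^{f_{ji}}$ with $(j,i)\in\Omega$, i.e.\ $j$ is the target and $i$ the source of $\alpha_{ji}$. Pushing $\varepsilon_{i_k}$ to the right past $\alpha_{i_k i_{k-1}}$ therefore reduces its exponent modulo $f_{i_{k-1} i_k}$, not modulo $f_{i_k i_{k-1}}$; your normal-form constraint should read $0 \le b_k < f_{i_{k-1} i_k}$. Likewise, since $H(j,i)$ is a free $H(i)$-module of rank $\nu_{ij}$ (and note that your phrase ``free of rank $f_{ij}$'' contradicts the stated basis of cardinality $\nu_{ij}=g_{ji}f_{ij}$), iterating the tensor product gives $\dim_K e_{i_l} H(\Gamma_l) e_{i_0} = c_{i_0}\,\nu_{i_0 i_1}\nu_{i_1 i_2}\cdots\nu_{i_{l-1} i_l}$, not $c_{i_0}\,\nu_{i_1 i_0}\cdots\nu_{i_l i_{l-1}}$. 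Because your two sides carry the same swap, they agree with one another but both differ from the true count: in type $\tilde{\BC}_1$ with $i_0=2$, $i_1=1$ one has $\nu_{21}=4$, $\nu_{12}=1$, $c_2=1$, and your formulas give $c_2\nu_{12}=1$ while $\dim_K H(1,2)=c_2\nu_{21}=4$. So as written the comparison is vacuous. Once the indices are corrected the two quantities do coincide, and one also needs the divisibility $f_{ij}\mid c_j$ (which follows from $c_j f_{ji} = c_i f_{ij}$ together with $\gcd(f_{ij},f_{ji})=1$) to ensure the (H2) push-bound is at least as strong as the (H1) truncation at intermediate vertices, so that the proposed normal forms are well defined and actually span.
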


\subsection{Unfolded path algebras} \label{sec:unfolded}

To gain a better understanding of the GLS species $\tilde{H}$
we introduce its \emph{unfolded path algebra}
\begin{align*}
    \bar{H} := \bar{H}(\Gamma) := \tilde{H}(\Gamma) \otimes_{L} \bar{L}.
\end{align*}
Note that the Galois group $\bar{G} := \Gal(\bar{L}/L)$ acts naturally on $\bar{H}$.
This is a basic finite-dimensional hereditary algebra
over the algebraically closed field $\bar{L}$
hence $\bar{H} \cong \bar{L}\bar{Q}$ is the path algebra of a quiver $\bar{Q}$.
The quiver $\bar{Q} = \bar{Q}(\Gamma)$ has an explicit description as the \emph{unfolded quiver} of $\Gamma$:
Its vertices and arrows are
\begin{align*}
    \bar{Q}_0 &= \{(i,k) \mid \text{$i\in\Gamma_0$, $k\in\bZ/c_i$}\}
    \\
    \bar{Q}_1 &= \{ \alpha_{(j,l),(i,k)}^{(g)} \mid 
        \text{
            $(i,k),(j,l) \in \bar{Q}_0$ 
            and $\alpha_{ji}^{(g)}\in Q_1$
            such that $k \equiv l$ in $\bZ/\gcd(c_i,c_j)$
        }
    \}
\end{align*}
where sources and targets are given as the indices suggest
\begin{align*}
    s(\alpha_{(j,l),(i,k)}^{(g)}) = (i,k) 
    && t(\alpha_{(j,l),(i,k)}^{(g)}) = (j,l).
\end{align*}

\begin{lemma}\label{lem:unfolded_path_algebra}
    For every $1\leq c < \infty$ with $c_i \mid c$ for all $i\in\Gamma_0$
    there is an isomorphism of algebras 
    \begin{align*}
        \tilde{H}_c(\Gamma):=\tilde{H}(\Gamma) \otimes_L L_c \cong L_c\bar{Q}.
    \end{align*}
    such that $\Gal(L_c/L) = \ideal{\sigma_c}$ acts on $L_c\bar{Q}$ as follows:
    \begin{align*}
        \sigma_c(\lambda \cdot e_{(i,k)}) 
        = \sigma_c(\lambda) \cdot e_{(i,{k+1})} 
        &&
        \sigma_c(\lambda \cdot \alpha_{(j,l),(i,k)}^{(g)}) 
        = \sigma_c(\lambda) \cdot \alpha_{(j,{l+1}),(i,{k+1})}^{(g)}
    \end{align*}
    for $\lambda\in L_c$, $(i,k),(j,l) \in \bar{Q}_0$ and $1\leq g \leq g_{ji}$.
\end{lemma}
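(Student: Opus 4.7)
The plan is to build the isomorphism by base-changing the modulation defining $\tilde{H}$ from $L$ to $L_c$ and matching the resulting tensor algebra against $L_c\bar{Q}$ on the nose. The key input throughout is the tensor-product formula (\ref{tensor_product_of_fields}) for the finite extensions $L_k/L$.

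First, I would analyse the vertex algebras. For each $i\in\Gamma_0$, since $c_i\mid c$, formula (\ref{tensor_product_of_fields}) applied with $k=c_i$, $l=c$ gives $n=c_i$, $m=c$, and hence
\[
    \tilde{H}(i)\otimes_L L_c \;=\; L_{c_i}\otimes_L L_c \;\cong\; \prod_{g\in G_{c_i}} L_c,
\]
producing $c_i$ primitive orthogonal idempotents which I label by $\bZ/c_i$ to match the vertices $(i,k)\in\bar{Q}_0$. A direct computation from the explicit formula (sending $x\otimes y\mapsto (g_1(x)y)_{g_1\in G_{c_i}}$) shows that $\sigma_c$, acting on the right tensor factor, cyclically permutes these idempotents via the restriction $\sigma_c|_{L_{c_i}}=\sigma_{c_i}$; after choosing the appropriate cyclic labeling one obtains $\sigma_c(e_{(i,k)})=e_{(i,k+1)}$.

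Next, I would examine the arrow bimodules. The symmetrizer identity $c_i\nu_{ij}=c_j\nu_{ji}$ together with $\gcd(f_{ji},f_{ij})=1$ forces $f_{ji}=c_i/n$ and $f_{ij}=c_j/n$ for $n:=\gcd(c_i,c_j)$. Under the embedding $\varepsilon_i\mapsto \varepsilon^{1/c_i}$, $\varepsilon_j\mapsto \varepsilon^{1/c_j}$ this makes the defining relation $\varepsilon_i^{f_{ji}}=\varepsilon_j^{f_{ij}}$ automatic and identifies $\tilde{H}(j,i)\cong L_m^{g_{ji}}$ as an $\tilde{H}(j)$-$\tilde{H}(i)$-bimodule, where $m:=\lcm(c_i,c_j)$. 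Tensoring with $L_c$ and applying (\ref{tensor_product_of_fields}) once more yields $L_m\otimes_L L_c\cong L_c^m$, so $\tilde{H}(j,i)\otimes_L L_c\cong L_c^{m\,g_{ji}}$. Decomposing this against the product of vertex idempotents $e_{(j,l)}(-)e_{(i,k)}$ shows that exactly the $c_ic_j/n=m$ blocks with $k\equiv l\pmod n$ are non-zero, each carrying $g_{ji}$ copies of $L_c$. This matches the arrow set $\bar{Q}_1$ bijectively.

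With these identifications in place, the desired isomorphism follows from the tensor-algebra property: $\tilde{H}\otimes_L L_c$ is the tensor algebra over $\tilde{H}(\Gamma_0)\otimes_L L_c$ of the degree-$1$ part $\tilde{H}(\Gamma_1)\otimes_L L_c$, and the previous two steps realize this as $L_c\bar{Q}$. The Galois-equivariance on arrows follows from the same coset-shift computation that handled the vertices. The main obstacle is purely bookkeeping: one must choose coherent coset representatives so that (i) the cyclic labels on the vertices produce the claimed $+1$-shift under $\sigma_c$, and (ii) the $g_{ji}$ parallel arrows $\alpha^{(g)}$ are transported through the tensor-product calculation without permuting the superscript $g$. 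Once these conventions are aligned, everything else is formal.
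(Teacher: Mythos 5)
Your proposal is correct and follows essentially the same route as the paper's proof: base-change the modulation to $L_c$, apply the tensor-product formula (\ref{tensor_product_of_fields}) to decompose the vertex algebras into orthogonal idempotents indexed by $\bZ/c_i$ and the arrow bimodules into blocks indexed by pairs $(k,l)$ with $k\equiv l\pmod{\gcd(c_i,c_j)}$, read off the $\Gal(L_c/L)$-action as a coordinate shift composed with the coefficient action of $\sigma_c$, and invoke the tensor-algebra structure to conclude. The paper fills in what you defer to ``bookkeeping'' — it writes explicit formulas for the multiplication and bimodule structure, constructs the comparison map on generators via the universal property of tensor algebras (citing \cite[Lemma 1.3]{BSZ09}), and closes by a dimension count — but the underlying ideas and the decomposition steps match yours.
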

\begin{proof}
    Note that $L_c\bar{Q} = L_c(\bar{Q})$
    is the tensor algebra of the simple quiver underlying $\bar{Q}$ with modulation
    \begin{align*}
        L_c((i,k)) := L_c \cdot e_{(i,k)}
        &&
        L_c((j,l),(i,k)) := \bigoplus_{g = 1}^{g_{ji}} L_c \cdot \alpha_{(j,l)(i,k)}^{(g)}
    \end{align*}
    for $(i,k),(j,l)\in\bar{Q}_0$.
    On the other hand $\tilde{H}_c(\Gamma)$ is the tensor algebra of the simple quiver underlying $\Gamma$ with modulation
    \begin{align*}
        \tilde{H}_c(i) = \tilde{H}(i) \otimes_L L_c
        &&
        \tilde{H}_c(j,i) = \tilde{H}(j,i) \otimes_L L_c
    \end{align*}
    for $i\in\Gamma_0$ and $(j,i)\in\Omega$.
    Since $c_i \mid c$, 
    we have $\varepsilon_i=\epsilon^{1/c_i}\in L_c$ for every $i\in\Gamma_0$.
    Thus, there are isomorphisms as in (\ref{tensor_product_of_fields})
    \begin{align*}
        \tilde{H}_c(i)
        \cong \prod_{k\in\bZ/c_i} L_c\cdot \bar{e}_{(i,k)}
        &&
        \tilde{H}_c(j,i) 
        \cong \bigoplus_{g=1}^{g_{ji}} \bigoplus_{m \in \bZ/\lcm(c_i,c_j)} 
        L_c\cdot \bar{\alpha}_{m}^{(g)}
    \end{align*}
    with $\Gal(L_c/L)$-action given by
    \begin{align*}
        \sigma_c(\lambda \cdot \bar{e}_{(i,k)}) 
        = \sigma_c(\lambda) \cdot \bar{e}_{(i,{k+1})} 
        &&
        \sigma_c(\lambda \cdot \bar{\alpha}_{m}^{(g)}) 
        = \sigma_c(\lambda) \cdot \bar{\alpha}_{m+1}^{(g)}.
    \end{align*}
    Moreover, the induced multiplication respectively bimodule structures are
    \begin{align*}
        \bar{e}_{(i,k)} \cdot \bar{e}_{(i,l)} 
        = 
        \begin{cases}
            1 & \text{if $k=l$}\\
            0 & \text{else}.
        \end{cases}
        &&
        \bar{e}_{(j,l)} \cdot \bar{\alpha}_m^{(g)} \cdot \bar{e}_{(i,k)} 
        =
        \begin{cases}
            \bar{\alpha}_m^{(g)} & \text{if $(m,m) \equiv (k,l)$ in $\bZ/c_i\times \bZ/c_j$}\\
            0 & \text{else}.
        \end{cases}
    \end{align*}
    Accordingly, we have well defined morphisms of algebras respectively bimodules
    \begin{align*}
        L_c(\bar{Q}_0) \xrightarrow{\iota_0} \tilde{H}_c(\Gamma), ~
        e_{(i,k)} \mapsto \bar{e}_{(i,k)}
        &&
        L_c(\bar{Q}_1) \xrightarrow{\iota_1} \tilde{H}_c(\Gamma), ~
        \alpha^{(g)}_{(j,l)(i,k)} \mapsto \bar{\alpha}_{m}^{(g)}
    \end{align*}
    where $m\in\bZ/\lcm(c_i,c_j)$ is unique 
    with $m \equiv k$ in $\bZ/c_i$ 
    and $m \equiv l$ in $\bZ/c_j$.
    The universal property of tensor algebras \cite[Lemma 1.3]{BSZ09} 
    yields an injective morphism of algebras 
    $\iota\colon L_c(\bar{Q}) \to \tilde{H}_c(\Gamma)$
    which is an isomorphism by comparing dimensions over $L_c$.
\end{proof}

\begin{proposition} \label{prop:unfolded_path_algebra}
    Let $\Gamma$ be a valued quiver.
    There is an isomorphism
    \begin{align*}
        \bar{H}(\Gamma) \cong \bar{L}\bar{Q}(\Gamma)
    \end{align*}
    such that any $\sigma \in \Gal(\bar{L}/L)$ 
    acts on $\bar{L}\bar{Q}(\Gamma)$ by
    \begin{align*}
        \sigma(\lambda\cdot e_{(i,k)}) = \sigma(\lambda) \cdot e_{\sigma(i,k)}, &&
        \sigma(\lambda \cdot \alpha_{(j,l),(i,k)}^{(g)}) = \sigma(\lambda) \cdot \alpha_{\sigma(j,l),\sigma(i,k)},
    \end{align*}
    for $\lambda\in\bar{L}$, $i,j\in Q_0$, $k\in\bZ/c_i$, $l\in\bZ/c_j$, $1\leq g \leq g_{ji}$.
\end{proposition}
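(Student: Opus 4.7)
The plan is to deduce the proposition from Lemma \ref{lem:unfolded_path_algebra} by passage to a direct limit. Observing that $\bar{L} = \varinjlim_c L_c$ as $c$ ranges over positive integers divisible by each $c_i$ for $i\in\Gamma_0$, and using that tensor product commutes with filtered colimits, I obtain
\[
    \bar{H}(\Gamma) \;=\; \tilde{H}(\Gamma)\otimes_L \bar{L} \;\cong\; \varinjlim_c\, \tilde{H}_c(\Gamma),
\]
while $\bar{L}\bar{Q} \cong \varinjlim_c L_c\bar{Q}$ trivially. Lemma \ref{lem:unfolded_path_algebra} provides isomorphisms $\iota_c\colon L_c\bar{Q} \xrightarrow{\sim} \tilde{H}_c(\Gamma)$ for every such $c$, and the explicit formulas for $\iota_c$ on vertices $e_{(i,k)}$ and arrows $\alpha_{(j,l),(i,k)}^{(g)}$ are given without reference to $c$ itself. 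Hence the family $(\iota_c)_c$ is automatically compatible with the transition maps of the two directed systems, and taking the colimit yields the first claim $\bar{H}(\Gamma) \cong \bar{L}\bar{Q}$.

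For the Galois action, I would use the standard identification $\bar{G} = \varprojlim_c G_c$, so that every $\sigma\in\bar{G}$ is encoded by a profinite integer $n$ whose class $n\bmod c$ gives the restriction of $\sigma$ to $L_c$ in terms of the generator $\sigma_c$. Under this identification, the formulas in Lemma \ref{lem:unfolded_path_algebra} describing $\sigma_c$ as a cyclic shift by $+1$ of the second component translate, at the level of $\bar{L}\bar{Q}$, to the prescription $\sigma(i,k) = (i, k + n \bmod c_i)$ and likewise on arrows, which is exactly the action stated in the proposition. Consistency on arrows $\alpha_{(j,l),(i,k)}^{(g)}$, where the congruence $k\equiv l \pmod{\gcd(c_i,c_j)}$ must be preserved, is automatic because the \emph{same} integer $n$ shifts both coordinates.

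The main technical obstacle, which I would verify carefully, is the compatibility of the isomorphisms $\iota_c$ with the inclusions $L_c\hookrightarrow L_{c'}$ when $c\mid c'$. This reduces to checking that the isomorphism (\ref{tensor_product_of_fields}) used in the proof of Lemma \ref{lem:unfolded_path_algebra} is natural with respect to enlarging the base field $L_c$ to $L_{c'}$: one must ensure that the idempotents $\bar{e}_{(i,k)}$ and elements $\bar{\alpha}_m^{(g)}$, defined via that isomorphism in $\tilde{H}_c(\Gamma)$, map to their counterparts in $\tilde{H}_{c'}(\Gamma)$ under the structural inclusion. This is a routine bookkeeping argument invoking the short exact sequence (\ref{chinese_remainder_sequence}), but it is the one place where anything needs to be checked beyond the formalism of colimits.
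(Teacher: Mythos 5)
Your proof is correct and takes the same approach as the paper, whose own proof is simply the remark that the proposition follows from Lemma \ref{lem:unfolded_path_algebra} ``passing to the limit $c\to\infty$.'' You fill in the colimit formalism and correctly isolate the one genuine compatibility check (naturality of the isomorphism (\ref{tensor_product_of_fields}) under the inclusions $L_c\hookrightarrow L_{c'}$), which the paper leaves implicit.
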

\begin{proof}
    This follows readily from Lemma \ref{lem:unfolded_path_algebra}
    passing to the limit $c \to \infty$.
\end{proof}

The $\bar{G}$-action on $\bar{H}$ induces an action on $\mod(\bar{H})$ 
by exact autoequivalences $g\colon \mod(\bar{H}) \xrightarrow{\sim} \mod(\bar{H})$
which further induce linear maps $g\colon \K_0(\bar{H}) \to \K_0(\bar{H})$ for all $g\in \bar{G}$.
This action can be described explicitly:
Set $c := \lcm(c_i \mid i\in\Gamma_0)$ and consider the canonical projection $\rho\colon \bar{G} \twoheadrightarrow \bZ/c$.
The cyclic group $\bZ/c = \ideal{\sigma}$ acts on $\bZ \bar{Q}_0$ by permuting basis vectors
\begin{align}
    \sigma(\be_{i,k}) := \be_{i,k+1}, ~~~~ \forall~ (i,k)\in \bar{Q}_0.
\end{align}

\begin{corollary}
    For each $g\in\Gal(\bar{L}/L)$ there is a commutative square
    \[
        \begin{tikzcd}
            \K_0(\bar{H}) \ar[d,"\dimv"'] \ar[rr,"g"]&& 
            \K_0(\bar{H}) \ar[d,"\dimv"]\\
            \bZ\bar{Q}_0 \ar[rr,"\rho(g)"']&&
            \bZ\bar{Q}_0
        \end{tikzcd}
    \]
    Moreover, for all $\bv,\bw\in\bZ\bar{Q}_0$ and $g\in\bZ/c$ is 
    $\euler{g(\bv),g(\bw)}_{\bar{Q}} = \euler{\bv,\bw}_{\bar{Q}}$.
\end{corollary}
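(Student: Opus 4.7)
The plan is to use the explicit formulas for the $\Gal(\bar{L}/L)$-action on $\bar{H} \cong \bar{L}\bar{Q}$ supplied by Proposition \ref{prop:unfolded_path_algebra}, together with the fact (implicit in Lemma \ref{lem:unfolded_path_algebra}) that this action on the vertex set $\bar{Q}_0$ factors through the finite cyclic quotient $\rho\colon \bar{G} \twoheadrightarrow \bZ/c = \Gal(L_c/L)$, where it is given by the shift $(i,k)\mapsto(i,k+1)$. Since $\bar{L}\bar{Q}$ is a basic hereditary $\bar{L}$-algebra, $\dimv$ identifies $\K_0(\bar{H})$ with $\bZ\bar{Q}_0$ by sending the class of the simple module $S_{(i,k)}$ at vertex $(i,k)$ to the basis vector $\be_{i,k}$. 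Under these identifications both claims of the corollary become purely combinatorial statements about the shift action of $\bZ/c$ on $\bar{Q}$.

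For the commutative square, note that for any $g\in\bar{G}$ the algebra automorphism of $\bar{H}$ induced by $g$ sends the idempotent $e_{(i,k)}$ to $e_{\rho(g)(i,k)}$ by the first formula in Proposition \ref{prop:unfolded_path_algebra}. Hence the induced exact autoequivalence $g\colon \mod(\bar{H})\xrightarrow{\sim}\mod(\bar{H})$ permutes the simples by $S_{(i,k)}\mapsto S_{\rho(g)(i,k)}$, and so on Grothendieck classes by $[S_{(i,k)}]\mapsto[S_{\rho(g)(i,k)}]$. Applying $\dimv$ yields $\be_{i,k}\mapsto \be_{\rho(g)(i,k)}$, which is precisely the permutation $\rho(g)$ of $\bZ\bar{Q}_0$. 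Passing to $\bZ$-linear extensions gives commutativity on all of $\K_0(\bar{H})$.

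For the invariance of the Ringel form, first note that $\bar{Q}$ is acyclic, since an oriented cycle in $\bar{Q}$ would project to an oriented cycle in $\Omega$, contradicting our standing assumption on $\Gamma$. Thus the Ringel form of $\bar{L}\bar{Q}$ takes the standard path-algebra shape
\[
    \euler{\bv,\bw}_{\bar{Q}} = \sum_{(i,k)\in\bar{Q}_0} v_{i,k}\, w_{i,k} - \sum_{a\in\bar{Q}_1} v_{s(a)}\, w_{t(a)}.
\]
The shift action of $\sigma\in\bZ/c$ permutes $\bar{Q}_0$ by $(i,k)\mapsto(i,k+1)$ and lifts to a compatible permutation of $\bar{Q}_1$ via $\alpha_{(j,l),(i,k)}^{(g)}\mapsto\alpha_{(j,l+1),(i,k+1)}^{(g)}$, which is well-defined because the congruence $k\equiv l \pmod{\gcd(c_i,c_j)}$ defining the arrow set is preserved under simultaneous shift. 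Consequently both sums above are invariant under re-indexing by $\sigma$, yielding $\euler{\sigma\bv,\sigma\bw}_{\bar{Q}} = \euler{\bv,\bw}_{\bar{Q}}$. There is no substantive obstacle here: the whole argument is a compatibility check between the algebra-theoretic Galois action and its combinatorial shadow on $\bar{Q}$, both of which have already been spelled out in Proposition \ref{prop:unfolded_path_algebra} and Lemma \ref{lem:unfolded_path_algebra}.
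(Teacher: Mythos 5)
Your proof is correct and is essentially the argument the paper leaves implicit: the Corollary is stated without proof immediately after Proposition \ref{prop:unfolded_path_algebra}, and your argument is the natural unpacking of that proposition — identifying $\K_0(\bar{H})\cong\bZ\bar{Q}_0$ via simples, reading off from the explicit formulas that the semilinear Galois action permutes idempotents (hence simples) by the shift $\rho(g)$, and then checking that the standard path-algebra Ringel form is invariant under the compatible permutation of $\bar{Q}_0$ and $\bar{Q}_1$.
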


We may supplement the diagram (\ref{reduction_localization})
with the push-forward along $\pi\colon \tilde{H} \to \bar{H}$ 
called \emph{scalar extension}
\begin{align}
    \pi^*:= - \otimes_L\bar{L}\colon \mod(\tilde{H}) \to \mod(\bar{H}).
\end{align}
This is an exact functor, hence induces a linear map on Grothendieck groups
\begin{align}
    \pi^*_0\colon \K_0(\tilde{H}) \to \K_0(\bar{H}).
\end{align}

\begin{corollary} \label{cor:scalar_extension_isometry}
    There is a commutative square
    \[
        \begin{tikzcd}
            \K_0(\tilde{H}) \ar[d,"\dimv"'] \ar[rr,"\pi^*_0"]&& 
            \K_0(\bar{H}) \ar[d,"\dimv"]\\
            \bZ\Gamma_0 \ar[rr,"\Pi^*_0"']&&
            \bZ\bar{Q}_0
        \end{tikzcd}
    \]
    where $\Pi^*_0(\bmalpha_i) := \sum_{k\in\bZ/c_i} \be_{i,k}$ for $i\in\Gamma_0$.
    Moreover, for all $\bv,\bw\in\bZ\Gamma_0$ is
    $\euler{\Pi^*_0(\bv),\Pi^*_0(\bw)}_{\bar{Q}} = \euler{\bv,\bw}_\Gamma$.
\end{corollary}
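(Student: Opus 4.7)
My plan is to reduce both claims to the classes of the simple $\tilde{H}$-modules $S_i$ ($i\in\Gamma_0$), which generate $\K_0(\tilde{H})$, and then to exploit heredity of $\tilde{H}$ and $\bar{H}$ together with flatness of the field extension $\bar{L}/L$.

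\textbf{Commutativity of the square.} Since $\pi^* = -\otimes_L\bar{L}$ is exact and $\dimv$ is additive, it suffices to verify $\dimv(\pi^* S_i) = \Pi^*_0(\bmalpha_i)$ for $i\in\Gamma_0$. The simple $\tilde{H}$-module $S_i$ is $L_{c_i}$ concentrated at vertex $i$, so that $\dimv[S_i]=\bmalpha_i$. By Proposition \ref{prop:unfolded_path_algebra} (equivalently Lemma \ref{lem:unfolded_path_algebra} passed to the limit $c\to\infty$) and the field isomorphism \eqref{tensor_product_of_fields},
\[
    \pi^* S_i \;=\; L_{c_i}\otimes_L \bar{L} \;\cong\; \prod_{k\in\bZ/c_i}\bar{L},
\]
with the $k^{\text{th}}$ factor carrying the idempotent $e_{(i,k)}\in\bar{H}$. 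As a $\bar{H}$-module this is $\bigoplus_{k\in\bZ/c_i} S_{(i,k)}$, whose dimension vector is precisely $\sum_{k\in\bZ/c_i}\be_{(i,k)} = \Pi^*_0(\bmalpha_i)$, as required.

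\textbf{Isometry.} Since $\tilde{H}$ and $\bar{H}$ are both hereditary by Theorem \ref{thm:gls_algebras}, their Ringel forms coincide with the alternating sum $\sum_r(-1)^r\dim\Ext^r$ of dimensions of extension groups. Faithful flatness of $\bar{L}/L$ yields, for finitely generated $V,W\in\mod(\tilde{H})$,
\[
    \Ext^r_{\bar{H}}(\pi^* V,\pi^* W)\;\cong\; \Ext^r_{\tilde{H}}(V,W)\otimes_L \bar{L},\qquad r\geq 0,
\]
hence $\dim_{\bar{L}}\Ext^r_{\bar{H}}(\pi^* V,\pi^* W)=\dim_L\Ext^r_{\tilde{H}}(V,W)$. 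Summing alternatingly gives $\euler{\pi^* V,\pi^* W}_{\bar{H}}=\euler{V,W}_{\tilde{H}}$; transporting along $\dimv$ via the commutative square just established yields $\euler{\Pi^*_0(\bv),\Pi^*_0(\bw)}_{\bar{Q}}=\euler{\bv,\bw}_\Gamma$.

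\textbf{Main obstacle.} In either approach the only non-trivial ingredient is the identification of the Ringel form on $\K_0(\tilde{H})$ with the combinatorial form \eqref{ringel_form} on $\bZ\Gamma_0$, which boils down to the standard species computation $\dim_L\Ext^1_{\tilde{H}}(S_i,S_j)=c_i\nu_{ij}$ for $i\to j$ in $\Omega$. As a sanity check, a direct combinatorial verification on basis vectors is also feasible: for $(j,i)\in\Omega$ the arrows of $\bar{Q}$ with source in $\{(i,k)\}_k$ and target in $\{(j,l)\}_l$ number $g_{ji}\cdot\lcm(c_i,c_j)$, and the equality $c_i\nu_{ij}=g_{ji}\lcm(c_i,c_j)$ follows from $\nu_{ij}=g_{ji}f_{ij}$ together with the symmetrizer relation $c_if_{ij}=c_jf_{ji}$ and $\gcd(f_{ij},f_{ji})=1$ (which force $c_j=f_{ij}\gcd(c_i,c_j)$). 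The remaining cases $i=j$ and $(i,j)\in\Omega$ are immediate from the absence of oriented cycles.
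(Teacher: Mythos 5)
Your proposal is correct and follows essentially the same route as the paper: commutativity is obtained by decomposing $\pi^*(S_i)$ into the simples $S_{i,k}$ via the isomorphism (\ref{tensor_product_of_fields}), and the isometry via the homological interpretation of the Ringel form together with (faithful) flatness/freeness of $\bar{L}$ over $L$. Your closing combinatorial verification is precisely the paper's suggested alternative argument, and the arithmetic there ($c_i\nu_{ij}=g_{ji}\lcm(c_i,c_j)$) checks out.
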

\begin{proof}
    It is enough to note that the isomorphism (\ref{tensor_product_of_fields}) shows
    \begin{align*}
        \pi^*(S_i) = S_i \otimes_{L} \bar{L} \cong \bigoplus_{k\in\bZ/c_i} S_{i,k}
    \end{align*}
    for the simple $\tilde{H}$-module $S_i$ at $i\in\Gamma_0$
    and the simple $\bar{H}$-modules $S_{i,k}$ at $(i,k) \in \bar{Q}_0$.
    The final assertion follows e.g. from the homological interpretation of Ringel forms
    because $\bar{L}$ is free as an $L$-module. 
    Alternatively, use the explicit definition of the Ringel form (\ref{ringel_form})
    and the short exact sequence (\ref{chinese_remainder_sequence}).
\end{proof}

\subsection{Locally free modules} \label{sec:lf}

Let $\cH := \cH(\Gamma)$ for any of the GLS modulations $\cH \in \{H, \hat{H}, \tilde{H}\}$.
An $\cH$-module $V$ is said to be \emph{locally free}
if $V(i)$ is a free $\cH(i)$-module for every $i\in\Gamma_0$.
Consider the full subcategory
\begin{align*}
    \mod_\lf(\cH) := \{V\in\mod(\cH) \mid \text{$V$ is locally free} \} \subseteq \mod(\cH).
\end{align*}
For the GLS species is $\mod_{\lf}(\tilde{H}) = \mod(\tilde{H})$.
The standard resolution shows that $\pdim(V)\leq 1$ for every $V\in\mod_\lf(\cH)$
and any GLS modulation $\cH\in\{H,\hat{H},\tilde{H}\}$.
This characterizes locally free modules over GLS algebras:

\begin{proposition}\cite[Proposition 3.5, Corollary 11.2]{GLS17i} \label{prop:locally_free}
    Let $V\in\mod(H)$. The following are equivalent:
    \begin{enumerate}[label = (\roman*)]
        \item $V$ is locally free;
        \item $\pdim_H(V)\leq 1$;
        \item $\Hom_H(\tau_H^{-1}(V),H) = 0$
    \end{enumerate}
\end{proposition}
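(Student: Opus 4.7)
My plan is to establish (i)$\Leftrightarrow$(ii) using the specific tensor algebra structure of $H$, and (ii)$\Leftrightarrow$(iii) via an Auslander transpose computation together with the $1$-Iwanaga-Gorenstein property of $H$. The implication (i)$\Rightarrow$(ii) is immediate from the standard projective resolution for locally free modules over tensor algebras, as noted just above the proposition.

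For (ii)$\Rightarrow$(i) I will use the exact evaluation functor $e_i(-) = \Hom_H(He_i, -)\colon \mod(H) \to \mod(H(i))$. Since each indecomposable projective $He_j$ is locally free by \cite[Proposition 3.5]{GLS17i}, the module $e_i(He_j) = e_iHe_j$ is free over $H(i)$. Applying $e_i(-)$ to a minimal projective resolution $0 \to P_1 \to P_0 \to V \to 0$ therefore produces an exact sequence $0 \to e_iP_1 \to e_iP_0 \to V(i) \to 0$ of free $H(i)$-modules, whence $V(i)$ has projective dimension at most $1$ over $H(i)$. But $H(i) = K[\varepsilon_i]/(\varepsilon_i^{c_i})$ is a local self-injective $K$-algebra, so any module of finite projective dimension is projective, hence free. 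Thus $V(i)$ is free for every $i \in \Gamma_0$ and $V$ is locally free.

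For (ii)$\Leftrightarrow$(iii), the plan is to combine an Auslander transpose computation with the Iwanaga-Gorenstein property. Starting from a minimal injective copresentation $0 \to V \to J_0 \to J_1$ of $V$ (one may assume $V$ has no injective summands, as injective $V$ makes both (ii) and (iii) trivially hold), the dualized and transposed presentation produces a minimal projective presentation of $\tau_H^{-1}V$. Applying $\Hom_H(-,H)$ to this presentation and using reflexivity of projective modules yields
\[
    \Hom_H(\tau_H^{-1}V, H) \;\cong\; \Omega^2_{H^{\mathrm{op}}}(DV),
\]
where $\Omega^2$ denotes the second syzygy in a minimal projective resolution over $H^{\mathrm{op}}$. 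Hence condition (iii) is equivalent to $DV$ having projective dimension at most $1$ over $H^{\mathrm{op}}$, i.e.\ $V$ having injective dimension at most $1$ over $H$. Since $H$ is $1$-Iwanaga-Gorenstein by Theorem \ref{thm:gls_algebras}.(iii), modules of finite projective dimension coincide with modules of finite injective dimension, and both dimensions are bounded by $1$; therefore (ii) and (iii) are equivalent.

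The step requiring the most care is the Auslander transpose identification above, where one must track injective summands of $V$ and possible projective summands of $\tau_H^{-1}V$ honestly; but since we are only after a vanishing statement and such summands contribute either zero or a projective right $H$-module on the nose, the bookkeeping is routine. No input beyond classical Auslander--Reiten theory and the structural input from \cite{GLS17i} is needed.
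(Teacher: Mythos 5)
The paper does not prove this statement; it is quoted directly from \cite[Proposition 3.5, Corollary 11.2]{GLS17i}, so there is no in-paper argument to compare against. Your self-contained proof is correct. For (ii)$\Rightarrow$(i) the decisive observation is that the restriction $e_i(-) = \Hom_H(He_i,-)$ is exact and carries a length-one projective resolution of $V$ to a length-one resolution of $V(i)$ by free $H(i)$-modules, and $H(i)\cong K[\varepsilon_i]/(\varepsilon_i^{c_i})$ is local self-injective, so finite projective dimension forces $V(i)$ to be free. For (ii)$\Leftrightarrow$(iii), dualising a minimal injective copresentation $0\to V\to J_0\to J_1$ yields a minimal projective presentation of $DV$ over $H^{\op}$, and applying $\Hom_H(-,H)$ to the resulting minimal projective presentation of $\tau_H^{-1}V=\Tr_{H^{\op}}(DV)$ together with reflexivity of projectives gives $\Hom_H(\tau_H^{-1}V,H)\cong\Omega^2_{H^{\op}}(DV)$; this vanishes exactly when $\idim_H(V)\leq 1$, which by the $1$-Iwanaga-Gorenstein property of $H$ (Theorem~\ref{thm:gls_algebras}) is equivalent to $\pdim_H(V)\leq 1$. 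The bookkeeping you flag does close: injective summands of $V$ contribute projective summands to $DV$ and hence nothing to $\Omega^2_{H^{\op}}$, and since $DV$ then has no projective summands, the dualised presentation of $\Tr_{H^{\op}}(DV)$ is automatically minimal and $\tau_H^{-1}V$ has no projective summands, so the isomorphism is exact as stated.
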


In particular, one has for every $k\in\Gamma_0$ 
the \emph{generalized simple} $E_k\in\mod_{\lf}(H)$ defined by
\begin{align*}
    E_k(i) :=
    \begin{cases}
        H(k) & \text{if $i=k$} \\
        0 & \text{else}
    \end{cases}
    &&
    E(j,i) = 0
\end{align*}
for all $i\in\Gamma_0$ and $(j,i)\in\Omega$.
Their classes $\bmalpha_k := [E_k]$ for $k\in\Gamma_0$ in the Grothendieck group $\K_0^{\lf}(H) := \K_0(\mod_\lf(H))$ 
form a basis.
Thus we may identify
\[
    \begin{tikzcd}
        \mod_\lf(H) \ar[d] \ar[dr,"\rkv"]\\
        \K_0^{\lf}(H) \ar[r,"\sim"]&
        \bZ \Gamma_0
    \end{tikzcd}
\]
where $\rkv(V) = (\rk_i(V))_{i\in Q_0}$ is the \emph{rank vector} of $V\in\mod_{\lf}(V)$
with $\rk_i(V)$ the rank of the free $H(i)$-module $V(i)$ for $i\in\Gamma_0$.
Note that we have a commutative square
\[
    \begin{tikzcd}
        \mod_{\lf}(H) \ar[r,hookrightarrow] \ar[d, "\rkv"']&
        \mod(H) \ar[d,"\dimv"]\\
        \bZ\Gamma_0 \ar[r,"D"'] & 
        \bZ Q_0
    \end{tikzcd}
\]
where $D := \Diag(c_i \mid i\in\Gamma_0)$ is the diagonal matrix with $D(\bmalpha_i) = c_i \cdot \be_i$ for $i\in\Gamma_0$.
More generally,
for any $V\in\mod(H)$ we set
\begin{align*}
    \rkv(V) := D^{-1}(\dimv(V)) \in \bR\Gamma_0.
\end{align*}

Similarly to the hereditary case, the Ringel form and Coxeter transformation are 
$\K$-theoretic shadows of the homological Euler form and the Auslander-Reiten translation.
This was already established by Geiß-Leclerc-Schröer,
but we need a slight generalization allowing one argument of the Euler form to be not locally free.

\begin{proposition}\label{prop:homological_ringel_coxeter}
    Let $U\in\mod(H)$ and $V,W\in\mod_{\lf}(H)$ 
    with $W$ indecomposable
    and $\tau_H(W)$ locally free and non-zero.
    Then
    \begin{align*}
        \euler{\rkv(V),\rkv(U)}_\Gamma = \euler{V,U}_H
        &&
        \rkv(\tau_H(W)) = \Phi_\Gamma(\rkv(W)).
    \end{align*}
\end{proposition}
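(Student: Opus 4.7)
The two identities are best treated in order, with the second using the first.

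For the first identity, the plan is to exploit the canonical tensor-algebra presentation of a locally free $H$-module. Since $V(i) \cong H(i)^{v_i}$ is free for every $i\in\Gamma_0$, where $v_i := \rk_i(V)$, there is a natural exact sequence
\[
0 \to \bigoplus_{(j,i)\in\Omega} He_j \otimes_{H(j)} H(j,i) \otimes_{H(i)} V(i) \to \bigoplus_{i\in\Gamma_0} He_i \otimes_{H(i)} V(i) \to V \to 0,
\]
whose exactness is verified termwise via the idempotents $e_k$, and which is a genuine projective resolution because Proposition \ref{prop:locally_free} gives $\pdim_H V \leq 1$. Applying $\Hom_H(-,U)$, using the adjunction $\Hom_H(He_k \otimes_{H(k)} X, U) \cong \Hom_{H(k)}(X, e_k U)$ and the fact that $H(j,i)$ is free of rank $\nu_{ji}$ as a left $H(j)$-module, the alternating dimension reads
\[
\sum_{i} v_i \dim_K(e_i U) \;-\; \sum_{(j,i)\in\Omega} v_i \nu_{ji} \dim_K(e_j U) \;=\; \sum_i c_i v_i u_i - \sum_{i\to j\in\Omega} c_i \nu_{ij} v_i u_j,
\]
where $u_k := \dim_K(e_k U)/c_k$ is the $k$-th entry of $\rkv(U)$. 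This is exactly $\euler{\rkv V, \rkv U}_\Gamma$.

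For the second identity, the plan is to pin down $\rkv(\tau_H W)$ by a characterizing Ringel-form identity obtained from Auslander-Reiten duality, and to match it with the same identity defining $\Phi_\Gamma\rkv(W)$. For any locally free test module $V$, the first identity yields $\euler{\rkv V, \rkv \tau_H W}_\Gamma = \euler{V, \tau_H W}_H$. Auslander-Reiten duality $D\Ext_H^1(W,V) \cong \overline{\Hom}_H(V, \tau_H W)$, combined with the first identity applied now to $W$, should collapse this into
\[
\euler{\rkv V, \rkv \tau_H W}_\Gamma \;=\; -\euler{\rkv W, \rkv V}_\Gamma \qquad (\text{for all locally free } V).
\]
Letting $V$ range over the generalized simples $E_k$ whose rank vectors form a $\bZ$-basis of $\bZ\Gamma_0$, this relation determines $\rkv(\tau_H W)$ uniquely. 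On the other hand, the Coxeter transformation $\Phi_\Gamma = s_{i_n}\cdots s_{i_1}$ satisfies the very same characterizing identity $\euler{\mathbf{x}, \Phi_\Gamma \mathbf{y}}_\Gamma = -\euler{\mathbf{y}, \mathbf{x}}_\Gamma$ by a standard inductive computation peeling off simple reflections along the admissible ordering. Uniqueness then forces $\rkv(\tau_H W) = \Phi_\Gamma \rkv(W)$.

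The main obstacle I anticipate is the careful bookkeeping of projective/injective summand corrections in the AR duality step. The hypotheses that $W$ is indecomposable and that $\tau_H W$ is locally free and nonzero must be leveraged precisely to annihilate these corrections: one uses Proposition \ref{prop:locally_free}(iii) together with the transpose exact sequence
\[
0 \to \tau_H W \to \nu(P_1) \to \nu(P_0) \to \nu(W) \to 0
\]
(for $\nu := D\Hom_H(-,H)$ applied to a minimal projective presentation $P_1 \to P_0 \to W \to 0$) to show that $\nu(W) = 0$ and that the stable-Hom quotients appearing in AR duality contribute nothing. Once this cleanup is in place, the remaining content is a standard comparison between the K-theoretic shadow of $\tau_H$ and the reflection product $\Phi_\Gamma$, exactly parallel to the hereditary case.
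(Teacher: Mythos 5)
Your treatment of the first identity is correct but takes a genuinely different route from the paper. The paper invokes \cite[Proposition 4.1]{GLS17i} for the case of $U$ locally free, and then extends to general $U\in\mod(H)$ by the observation that $\euler{V,-}_H$ factors linearly through $\K_0(H)$ and checking on the simples $S_i$ via $E_i$. You instead compute $\euler{V,U}_H$ directly from the standard tensor-algebra resolution of $V$, using the adjunction $\Hom_H(He_k\otimes_{H(k)}X,U)\cong\Hom_{H(k)}(X,e_kU)$, the freeness of $H(j,i)$ over $H(j)$ of rank $\nu_{ji}$, and the symmetrizer relation $c_j\nu_{ji}=c_i\nu_{ij}$. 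This is a fully self-contained argument that covers general $U$ in one stroke; the paper's version is shorter but outsources the locally-free case.

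For the second identity the paper simply cites \cite[Proposition 11.5]{GLS17i}, so there is no internal proof to compare against; but the sketch you give has a real gap, not just bookkeeping. Combining Auslander--Reiten duality with $\pdim_H V\leq 1$ and $\pdim_H W\leq 1$ gives
\begin{align*}
\euler{V,\tau_H W}_H + \euler{W,V}_H \;=\; \dim_K\mathcal{P}(W,V) \;+\; \dim_K\mathcal{I}(V,\tau_H W),
\end{align*}
where $\mathcal{P}(W,V)$ is the space of maps $W\to V$ factoring through projectives and $\mathcal{I}(V,\tau_H W)$ the space of maps $V\to\tau_H W$ factoring through injectives. From $\tau_H W$ locally free you correctly get $\Hom_H(W,H)=0$ via Proposition~\ref{prop:locally_free}(iii), and that kills $\mathcal{P}(W,V)$ and $\nu W$. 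But $\mathcal{I}(V,\tau_H W)=0$ does \emph{not} come from the transpose sequence; it requires the independent vanishing $\Hom_H(DH,\tau_H W)=0$, which, after dualising, is $\Hom_{H^\op}(\Tr_H W, H^\op)=0$. That in turn is the opposite-algebra version of Proposition~\ref{prop:locally_free}(iii) applied to $DW$ (one must also observe that $DW$ is locally free over $H^\op$ and that $\Tr_H W\cong\tau_{H^\op}^{-1}(DW)$ because $W$ has no projective summand). None of this is in your sketch, and it is precisely where the hypotheses on $W$ enter a second time. Alternatively, you could bypass AR duality altogether: once $\nu W=0$ you have the exact sequence $0\to\tau_H W\to\nu P_1\to\nu P_0\to 0$ of locally free modules; taking rank vectors reduces the claim to the purely combinatorial identity $\Phi_\Gamma(\rkv(P_i)) = -\rkv(I_i)$ for indecomposable projectives, which is the actual content one should verify (and presumably what GLS do). As written, the second half of your argument is a plan rather than a proof.
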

\begin{proof}
    The second equality is entirely \cite[Proposition 11.5]{GLS17i}.
    The first is \cite[Proposition 4.1]{GLS17i} for $U$ locally free.
    But note that $\pdim_H(V)\leq 1$ hence
    \begin{align*}
        \euler{V,-}_H\colon \mod(H) \to \bZ
    \end{align*}
    factors linearly through $\K_0(H)$.
    In particular, for every $i\in Q_0$ is
    \begin{align*}
        \euler{V,S_i}_H = \frac{1}{c_i}\euler{V,E_i}_H = \frac{1}{c_i}\euler{\rkv(V),\rkv(E_i)}_\Gamma = \euler{\rkv(V),\rkv(S_i)}_\Gamma
    \end{align*}
    where the second equality holds by \cite[Proposition 4.1]{GLS17i}.
    This settles the first equality for locally free $V$ and general $U$.
\end{proof}

\subsection{$\tau$-rigid modules} \label{sec:tau_rigid}

In \cite{GLS20} $\tau$-rigid modules over GLS algebras were classified by translating the problem
to the classification of rigid modules over the GLS species $\tilde{H}$.
The latter was solved by Ringel \cite{Rin94} 
based on Crawley-Boevey's \cite{CB93} transitive braid group action on exceptional sequences.
We further observe that the classification of rigid modules over the GLS species $\tilde{H}$ 
can be recovered via Galois descent 
from the classification of rigid modules over the unfolded path algebra $\bar{H}$.
To state the various bijective correspondences we need to introduce some more sets:
\begin{align*}
    \rig(\cH) &:= \{V\in\mod(\cH) \mid \text{$\Ext^1_\cH(V,V) = 0$}\}/\cong, \\
    \rig_\lf(\cH) &:= \{V\in\mod_\lf(\cH) \mid \text{$\Ext^1_\cH(V,V) = 0$}\}/\cong, \\
    \trig(\cH) &:= \{V\in\mod(\cH) \mid \text{$\Hom_{\cH}(V,\tau_{\cH}(V)) = 0$}\}/\cong, \\
    \rig(\bar{H},\bar{G}) &:= \{ V \in \rig(\bar{H}) \mid \text{$g(V)\cong V$ for all $g\in \bar{G}$} \}/\cong.
\end{align*}
where the third is only defined for $\cH \neq \hat{H}$.
Note that $\rig(\cH) = \rig_\lf(\cH) = \trig(\cH)$ 
for $\cH\in \{\tilde{H}, \bar{H}\}$
by heredity.
Moreover, \emph{Demonet's Lemma} \cite[Lemma 6.2]{GLS20} says $\trig(H) = \rig_\lf(H)$
and it is conjectured in loc.cit. that $\rig(H) = \rig_\lf(H)$.
In the upcoming Section \ref{sec:tau_reduced},
we provide a geometric proof of Demonet's Lemma.
An important preliminary step is the fundamental and well known observation
that they are determined by their class in the corresponding Grothendieck group.

\begin{proposition} \label{prop:lf_rigid_class}
    Let $\Gamma$ be a valued quiver 
    and $\cH = \cH(\Gamma)$ 
    for $\cH\in\{H,\hat{H},\tilde{H},\bar{H}\}$.
    Suppose $V,W\in\rig_\lf(\cH)$.
    If $[V] = [W]$ in $\K_0(\cH)$, then $V \cong W$.
\end{proposition}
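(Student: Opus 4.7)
The plan is to reduce the statement to the classical result that rigid modules over a hereditary algebra are determined by their dimension vector. As a preliminary reduction, I first note that $[V] = [W]$ in $\K_0(\cH)$ is equivalent to $\rkv(V) = \rkv(W)$: for the hereditary algebras $\cH \in \{\tilde{H}, \bar{H}\}$ this is tautological since every module is locally free and $\rkv = \dimv$; for $\cH = H$ the identity $\dimv = D \circ \rkv$ with $D$ invertible over $\bQ$ does the job; and for $\cH = \hat{H}$ the same conclusion follows after reducing modulo $\varepsilon$ via $\delta^*$.

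Next I would dispose of the hereditary cases. For $\cH = \bar{H}$, Proposition \ref{prop:unfolded_path_algebra} identifies $\bar{H} \cong \bar{L}\bar{Q}$ with the path algebra of a quiver over an algebraically closed field, so I will invoke the classical Kac--Schofield theorem: rigid modules decompose as direct sums of exceptional modules, which are in bijection with real Schur roots via the dimension vector, giving $V \cong W$. For $\cH = \tilde{H}$ I would bootstrap from $\bar{H}$ via the exact scalar extension $\pi^* = - \otimes_{L}\bar{L}$: by Corollary \ref{cor:scalar_extension_isometry}, $\pi^*V$ and $\pi^*W$ are rigid $\bar{H}$-modules of equal class in $\K_0(\bar{H})$, hence isomorphic by the previous case; faithfully flat descent along the separable extension $\bar{L}/L$ (we work in characteristic zero) then yields $V \cong W$. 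One may equivalently invoke Ringel's generalization of Kac's theorem to hereditary species.

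For $\cH \in \{H, \hat{H}\}$ I will combine the deformation picture \eqref{reduction_localization} with Demonet's Lemma \cite[Lemma 6.2]{GLS20} to obtain mutually inverse, rank-vector preserving bijections
\[
    \rig_\lf(H) \xleftarrow[\sim]{\delta^*} \rig_\lf(\hat{H}) \xrightarrow[\sim]{\iota^*} \rig(\tilde{H}).
\]
Given $V, W \in \rig_\lf(H)$ with equal rank vectors, the corresponding rigid $\tilde{H}$-modules have equal dimension vector and are therefore isomorphic by the hereditary case; the bijection transports the isomorphism back to $V \cong W$, and the $\hat{H}$-case is identical via $\iota^*$. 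The hardest point will be producing and using this rigidity-preserving lift from $H$ (or $\hat{H}$) to $\tilde{H}$, compatibly with rank vectors; accepting it from the GLS deformation theory of \cite{GLS17i, GLS20}, the remainder is classical hereditary representation theory together with standard faithfully flat descent.
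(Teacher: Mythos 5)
Your proposal is correct in substance but takes a genuinely different route from the paper. The paper's proof is a pure citation argument: it cites the irreducibility of $\bRep(K\bar{Q},\bd)$ for $\bar{H}$ (two dense orbits in an irreducible variety must coincide), Ringel \cite{Rin94} for the species $\tilde{H}$, \cite[Proposition 3.2]{GLS18ii} for $H$, and \cite[Proposition 5.11]{GLS20} for $\hat{H}$. You instead derive everything from the algebraically closed path-algebra case via scalar extension and the GLS deformation machinery, which is a nice unification.

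Two points deserve attention. First, the step from $\bar{H}$ back to $\tilde{H}$ is not really ``faithfully flat descent'' in the Galois-cohomological sense (where nontrivial forms could a priori exist); the correct tool is the Noether--Deuring theorem: for a finite-dimensional $L$-algebra $A$ and a field extension $\bar L/L$, if $M\otimes_L \bar L\cong N\otimes_L\bar L$ as $A\otimes_L\bar L$-modules then $M\cong N$. That gives exactly what you want, with no separability hypothesis needed, so it is worth naming it. Second, for the $H$ and $\hat H$ cases you invoke the bijections $\rig_\lf(\hat H)\xrightarrow{\delta^*}\trig(H)$ and $\rig_\lf(\hat H)\xrightarrow{\iota^*}\rig(\tilde H)$, which in the paper appear as Theorem \ref{thm:rigid_bijections}, stated \emph{after} Proposition \ref{prop:lf_rigid_class} (and described as building on it as ``an important preliminary step''). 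Within the paper's exposition that would be a forward reference; it is not an actual circularity because both the proposition and the bijection theorem are ultimately quotations of results in \cite{GLS20}, but you should say explicitly that you are using \cite[Theorem 1.1]{GLS20} as a black box rather than the paper's Theorem \ref{thm:rigid_bijections}. With those two clarifications your argument stands; its benefit is that it explains the $H$, $\hat H$, $\tilde H$ cases uniformly from the elementary $\bar H$ case, whereas the paper trades that insight for brevity by citing each case separately.
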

\begin{proof}
    For path algebras over algebraically closed fields, in particular $\bar{H}$, 
    this is classically known and follows from the irreducibility of varieties of representations.
    More generally, for species like $\tilde{H}$ this follows from e.g. \cite{Rin94}.
    For the GLS algebra $H$ this is \cite[Proposition 3.2]{GLS18ii}.
    Finally, for $\hat{H}$ this is \cite[Proposition 5.11]{GLS20}.
\end{proof}

The key to classify $\tau$-rigid modules over GLS algebras $H$
is to set them in bijective correspondence with 
rigid modules over GLS species $\tilde{H}$.
This is the lower part of the diagram of bijections in the subsequent Theorem \ref{thm:rigid_bijections}
and completely due to Geiß-Leclerc-Schröer.
Our modest contribution only concerns the unfolded algebra $\bar{H}$,
that is the vertical upper right bijection.

\begin{theorem} \label{thm:rigid_bijections}
    Let $\Gamma$ be a valued quiver 
    with associated GLS algebras $\cH = \cH(\Gamma)$ 
    for $\cH\in\{H,\hat{H},\tilde{H},\bar{H}\}$.
    The reduction, localization and scalar extension functors restrict to bijective correspondences
    \[
        \begin{tikzcd}
            &
            \rig(\bar{H},\bar{G})
            \\
            \rig_\lf(\hat{H}) \ar[r, "\iota^*"] \ar[d, "\delta^*"']
            & 
            \rig(\tilde{H}) \ar[u, "\pi^*"'] \\
            \trig(H)
        \end{tikzcd}
    \]
    Moreover, reduction $\delta^*$ and localization $\iota^*$ 
    preserve indecomposability of locally free rigid modules.
\end{theorem}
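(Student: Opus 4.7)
The lower triangle of bijections $\delta^*\colon \rig_\lf(\hat H) \to \trig(H)$ and $\iota^*\colon \rig_\lf(\hat H) \to \rig(\tilde H)$ is essentially the content of \cite{GLS20}; the essential ingredients there are Demonet's Lemma $\trig(H) = \rig_\lf(H)$, compatibility of reduction and localization with Grothendieck classes, and the uniqueness principle of Proposition \ref{prop:lf_rigid_class} which pins down a rigid locally free module over any of $H, \hat H, \tilde H, \bar H$ by its class in $\K_0$. I therefore only need to add the vertical bijection $\pi^*\colon \rig(\tilde H) \to \rig(\bar H, \bar G)$ and the preservation of indecomposability.

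For well-definedness of $\pi^*$, flatness of $L \hookrightarrow \bar L$ implies that scalar extension commutes with Ext, so rigidity transfers from $V$ to $\pi^*V$; and for each $g \in \bar G$ the $L$-linear automorphism $\id_V \otimes g$ of $V \otimes_L \bar L$ furnishes an isomorphism $g_*(\pi^*V) \cong \pi^*V$, placing $\pi^*V$ in $\rig(\bar H, \bar G)$. For injectivity, suppose $\pi^*V_1 \cong \pi^*V_2$; Corollary \ref{cor:scalar_extension_isometry} then gives $\Pi^*_0(\dimv V_1) = \Pi^*_0(\dimv V_2)$, and $\Pi^*_0$ is injective because the vectors $\Pi^*_0(\bmalpha_i) = \sum_{k\in\bZ/c_i} \be_{i,k}$ have pairwise disjoint supports. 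Hence $\dimv V_1 = \dimv V_2$, and Proposition \ref{prop:lf_rigid_class} over $\tilde H$ forces $V_1 \cong V_2$.

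The substantive step is surjectivity. Given $W \in \rig(\bar H, \bar G)$, the $\bar G$-fixed vector $\dimv W \in \bZ \bar Q_0$ lies in the image of $\Pi^*_0$ by invariant theory of the permutation action of $\bar G$ on $\bZ \bar Q_0$, say $\dimv W = \Pi^*_0(\bw)$. By finite-dimensionality, $W$ is defined over some finite Galois layer $L_c$, so $W \cong W_c \otimes_{L_c} \bar L$ for a rigid $W_c \in \rig(\tilde H_c)$ (rigidity descends along the faithfully flat extension $\bar L / L_c$). For each $g \in G_c = \Gal(L_c/L)$, the $\bar G$-invariance of $W$ together with Proposition \ref{prop:lf_rigid_class} applied after base change to $\bar H$ forces $g_* W_c \cong W_c$; the rigidity of $W_c$ then allows one to normalise the resulting family of intertwiners into a descent datum, at which point classical Galois descent for the finite extension $L_c/L$ produces $V \in \mod(\tilde H)$ with $V \otimes_L L_c \cong W_c$, so that $\pi^*V \cong W$ as desired.

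Finally, preservation of indecomposability under $\delta^*$ and $\iota^*$ follows from bijectivity and additivity: a non-trivial direct sum decomposition on the target side would pull back through the inverse bijection to a non-trivial decomposition on the source, contradicting indecomposability there. The main obstacle in this plan is the rigidification step inside surjectivity of $\pi^*$: one must upgrade the mere existence of isomorphisms $g_* W_c \cong W_c$ to a genuine descent cocycle, and this is precisely where the rigidity of $W$ combined with the uniqueness of rigid modules in a given class (Proposition \ref{prop:lf_rigid_class}) is indispensable, guaranteeing that the intertwiners are canonical enough to satisfy the cocycle condition.
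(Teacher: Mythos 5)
The lower triangle, well-definedness and injectivity of $\pi^*$ are handled essentially as in the paper (citation of GLS20, flatness of $\bar L/L$, isometry $\Pi_0^*$ plus Proposition \ref{prop:lf_rigid_class}). The genuine gap is in the surjectivity of $\pi^*$, specifically the step you yourself flag: upgrading the family of isomorphisms $g_*W_c \cong W_c$ to an actual descent cocycle. You write that the rigidity of $W$ together with Proposition \ref{prop:lf_rigid_class} ``is indispensable, guaranteeing that the intertwiners are canonical enough to satisfy the cocycle condition'' --- but this is a hope, not an argument, and in general it is false that mere uniqueness-up-to-isomorphism of a module in a $G'$-stable isomorphism class produces a descent datum. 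The endomorphism ring of an indecomposable rigid $\tilde H_c$-module is a nontrivial local ring (a field extension $L_{c_i}$ of $L$ in the brick case, a truncated polynomial ring in general), so $\Aut(W_c)$ has plenty of elements and there is an honest cohomological obstruction to turning ``$g_*W_c \cong W_c$'' into a cocycle.

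The paper resolves exactly this point with two concrete inputs that are missing from your proposal: (1) by Crawley-Boevey, a rigid module over the species is already defined over $\bZ$, so the chosen isomorphism $f\colon V' \to \sigma(V')$ may be taken defined over $K \subseteq L'$; and (2) Gabriel's normalization trick, replacing $f$ by $f' = fa$ for a suitable $a \in K[w]$ where $w := \sigma^{m-1}(f)\cdots\sigma(f)f$, using $\sigma(w)f = fw$ to compute that the twisted norm of $f'$ is $wa^m$, and then choosing $a$ to be an $m$-th root of $w^{-1}$ in $K[w]$ (possible since $K$ is algebraically closed of characteristic $0$). Without this or an equivalent argument, the surjectivity step is unjustified. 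As a side remark, your observation that the $\bar G$-fixed dimension vector lies in the image of $\Pi_0^*$ is correct but not needed for the paper's argument; it is the cocycle issue, not the dimension vector, that carries the difficulty.
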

\begin{proof}
    For the modulations $H, \hat{H}$ and $\tilde{H}$ of $\Gamma$
    this is \cite[Theorem 1.1]{GLS20}.
    Since $\bar{L}$ is a free $L$-module
    and scalar extension is exact,
    it readily follows that the induced map
    $\rig(\tilde{H})\to \rig(\bar{H},\bar{G})$ is well defined and injective.
    For surjectivity let $V\in\rig(\bar{H},\bar{G})$.
    We may assume that $V = V' \otimes_{L'} L$ is defined over some finite field extension $L'/L$
    with $V'$ a module over $\tilde{H} \otimes_L L'$.
    Furthermore, by Proposition \ref{prop:unfolded_path_algebra} 
    we may assume that $\tilde{H}\otimes_L L' \cong L'\bar{Q}$.
    Let $G' := \Gal(L'/L)$ which is cyclic with generator denoted $\sigma$.
    We need to find for every $g\in G'$ an isomorphism $f_g:V'\xrightarrow{\sim} g(V')$ 
    such that $f_{gh} = g(f_h)\circ f_g$ for all $g,h\in G'$.
    In fact, it suffices to find an isomorphism $f\colon V' \xrightarrow{\sim} \sigma(V')$ 
    such that $\id_{V'} = \sigma^{m-1}(f)\cdots \sigma(f) f$.
    By assumption we have an isomorphism $f\colon V' \xrightarrow{\sim} \sigma(V')$.
    According to Crawley-Boevey \cite{CB96} the module $V'$ is defined over $\bZ\subseteq L'$.
    Therefore we may assume that the isomorphism $f\colon V' \xrightarrow{\sim} \sigma(V')$ 
    is defined over $K\subseteq L'$ as well.
    Thus we may apply a construction due to Gabriel \cite[Section 3.9]{Gab81} to modify $f$ if necessary:
    Say $\sigma^{m-1}(f)\cdots \sigma(f) f = w \neq \id_{V'}$. 
    One may choose $a\in K[w]$ 
    and define a new isomorphism $f'=fa$
    which then satisfies $\sigma^{m-1}(f')\cdots \sigma(f') f' = w a^m$
    because $\sigma(w) f = f w$.
    Therefore, we take $a$ to be an $m$-th root of $w^{-1}$ in $K[w]$
    which is possible because $K$ is algebraically closed of characteristic $0$.
\end{proof}

These bijective correspondences allow Gei{\ss}-Leclerc-Schröer 
to draw further consequences for the structure of $\tau$-rigid $H$-modules.

\begin{proposition} \cite[Theorem 1.2(b)]{GLS20} \label{prop:tau_rigid_structure}
    Let $V\in\mod(H)$ be indecomposable $\tau$-rigid.
    Then $V$ is free as a $\End_H(V)$-module and
    \begin{align*}
        \End_H(V) \cong K[\delta]/\ideal{\delta^{c_i}}
    \end{align*}
    where $c_i = q_\Gamma(\rkv(V))$ for some $i\in\Gamma_0$.
\end{proposition}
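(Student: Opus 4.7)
The plan is to transfer the problem to the unfolded path algebra $\bar H$ and apply Galois descent along the chain of algebras $H, \hat H, \tilde H, \bar H$. Let $V\in\mod(H)$ be indecomposable $\tau$-rigid; by Demonet's Lemma $V$ is locally free rigid, and Theorem~\ref{thm:rigid_bijections} produces compatible indecomposable lifts $\hat V \in \rig_\lf(\hat H)$, $\tilde V := \iota^*(\hat V) \in \rig(\tilde H)$ and a Galois-invariant rigid module $\bar V := \pi^*(\tilde V)\in\rig(\bar H, \bar G)$. Since $\bar H \cong \bar L \bar Q$ is a path algebra over an algebraically closed field, Krull--Schmidt combined with the transitive $\bar G$-action on isomorphism classes of summands yields $\bar V \cong \bigoplus_{i = 1}^{c} \bar V_i$ with each $\bar V_i$ an indecomposable $\bar H$-module in a single $\bar G$-orbit of size $c$.

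The next step is to compute endomorphism rings from top to bottom. Each $\bar V_i$ is an exceptional module over the hereditary algebra $\bar H$, so $\End_{\bar H}(\bar V_i) \cong \bar L$. The summands are pairwise non-isomorphic, since otherwise a Krull--Schmidt refinement of $\tilde V$ would contradict its indecomposability, so $\End_{\bar H}(\bar V) \cong \bar L^c$. Flat base change along $\bar L / L$ gives $\End_{\tilde H}(\tilde V) \otimes_L \bar L \cong \bar L^c$. On the other hand $\End_{\tilde H}(\tilde V)$ is a finite-dimensional division algebra over $L$, and since $L = K((\varepsilon))$ is quasi-finite of characteristic zero with trivial Brauer group, the classification of finite extensions recalled in Section~\ref{sec:local} forces $\End_{\tilde H}(\tilde V) \cong L_c$.

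Descending to $\hat H$ and $H$: the ring $\End_{\hat H}(\hat V)$ is a commutative, $R$-torsion-free subalgebra of $\End_{\tilde H}(\tilde V) = L_c$, hence an $R$-order in the integral closure $R_c$. Using Galois descent to transfer the $\bar L^c$-action on $\bar V$ to an $R_c$-integral structure on $\hat V$, together with an $R$-rank count, one identifies $\End_{\hat H}(\hat V) = R_c$. Since $\hat V$ is locally free, reduction commutes with $\Hom_{\hat H}(\hat V,-)$ (cf. \cite[Section 5]{GLS20}), giving
\[
    \End_H(V) \cong \End_{\hat H}(\hat V) \otimes_R K \cong R_c \otimes_R K \cong K[\delta]/\ideal{\delta^c}.
\]
For freeness, $\tilde V$ is free over the field $L_c$ of some rank $r$, so the $R$-lattice $\hat V$ is $R_c$-free of rank $r$ as a finitely generated torsion-free module over the DVR $R_c$; reducing modulo $\varepsilon$ shows that $V$ is $K[\delta]/\ideal{\delta^c}$-free of rank $r$.

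It remains to identify $c$ with some $c_i$. Applying Proposition~\ref{prop:homological_ringel_coxeter} together with $\Ext^1_H(V,V) = 0$ (again from Demonet's Lemma) gives
\[
    c \;=\; \dim_K \End_H(V) \;=\; \euler{V,V}_H \;=\; q_\Gamma(\rkv(V)).
\]
By the Dlab--Ringel classification of indecomposable modules over the tame hereditary species $\tilde H$, the dimension vector of the indecomposable rigid $\tilde H$-module $\tilde V$, which coincides with $\rkv(V)$, is a positive real Schur root, so $\rkv(V) = w(\bmalpha_i)$ for some $w \in \W(\Gamma)$ and $i \in \Gamma_0$. Invariance of $q_\Gamma$ under $\W(\Gamma)$ combined with $q_\Gamma(\bmalpha_i) = c_i$ (as $\Gamma$ has no loops) yields $c = c_i$. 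The main obstacle is the precise identification $\End_{\hat H}(\hat V) = R_c$ and the commutation of $\delta^*$ with endomorphism formation on locally free rigid modules: these are delicate lattice-theoretic points, but they are what upgrade the abstract degree $c$ into the explicit cyclic $K$-algebra structure claimed for $\End_H(V)$.
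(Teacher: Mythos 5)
The paper does not prove Proposition~\ref{prop:tau_rigid_structure} itself; it simply cites \cite[Theorem 1.2(b)]{GLS20}, so there is no paper proof to measure against. Your blind attempt, which re-derives the statement via the chain $H \to \hat H \to \tilde H \to \bar H$, is a plausible route and most of its ingredients are sound: Demonet's Lemma and Theorem~\ref{thm:rigid_bijections} give the compatible lifts, the Brauer-group/quasi-finiteness argument correctly forces the indecomposable rigid $\tilde H$-module to have $\End_{\tilde H}(\tilde V)\cong L_c$ for some $c$, the reduction isomorphism $\End_{\hat H}(\hat V)\otimes_R K\cong\End_H(V)$ does hold for locally free rigid lattices (because $\Ext^1_{\hat H}(\hat V,\hat V)=0$ makes $\Hom_{\hat H}(\hat P_\bullet,\hat V)$ a short exact sequence of $R$-lattices which stays exact after $-\otimes_R K$), and the final numerical identification $c=\dim_K\End_H(V)=\euler{V,V}_H=q_\Gamma(\rkv(V))=c_i$ is correct and does not actually require the tame (affine) hypothesis that you invoke by citing Dlab--Ringel — real Schur roots of any hereditary species are of the form $w(\bmalpha_i)$.

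The genuine gap is exactly the point you flag but do not close: the identification $\End_{\hat H}(\hat V)=R_c$. You know that $\End_{\hat H}(\hat V)$ is a commutative $R$-order in $L_c$, but "Galois descent to transfer the $\bar L^c$-action" and "an $R$-rank count" cannot pin it down. Galois descent relates $\tilde H$-modules over $L$ to $\bar H$-modules over $\bar L$; it says nothing about which $R$-subring of $L_c$ acts on the lattice $\hat V$. And every $R$-order in $L_c$ has $R$-rank $c$, so a rank count is vacuous. Worse, non-maximal orders reduce mod $\varepsilon$ to local $K$-algebras that are genuinely different from $K[\delta]/\ideal{\delta^c}$: for instance the order $\cO=R[\varepsilon^{1/2},\varepsilon^{3/4}]\subset R_4$ has $\cO/\varepsilon\cO\cong K[a,b]/\ideal{a^2,b^2}$, whose maximal ideal has cube zero, not fourth power zero. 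So the statement $\End_H(V)\cong K[\delta]/\ideal{\delta^{c_i}}$, and likewise the freeness of $V$ over its endomorphism ring, really hinges on maximality of the order $\End_{\hat H}(\hat V)$, and no argument for this is given. Closing the gap requires an additional idea — e.g. propagating the base case $\End_H(E_i)\cong H(i)^{\op}$ along reflection functors, which is presumably the strategy of \cite{GLS20} — rather than the descent package alone.
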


\subsection{Generically $\tau$-reduced components} \label{sec:tau_reduced}

Thanks to Geiß-Leclerc-Schröer's explicit presentation of $H$ 
in terms of the ordinary quiver $Q$ and relations $I$
from Proposition \ref{prop:quiver_with_relations},
we have a variety $\Rep(H,\bd)$ and a scheme $\bRep(H,\bd)$ of representations 
for each dimension vector $\bd\in\bN Q_0$.
In the geometric representation theory of finite-dimensional algebras $A \cong KQ/I$,
an important tool is \emph{Voigt's Isomorphism}
\begin{align} \label{voigt}
    \T_V(\bRep(A,\bd))/\T_V(\cO(V)) \xrightarrow{\sim} \Ext^1_A(V,V)
\end{align}
valid for all $V\in\Rep(H,\bd)$ and $\bd\in \K_0(A)^+$.
We write $T_x(\cX)$ for the tangent space of a scheme $\cX$ at a closed point $x\in\cX$.
Further, recall that orbits are always smooth hence
\begin{align} \label{orbit_dimension}
    \dim \T_V(\cO(V)) = \dim \cO(V) = \dim \GL(K,\bd) - \dim\End_A(V).
\end{align}

\begin{lemma} \label{lem:local_free_smooth}
    Let $n,d\geq 1$
    and consider $H := K[x]/\ideal{x^n}$ the truncated polynomial ring.
    The following are equivalent for $V\in\Rep(H,d)$:
    \begin{enumerate}[label = (\roman*)]
        \item \label{enum:local_free_module}
        $V$ is a free module in $\mod(H)$;
        \item \label{enum:local_smooth_point}
        $V$ is a smooth point of $\bRep(H,d)$.
    \end{enumerate}
\end{lemma}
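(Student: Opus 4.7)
The plan is to combine Voigt's Isomorphism \eqref{voigt} with an explicit computation of the form $\chi_H(V,W) := \dimhom_H(V,W) - \dimext^1_H(V,W)$ on $\mod(H)$. Every $H$-module decomposes as $V \cong \bigoplus_{k=1}^n M_k^{m_k}$ with $M_k := K[x]/\ideal{x^k}$, so under the identification $V \leftrightarrow X \in M_d(K)$ with $X^n = 0$, modules of dimension $d$ correspond to nilpotent matrices whose Jordan type is a partition of $d$ with parts $\leq n$. Writing $d = rn + s$ with $0 \leq s < n$, the partition $(n^r, s)$ is uniquely maximal in dominance order among such partitions, so $\Rep(H,d)$ is the closure of the $\GL(K,d)$-orbit of the generic module $V_0 := H^r \oplus M_s$ and in particular is irreducible with $\dim \Rep(H,d) = \dim \GL(K,d) - \dim \End_H(V_0)$.

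Next, Voigt's isomorphism \eqref{voigt} together with \eqref{orbit_dimension} gives
\begin{align*}
    \dim \T_V(\bRep(H,d)) = \dim \GL(K,d) - \chi_H(V,V),
\end{align*}
and since $\bRep(H,d)$ and $\Rep(H,d)$ have the same underlying topology (hence the same Krull dimension at every closed point), the smoothness of $V$ is equivalent to the equality $\chi_H(V,V) = \dim \End_H(V_0)$.

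The key computational step is to evaluate $\chi_H$ explicitly. From $\dimhom_H(M_k, M_l) = \min(k,l)$ and the short exact sequence $0 \to M_{n-k} \to H \to M_k \to 0$ one obtains
\begin{align*}
    \chi_H(M_k, M_l) = \max(0, k + l - n),
\end{align*}
and bilinearity gives $\chi_H(V,V) = \sum_{k, l} m_k m_l \max(0, k + l - n)$. The crucial elementary inequality is $\max(0, k + l - n) \leq kl/n$ for $0 \leq k, l \leq n$, which in the non-trivial case $k + l \geq n$ is equivalent to $(n - k)(n - l) \geq 0$, with equality iff $k = n$ or $l = n$. Summing over $k, l$ with $d = \sum k m_k$ yields $\chi_H(V,V) \leq d^2/n$, with equality iff only $m_n$ is non-zero, i.e.\ iff $V$ is free.

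Finally, direct computation yields $\dim \End_H(V_0) = r^2 n + 2rs + s = d^2/n + s(n-s)/n$, so $\dim \End_H(V_0) \geq d^2/n$ with equality iff $s = 0$, that is iff $n \mid d$. Chaining $\chi_H(V,V) \leq d^2/n \leq \dim \End_H(V_0)$ forces the smoothness condition to be a chain of equalities, hence $s = 0$ and $V \cong H^r$, which is exactly freeness. The main obstacle is the explicit formula $\chi_H(M_k, M_l) = \max(0, k + l - n)$ together with the optimization of $\chi_H(V,V)$ over modules of fixed dimension; both reduce to elementary combinatorial computations once the correct bilinear quantity has been identified.
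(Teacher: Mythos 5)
Your proof is correct and follows essentially the same approach as the paper: identify the dense orbit of $V_0 = H^r \oplus M_s$, invoke Voigt's isomorphism, compute $\chi_H(M_k,M_l) = \max(0,k+l-n)$, and exploit the inequality $\max(0,k+l-n) \le kl/n$ with equality only at $k=n$ or $l=n$. The only cosmetic difference is that the paper first rules out $n \nmid d$ via openness of the smooth locus and then treats the case $n \mid d$, whereas you fold both steps into the single inequality chain $\chi_H(V,V) \le d^2/n \le \dim\End_H(V_0)$, which is a slightly tidier packaging of the same argument.
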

\begin{proof}
    The implication $\ref{enum:local_free_module} \Rightarrow \ref{enum:local_smooth_point}$
    is true in general by Voigt's Isomorphism.
    Let's prove $\ref{enum:local_smooth_point} \Rightarrow \ref{enum:local_free_module}$:
    The indecomposable $H$-modules are up to isomorphism given by $N_k:=H/\ideal{x^k}$ for $k=1,\dots, n$. 
    Write any $d\in\mathbb{Z}_{\geq 0}$ as $d=r n + k$ 
    for unique $r\geq 0$ and $0\leq k < n$. 
    Define $N_d:=N_n^r\oplus N_k$ where $N_0:=0$ 
    and no ambiguity arises as this definition coincides with our previous notation for indecomposable $H$-modules for $1\leq d < n$. 
    Then the orbit $\mathcal{O}(N_d)$ is open and dense in $\Rep(H,d)$. 
    Therefore, the reduced variety $\Rep(H,d)$ is irreducible of dimension
    \begin{align*}
        \dim \Rep(H,d) = \dim \GL(K,d) - \dimend_H(N_d).
    \end{align*}
    In particular, $N_d$ is smooth if and only if it is reduced.
    By Voigt's Isomorphism, 
    $N_d$ is smooth if and only if
    \begin{align*}
    \dimext^1_H(N_d,N_d) - \dimend_H(N_d) + \dim \GL(K,d) = \dim \GL(K,d) - \dimend_H(N_d)
    \end{align*}
    equivalently $0 = \Ext^1_H(N_k,N_k)$ hence $k=0$.
    To conclude, recall that the smooth points of a finite type scheme form an open subscheme. 
    By our previous discussion,
    $\bRep(H,d)$ contains a smooth point if and only if $k=0$ i.e. $d=rn$ for some $r\geq 0$. 
    Now any $V\in\Rep(H,d)$ is isomorphic to a direct sum
    \begin{align*}
        V\cong N_n^{r_n}\oplus \cdots\oplus N_1^{r_1}
    \end{align*}
    for some $r_n,\dots,r_1\geq 0$ with $\sum_{k=1}^n kr_k=d$. 
    As before the module $V$ is smooth if and only if
    \begin{align*}
        0
        &= \dimend_H(N_d)-\dimend_H(V)+\dimext_H^1(V,V) \\
        &= n r^2-\sum_{1\leq k,l\leq n} \max\{k+l-n,0\} r_k r_l \\
        &= \sum_{1\leq k,l\leq n} \underbrace{\left(\frac{k l}{n}-\max\{k+l-n,0\}\right)}_{c_{k,l}} r_k r_l
    \end{align*}
    where the coefficients satisfy $c_{k,l}\geq 0$ 
    with equality if and only if $k$ or $l$ equals $n$. 
    This proves that $V$ can only be smooth if it is isomorphic to $N_n^r$.
\end{proof}

In \cite{LFZ16} a valued quiver $\Gamma$ is called \emph{strongly primitive} 
if $\gcd(c_i,c_j)=1$ for all $i,j\in\Gamma_0$.
Note that the symmetrizer of a strongly primitive valued quiver $\Gamma$ is necessarily minimal.
These are precisely those valued quivers for which the commutativity relations of $H(\Gamma)$ are redundant.
The affine non-simply laced strongly primitive valued quivers are those of type
$\tilde{B}_2, \tilde{C}_n, \tilde{\BC}_{1}, \tilde{\CD}_{n}$ and $\tilde{G}_{2,3}$.

\begin{proposition}\label{prop:strongly_primitive_lf_smooth_reduced}
    Assume that $\Gamma$ is strongly primitive. 
    Let $H = H(\Gamma)$ be its GLS algebra
    and $\bd\in\bN Q_0$.
    Then the following are equivalent for a representation $V\in\Rep(H,\bd)$:
    \begin{enumerate}[label = (\roman*)]
        \item $V$ is a locally free module in $\mod(H)$;
        \item $V$ is a smooth point of $\bRep(H,\bd)$.
    \end{enumerate}
\end{proposition}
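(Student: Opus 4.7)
The plan is to prove the two directions separately, where only the non-trivial direction $(ii) \Rightarrow (i)$ requires strong primitivity.

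For $(i) \Rightarrow (ii)$, which in fact holds for every GLS algebra (not just the strongly primitive ones), I would invoke Proposition \ref{prop:locally_free}: if $V$ is locally free then $\pdim_H(V) \leq 1$, hence $\Ext^2_H(V,V) = 0$. Since the obstruction to lifting an infinitesimal deformation of $V$ in $\bRep(H,\bd)$ lies in $\Ext^2_H(V,V)$ (standard scheme-theoretic deformation theory à la Voigt/Geiß, compare with \eqref{voigt}), this forces $V$ to be a smooth point of $\bRep(H,\bd)$.

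For $(ii) \Rightarrow (i)$, the strategy is to exploit strong primitivity to reduce the problem vertex by vertex to Lemma \ref{lem:local_free_smooth}. The crucial observation is that, under the assumption $\gcd(c_i,c_j) = 1$, the commutativity relations (H2) of Proposition \ref{prop:quiver_with_relations} are \emph{redundant}. Indeed, from $c_i \nu_{ij} = c_j \nu_{ji}$ and $\gcd(c_i, c_j) = 1$ one gets $\nu_{ij} = c_j a$ and $\nu_{ji} = c_i a$ for a common $a \geq 1$, whence $g_{ji} = a$ and $f_{ji} = c_i$, $f_{ij} = c_j$. Relation (H2) then reads $\varepsilon_j^{c_j} \alpha_{ji}^{(g)} = \alpha_{ji}^{(g)} \varepsilon_i^{c_i}$, which is already $0 = 0$ by (H1).

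Consequently, as schemes,
\[
    \bRep(H,\bd) \cong \prod_{i \in \Gamma_0} \bRep(H(i), d_i) \times \prod_{(j,i) \in \Omega}\prod_{g=1}^{g_{ji}} \mathrm{Mat}_{d_j \times d_i}(K),
\]
since no relations involve the arrows $\alpha_{ji}^{(g)}$. Smoothness at a point of a finite product of $K$-schemes of finite type is equivalent to smoothness at each coordinate, and the affine space factors are smooth everywhere. Hence $V$ is a smooth point of $\bRep(H,\bd)$ if and only if each $V(i)$ is a smooth point of $\bRep(H(i), d_i)$. By Lemma \ref{lem:local_free_smooth}, this is equivalent to each $V(i)$ being a free $H(i)$-module, i.e. to $V$ being locally free.

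There is no genuine obstacle in this proof: the heart of the argument is the short arithmetic verification that strong primitivity collapses the commutativity relations, after which the statement follows from combining Lemma \ref{lem:local_free_smooth} with the compatibility of smoothness with finite products. The only thing to be careful about is remembering to argue scheme-theoretically throughout, since the conclusion concerns the possibly non-reduced scheme $\bRep(H,\bd)$ and not just the underlying variety.
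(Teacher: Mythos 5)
Your proof is correct and follows essentially the same route as the paper: observe that strong primitivity renders the commutativity relations (H2) redundant, so that $\bRep(H,\bd)$ factors as a product of the $\bRep(H(i),d_i)$'s and affine spaces, and then invoke Lemma~\ref{lem:local_free_smooth} vertex by vertex. You spell out the short arithmetic showing $f_{ij}=c_j$, $f_{ji}=c_i$ under $\gcd(c_i,c_j)=1$, which the paper leaves implicit, and you add a self-contained deformation-theoretic argument for $(i)\Rightarrow(ii)$ valid for all GLS algebras; both are welcome clarifications but do not change the structure of the argument.
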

\begin{proof}
    Since $\Gamma$ is assumed to be strongly primitive,
    we have a product decomposition:
    \begin{align*}
        \bRep(H,\bd) = \prod_{i \in Q_0} \bRep(H(i),d_i) \times \prod_{a\in Q^\circ_1} \bRep(K\Lambda,(d_{t(a)},d_{s(a)}))
    \end{align*}
    where $\Lambda\colon 1 \to 0$ and $Q^\circ$ is the quiver $Q$ without the loops.
    It is well-known 
    that a point $V$ of the product is smooth 
    if and only if all of its factors are smooth. 
    Note that the factors $\bRep(K\Lambda,(d_{t(a)},d_{s(a)})) = \bA^{d_{t(a)} d_{s(a)}}$ are just affine spaces
    hence smooth for all $a\in Q_1^\circ$.
    Now, our claim is a direct consequence of Lemma \ref{lem:local_free_smooth}.
\end{proof}

This proposition provides a geometric interpretation of local freeness for strongly primitive $\Gamma$.
We conjecture that one may drop the assumption on $\Gamma$.
For general $\Gamma$, we will now present an interpretation of local freeness in terms of so called generically $\tau$-reduced components. 
For a general finite-dimensional algebra $A$ and an irreducible component $\cZ\in\Irr(A)$,
Voigt's Isomorphism and Auslander-Reiten Duality yield estimates for the generic number of parameters of $\cZ$
\begin{align*}
    c_A(\cZ) \leq \dimext^1_A(\cZ) \leq \dimhom^\tau_A(\cZ)
\end{align*}
where $\dimext^1_A(\cZ) := \min\{\dimext^1_A(V,V) \mid V\in\cZ\}$ is the generic dimension of selfextensions of elements of $\cZ$.
In particular, $\cZ$ is generically reduced if $c_A(\cZ) = \dimext^1_A(\cZ)$.
Geiß-Leclerc-Schröer consider in \cite{GLS12} components $\cZ$ satisfying $c_A(\cZ) = \dimhom^\tau_A(\cZ)$
and these are now called \emph{generically $\tau$-reduced components}.
Let $\Irr^\tau(A)\subseteq\Irr(A)$ be the subset of generically $\tau$-reduced components.
Generically $\tau$-reduced components naturally generalize $\tau$-rigid modules.
Recall that $\tau$-rigid modules are determined by their $\g$-vectors defined in (\ref{g_vector}).
Plamondon proves that generically $\tau$-reduced components are determined by their \emph{generic $\g$-vectors};
namely, for any $\cZ\in\Irr(A)$, there is a well defined $\g(\cZ)\in\K_0^{\fin}(A)$
such that the generic element $V\in\cZ$ has $\g$-vector $\g(V) = \g(\cZ)$.

\begin{theorem}\cite[Theorem 1.2]{Pla13}\label{thm:plamondon}
    There is a left inverse to the assignment of generic $\g$-vectors
    \[
        \begin{tikzcd}
            \Irr^\tau(A)
            \ar[rr, shift left = 1mm, "\g"]
            &&
            \ar[ll, shift left = 1mm, "\Psi"]
            \K_0^{\fin}(A)
        \end{tikzcd}
    \]
    such that for all $\cZ\in\Irr^\tau(A)$ and 
    $S\in\proj(A)$ with $\dimhom_A(S,\cZ) = 0$ 
    is $\cZ = \Psi(\g(\cZ) - \g(S))$.
\end{theorem}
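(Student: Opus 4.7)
The plan is to construct $\Psi$ as the closure of generic cokernels of maps between projectives, then verify the required properties using the Auslander-Reiten $\g$-vector formula (\ref{ar_g_vector_formula}). Given $\mathbf{g}\in\K_0^{\fin}(A)$, decompose $\mathbf{g} = [P_0]-[P_1]$ with $P_0,P_1 \in \proj(A)$ having no common indecomposable summand. On $\Hom_A(P_1,P_0)$ the map $f \mapsto \dimv(\mathrm{coker}(f))$ is upper semicontinuous, so there is a dense open $U$ on which it is constant, equal to some $\bd$. Define $\Psi(\mathbf{g})$ as the closure in $\Rep(A,\bd)$ of $\{W\in\Rep(A,\bd)\mid W\cong \mathrm{coker}(f)\text{ for some }f\in U\}$; this is the image of the natural $\GL(K,\bd)$-equivariant morphism from $U\times \GL(K,\bd)$, hence irreducible.

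First, I would show $\Psi(\mathbf{g})\in \Irr^\tau(A)$. For a generic $V=\mathrm{coker}(f)$, substituting $V$ into both arguments of (\ref{ar_g_vector_formula}) gives $\euler{\mathbf{g},\bd}_A = \dimend_A(V) - \dimhom_A(V,\tau_A V)$. A fibre-dimension computation for $U\times \GL(K,\bd) \to \Psi(\mathbf{g})$, using the long exact sequence from $\Hom_A(-,V)$ applied to the projective presentation together with Voigt's isomorphism (\ref{voigt}), yields $\dim \Psi(\mathbf{g}) = \dim\GL(K,\bd) - \euler{\mathbf{g},\bd}_A$. Combining this with (\ref{orbit_dimension}) and (\ref{generic_number_parameters}) gives $c_A(\Psi(\mathbf{g})) = \dimhom_A^\tau(\Psi(\mathbf{g}))$, which is the defining equality (\ref{tau_reduced}); comparing with the expected codimension of any irreducible component of $\Rep(A,\bd)$ through $V$ ensures that $\Psi(\mathbf{g})$ is full-dimensional, i.e. a component.

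Next, for $\Psi\circ \g = \id$ on $\Irr^\tau(A)$: given $\cZ\in \Irr^\tau(A)$ with generic element $V$, the minimal projective presentation $P_1\to P_0\to V\to 0$ yields $\g(V) = \g(\cZ)$ by (\ref{g_vector}), and since minimality is an open condition, the associated presentation space sweeps out a dense open of $\cZ$, so $\Psi(\g(\cZ)) = \cZ$. The auxiliary identity with $S\in\proj(A)$ and $\dimhom_A(S,\cZ)=0$ follows because a generic $V\in\cZ$ admits the non-minimal presentation $P_1\oplus S\to P_0\to V\to 0$ (with zero $S$-component), and the openness of $\dimhom_A(S,V)=0$ guarantees that the generic cokernel of $P_1\oplus S\to P_0$ still lies in $\cZ$ with unchanged dimension vector. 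The main obstacle I foresee is the fibre-dimension calculation in the first step: matching $\dim\Psi(\mathbf{g})$ with the right-hand side of (\ref{ar_g_vector_formula}) relies on the exact sequence $0\to \Hom_A(V,V)\to \Hom_A(P_0,V)\to \Hom_A(P_1,V)\to \Ext^1_A(V,V)\to 0$ and Voigt's isomorphism, which is precisely where the Auslander-Reiten formula makes the construction tick.
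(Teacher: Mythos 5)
The paper does not prove this statement: it is quoted directly from Plamondon \cite[Theorem 1.2]{Pla13} and used as a black box, so there is no in-paper argument to compare against. Your sketch does capture the general strategy underlying Plamondon's result (and its antecedents in the work of Derksen--Fei), namely constructing $\Psi$ from generic cokernels in a presentation space and identifying the resulting closure as $\tau$-reduced via a dimension count.

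That said, two steps you assert are genuinely nontrivial and where the real work lies. First, the dimension formula $\dim\Psi(\mathbf{g})=\dim\GL(K,\bd)-\euler{\mathbf{g},\bd}_A$ is not an immediate consequence of the ingredients you list; moreover, to combine it with (\ref{ar_g_vector_formula}) you implicitly need $\mathbf{g}=\g(V)$ for the generic $V=\coker(f)$, i.e.\ that the generic map $f\colon P_1\to P_0$ is a \emph{minimal} presentation of its cokernel, which does not follow merely from $[P_0]-[P_1]$ having no common summand (over $K[x]/(x^2)$ the generic map $A\to A$ is an isomorphism, so the reduced decomposition $[A]-[A]=0$ tells you nothing about minimality). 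Second, and related, in verifying $\Psi\circ\g=\id$ you need the generic cokernel of $P_1\to P_0$ to have the \emph{same} dimension vector as the generic element of $\cZ$, not a strictly smaller one; upper semicontinuity only tells you it cannot be larger. That this cannot degenerate when $\cZ$ is $\tau$-reduced is precisely what needs to be proved. Finally, a technical remark: $U\times\GL(K,\bd)$ does not map naturally to $\Rep(A,\bd)$, since $\coker(f)$ is an abstract module rather than a point of $\Rep(A,\bd)$; one has to pass to an incidence variety of pairs $(f,q)$ with $q\colon P_0\twoheadrightarrow K^{\bd}$ and $\ker q=\im f$, which is a $\GL(K,\bd)$-torsor over the right open subset and does give irreducibility and the fibre dimensions. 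None of these gaps suggest your approach is wrong -- it is the correct approach -- but they are exactly the parts of the proof Plamondon carries out carefully and that a complete argument cannot skip.
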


Over GLS algebras we only need to compare $\g$-vectors and rank vectors to obtain:

\begin{theorem} \label{thm:tau_reduced_locally_free}
    Let $\Gamma$ be a valued quiver
    with GLS algebra $H = H(\Gamma)$.
    For an irreducible component $\cZ\in\Irr(H)$ the following statements are equivalent:
    \begin{enumerate}[label = (\roman*)]
        \item \label{enum:gen_loc_fr} 
        $\cZ$ is generically locally free;
        \item \label{enum:gen_tau_red} 
        $\cZ$ is generically $\tau$-reduced.
    \end{enumerate}
    Moreover, if $\Gamma$ is strongly primitive, then there is are further equivalences with
    \begin{enumerate}[resume, label = (\roman*)]
        \item \label{enum:gen_reduced}
        $\cZ$ is generically reduced.
    \end{enumerate}
\end{theorem}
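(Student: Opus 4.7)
The plan is as follows. For $(i)\Rightarrow(ii)$, I would carry out a direct dimension calculation. Let $\cZ = \cZ(\br) = \overline{\Rep_\lf(H,\br)}$ and take generic locally free $V \in \cZ$. Combining formulas (\ref{lf_rep_varitey_dim}) and (\ref{orbit_dimension}) gives
\[
c_H(\cZ) = \dim\cZ - \dim\cO(V) = \dimend_H(V) - q_{DC}(\br).
\]
Identifying the Tits form $q_{DC}$ with $q_\Gamma$, Proposition \ref{prop:homological_ringel_coxeter} yields $\euler{V,V}_H = q_\Gamma(\rkv(V)) = q_\Gamma(\br)$; since $\pdim_H V \leq 1$ by Proposition \ref{prop:locally_free}, this is $\dimhom_H(V,V) - \dimext^1_H(V,V)$, so $c_H(\cZ) = \dimext^1_H(V,V)$. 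Applying the Auslander-Reiten $\g$-vector formula (\ref{ar_g_vector_formula}) with $U = V$, and using $\euler{\g(V),\dimv(U)}_H = \euler{V,U}_H$ (valid because $V$ has finite projective dimension), one gets $\dimhom_H(V,\tau_H V) = \dimhom_H(V,V) - \euler{V,V}_H = \dimext^1_H(V,V)$. Combining these two computations shows $c_H(\cZ) = \dimhom^\tau_H(\cZ)$.

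For $(ii)\Rightarrow(i)$, the plan is to invoke Plamondon's Theorem \ref{thm:plamondon}, which says that the assignment $\g \colon \Irr^\tau(H) \to \K_0^\fin(H)$ is injective with explicit left inverse $\Psi$. Observe that by the direction just established, every generically locally free component $\cZ(\br')$ with $\br' \in \bN\Gamma_0$ lies in $\Irr^\tau(H)$. One then needs to show that every generic $\g$-vector of an element of $\Irr^\tau(H)$ is already realized as $\g(\cZ(\br'))$ for some $\br' \in \bN\Gamma_0$, possibly after subtracting $\g(S)$ for a projective $S$ with $\Hom_H(S,\cZ) = 0$; this is the content of Plamondon's formula $\Psi(\g(\cZ) - \g(S)) = \cZ$. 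Given such a realization, injectivity of $\g$ forces $\cZ = \cZ(\br')$, hence $\cZ$ is generically locally free and in particular has dimension vector in $D(\bN\Gamma_0)$. The main obstacle, and the technical heart of the argument, is precisely this covering statement: one must match the generic projective presentation $P_1 \to P_0 \to V \to 0$ of a locally free $V \in \cZ(\br')$ with a prescribed class in $\K_0^\fin(H) \cong \bZ\Gamma_0$, bookkeeping shifted projective direct summands carefully in the spirit of \cite{CBS02}.

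Finally, the additional equivalence with $(iii)$ under the strong primitivity assumption follows transparently from Proposition \ref{prop:strongly_primitive_lf_smooth_reduced}, which identifies locally free modules with smooth points of $\bRep(H,\bd)$. The implication $(i)\Rightarrow(iii)$ is then immediate since smooth points of a scheme are reduced. Conversely, as $K$ is algebraically closed of characteristic zero and hence perfect, the standard generic smoothness result ensures that any generically reduced component of the finite-type scheme $\bRep(H,\bd)$ is generically smooth; invoking Proposition \ref{prop:strongly_primitive_lf_smooth_reduced} once more gives $(iii)\Rightarrow(i)$.
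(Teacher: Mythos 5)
Your argument for $(\ref{enum:gen_loc_fr})\Rightarrow(\ref{enum:gen_tau_red})$ is correct and agrees in substance with the paper's: both use $\pdim_H V\leq 1$ to feed Voigt's isomorphism and the Auslander--Reiten $\g$-vector formula into the dimension count, with you presenting it as an explicit chain through $q_\Gamma(\br)$ and the paper presenting it as a chain of equalities on a dense open set obtained via upper semicontinuity. Your treatment of the equivalence with $(\ref{enum:gen_reduced})$ via Proposition~\ref{prop:strongly_primitive_lf_smooth_reduced} and generic smoothness is likewise the paper's argument.

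The gap is in $(\ref{enum:gen_tau_red})\Rightarrow(\ref{enum:gen_loc_fr})$. You correctly locate Plamondon's Theorem~\ref{thm:plamondon} as the engine and you explicitly flag the covering step as ``the technical heart'', but you then leave it as a sketch (``bookkeeping shifted projective direct summands carefully in the spirit of \cite{CBS02}''). That step is not bookkeeping of presentations; it is a purely combinatorial fact that must be proved. Concretely, one needs: for every $\bmgamma\in\K_0^{\fin}(H)\cong\bZ\Gamma_0$ there exist \emph{unique} disjoint $I,J\subseteq\Gamma_0$ and positive integers $r_i,s_j$ such that
\begin{align*}
\bmgamma = \sum_{i\in I} r_i\,\g(E_i) - \sum_{j\in J} s_j\,\g(P_j).
\end{align*}
The paper proves this (Claim~\ref{claim:positive_g_vector_decomposition}) by induction along an admissible ordering of $\Gamma_0$, using that $\g(E_{i_0})$ is unitriangular (its $i_0$-th entry is $1$ and it is supported below $i_0$) so that subtracting or adding a suitable multiple of $\g(E_{i_0})$ or $\g(P_{i_0})$ reduces the support. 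Once this decomposition is in hand, one sets $\br=(r_i)_{i\in I}$ (zero elsewhere), observes that the resulting $\cZ(\br)$ is generically $\tau$-reduced by the already-proved direction, takes $S$ the projective with $\g(S)=\sum_{j\in J}s_j\,\g(P_j)$, checks $\dimhom_H(S,\cZ(\br))=0$ by disjointness of supports, and then applies Plamondon's formula twice --- once to $\cZ$ with $S=0$ giving $\Psi(\g(\cZ))=\cZ$, and once to $\cZ(\br)$ with this $S$ giving $\Psi(\g(\cZ(\br))-\g(S))=\cZ(\br)$ --- to conclude $\cZ=\cZ(\br)$. Without the explicit combinatorial claim, or a precise replacement, the backward implication does not go through.
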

\begin{proof}
    $"\ref{enum:gen_loc_fr} \Rightarrow \ref{enum:gen_tau_red}":$ 
    Suppose $\cZ$ is generically locally free.
    Let then $\cZ_{\lf}\subseteq \cZ$ be the open subset of locally free representations.
    The functions $c_A$ and 
    $\dimhom_A(-,\tau_A(?))$ are well known to be upper semicontinuous 
    for general finite-dimensional algebras $A$,
    see e.g. \cite{GLFS23}.
    Thus, let $\cU\subseteq \cZ$ be the open subset of representations $V$ satisfying both:
    \begin{align*}
        c_H(\cZ) = \dim(\cZ) - \dim \cO(V),
        &&
        \dimhom^\tau_H(\cZ) = \dimhom_H(V,\tau_H(V)).
    \end{align*}
    Since $\cZ$ is irreducible, 
    the intersection $\cU_\lf:=\cU\cap \cZ_{\lf}$ is open and dense in $\cZ$. 
    Observe for any $V\in \cU_\lf$ the chain of equalities
    \begin{align*}
        c_H(\cZ)  = \dim(\cZ) - \dim(\cO(V))
                = \dimext_H^1(V,V)
                = \dimhom_H(V,\tau_H(V))
                = \dimhom^\tau_H(\cZ)
    \end{align*}
    the first and final equality hold because $V\in \cU_\lf$, 
    the two in between follow from $\pdim(V)\leq 1$
    together with Voigt's isomorphism respectively Auslander-Reiten's formula.
    This shows that $\cZ$ is generically $\tau$-reduced.

    $"\ref{enum:gen_tau_red} \Rightarrow \ref{enum:gen_loc_fr}":$
    Suppose $\cZ$ is generically $\tau$-reduced.
    Let $\g(\cZ) \in\bZ \Gamma_0$ be its generic $\g$-vector.
    Apply the following claim
    to $\bmgamma = \g(\cZ)$:
    \begin{claim}\label{claim:positive_g_vector_decomposition}
        For any $\bmgamma\in \bZ \Gamma_0$ 
        exist unique subsets $I,J\subseteq \Gamma_0$ with $I \cap J = \emptyset$
        and positive coefficients $r_i,s_j\in\bN_{>0}$ for $i\in I$ and $j\in J$
        such that
        \begin{align*}
            \bmgamma = \sum_{i\in I} r_i \cdot \g({E_i}) - \sum_{j\in J} s_j \cdot \g({P_j})
        \end{align*}
    \end{claim}
    \begin{proof}[Proof of Claim]
        We can order the vertices $\Gamma_0 = \{1,\dots,n\}$ such that
        $\Hom_H(P_i,P_j) = 0$ for all $i > j$.
        Let $i_0 := \min\{i\in Q_0 \mid \gamma_i \neq 0\}$.
        If $i_0 = 1$, we are done because $E_1 = P_1$.
        If $i_0 > 1$, consider
        \begin{align*}
            \bmgamma' :=
            \begin{cases}
                \bmgamma + \gamma_{i_0} \cdot \g(P_{i_0}) & \text{if $\gamma_{i_0} < 0$,} \\
                \bmgamma - \gamma_{i_0} \cdot \g(E_{i_0}) & \text{if $\gamma_{i_0} > 0$.}
            \end{cases}
        \end{align*}
        Then $\gamma'_j = 0$ for all $j \geq i_0$ 
        because $\g(E_{i_0})_j = 0$ for $j > i_0$ and $\g(E_{i_0})_{i_0} = 1$.
        By induction, there are unique subsets $I',J'\subseteq \{1,\dots,i_0-1\}$ 
        with $I'\cap J' = \emptyset$ and coefficients $r'_i,s'_j\in\bN_{> 0}$ for $i\in I'$ and $j\in J'$
        such that
        \begin{align*}
            \bmgamma' = \sum_{i\in I'} r'_i \cdot \g(E_i) - \sum_{j\in J'} s'_j \cdot \g(P_j).
        \end{align*}
        This proves the claim.
    \end{proof}
    Consider the generically locally free component $\cZ(\br)$ with generic rank vector $\br\in\bN\Gamma_0$
    whose entries are $r_i$ for $i\in I$ and $0$ else.
    Further, take $S\in\proj(A)$ to be the unique projective with $\g(S) = \bs$ where $\bs\in\bN\Gamma_0$
    has entries $s_j$ for $j\in J$ and $0$ else.
    Then $\cZ(\br)$ is generically $\tau$-reduced by the first part of the proof 
    and $\g(\cZ) = \g(\cZ(\br)) - \g(S)$ with $\dimhom_H(S,\cZ(\br)) = 0$ by construction.
    Therefore, Plamondon's Theorem \ref{thm:plamondon} allows to conclude 
    \begin{align*}
        \cZ = \Psi(\g(\cZ)) = \Psi(\g(\cZ(\br)) - \g(S)) = \cZ(\br)
    \end{align*}
    The further equivalence
    $\ref{enum:gen_loc_fr} \Leftrightarrow \ref{enum:gen_reduced}$
    for strongly primitive $\Gamma$ 
    is now a direct consequence of Proposition \ref{prop:strongly_primitive_lf_smooth_reduced}
    and the fact that generically reduced components are generically smooth.
\end{proof}

From our geometric considerations we obtain another proof of Demonet's Lemma \cite[Lemma 6.2]{GLS20}, 
that is the equivalence 
$\ref{enum:lf_rigid} \Leftrightarrow \ref{enum:tau_rigid}$ 
in the subsequent corollary.
The moreover part for strongly primitive $\Gamma$ 
also follows from Demonet's more constructive proof,
even though it was not stated explicitly in \cite{GLS20}.
We should also acknowledge that Demonet's proof works over arbitrary fields,
while we always work over an algebraically closed field $K$.

\begin{corollary} \label{cor:demonet_lemma}
    For a module $V\in\mod(H)$ the following are equivalent
    \begin{enumerate}[label = (\roman*)]
        \item \label{enum:lf_rigid}
        $V$ is locally free and rigid;
        \item \label{enum:tau_rigid}
        $V$ is $\tau$-rigid.
    \end{enumerate}
    Moreover, if $\Gamma$ is strongly primitive, 
    then every rigid $V$ is locally free.
\end{corollary}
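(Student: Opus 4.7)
My plan is to deduce Demonet's Lemma from Theorem \ref{thm:tau_reduced_locally_free} by specializing the component-level equivalence to the orbit closure of a single module.

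First I would dispatch the easy direction \ref{enum:lf_rigid} $\Rightarrow$ \ref{enum:tau_rigid}: if $V$ is locally free then $\pdim_H(V)\leq 1$ by Proposition \ref{prop:locally_free}, so the Auslander-Reiten formula (\ref{ar_g_vector_formula}) (with $U=V$) yields $\dimhom_H(V,\tau_H(V)) = \dimext^1_H(V,V)$, which vanishes when $V$ is additionally rigid.

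For the converse \ref{enum:tau_rigid} $\Rightarrow$ \ref{enum:lf_rigid}, I would first observe that every $\tau$-rigid module is rigid via the standard Auslander-Reiten inequality $\dimext^1_H(V,V)\leq \dimhom_H(V,\tau_H(V))$. Voigt's Isomorphism (\ref{voigt}) combined with (\ref{orbit_dimension}) then makes the orbit $\cO(V)$ open in every irreducible component $\cZ\in\Irr(H)$ through $V$, so $c_H(\cZ)=0$; since the dense orbit contains the $\tau$-rigid point $V$ one likewise has $\dimhom^\tau_H(\cZ)=0$. Hence $\cZ$ is generically $\tau$-reduced, and Theorem \ref{thm:tau_reduced_locally_free} gives that $\cZ$ is generically locally free. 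The locally free locus is open and non-empty in $\cZ$, so it intersects the open orbit of $V$, forcing $V$ itself to be locally free.

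For the moreover part I would assume $\Gamma$ strongly primitive and $V\in\mod(H)$ rigid. By (\ref{voigt}) and (\ref{orbit_dimension}), the tangent space $\T_V \bRep(H,\dimv(V))$ has dimension $\dim \cO(V)$; chaining this with the standard inequalities $\dim\cO(V) \leq \dim_V \bRep(H,\dimv(V)) \leq \dim \T_V \bRep(H,\dimv(V))$ forces equality everywhere, so $V$ is a smooth point of $\bRep(H,\dimv(V))$. Proposition \ref{prop:strongly_primitive_lf_smooth_reduced} then delivers local freeness.

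The only non-routine ingredient is Theorem \ref{thm:tau_reduced_locally_free} itself, which in turn invoked Plamondon's classification Theorem \ref{thm:plamondon}; but that component-level equivalence has already been established, so at the level of the corollary the remaining steps are routine reductions from components to single orbit closures and no further obstacle is anticipated.
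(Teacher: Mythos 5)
Your proof is correct and takes essentially the same route as the paper: deduce the equivalence from Theorem \ref{thm:tau_reduced_locally_free} applied to $\cZ=\overline{\cO(V)}$, using Voigt's Isomorphism and (\ref{orbit_dimension}) to recognize that orbit closure as a generically $\tau$-reduced (resp.\ generically reduced) component, with the moreover part finished via Proposition \ref{prop:strongly_primitive_lf_smooth_reduced}. Your version simply makes explicit the smoothness/dimension-count step that the paper leaves implicit, and for the moreover part it invokes Proposition \ref{prop:strongly_primitive_lf_smooth_reduced} directly rather than through the third equivalence of Theorem \ref{thm:tau_reduced_locally_free}, which amounts to the same thing since that equivalence is itself proved via that proposition.
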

\begin{proof}
    The implication $\ref{enum:lf_rigid} \Rightarrow \ref{enum:tau_rigid}$ 
    follows from Auslander-Reiten Duality. 
    Conversely, if $V$ is $\tau$-rigid, 
    then $\cZ:=\overline{\cO(V)}$ is a generically $\tau$-reduced component. 
    By Theorem \ref{thm:tau_reduced_locally_free}, 
    $\cZ$ is generically locally free, 
    thus $V$ is locally free. 
    This proves $\ref{enum:tau_rigid} \Rightarrow \ref{enum:lf_rigid}$. 

    For the moreover part,
    assume that $V$ is rigid.
    Then $\overline{\cO(V)}$ is a generically reduced component
    by Voigt's Isomorphism.
    As before $V$ is then already locally free by Theorem \ref{thm:tau_reduced_locally_free}.
\end{proof}

This shows that GLS algebras are still ``hereditary'' from a $\tau$-tilting perspective.
Note that GLS algebras are quite well designed for this to hold.
One easily finds examples of $1$-Iwanaga-Gorenstein algebras with invertible Cartan matrices
but ``non-hereditary'' $\tau$-tilting theory:

\begin{example}
    Consider the algebra $A = KQ/I$ given by the quiver
    \[
        \begin{tikzcd}
            Q \colon &
            1 \ar[loop, in = 150, out = -150, distance = 5ex, "a"] \ar[r, shift left = 1mm, "b"] &
            2 \ar[l, shift left = 1mm, "c"]
        \end{tikzcd}
    \]
    and the ideal $I$ is generated by paths of length $2$ in $Q$.
    This is a representation finite selfinjective string algebra.
    Its Cartan matrix is $C_A = \rsm{2&1\\1&1}$ with $\det(C_A) = 1$.
    Note that the simple module $S_2$ at vertex $2$ 
    is $\tau$-rigid with $\pdim(S_2) = \infty$.
\end{example}

It is asked in \cite{GLS20} whether one may drop the assumption 
that $\Gamma$ is strongly primitive in Corollary \ref{cor:demonet_lemma}.
More generally, we conjecture that one can drop this assumption 
in Proposition \ref{prop:strongly_primitive_lf_smooth_reduced}
hence also in Theorem \ref{thm:tau_reduced_locally_free}.

\section{Affine valued quivers} \label{sec:affine}

Finally, we arrive at the main section of the present work concering valued quivers $\Gamma$ of affine type.
A valued quiver $\Gamma$ is of \emph{finite type} if $q_\Gamma$ is positive definite.
A valued quiver $\Gamma$ is of \emph{affine type} if it is not of finite type
but $q_\Gamma$ is positive semidefinite.
The connected valued quivers $\Gamma$ of affine type 
are precisely those with underlying valued graph listed in Figure \ref{tab:affine},
in particular the type does not depend on the symmetrizer or orientation.
The names for the affine types vary in the literature (see e.g. \cite{Moo68}, \cite{Mac71}, \cite{DR76}, \cite{K90}).
From now on, 
we fix a connected valued quiver $\Gamma = (\Gamma_0,\Gamma_1,\Omega,\nu,\bc)$ of affine type
and set $\cH := \cH(\Gamma)$ for any of the GLS modulations $\cH \in \{H,\hat{H},\tilde{H},\bar{H}\}$.
We start with a brief summary of the structure of affine root systems following \cite{DR76} in Section \ref{sec:affine_roots}.
Then we present in Section \ref{sec:extended_type_bc1} a detailed study of the representation theory of GLS algebras of type $\tilde{\BC}_1$.
This is fundamental for the general construction of a $1$-parameter family of stable $H$-modules in Section \ref{sec:stable_family}.
In higher ranks, we need to understand how our family of stable modules interacts with regular $\tau$-rigid modules.
To this end, we study their structure in Section \ref{sec:regular_tau_rigid}.
Section \ref{sec:generic_classification} contains the proof of our Main Theorem \ref{thm:generic_classification}.
Some consequences for recent $\tau$-tilted notions of tameness are obtained in Section \ref{sec:tau_tame}.

\subsection{Affine root systems} \label{sec:affine_roots}

The structure of the root system $\Delta(\Gamma)$ 
for (connected) affine $\Gamma$ 
is well understood 
(see \cite[Section 1]{M69}, \cite[Section 1]{DR76} and \cite[Chapters 4.8, 6]{K90}).
There exists a unique non-zero $\bmeta_\Gamma\in\bN\Gamma_0$ up to scaling,
the minimal such called \emph{primitive null root},
such that one, and then any, of the following holds
\begin{enumerate}[label = (\roman*)]
    \item the vector $\bmeta_\Gamma$ is \emph{isotropic} i.e. $q_\Gamma(\bmeta_\Gamma) = 0$;
    \item the vector $\bmeta_\Gamma$ is \emph{radical} i.e. $(\bmeta_\Gamma,\bw)_\Gamma = 0$ for all $\bw\in\bZ\Gamma_0$;
    \item the vector $\bmeta_\Gamma$ is \emph{homogeneous} i.e. $\Phi_\Gamma(\bmeta_\Gamma) = \bmeta_\Gamma$.
\end{enumerate}
The \emph{defect} $\partial_\Gamma \in \bZ\Gamma_0^*$ is defined as
$\partial_\Gamma(\bv) := \euler{\bmeta_\Gamma, \bv}_\Gamma$ for $\bv\in\bZ\Gamma_0$.
Note that we follow a different scaling convention for the defect than in \cite[p. 11]{DR76}.
With its help it is easy to decide whether a given positive root is preprojective, regular or preinjective:
\begin{align*}
    \Delta_{\cP}(\Gamma) &= \{\bv\in\Delta^+(\Gamma) \mid \partial_\Gamma(\bv) < 0\}, \\
    \Delta_{\cR}(\Gamma) &= \{\bv\in\Delta^+(\Gamma) \mid \partial_\Gamma(\bv) = 0\}, \\
    \Delta_{\cI}(\Gamma) &= \{\bv\in\Delta^+(\Gamma) \mid \partial_\Gamma(\bv) > 0\}.
\end{align*}
Following \cite[p. 13]{DR76}, define the set $\Delta_{\cR}^{\simp}(\Gamma)$
of \emph{quasi-simple regular real roots} as those regular real roots 
which are minimal with respect to the partial order 
$\bv\leq_\Phi \bw$ if $\Phi_\Gamma^n(\bv)\leq \Phi_\Gamma^n(\bw)$ for all $n\in\bZ$.
The Coxeter transformation $\Phi_\Gamma$ permutes $\Delta_{\cR}^{\simp}(\Gamma)$
and decomposes it into at most three $\Phi$-orbits 
denoted $\Delta_{\cR}^{\lambda}(\Gamma)$ for $\lambda\in I_\Gamma$
where $I_\Gamma \in \{ \{\infty\}, \{0,\infty\}, \{0,1,\infty\}\}$
depending on the number of $\Phi$-orbits.
For $\lambda\in I_\Gamma$ 
let $r_\lambda := \norm{\Delta_{\cR}^\lambda(\Gamma)}$
and choose a $\bv_{\lambda,0} \in \Delta_{\cR}^\lambda(\Gamma)$
then
\begin{align*}
    \Delta_{\cR}^{\lambda}(\Gamma) = \{\bv_{\lambda,k} := \Phi_\Gamma^k (\bv_{\lambda,0}) \mid k\in\bZ/r_\lambda\}.
\end{align*}
Define inductively $\bv_{\lambda,k}^{(0)} := 0$ 
and $\bv_{\lambda,k}^{(l)} := \bv_{\lambda,k}^{(l-1)} + \bv_{\lambda,k-l+1}$ 
for $\lambda\in I_\Gamma$, $l\geq 1$ and $k\in\bZ/r_\lambda$.
Following the induction scheme, we may organize the just defined roots in tubes
(see Figure \ref{fig:tube_4} for an example with $r_\lambda = 4$).
We have now a very explicit description of all regular roots appearing in \cite[Proposition 1.9.(b)(3)]{DR76}
\begin{align*}
    \Delta_\cR(\Gamma)
    = \{\bv_{\lambda,k}^{(l)} \mid \text{$\lambda\in I_\Gamma, k\in\bZ/r_\lambda$ and $l\in\bN\setminus \bN r_\lambda$}\} \cup \bN \bmeta_\Gamma
\end{align*}
Note that 
\begin{align*}
    \bv_{\lambda,k}^{(r_\lambda)} = \sum_{k\in\bZ/r_\lambda} \bv_{\lambda,k} = t_\lambda \bmeta_\Gamma
    &&
    t_\lambda := \euler{\bv_{\lambda,k}^{(l)},\bv_{\lambda,k}^{(l)}}_\Gamma
\end{align*} 
for $\lambda\in I_\Gamma$
where $t_\lambda$ is independent of the choice of $k\in\bZ/r_\lambda$ and $l\in\bN\setminus \bN r_\lambda$.
The \emph{tier number} of $\Gamma$ is $t_\Gamma := \max\{t_\lambda \mid \lambda\in I_\Gamma\}$.
This number was introduced in \cite{M69} and can be found in Table \ref{tab:affine}.

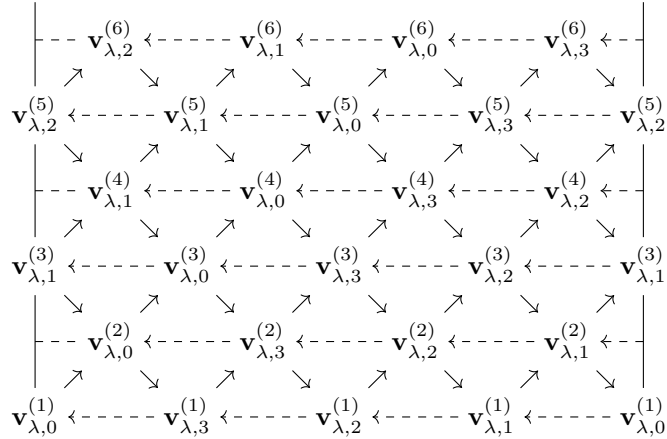
\begin{figure}[h]
    \centering
    \begin{tikzpicture}
        \foreach \x in {0,2,...,8}
        \foreach \y in {0,2,...,4}
            \pgfmathtruncatemacro{\l}{\y+1}
            \pgfmathtruncatemacro{\s}{mod(4-\x/2+\y/2,4)}
            \node (\x\y) at (\x,\y) {$\bv_{\lambda,\s}^{(\l)}$};
        \foreach \x in {1,3,...,7}
        \foreach \y in {1,3,...,5}
            \pgfmathtruncatemacro{\l}{\y+1}
            \pgfmathtruncatemacro{\s}{mod(3-(\x-1)/2+(\y+1)/2,4)}
            \node (\x\y) at (\x,\y) {$\bv_{\lambda,\s}^{(\l)}$};
        \foreach \x in {0,2,...,6}
        \foreach \y in {0,2,...,4}
            \pgfmathtruncatemacro{\a}{\x+1}
            \pgfmathtruncatemacro{\b}{\y+1}
            \draw[->] (\x\y) -- (\a\b);
        \foreach \x in {1,3,...,7}
        \foreach \y in {1,3}
            \pgfmathtruncatemacro{\a}{\x+1}
            \pgfmathtruncatemacro{\b}{\y+1}
            \draw[->] (\x\y) -- (\a\b);
        \foreach \x in {0,2,...,6}
        \foreach \y in {2,4}
            \pgfmathtruncatemacro{\a}{\x+1}
            \pgfmathtruncatemacro{\b}{\y-1}
            \draw[->] (\x\y) -- (\a\b);
        \foreach \x in {1,3,...,7}
        \foreach \y in {1,3,...,5}
            \pgfmathtruncatemacro{\a}{\x+1}
            \pgfmathtruncatemacro{\b}{\y-1}
            \draw[->] (\x\y) -- (\a\b);
        \foreach \x in {2,4,...,8}
        \foreach \y in {0,2,...,4}
            \pgfmathtruncatemacro{\a}{\x-2}
            \draw[->,dashed] (\x\y) --  (\a\y);
        \foreach \x in {3,5,...,7}
        \foreach \y in {1,3,...,5}
            \pgfmathtruncatemacro{\a}{\x-2}
            \draw[->,dashed] (\x\y) --  (\a\y);
        \foreach \y in {1,3,...,5}
            \draw[-,dashed] (0,\y) -- (1\y);
        \foreach \y in {1,3,...,5}
            \draw[->,dashed] (8,\y) -- (7\y);
        \draw[-] (00) -- (02) -- (04) -- (0,5.5);
        \draw[-] (80) -- (82) -- (84) -- (8,5.5);
    \end{tikzpicture}
    \caption{
        A tube of rank $4$. The left and right boundary should be identified.
        The dashed arrows represent the action of $\Phi_\Gamma$ on roots.
        The solid arrows represent irreducible morphisms
        between corresponding indecomposable modules over the species $\tilde{H}(\Gamma)$.
        For a fixed $l\geq 1$, 
        the roots $\bv_{\lambda,k}^{(4l)}$ coincide for all $k\in\bZ/4$,
        but we still list one for each $k$ 
        to maintain the shape of a tube;
        and there actually are corresponding pairwise non-isomorphic modules over species $\tilde{H}(\Gamma)$
        (see \cite[Main Theorem]{DR76}).
    }
    \label{fig:tube_4}
\end{figure}

\subsection{Affine type $\widetilde{\BC}_1$} \label{sec:extended_type_bc1}

Consider the following valued quiver $\Gamma$ of affine type $\tilde{\BC}_{1}$:
\[
    \begin{tikzcd}
        \Gamma \colon ~ 
        2 \ar[r,"1 \mid 4"] & 1
    \end{tikzcd}
\]
with minimal symmetrizer $D = \rsm{4 & 0 \\ 0 & 1}$.
The associated GLS algebra is $H = KQ/I$ with quiver and relations
\[
    \begin{tikzcd}
        Q\colon ~
        2 \ar[r, "\alpha"] &
        1 \ar[loop, out=30, in=-30, distance=5ex, "\varepsilon"] &&&&
        I := \ideal{\varepsilon^4}.
    \end{tikzcd}
\]
It is very well known that $H$ has wild representation type. 
Indeed it's Galois covering has a convex hypercritical subcategory
of twice extended type $\tilde{\tilde{D}}_5$ highlighted in red:
\[
    \begin{tikzcd}
        \cdots &
        2_0 \arrow[d, "\alpha_0"] &
        \red{2_1} \arrow[red, d, "\alpha_1"] &
        \red{2_2} \arrow[red, d, "\alpha_2"] &
        \red{2_3} \arrow[red, d, "\alpha_3"] &
        2_4 \arrow[d, "\alpha_4"] &
        2_5 \arrow[d, "\alpha_5"] &
        2_6 \arrow[d, "\alpha_6"] &
        \cdots \\
        \cdots &
        1_0 \arrow[r,"\varepsilon_0"] & 
        \red{1_1} \arrow[red, r, "\varepsilon_1"] 
        \arrow[-, dotted, rrrr, shift right =.2cm, bend right=.5cm] &
        \red{1_2} \arrow[red, r, "\varepsilon_2"] &
        \red{1_3} \arrow[red, r, "\varepsilon_3"] &
        \red{1_4} \arrow[r, "\varepsilon_4"] &
        1_5 \arrow[r, "\varepsilon_5"] &
        1_6 &
        \cdots
    \end{tikzcd}
\]
The Ringel form, Tits form and Coxeter transformation of $\Gamma$ are
\begin{align*}
    \euler{\bv,\bw}=4v_1w_1+v_2w_2-4v_2w_1 && 
    q(\bv) = (2v_1 - v_2)^2&& 
     \Phi = \left(\bsm -1 & 1 \\ -4 & 3\esm\right)
\end{align*}
Therefore, the primitive null root is $\bmeta = (1,2)$
and the preprojective and preinjective roots are
\begin{align*}
    \bp_1^{(n)}&:=(2n+1,4n) &&& \bq_1^{(n)}&:=(2n+1,4n+4) &&&
    \bp_2^{(n)}&:=(n+1,2n+1) &&& \bq_2^{(n)}&:=(n,2n+1)
\end{align*}
for $n\geq 0$ and these are all positive real roots. 
The corresponding indecomposable $\tau$-rigid $H$-modules are
\begin{align*}
    P_1^{(n)}& = \tau_H^{-n} (P_1) &&& I_1^{(n)}& = \tau_H^{n} (I_1) &&&
    P_2^{(n)}& = \tau_H^{-n} (P_2) &&& I_2^{(n)}& = \tau_H^{n} (I_2).
\end{align*}
for $n\geq 0$ with corresponding rank and $\g$-vectors
\begin{align*}
    \rkv(P_1^{(n)}) &= \bp_1^{(n)}, &&& \g(P_1^{(n)})& = (-4n, 8n+4), &&& 
    \rkv(I_1^{(n)}) &= \bq_1^{(n)}, &&& \g({I_1^{(n)}})& = (-4n-4, 8n+4), \\
    \rkv(P_2^{(n)}) &= \bp_2^{(n)}, &&& \g({P_2^{(n)}})& = (-2n-1, 4n+4), &&& 
    \rkv(I_2^{(n)}) &= \bq_2^{(n)}, &&& \g({I_2^{(n)}})& = (-2n-1, 4n).
\end{align*}
Part of the support $\tau$-tilting exchange graph 
and the $\g$-vector fan
are shown in Figure \ref{fig:exchange_quiver_and_fan}.
The red ray $\bR_{\geq 0}\partial$ spanned by the defect $\partial = (-1,2)\in\bR^2$ is the only wall which is not part of the $\g$-vector fan,
but it is the limit of walls in the $\g$-vector fan.
One immediately sees that the algebra $H$ is $\g$-tame.
For a complete classification of all stable $H$-modules
it remains to consider those stable with respect to the defect.
Explicitly, $V\in\mod(H)$ is $\partial$-semistable if and only if
$\dim(V(1)) = 2\cdot\dim(V(2))$ and
\begin{align} \label{bc1_defect_semistable}
    2\cdot \dim(U(2)) \leq \dim(U(1))
\end{align}
for all proper and non-zero submodules $U\subset V$.
A single $\partial$-stable representation is easy to find:
\[
    \begin{tikzcd}[ampersand replacement=\&]
        \bar{V}_{\infty}\colon 
        ~
        K \ar[r,"\rsm{1\\0}"] 
        \&
        K^2 
        \ar[loop, out=30, in=-30, distance=5ex, "\rsm{0 & 0 \\ 1 & 0}"]
    \end{tikzcd}
\]
Furthermore, given $\lambda\in K^2\setminus 0$ define the representation $V_\lambda$ as 
\[
    \begin{tikzcd}[ampersand replacement=\&]
        V_\lambda\colon 
        ~ 
        K^2 
        \ar[rr,"\rsm{1 & 0 \\ 0 & \lambda_2 \\ 0 & \lambda_1 \\ 0 & 0}"] 
        \&\&
        K^4 \ar[loop, out=30, in=-30, distance=5ex, "\rsm{0 & 0 & 0 & 0 \\
        1 & 0 & 0 & 0 \\ 0 & 1 & 0 & 0 \\ 0 & 0 & 1 & 0}"] 
    \end{tikzcd}
\]
Note that $V_\lambda\cong V_{t\lambda}$ for every $t\in K^\times$ and $\lambda\in K^2\setminus\{0\}$.
Hence the expression $V_\lambda$ is up to isomorphism well defined for $\lambda\in\bP^1$.
We set $\infty := (1:0)\in\bP^1$ and naturally identify $\bA^1 = \bP^1\setminus\{\infty\}$ 
by writing $\lambda = (\lambda : 1)\in\bP^1$ for $\lambda\in\bA^1$ with some abuse of notation.

\begin{proposition}\label{prop:bc1_family}
    Let $H$ and $V_\lambda, \bar{V}_\infty\in \mod(H)$ be as above.
    The following properties hold
    \begin{enumerate}[label = (\roman*)]
        \item \label{enum:stable} 
        $V_\lambda$ is $\partial$-stable for all $\lambda\in\bA^1$.
        \item \label{enum:semistable} 
        $\bar{V}_\infty$ is $\partial$-stable while $V_\infty$ is $\partial$-semistable and there is a non-split short exact sequence
        \[
        0\to \bar{V}_\infty \to V_\infty \to \bar{V}_\infty\to 0.
        \]
        \item \label{enum:iso}
        For $\lambda,\mu\in\bP^1$ is $V_\lambda \cong V_\mu$ if and only if $\lambda = \pm\mu$.
        \item \label{enum:hom_orthogonal}
        For $\lambda,\mu\in\bP^1$ with $\lambda \neq \pm\mu$ is $\Hom_H(V_\lambda,V_\mu)=0$.
        \item \label{enum:tau_translation}
        For every $\lambda\in\bP^1$ is $\tau_H(V_\lambda) \cong V_\lambda$.
    \end{enumerate}
\end{proposition}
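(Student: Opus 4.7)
The plan is to verify (i)-(iv) by direct matrix computations using the explicit representations of $V_\lambda$ and $\bar V_\infty$, and to prove (v) by constructing a minimal projective presentation and computing $\tau_H V_\lambda = \D \operatorname{Tr}(V_\lambda)$. The common observation is that every $\varepsilon$-equivariant endomorphism of $V_\lambda(1) = K^4$ is a lower-triangular Toeplitz matrix, so all relevant linear data reduces to a small system of equations.

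For (i) I enumerate proper nonzero submodules $U \subset V_\lambda$: if $U(2)$ contains a vector $v = p f_1 + q f_2$, then $\alpha(v) = p e_1 + q(e_2 + \lambda e_3)$ lies in $U(1)$ together with its whole $\varepsilon$-cyclic span, and a case distinction on whether $p$ vanishes shows $\dim U(1) > 2 \dim U(2)$, yielding $\partial$-stability via (\ref{bc1_defect_semistable}). For (ii) the map $\bar V_\infty \to V_\infty$ sending the generator of $\bar V_\infty(2)$ to $f_2$ (so that $\alpha(f_2) = e_3$) is an embedding with image concentrated in $\langle e_3, e_4 \rangle$ at vertex $1$, and the quotient is again $\bar V_\infty$; the stability of $\bar V_\infty$ is immediate, while the sequence is non-split because $\varepsilon$ acts on $V_\infty(1) = K^4$ as a single Jordan block. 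For (iii) and (iv), I parametrise a morphism $f \colon V_\lambda \to V_\mu$ by a Toeplitz matrix $(a_0, a_1, a_2, a_3)$ on $V(1)$ together with a $2 \times 2$ matrix $\rsm{b_0 & b_1 \\ c_0 & c_1}$ on $V(2)$; the compatibility with $\alpha_\lambda, \alpha_\mu$ yields a linear system whose crucial identity is $a_0(\mu - \lambda)(\mu + \lambda) = 0$, which forces $f = 0$ whenever $\mu \neq \pm\lambda$ (proving (iv)) and admits an invertible solution exactly when $\mu = \pm\lambda$ (proving (iii)). The cases involving $\infty$ require a minor modification of the equations but follow the same pattern.

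Part (v) is the core computational step. The top of $V_\lambda$ is $S_2^2$, since $\alpha V_\lambda(2) + \varepsilon V_\lambda(1) = V_\lambda(1)$, so the projective cover is $P_2^2$; its kernel has rank vector $(1, 0)$ and is locally free by Proposition \ref{prop:locally_free}, whence isomorphic to $P_1$ by Proposition \ref{prop:lf_rigid_class}. The connecting map $\phi_\lambda \colon P_1 \to P_2^2$ is then $e_1 \mapsto \bigl(-(\varepsilon + \lambda \varepsilon^2)\alpha,\ \alpha\bigr) \in (e_1 H e_2)^2$. Applying $\Hom_H(-, H)$ produces the presentation $e_2 H \oplus e_2 H \to e_1 H \to \operatorname{Tr}(V_\lambda) \to 0$ with image the right submodule of $e_1 H$ spanned by $\alpha$ and $\varepsilon \alpha + \lambda \varepsilon^2 \alpha$. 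Dualizing, $\tau_H V_\lambda$ turns out to be locally free of rank $\bmeta$ with $\varepsilon$ acting by the standard Jordan block on the $4$-dimensional vertex-$1$ part and $\alpha$ realised by the matrix $\rsm{1 & 0 \\ 0 & 1 \\ 0 & -\lambda \\ 0 & 0}$, which is exactly $V_{-\lambda}$. By (iii), $\tau_H V_\lambda \cong V_{-\lambda} \cong V_\lambda$. The main obstacle is the bookkeeping in the transpose-then-dualize step, where one has to track which dual basis element sits at which vertex and how the right $H$-action on $\operatorname{Tr}(V_\lambda)$ translates to the left $H$-action after $\D$; the sign change $\lambda \mapsto -\lambda$ that emerges is precisely absorbed by the $\pm$-ambiguity in (iii).
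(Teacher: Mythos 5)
Your proposal is correct and follows essentially the same route as the paper's proof: a direct submodule enumeration for (i)–(ii), the lower-triangular Toeplitz parametrization of homomorphisms leading to the key relation $(\mu-\lambda)(\mu+\lambda)a=0$ for (iii)–(iv), and the minimal presentation $P_1\to P_2^2\to V_\lambda\to 0$ followed by transpose-and-dualize for (v). The only cosmetic difference is that you land on $V_{-\lambda}$ and invoke (iii), whereas the paper identifies $\D(W_\lambda)$ with $V_\lambda$ directly; the sign is indeed absorbed by the $\pm$-ambiguity.
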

\begin{proof}
    For \ref{enum:stable} let $U\subseteq V_\lambda$ be a proper non-zero submodule.
    We cannot have $\dim U(2) = 2$ otherwise $U = V$.
    If $\dim U(2) = 0$ we have strict inequality in (\ref{bc1_defect_semistable}).
    If $\dim U(2) = 1$, take any non-zero $u\in U(2)$.
    If $u_1\neq 0$ then $U(\varepsilon)^k U(\alpha) u$ are linearly independent for $0\leq k \leq 3$ 
    hence strict inequality in (\ref{bc1_defect_semistable}) holds.
    If $u_1 = 0$ then $u_2 \neq 0$ and by assumption $\lambda_2\neq 0$ 
    thus $U(\varepsilon)^k U(\alpha) u$ are linearly independent for $0\leq k \leq 2$
    and still strict inequality in (\ref{bc1_defect_semistable}) holds.

    For \ref{enum:semistable} it is enough to observe that $\bar{V}_\infty(2)$ generates $\bar{V}_\infty$.
    The obvious short exact sequence for $V_\infty$ also shows that $V_\infty$ is $\partial$-semistable.

    For \ref{enum:iso} and \ref{enum:hom_orthogonal} 
    assume first that $\lambda,\mu \neq \infty$ 
    then a general homomorphism $f\in\Hom(V_\lambda,V_\mu)$ is of the form
    \[
        \begin{tikzcd}[ampersand replacement=\&]
            V_\lambda
            \ar[ddd,"f"']
            \&\&
            K^2 
            \ar[rr,"\rsm{1 & 0 \\ 0 & 1 \\ 0 & \lambda\\ 0 & 0}"] 
            \ar[ddd, "\rsm{ a_1 & 0 \\ a_2 & a_1 }"']
            \&\&
            K^4
            \ar[loop, out=30, in=-30, distance=5ex, "\rsm{0 & 0 & 0 & 0 \\ 1 & 0 & 0 & 0 \\ 0 & 1 & 0 & 0 \\ 0 & 0 & 1 & 0}"]
            \ar[ddd,"\rsm{ a_1 & 0 & 0 & 0 \\ a_2 & a_1 & 0 & 0 \\ a_3 & a_2 & a_1 & 0 \\ a_4 & a_3 & a_2 & a_1}"]
            \\ \\ \\
            V_{\mu}
            \&\&
            K^2 
            \ar[rr,"\rsm{1 & 0 \\ 0 & 1 \\ 0 & \mu \\ 0 & 0}"'] 
            \&\&
            K^4
            \ar[loop, out=30, in=-30, distance=5ex, "\rsm{0 & 0 & 0 & 0 \\ 1 & 0 & 0 & 0 \\ 0 & 1 & 0 & 0 \\ 0 & 0 & 1 & 0}"] 
        \end{tikzcd}
    \]
    this diagram commutes if and only if
    $\rsm{\mu a_2 & \mu a_1 \\ 0 & 0} 
    = \rsm{a_3 & a_2 + \lambda a_1 \\ a_4 & a_3 + \lambda a_2}$
    which leaves two equations for $a_1$ and $a_2$, namely
    \begin{align*}
        a_2 = (\mu - \lambda)a_1 && 0 = (\mu+\lambda) a_2.
    \end{align*}
    Thus if $\lambda = -\mu$, then $a_2 = 2\mu a_1$ and one may choose $a_1\in K$ arbitrary.
    This proves $V_\lambda \cong V_{-\lambda}$.
    If on the other hand $\lambda \neq \pm\mu$ then $a_2=0$ and $a_1=0$.
    This proves $\Hom_H(V_\lambda,V_\mu) = 0$ in this case.
    Similarly, one checks $\Hom_H(\bar{V}_\infty,V_\lambda) = 0$ and $\Hom_H(V_\lambda,\bar{V}_\infty) = 0$ for any $\lambda\in\bA^1$.

    Finally, for \ref{enum:tau_translation} apply the dual of transpose construction
    to the minimal projective presentation
    \[
        \begin{tikzcd}
            P_1 \ar[r,"\ssm{\rho \\ -\alpha}"] 
            &
            P_2 \oplus P_2 \ar[r] 
            & V_\lambda \ar[r]
            & 0
        \end{tikzcd}
    \]
    where $\rho = \lambda_1\varepsilon^2\alpha + \lambda_2\varepsilon\alpha$.
    Applying $(-)^* := \Hom_H(-,H)$ to $\vec{P}(V_\lambda)$ 
    and taking the cokernel computes the Auslander-Bridger transpose $W_\lambda := \Tr_H(V_\lambda)$
    \[
        \begin{tikzcd}[ampersand replacement = \& ]
            P_2^* \oplus P_2^* \ar[r,"\text{$\ssm{\rho^* \mid -\alpha^*}$}"] 
            \&
            P_1^* \ar[r] 
            \& 
            W_\lambda \ar[r]
            \&
            0,
        \end{tikzcd}
        ~~~~ ~~~~ ~~~~ ~~~~
        \begin{tikzcd}[ampersand replacement=\&]
            W_\lambda\colon 
            ~ 
            K^2 
            \ar[<-,rrr,"\rsm{0 & 0 & 0 & 1 \\ 0 & \lambda_1 & \lambda_2 & 0}"] 
            \&\&\&
            K^4 \ar[loop, out=30, in=-30, distance=5ex, "\rsm{0 & 0 & 0 & 0 \\
            1 & 0 & 0 & 0 \\ 0 & 1 & 0 & 0 \\ 0 & 0 & 1 & 0}"] 
        \end{tikzcd}
    \]
    Dualizing with $\D = \Hom_K(-,K)$ gives the Auslander-Reiten translation $\tau_A(V_\lambda) \cong \D(W_\lambda) \cong V_\lambda$.
\end{proof}

We aim at proving that we found all $\partial$-stable modules. 
To this end we begin with a straight forward dimension count 
to check whether we have at least the correct number of parameters.

\begin{lemma}\label{lem:dimension}
    Let $\bd = (d_1,d_2)\in\bN Q_0$ and write $d_1 = 4r + s$ for some $r\in\bN$ and $0\leq s < 4$. 
    The variety $\Rep(H,\bd)$ is irreducible of dimension
    \begin{align*}
        \dim\Rep(H,\bd) = d_1d_2 + d_1^2 - 4r^2 - 2sr - s
    \end{align*}
\end{lemma}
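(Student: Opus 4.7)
The plan is to exploit the fact that the algebra $H$ for type $\widetilde{\BC}_1$ has an especially simple presentation: the quiver $Q$ has one arrow $\alpha$ and one loop $\varepsilon$, and the ideal $I = \ideal{\varepsilon^4}$ only involves the loop. Consequently, the representation scheme decomposes as a product
\[
    \bRep(H,\bd) \cong \Hom_K(K^{d_2},K^{d_1}) \times \cN_{d_1,4},
\]
where $\cN_{d_1,4} := \{E \in \End_K(K^{d_1}) \mid E^4 = 0\}$ is the variety of matrices of nilpotency order at most $4$. The factor coming from $\alpha$ is just an affine space of dimension $d_1 d_2$, so the problem reduces to proving irreducibility of $\cN_{d_1,4}$ and computing its dimension.

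For irreducibility, I would observe that $\cN_{d_1,4}$ is (set-theoretically) the union of $\GL(K,d_1)$-orbits indexed by partitions $\mu$ of $d_1$ with all parts bounded by $4$; orbit closures are governed by the dominance order on partitions. Writing $d_1 = 4r + s$ with $0 \leq s < 4$, the partition $\lambda = (4,\dots,4,s)$ (with $r$ fours) is the unique maximum in dominance order among such partitions, so every $E \in \cN_{d_1,4}$ lies in the orbit closure of the module $N_{d_1} := N_4^{r} \oplus N_s$ from the proof of Lemma \ref{lem:local_free_smooth}. Hence $\cN_{d_1,4} = \overline{\cO(N_{d_1})}$ is irreducible.

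For the dimension count, I would apply the orbit--stabilizer formula
\[
    \dim \cO(N_{d_1}) = \dim \GL(K,d_1) - \dim \End_{H(1)}(N_{d_1}) = d_1^2 - \dim \End_{H(1)}(N_{d_1}),
\]
and use $\dim \Hom_{H(1)}(N_k, N_l) = \min(k,l)$ to evaluate
\[
    \dim \End_{H(1)}(N_4^r \oplus N_s) = 4r^2 + 2rs + s.
\]
(This is exactly the computation carried out in the proof of Lemma \ref{lem:local_free_smooth}.) Combining the product decomposition with $\dim\cN_{d_1,4} = d_1^2 - 4r^2 - 2sr - s$ yields the claimed formula
\[
    \dim \Rep(H,\bd) = d_1 d_2 + d_1^2 - 4r^2 - 2sr - s.
\]

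I do not expect any serious obstacle: the only subtle point is making sure $\cN_{d_1,4}$ really is the closure of a single orbit, which is an easy check once one invokes the dominance-order description of orbit closures. Everything else reduces to elementary computations already present earlier in the paper.
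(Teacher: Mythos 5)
Your proof is correct and follows essentially the same route as the paper: the same product decomposition of the representation variety, the same module $N_{d_1} = N_4^r \oplus N_s$ with dense orbit, and the same orbit--stabilizer dimension count. The only difference is that you make explicit (via the dominance order on partitions with parts bounded by $4$) why $\cO(N_{d_1})$ is dense, a point the paper treats as implicit.
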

\begin{proof}
    We have a product decomposition 
    \begin{align*}
        \Rep(H,\bd)\cong \Rep(K[x]/\ideal{x^4},d_1) \times \Rep(K\Lambda,\bd)
    \end{align*}
    where $\Lambda\colon 2 \to 1$.
    Clearly, $\Rep(K\Lambda,\bd)\cong \bA^{d_1d_2}$ is irreducible of dimension $d_1d_2$.
    On the other hand, consider the representation $N_{d_1}\in\Rep(K[x]/\ideal{x^4},d_1)$
    with dense orbit,
    defined as in the proof of Lemma \ref{lem:local_free_smooth}.
    In particular $\Rep(K[x]/\ideal{x^4},d_1)$ is irreducible of dimension $\dim\GL(K,d_1)-\dimend(N_{d_1})$ 
    where $\dim\GL(K,d_1)=d_1^2$ and $\dimend(N_{d_1}) = 4r^2 + 2sr + s$.
\end{proof}

Next,
we show that the generic representation with vanishing defect is semistable and
isomorphic to a direct sum of pairwise distinct $V_\lambda$ for $\lambda\in\bA^1$ 
and at most one $\bar{V}_\infty$.

\begin{proposition}\label{prop:generic_defect_semistable}
    Let $\bd = (d_1,d_2)\in\bN Q_0$ with $\partial(\bd)=0$. 
    Write $d_2 = 2r+s$ for some $r\in\bN$ and $0\leq s < 2$.
    Then
    \begin{align*}
        \Rep(H,\bd) = \overline{\bigcup_{\underline{\lambda} \in \bA^r} \cO(V_{\underline{\lambda}})},
        &&
        V_{\underline{\lambda}} := \bar{V}_{\infty}^s \oplus \bigoplus_{i = 1}^r V_{\lambda_i}.
    \end{align*}
\end{proposition}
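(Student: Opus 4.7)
The plan is to exhibit the constructible subset
\[
    M := \bigcup_{\underline{\lambda}\in\bA^r} \cO(V_{\underline{\lambda}})
\]
and show that it already has full dimension inside $\Rep(H,\bd)$; then irreducibility of $\Rep(H,\bd)$ (Lemma \ref{lem:dimension}) forces $\overline{M} = \Rep(H,\bd)$. The dimension vector checks out immediately: since $\dimv(V_\lambda) = (4,2)$, $\dimv(\bar{V}_\infty) = (2,1)$, and the condition $\partial(\bd) = 0$ forces $d_1 = 2d_2$, the tuple $V_{\underline{\lambda}}$ has dimension vector $r(4,2) + s(2,1) = (4r+2s, 2r+s) = \bd$.

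The main input is the generic endomorphism algebra. For pairwise generic $\underline{\lambda}\in\bA^r$ (meaning $\lambda_i \neq \pm\lambda_j$ for $i\neq j$), Proposition \ref{prop:bc1_family}(ii)--(iv) provides the pairwise $\Hom$-orthogonality of the bricks $V_{\lambda_1},\dots,V_{\lambda_r},\bar{V}_\infty$, which together with $s \in \{0,1\}$ gives
\begin{align*}
    \End_H(V_{\underline{\lambda}}) \cong K^{r+s},
    \qquad
    \dim \cO(V_{\underline{\lambda}}) = \dim \GL(K,\bd) - (r+s).
\end{align*}
Next, I would consider the morphism
$\Xi\colon \bA^r \times \GL(K,\bd) \to \Rep(H,\bd)$,
$(\underline{\lambda},g) \mapsto g\cdot V_{\underline{\lambda}}$. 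By Chevalley's theorem its image is the constructible set $M$. Over a generic $W \in M$, the fiber $\Xi^{-1}(W)$ is a finite disjoint union of $\Aut(V_{\underline{\lambda}})$-torsors, indexed by the free action of the finite group $(\bZ/2)^r \rtimes S_r$ on the pairwise generic locus (the $\bZ/2$-factors recording $V_\lambda \cong V_{-\lambda}$ from Proposition \ref{prop:bc1_family}(iii)). Hence each generic fiber has dimension $r+s$, yielding
\begin{align*}
    \dim M \;=\; r + \dim\GL(K,\bd) - (r+s) \;=\; \dim\GL(K,\bd) - s.
\end{align*}

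Finally, a direct computation with Lemma \ref{lem:dimension} confirms $\dim\Rep(H,\bd) = \dim\GL(K,\bd) - s$: writing $d_1 = 4r + 2s = 4r' + s'$ gives $r' = r$ and $s' = 2s \in \{0,2\}$, after which both sides collapse to $(4r+2s)^2 + (2r+s)^2 - s$. Irreducibility of $\Rep(H,\bd)$ then forces $\overline{M} = \Rep(H,\bd)$. The main obstacle I anticipate is the fiber-dimension bookkeeping for $\Xi$: one must rule out positive-dimensional symmetries identifying different tuples in $\underline{\lambda}$, which reduces via Proposition \ref{prop:bc1_family}(iii)--(iv) to the observation that, outside the pairwise non-generic locus, the identifications are exhausted by the finite group $(\bZ/2)^r \rtimes S_r$.
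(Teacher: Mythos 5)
Your proposal is correct and follows the same dimension-counting strategy as the paper's proof: compute $\dim\cO(V_{\underline{\lambda}}) = \dim\GL(K,\bd) - (r+s)$ for generic $\underline{\lambda}$ via $\Hom$-orthogonality of the bricks, compare with $\dim\Rep(H,\bd) = \dim\GL(K,\bd) - s$ from Lemma \ref{lem:dimension}, and conclude by irreducibility. The only difference is expository: you spell out the fiber-dimension argument for $\Xi$ explicitly (including the $(\bZ/2)^r\rtimes S_r$-action tracking $V_\lambda\cong V_{-\lambda}$), whereas the paper compresses this into the phrase that the family $V_{\underline{\lambda}}$ ``has $r$ parameters.''
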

\begin{proof}
    The orbit of $V_{\underline{\lambda}}$ 
    for $\underline{\lambda}\in \bA^r$ with pairwise different coordinates 
    has dimension 
    \[
        \dim \cO(V_{\underline{\lambda}}) = 5d_2^2 - r - s
    \] 
    while $\dim\Rep(H,\bd)=5d_2^2-s$, by Lemma \ref{lem:dimension},
    thus $\dim \Rep(H,\bd) - \dim \cO(V_{\underline{\lambda}}) = r$.
    Since $V_{\underline{\lambda}}$ has $r$-parameters, this proves the assertion.
\end{proof}

\begin{corollary}\label{cor:generic_locally_free}
    Let $\br\in\bN\Gamma_0$ be a rank vector.
    If $\br = m\cdot \bmeta$ for some $m\geq 0$, then
    \begin{align*}
        \cZ(\br) = \overline{\bigcup_{\underline{\lambda}\in\bA^m}\cO(V_{\underline{\lambda}})}.
    \end{align*}
    If $\br\notin\bN\bmeta$, then there exists a $\tau$-rigid $W\in\mod(H)$ with
    \begin{align*}
        \cZ(\br) = \overline{\cO(W)}.
    \end{align*}
\end{corollary}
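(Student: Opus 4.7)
The plan is to handle the two cases separately, reducing the second to the construction of a specific $\tau$-rigid module with rank vector $\br$.

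For $\br = m\bmeta$, I would apply Proposition \ref{prop:generic_defect_semistable} to the dimension vector $\bd := D\br = (4m, 2m)$, which satisfies $\partial(\bd) = 0$ with $d_2 = 2m$ even (so $r = m$, $s = 0$ in the notation of that proposition). It yields $\Rep(H,\bd) = \overline{\bigcup_{\underline\lambda\in\bA^m} \cO(V_{\underline\lambda})}$ with $V_{\underline\lambda} = \bigoplus_{i} V_{\lambda_i}$, and Lemma \ref{lem:dimension} asserts $\Rep(H,\bd)$ is irreducible. Since each $V_\lambda$ was constructed as a locally free module of rank $\bmeta$, the direct sum $V_{\underline\lambda}$ is locally free of rank $m\bmeta$, so $\Rep_\lf(H, m\bmeta)$ is a non-empty open subset of the irreducible variety $\Rep(H,\bd)$. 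Hence $\cZ(m\bmeta) = \Rep(H,\bd) = \overline{\bigcup_{\underline\lambda\in\bA^m} \cO(V_{\underline\lambda})}$.

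For $\br \notin \bN\bmeta$, the strategy is to produce a $\tau$-rigid $W$ with $\rkv(W) = \br$ whose indecomposable summands come from a single $\tau$-rigid pair in the exchange graph of Figure \ref{fig:exchange_quiver_and_fan}. By applying the standard duality $\D$ if necessary, we may assume $\partial(\br) < 0$, equivalently $2r_1 > r_2$. The consecutive preprojective $\tau$-rigid pairs are $P_1^{(n)} \oplus P_2^{(n-1)}$ and $P_1^{(n)} \oplus P_2^{(n)}$ (with the convention $P_2^{(-1)} := 0$), and the slopes $r_2/r_1$ of their rank rays $(2n+1, 4n)$ and $(n+1, 2n+1)$ form the strictly increasing sequence $0 < 1 < 4/3 < 3/2 < 8/5 < \ldots$ converging to the defect slope $2$. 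Consequently, the rank cones spanned by these consecutive pairs partition the open half-plane $\{(r_1, r_2) \mid 2r_1 > r_2 \geq 0\}$, so any such $\br$ can be written as $\br = a\rkv(X) + b\rkv(Y)$ with $a, b \in \bN$ for the corresponding $\tau$-rigid pair $X \oplus Y$. Then $W := X^a \oplus Y^b$ is $\tau$-rigid with $\rkv(W) = \br$.

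A dimension count then closes the argument. By Demonet's Lemma (Corollary \ref{cor:demonet_lemma}) the module $W$ is locally free, so $\cO(W) \subseteq \Rep_\lf(H, \br) \subseteq \cZ(\br)$. Rigidity together with Proposition \ref{prop:homological_ringel_coxeter} yields
\[
    \dim \cO(W) = \dim\GL(K, D\br) - \dimhom_H(W,W) = \dim\GL(K, D\br) - q_\Gamma(\br) = \dim \cZ(\br),
\]
and since $\cZ(\br)$ is irreducible by formula (\ref{lf_rep_varitey_dim}), this equality forces $\cZ(\br) = \overline{\cO(W)}$. The main obstacle will be the combinatorial bookkeeping in the second case: confirming that the listed consecutive pairs are genuinely $\tau$-rigid and that their rank cones cover the preprojective half-plane without gaps. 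The $\g$-tameness of $H$ noted in Section \ref{sec:intro_bc1} makes this plausible, but transferring the statement from $\g$-vector chambers to rank cones and handling the inductive step along the mutation sequence still requires explicit verification.
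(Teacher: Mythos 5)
Your argument is correct and, in the first case, coincides exactly with the paper's: both invoke Proposition \ref{prop:generic_defect_semistable} for $\bd = D(m\bmeta) = (4m,2m)$ and then use irreducibility of $\Rep(H,\bd)$ (Lemma \ref{lem:dimension}) to identify $\cZ(m\bmeta) = \overline{\Rep_\lf(H,m\bmeta)}$ with $\Rep(H,\bd)$.

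For the second case the paper is terser: it only observes that (under the identification $\K_0^\fin(H) \cong \bZ\Gamma_0$ sending $\g$-vectors to rank vectors) $\br$ lies in $\fan(H)$, as read off from Figure \ref{fig:exchange_quiver_and_fan}, so a $\tau$-rigid module of rank $\br$ exists, and its orbit is automatically open and dense in $\Rep_\lf(H,\br)$. You unpack exactly what "lies in the fan" means: you locate $\br$ in the cone of a consecutive preprojective $\tau$-rigid pair via the increasing slope sequence $0 < 1 < 4/3 < 3/2 < 8/5 < \cdots \to 2$ and form $W = X^a \oplus Y^b$. This is the same underlying fact written out constructively; both versions rely on the exchange-graph structure in Figure \ref{fig:exchange_quiver_and_fan} without re-deriving it. Your observation that the rank vectors of a $\tau$-rigid pair form a $\bZ$-basis (so $a,b$ are integers, not just non-negative reals) is a genuine detail the paper suppresses and is worth having; it follows abstractly from the fact that $\g$-vectors of a support $\tau$-tilting pair give a $\bZ$-basis of $\K_0^\fin(H)$, or by the explicit unit-determinant computation. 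Two small remarks: the closing dimension count is correct but can be avoided — once $W$ is rigid, $\cO(W)$ is open in $\Rep(H,D\br)$ by Voigt's isomorphism, hence dense in the irreducible $\Rep_\lf(H,\br)$, which gives $\cZ(\br) = \overline{\cO(W)}$ immediately. And the reduction "by applying $\D$ we may assume $\partial(\br)<0$" is slightly loose since $\D$ lands in $\mod(H^\op)$ and $H^\op \not\cong H$; it is cleaner to say the preinjective case runs in parallel using $I_1^{(n)}, I_2^{(n)}$, whose slopes decrease to $2$ from above.
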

\begin{proof}
    For $\br\in\bN\bmeta$, this is a special case of Proposition \ref{prop:generic_defect_semistable}.
    If instead $\br\notin \bN\bmeta$, then $\g(\br) \in\fan(H)$ as seen in Figure \ref{fig:exchange_quiver_and_fan}.
    Thus, there exists a $\tau$-rigid $H$-module $V$ with $\rkv(V) = \br$. 
    This proves the second assertion.
\end{proof}

Before we prove that we found all $\partial$-stable $H$-modules,
let us show a genericity property of bricks:

\begin{lemma} \label{lem:bc1_brick}
    Let $V \in\mod(H)$ be a brick.
    Then $V(\varepsilon)$ has maximal rank.
\end{lemma}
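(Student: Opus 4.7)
The plan is to argue by contrapositive: assume $V$ is a brick, and show that the number of Jordan blocks of $V(\varepsilon)\colon V(1) \to V(1)$ equals $\lceil d_1/4\rceil$, where $d_1 = \dim_K V(1)$; this is equivalent to $V(\varepsilon)$ having maximal rank. The degenerate cases $V(1) = 0$ or $V(2) = 0$ leave only the simples $S_2$ and $S_1$ as bricks, both with $V(\varepsilon) = 0$ trivially of maximal rank, so from now on I assume $V(1), V(2) \neq 0$.

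First I would distill two structural constraints from the brick hypothesis. Any $f_2 \in \End_K(V(2))$ with image in $\ker V(\alpha)$ yields an endomorphism $(0, f_2) \in \End_H(V)$, so brickness forces $V(\alpha)$ to be injective. Dually, any $H(1)$-linear endomorphism $f_1$ of $V(1)$ vanishing on the $H(1)$-submodule $\langle \operatorname{im} V(\alpha)\rangle$ gives an endomorphism $(f_1, 0) \in \End_H(V)$; since $H(1) = K[\varepsilon]/(\varepsilon^4)$ is self-injective and $V(1) \neq 0$, every non-zero $H(1)$-module admits a non-zero map into $V(1)$, so brickness forces $\langle \operatorname{im} V(\alpha)\rangle = V(1)$. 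Consequently, the map $F\colon H(1) \otimes_K V(2) \twoheadrightarrow V(1)$, $\varepsilon^j \otimes v \mapsto \varepsilon^j V(\alpha)(v)$, is a surjection of $H(1)$-modules, and using injectivity of $V(\alpha)$, every endomorphism of $V$ is determined by a unique $f_2 \in \End_K(V(2))$ such that $\operatorname{id}_{H(1)} \otimes f_2$ stabilizes $K_F := \ker F$. The brick condition then translates to: only scalar $f_2$ stabilize $K_F$.

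Now suppose for contradiction that $V(\varepsilon)$ has $m > \lceil d_1/4 \rceil$ Jordan blocks, and let $k_{\max}$ denote the maximum block size. Then $\varepsilon^{k_{\max}} V(1) = 0$, so $\varepsilon^{k_{\max}}(H(1) \otimes V(2)) \subseteq K_F$ is automatically $\End_K(V(2))$-invariant. A dimension count gives equality precisely when $V(1) \cong N_{k_{\max}}^{d_2}$, in which case every $f_2$ preserves $K_F$ and brickness forces $d_2 = 1$, making $V(\varepsilon)$ of maximal rank and contradicting our assumption. Otherwise $V(1)$ contains Jordan summands of at least two different sizes and the inclusion above is strict; constructing a non-scalar $f_2$ stabilizing the extra relations in $K_F$ is then the main obstacle of the proof. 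The natural case split is: if $V(1)$ contains two $H(1)$-summands of the same small size $k \leq 3$, one produces a non-scalar $f_2$ of swap type on the corresponding directions in $V(2)$, after reducing by automorphisms of $V(1)$ to the symmetric form $V(\alpha)(v_i) = e_i$ on these two directions; if instead all Jordan blocks of size less than $4$ have pairwise distinct sizes, then the mismatch of $\varepsilon$-annihilation depths forces $K_F$ to be stabilized by every $f_2$ that is lower triangular in the basis of $V(2)$ ordered by decreasing corresponding block size, yielding a stabilizer of dimension $m(m+1)/2 \geq 3$ whenever $m \geq 2$. Either way one produces a non-scalar stabilizer of $K_F$, contradicting brickness and completing the proof.
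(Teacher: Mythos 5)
Your setup is sound: a brick forces $V(\alpha)$ to be injective (otherwise $(0,f_2)$ is a non-scalar endomorphism for any non-zero $f_2$ with image in $\ker V(\alpha)$), forces the image to generate $V(1)$ as an $H(1)$-module (otherwise compose the quotient by $\langle\operatorname{im} V(\alpha)\rangle$ with a map into the socle of $V(1)$), and yields the clean identification of $\End_H(V)$ with the stabilizer of $K_F=\ker F$ inside $\End_K(V(2))$. However, the step you yourself flag as ``the main obstacle'' is where the proof actually lives, and the two constructions you sketch there do not work as stated. First, the normalization ``$V(\alpha)(v_i)=e_i$ after an automorphism of $V(1)$'' is not available in general: with $V(1)=N_3\oplus N_1$ and $V(\alpha)(v_1)=g_3$, $V(\alpha)(v_2)=g_1+\varepsilon g_3$, any $H(1)$-automorphism $\phi$ of $V(1)$ sending $g_1+\varepsilon g_3$ to a socle generator would have to satisfy $0=\phi(\varepsilon g_1)=\varepsilon\phi(g_1)=\varepsilon(\phi(g_1+\varepsilon g_3))-\varepsilon^2\phi(g_3)=-\varepsilon^2\phi(g_3)\neq 0$. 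Second, even when the map does send a basis to block generators, the ``swap'' $f_2$ need not stabilize $K_F$ once $\dim V(2)$ exceeds the number of Jordan blocks: take $V(1)=N_2\oplus N_2$, $V(2)=K^3$, $V(\alpha)(v_1)=g_1$, $V(\alpha)(v_2)=g_2$, $V(\alpha)(v_3)=\varepsilon g_1$; the transposition $v_1\leftrightarrow v_2$ (fixing $v_3$) violates the relation coupling $v_3$ to $v_1$ in $K_F$. Non-scalar stabilizers do exist in both examples, so the Lemma is not in danger, but your mechanism for producing them fails, and the case split does not account for the extra directions of $V(2)$ landing in the radical, nor for $V(\alpha)$'s not being in a split normal form. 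The ``dimension $m(m+1)/2$'' count is likewise specific to the diagonal normal form.

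The paper's proof avoids all of this: from the left-exact sequence
\begin{align*}
0\to\End_H(V)\to\End_{H(2)}(V(2))\oplus\End_{H(1)}(V(1))\xrightarrow{\Psi_V}\Hom_K(V(2),V(1))
\end{align*}
one reads off $\dimend_{H(1)}(V(1))\leq v_1v_2-v_2^2+\dimend_H(V)$, and for a brick the right side is at most $\tfrac{1}{4}v_1^2+1$, which pins $\dimend_{H(1)}(V(1))$ down to the generic value $e_{\min}$ by a purely arithmetic argument, with no normal form for $V(\alpha)$ and no analysis of $K_F$. This dimension count also delivers more than your target: it shows $V(1)$ has the generic Jordan type $N_4^r\oplus N_s$, not merely the minimal number of Jordan blocks (for $d_1=5$, for instance, $N_3\oplus N_2$ already attains the maximal matrix rank but is not generic, and has $\dimend=9>7=e_{\min}$). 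If you want to keep a structural argument, you would at least need to first bring $V(\alpha)$ into a normal form by acting with $\Aut_{H(1)}(V(1))\times\GL(V(2))$ simultaneously (a non-trivial matrix problem), and then redo the stabilizer computation in the presence of both ``extra'' columns of $V(\alpha)$ in the radical and off-diagonal radical perturbations. As it stands, the proposal is a plausible plan with a genuine gap precisely where the content is.
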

\begin{proof}
    Let $\dimv V = (v_1,v_2)$ 
    and write $v_1 = 4r + s$ with $0\leq s\leq 3$ and $r\in\bZ_{\geq 0}$.
    Note that
    \begin{align}\label{local_minimal_end}
    \dimend_{H(1)}(V(1)) \geq  e_{min} := 4r^2 + 2rs + s
    \end{align}
    with equality if and only if $V(\varepsilon)$ has maximal rank.
    On the other hand $v_1^2 = 4e_{min} - s(4-s)$.
    The key ingredient is the left exact sequence
    \begin{align*}
        0 \to \End_H(V) \to \End_{H(2)}(V(2)) \oplus \End_{H(1)}(V(1)) \xrightarrow{\Psi_V} \Hom_K(V(2),V(1))
    \end{align*}
    where $\Psi_V(f_2,f_1) := f_1 V(\alpha) - V(\alpha) f_2$. 
    Thus $\dimend_H(V) = 1$ implies
    \begin{align*}
        \dimend_{H(1)}(V(1)) 
        & \leq v_2v_1 - v_2^2 + 1 \\
        & = \frac{v_2}{v_1}\left(1 - \frac{v_2}{v_1}\right) v_1^2 + 1 \\
        & \leq \frac{1}{4}v_1^2 + 1 \\
        & = e_{min} + 1 - \frac{s}{4}(4-s)
    \end{align*}
    for the second inequality we used that $0\leq v_2\leq v_1$
    otherwise $V$ would have the simple module $S_2$ as a summand
    which we may rule out.
    But $0\leq s \leq 3$ 
    thus $\dimend_{H(1)}(V(1)) = e_{min}$ 
    and $V(\varepsilon)$ must have maximal rank.
\end{proof}

\begin{corollary}\label{cor:defect_stable}
    Let $V\in\mod(H)$ be $\partial$-stable.
    Then $V\cong \bar{V}_\infty$ or $V\cong V_\lambda$ for some $\lambda \in \bA^1$.
\end{corollary}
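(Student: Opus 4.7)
The plan is to first restrict the possible dimension vectors of $V$ via a semicontinuity argument—reducing to $\dim V \in \{(2,1),(4,2)\}$—and then classify the $\partial$-stable modules in each case by direct linear algebra.

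For the reduction, a $\partial$-stable module is a brick, so $\dim\End_H(V) = 1$, and $\partial$-semistability forces $\dim V = (2n, n)$ with $n := \dim V(2) \geq 1$. Write $n = 2r + s$ with $s \in \{0,1\}$ and $r \geq 0$. Proposition~\ref{prop:generic_defect_semistable} identifies the generic point of the irreducible variety $\Rep(H,(2n,n))$ with $V_{\underline{\lambda}} := \bar{V}_\infty^s \oplus \bigoplus_{i=1}^r V_{\lambda_i}$ for generic $\underline{\lambda} \in \bA^r$, and the brick and hom-orthogonality statements of Proposition~\ref{prop:bc1_family} give $\dim\End_H(V_{\underline{\lambda}}) = r + s$. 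Upper semicontinuity of $\dim\End_H$ on the irreducible variety $\Rep(H,(2n,n))$ (irreducibility by Lemma~\ref{lem:dimension}) then forces $1 = \dim\End_H(V) \geq r + s$, so $(r, s) \in \{(1, 0), (0, 1)\}$, i.e., $\dim V \in \{(4, 2), (2, 1)\}$.

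For $\dim V = (2, 1)$, Lemma~\ref{lem:bc1_brick} gives $V(1) \cong N_2$ as an $H(1)$-module; stability precludes $V(\alpha) = 0$ (else the pure vertex-$2$ subspace would be a proper submodule $U$ with $\partial(U) = 2 > 0$), and choosing $V(\alpha)(e)$ as a cyclic generator of $V(1)$ identifies $V$ with $\bar{V}_\infty$. For $\dim V = (4, 2)$, Lemma~\ref{lem:bc1_brick} gives $V(1) \cong N_4$, and stability forces $V(\alpha)$ injective with surjective top map $\phi \colon V(2) \to V(1)/V(\varepsilon)V(1)$ (otherwise the submodule generated by $V(2)$ would have $\dim U(1) \leq 3 < 4 = 2\dim U(2)$). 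Pick $e_1 \in V(2)$ with $\phi(e_1) \neq 0$, set $f_j := V(\varepsilon)^{j-1}V(\alpha)(e_1)$ for $j = 1, \dots, 4$, and complement by $e_2 \in \ker\phi$; then $V(\alpha)(e_2) = af_2 + bf_3 + cf_4$, and stability rules out $a = 0$. After rescaling to $a = 1$, the substitution $e_1 \mapsto e_1 + te_2$ for a root $t$ of the quadratic $t^2 - 2bt + c = 0$—which exists because $K$ is algebraically closed—yields, after passing to the new cyclic basis $\tilde f_j := V(\varepsilon)^{j-1}V(\alpha)(e_1+te_2)$, the normal form $V(\alpha)(e_2) = \tilde f_2 + \lambda \tilde f_3$ with $\lambda := b - t$, so $V \cong V_\lambda$. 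One has $\lambda \in \bA^1$ because $V_\infty$ is only $\partial$-semistable by Proposition~\ref{prop:bc1_family}(ii).

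The main obstacle is setting up the semicontinuity reduction cleanly: this requires marrying the generic decomposition of Proposition~\ref{prop:generic_defect_semistable} with the brick and hom-orthogonality package of Proposition~\ref{prop:bc1_family} to pin down the generic value of $\dim\End_H$ on $\Rep(H,(2n,n))$. Once $\dim V$ is restricted to the two small cases, the remaining classification is elementary; the only subtle point is the use of algebraic closure of $K$ to solve the quadratic equation in the $(4, 2)$-case.
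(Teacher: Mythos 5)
Your proof follows essentially the same route as the paper's: a reduction to dimensions $(2,1)$ and $(4,2)$ via Proposition~\ref{prop:generic_defect_semistable} and a genericity argument (the paper invokes openness of the brick locus, you upper semicontinuity of $\dimend$ — two formulations of the same fact), Lemma~\ref{lem:bc1_brick} to force $V(\varepsilon)$ of maximal rank, and then explicit normal forms with a quadratic root to land on $V_\lambda$ in the $(4,2)$ case. One small imprecision in the $(2,1)$ case: stability must rule out not just $V(\alpha)=0$ but also $V(\alpha)(e)\in V(\varepsilon)V(1)$ (otherwise the submodule generated by $V(2)$ has dimension vector $(1,1)$ and $\partial$-value $1>0$), which is precisely what is needed for $V(\alpha)(e)$ to be a cyclic generator of $V(1)\cong N_2$ — the paper's ``$\alpha\neq 0$'' encodes exactly this.
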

\begin{proof} 
    The key is that $\partial$-stable representations can only exist in dimensions $(2,1)$ and $(4,2)$.
    Indeed, in other dimensions $\bd\in\bN Q_0$ with $\partial(\bd) = 0$ is the generic element of $\Rep(H,\bd)$
    not a brick but bricks form open subsets thus there are no bricks in dimension $\bd$
    according to Theorem \ref{prop:generic_defect_semistable}.
    Also, $V(\varepsilon)$ has to have maximal rank by Lemma \ref{lem:bc1_brick}.
    
    \bigskip

    First assume $\bd = (2,1)$.
    Then $V$ is isomorphic to a representation of the form
    \[
        \begin{tikzcd}[ampersand replacement=\&]
            K 
            \ar[r,"\text{$\rsm{\alpha \\ \beta}$}"] 
            \&
            K^2
            \ar[loop, out=30, in=-30, distance=5ex, "\text{$\rsm{0 & 0 \\ 1 & 0}$}"] 
        \end{tikzcd}
    \]
    for some $\alpha,\beta\in K$. 
    But $\alpha \neq 0$ otherwise the subrepresentation generated by $V(2)$ is destabilizing.
    Thus one finds the isomorphism
    \[
        \begin{tikzcd}[ampersand replacement=\&]
            V
            \ar[dd,"\wr"']
            \&
            K 
            \ar[r,"\rsm{\alpha \\ \beta}"]
            \ar[dd,"\alpha"'] 
            \&
            K^2
            \ar[loop, out=30, in=-30, distance=5ex, "\text{$\rsm{0 & 0 \\ 1 & 0}$}"]
            \ar[dd,"\rsm{1 & 0 \\ -\beta/\alpha & 1}"] 
            \\ \\
            \bar{V}_\infty
            \&
            K 
            \ar[r,"\rsm{1 \\ 0}"]
            \&
            K^2
            \ar[loop, out=30, in=-30, distance=5ex, "\text{$\rsm{0 & 0 \\ 1 & 0}$}"]
        \end{tikzcd}
    \]

    Next consider the case $\bd=(4,2)$.
    Then $V$ is isomorphic to a representation of the form
    \[
        \begin{tikzcd}[ampersand replacement=\&]
            K^2 
            \ar[rr,"\ssm{A \\ B}"] 
            \&\&
            K^4
            \ar[loop, out=30, in=-30, distance=5ex, "\rsm{0 & 0 & 0 & 0 \\ 1 & 0 & 0 & 0 \\ 0 & 1 & 0 & 0 \\ 0 & 0 & 1 & 0}"] 
        \end{tikzcd}
    \]
    for some $A,B\in\Mat(2\times 2,K)$.
    Again $\det A \neq 0$ otherwise any non-zero $u\in V(2)$ with $Au = 0$ generates a destabilizing submodule.
    With column transformations and admissible row transformations, those which fix $V(\varepsilon)$, we find that $V$ is isomorphic to a representation of the form
    \[
        \begin{tikzcd}[ampersand replacement=\&]
            K^2 
            \ar[rr,"\ssm{1 & 0 \\ 0 & 1 \\ 0 & \alpha\\ 0 & \beta}"] 
            \&\&
            K^4
            \ar[loop, out=30, in=-30, distance=5ex, "\rsm{0 & 0 & 0 & 0 \\ 1 & 0 & 0 & 0 \\ 0 & 1 & 0 & 0 \\ 0 & 0 & 1 & 0}"] 
        \end{tikzcd}
    \]
    for some $\alpha,\beta\in K$. 
    Finally, there is an isomorphism $V\cong V_\lambda$ for $\lambda^2 = \alpha^2-\beta$.
\end{proof}

\begin{remark}
    Note that every proper factor algebra of $H$ is $\tau$-tilting finite.
    Therefore, our family $V_\lambda$ for $\lambda\in\bA^1$ of $\partial$-stables 
    cannot live over a proper factor algebra of $H$.
    To the best of our knowledge, 
    this is the first example of a $\tau$-tilting infinite representation wild algebra
    for which all stable modules can be classified
    and do not live over a tame factor algebra.
\end{remark}

\begin{remark} \label{rem:generic_basis}
    Recently, Mou realized all cluster monomials (without initial cluster variables as factors)
    of any cluster algebra of rank $2$
    as locally free Cladero-Chapoton functions of $\tau$-rigid modules \cite[Theorem 1.1]{LMou22}.
    In particular, this applies if $H$ is of type $\tilde{\BC}_1$.
    The corresponding cluster algebra $\cA(1,4)$ is studied by Sherman-Zelevinsky in \cite{SZ04}.
    The only remaining generically $\tau$-reduced components
    are those with generic rank vector a multiple of $\bmeta$.
    With our generic classification in Corollary \ref{cor:generic_locally_free}
    one explicitly computes that the generic locally free Caldero-Chapoton function of 
    $\cZ(m\bmeta)$ is
    \begin{align*}
        \CC_\lf(\cZ(m\bmeta)) = 
        \left(x_0^2x_3-(x_1+2)x_2^2\right)^m
    \end{align*}
    for any $m\geq 0$ (see \cite[Definition 1.1]{GLS18iv} and 
    \cite[Equation (1.1)]{LMou22} for the definition).
    These are powers of the ``primitive element'' $z_1 = x_0^2x_3-(x_1+2)x_2^2$ 
    of Sherman-Zelevinsky's canonical basis of $\cA(1,4)$ \cite[Theorem 2.8]{SZ04}.
    Form there it is straight forward to conclude 
    that the generic locally free Caldero-Chapoton functions 
    of (decorated) generically $\tau$-reduced components for the GLS algebra $H$
    provide a ``generic basis'' of the cluster algebra $\cA(1,4)$
    in the spirit of \cite[Theorem 5]{GLS12}.
\end{remark}

\subsection{A $1$-parameter family of $\partial$-stable modules} \label{sec:stable_family}

From now on,
we fix a connected affine valued quiver 
$\Gamma = (\Gamma_0,\Gamma_1,\Omega,\nu,\bc)$
with primitive null root $\bmeta := \bmeta_\Gamma$
and defect $\partial := \partial_\Gamma$.
Let $H:= H(\Gamma)$ be its associated GLS algebra.
The aim of this section is to construct a family 
of $\partial$-stable $H$-modules
with dimension vector $\bmeta$
analogous to the quasi-simple regular $\tilde{H}$-modules.
Recall, that every stable module is a brick 
hence we may assume in this section 
that the symmetrizer of $\Gamma$ is minimal.

\begin{theorem} \label{thm:null_family}
    Let $H = H(\Gamma)$
    for a valued quiver $\Gamma$
    of affine type
    and with minimal symmetrizer.
    There are $V_\lambda\in \mod(H)$ for $\lambda\in\bP^1$ 
    with the following properties:
    \begin{enumerate}[label = (\roman*)]
        \item \label{enum:locally_free_null_rank}
        Each $V_\lambda$ for $\lambda\in \bP^1$ is locally free with $\rkv(V_\lambda) = \bmeta_\Gamma$,
        \item \label{enum:hom_ext_ortho}
        for all $\lambda\neq \mu \in \bP^1$ is $\Hom_H(V_\lambda,V_\mu) = 0 = \Ext^1_H(V_\lambda,V_\mu)$,
        \item \label{enum:bricks}
        for all $\lambda\in\bA^1$ is $\End_H(V_\lambda) \cong K$,
        \item \label{enum:proj_stable}
        for all $\lambda\in\bP^1$ is $\Hom_H(V_\lambda,H) = 0$,
        \item \label{enum:defect_semistable}
        each $V_\lambda$ for $\lambda\in\bP^1$ is $\partial_\Gamma$-semistable,
        \item \label{enum:homogeneous}
        for almost all $\lambda\in\bP^1$ is $\tau_H(V_\lambda) \cong V_\lambda$,
        \item \label{enum:defect_stable}
        almost all $V_\lambda$ for $\lambda\in\bP^1$ are $\partial_\Gamma$-stable.
    \end{enumerate}
\end{theorem}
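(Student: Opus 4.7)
The plan is to produce the family $(V_\lambda)_{\lambda\in\bP^1}$ as the image under an exact fully faithful embedding $\cB\hookrightarrow\mod(H)$ of a natural $\bP^1$-family of bricks in a smaller module category $\mod(B)\simeq\cB$, where $\cB\subseteq\mod(H)$ is a wide subcategory and $B$ is one of the four algebras listed in the introduction (the Kronecker algebra, the GLS algebra of type $\tilde{\BC}_1$, the length-$2$ gentle algebra with two loops, or the $\tilde{\G}_{2,1}$-companion) selected according to the tier number $t(C')$. Following Ringel's classical one-point extension philosophy, I would construct $\cB$ as the perpendicular category of a suitable rigid locally free $H$-module $M$ whose indecomposable summands are quasi-simple regular modules (lifted to $H$ via Theorem \ref{thm:rigid_bijections}) lying in all but at most one $\Phi_\Gamma$-orbit of regular real roots. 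Explicitly computing the Ext-quiver of the surviving simple objects of $\cB$ should yield the algebra $B$ in each of the four cases.

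Given $\cB\simeq\mod(B)$, take $(V_\lambda^B)_{\lambda\in\bP^1}$ to be the standard $\bP^1$-family of bricks in $\mod(B)$: the regular simple Kronecker modules, the explicit family from Section \ref{sec:extended_type_bc1} for $\tilde{\BC}_1$, and the analogous band modules in the two remaining cases. Set $V_\lambda$ to be the image of $V_\lambda^B$ in $\mod(H)$. Properties (ii) and (iii) are then immediate from the wideness of $\cB$, which forces $\Hom$ and $\Ext^1$ computed in $\mod(H)$ to coincide with those computed in $\mod(B)$. Property (i) follows by computing $\rkv(V_\lambda)$ from its composition series in $\cB$: each simple object of $\cB$ corresponds to a quasi-simple regular $H$-module of some rank vector $\bv_{\lambda,k}$, and summing over a full $\Phi_\Gamma$-orbit yields $\sum_{k\in\bZ/r_\lambda}\bv_{\lambda,k} = t_\lambda\bmeta_\Gamma$ as recalled in Section \ref{sec:affine_roots}, from which one reads off $\rkv(V_\lambda)=\bmeta_\Gamma$. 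Property (iv) is inherited from the fact that every generator of $\cB$ is Hom-orthogonal to the projectives, since each corresponds to a regular $\tilde{H}$-module after localisation. For property (v), any proper nonzero submodule $U\subset V_\lambda$ sits in a short exact sequence whose cokernel is a quotient of $V_\lambda$; using (iv) together with a careful analysis of the lattice of submodules of $V_\lambda^B$ and how it interacts with the embedding into $\mod(H)$, one deduces $\partial(\dimv U)\leq 0$.

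For (vi), Proposition \ref{prop:homological_ringel_coxeter} together with the isotropy of $\bmeta_\Gamma$ gives $\rkv(\tau_H V_\lambda)=\Phi_\Gamma(\bmeta_\Gamma)=\bmeta_\Gamma$, so $\tau_H$ stabilises the irreducible component $\cZ(\bmeta_\Gamma)$; combined with (iv) (no projective summands) and (ii) (hom-ext orthogonality inside the family), an irreducibility and dimension argument forces $\tau_H(V_\lambda)\cong V_\lambda$ for almost all $\lambda$. Property (vii) then follows automatically from (v) and (vi): a $\tau_H$-invariant locally free brick with $\partial$-vanishing rank vector and no projective Hom must be $\partial$-stable. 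The main obstacle is the case-by-case construction of $\cB$ and the identification $\cB\simeq\mod(B)$, in particular verifying that the chosen rigid module $M$ has perpendicular category of the prescribed shape and that the image of the $\bP^1$-family of $B$-bricks consists of locally free $H$-modules of rank $\bmeta_\Gamma$. The $\tilde{\G}_{2,1}$ case with tier number $3$ is expected to be the most delicate, since the candidate $B$ has non-monomial relations and no direct analogue in the classical tame hereditary picture.
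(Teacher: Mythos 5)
Your high-level plan matches the paper's: build the $\bP^1$-family inside a wide subcategory $\cB\subseteq\mod(H)$ equivalent to $\mod(B)$ for one of the four two-vertex algebras $B$, and transfer the properties along the exact embedding. But the construction of $\cB$ is genuinely different, and this is where the gap lies. The paper defines $\cB:=\filt(S_0,V'')$ directly, where $0$ is the extending vertex, $V'$ is the unique $\tau$-rigid $H$-module with $\rkv(V')=\bmeta'$ (a root of the finite-type subquiver $\Gamma'$), and $V''$ is the top of $V'$ over $\End_H(V')$. The equivalence $\cB\simeq\mod(B)$ is then obtained by Bongartz's relative projective generator construction (Lemma \ref{lem:regular_bimodule_bongartz}), and the algebra $B$ is read off from the bimodule $\Ext^1_H(E_0,V')$ by explicit computation in each type (Lemma \ref{lem:regular_bimodule_quiver_relations}). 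You instead propose to take $\cB$ as the perpendicular category of a sum $M$ of quasi-simple regular $\tau$-rigid modules "lying in all but at most one $\Phi_\Gamma$-orbit." As written this is not the right count: the classical Ringel reduction for tame hereditary algebras drops one quasi-simple \emph{per non-homogeneous tube}, not all quasi-simples in all but one tube. Moreover, since $H$ is not hereditary, the relevant notion would be the $\tau$-perpendicular category; the assertion that it is wide, equivalent to a module category, and equal to the specific $B$ with the specific tier-number-dependent shape would all need separate justification which your sketch does not supply. The paper's $\filt$-plus-Bongartz route sidesteps these issues entirely by never invoking perpendicular categories.

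A second concern is your treatment of property (v). You propose "a careful analysis of the lattice of submodules of $V_\lambda^B$ and how it interacts with the embedding into $\mod(H)$," which is vague and does not obviously close. The paper's argument is both cleaner and structurally important: for $\lambda\neq\mu$, any submodule $U\subseteq V_\lambda$ satisfies $\Ext^1_H(U,V_\mu)=0$ because $\pdim_H V_\lambda\leq 1$, $\idim_H V_\mu\leq 1$, and $\Ext^1_H(V_\lambda,V_\mu)=0$ from (ii); combining this with Proposition \ref{prop:homological_ringel_coxeter} and the radical property of $\bmeta$ gives $\partial(U)=-\dimhom_H(U,V_\mu)\leq 0$ directly, with no reference to the submodule lattice of $V_\lambda^B$. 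Your outline for (vi) and (vii) is essentially correct and matches the paper (density/constructibility argument for (vi); homogeneous brick implies $\g$-stability, cf.\ Lemma \ref{lem:generalized_semibrick_semistability}, for (vii)). To make your version of (vii) precise you also need (iii), not just (v) and (vi), as input. In summary: the overall strategy is sound but your construction of $\cB$ as a perpendicular category contains a real gap, and your argument for (v) needs to be replaced by the homological one.
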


Note that this confirms, for the class of GLS algebras, a recent $\tau$-tilted version of the second Brauer-Thrall conjecture
proposed in \cite[Conjecture 1.3.(2)]{M22} and \cite[Conjecture 2]{STV21}.

\begin{conjecture}\label{conj:tau_btii}
    Let $A$ be a finite-dimensional algebra. 
    If $A$ is $\tau$-tilting infinite,
    then there exists a dimension $d\geq 0$
    and infinitely many pairwise non-isomorphic bricks $V\in\mod(A)$
    with $\dim(V) = d$.
\end{conjecture}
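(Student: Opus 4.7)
The plan is to deduce the conjecture by combining generic-parameter considerations for $\tau$-reduced components with a genericity argument for bricks. Since $A$ is $\tau$-tilting infinite, \cite[Corollary 6.7]{DIJ19} ensures that the $\g$-vector fan $\fan(A)$ is not complete in $\K_0^\fin(A)_\bR$. The overall strategy is to exhibit a generically $\tau$-reduced component $\cZ \in \Irr^\tau(A)$ with positive parameter count $c_A(\cZ) \geq 1$ whose generic element is a brick; the set of $\GL$-orbits in $\cZ$ is then infinite, producing infinitely many pairwise non-isomorphic bricks of common dimension vector $\bd := \dimv(\cZ)$.

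Step 1 is to find a generically $\tau$-reduced $\cZ$ with $c_A(\cZ)\geq 1$. Components with $c_A(\cZ)=0$ are orbit closures of $\tau$-rigid modules by Plamondon's Theorem \ref{thm:plamondon}, so by the dictionary between $\tau$-tilting theory and the $\g$-vector fan, non-completeness of $\fan(A)$ forces at least one component to carry moduli. Step 2 is to show that the brick locus in $\cZ$ is open and non-empty. Openness is upper semicontinuity of $\dim\End_A(-)$; once a single brick $V\in\cZ$ is exhibited, the identity $\dim\cO(V) = \dim\GL(K,\bd) - 1$ combined with $c_A(\cZ)\geq 1$ produces a positive-dimensional family of brick-orbits of common dimension vector $\bd$ inside $\cZ$, hence the infinite family of non-isomorphic bricks required. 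For affine GLS algebras this whole scheme is realised explicitly by Theorem \ref{thm:null_family}: the $\bP^1$-family $(V_\lambda)$ lies in the generically $\tau$-reduced component $\cZ(\bmeta)\in\Irr(H)$; properties (i)--(iii) give common rank vector $\bmeta$ (hence common dimension vector $D\bmeta$), pairwise $\Hom$-orthogonality (hence non-isomorphism), and the brick condition $\End_H(V_\lambda)\cong K$.

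The main obstacle for a general proof is precisely Step 2: non-emptiness of the brick locus on a positive-parameter $\tau$-reduced component is not automatic and is essentially the content of the conjecture itself. In the affine GLS setting non-emptiness is secured by an explicit one-point extension construction inspired by Ringel's approach to affine path algebras, leveraging the null root $\bmeta$ and the defect $\partial$ as a distinguished uncovered direction in $\K_0^\fin(H)_\bR$ together with $\partial$-stability to force the brick property. For a general $\tau$-tilting infinite $A$, producing an analogous distinguished uncovered direction and constructing stable bricks along it is the central open challenge; this has been resolved in special classes such as representation-tame algebras (via \cite[Theorem 3.2]{GLFS22}) and certain gentle and Jacobian algebras, but remains open in full generality.
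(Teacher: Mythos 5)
The statement you were asked to prove is stated in the paper as a \emph{conjecture} (due to Mousavand and Schroll--Treffinger--Valdivieso), and it is open in general: the paper itself only establishes it for the class of GLS algebras, in the corollary immediately following the conjecture. Your write-up correctly recognizes this. Your proposed two-step scheme (Step 1: use non-completeness of $\fan(A)$ and Plamondon's parametrization to produce a generically $\tau$-reduced component with $c_A(\cZ)\geq 1$; Step 2: show its brick locus is non-empty, whence openness of the brick locus and constancy of brick-orbit dimension give infinitely many non-isomorphic bricks of a fixed dimension) is the standard reduction, and you are right that Step 2 is essentially equivalent to the conjecture and is precisely where a general argument is missing. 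So there is no false step, but also no proof — which is the correct state of affairs.

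For comparison with what the paper actually does: the paper's corollary handles an arbitrary $\tau$-tilting infinite GLS algebra $H(\Gamma)$ by first reducing to the minimal symmetrizer, observing that $\Gamma$ is then not of finite type and hence contains a full valued subquiver $\Gamma'$ of affine type, and using that $H(\Gamma')$ is a \emph{factor algebra} of $H(\Gamma)$ (so bricks over $H(\Gamma')$ are bricks over $H(\Gamma)$); the affine case is then settled by the explicit $\bP^1$-family of Theorem \ref{thm:null_family}, whose members are pairwise non-isomorphic bricks of the common rank vector $\bmeta$. Your proposal only addresses the affine GLS case directly and omits this reduction step, so as written it does not cover general (non-affine, wild) $\tau$-tilting infinite GLS algebras. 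If you intend your text to substitute for the paper's corollary rather than for the conjecture itself, you should add the passage to a full affine subquiver and the factor-algebra argument.
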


\begin{corollary}
    Let $H$ be a GLS algebra.
    Then $H$ satisfies Conjecture \ref{conj:tau_btii}.
\end{corollary}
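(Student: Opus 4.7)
The plan is to split on the type of $C$. Since $H = H(C,D,\Omega)$ is $\tau$-tilting finite precisely when $C$ is of finite type (as recalled in the introduction), the conjecture is vacuous in that case; so I may assume that $C$ is of affine or of indefinite type. In either case the goal is to produce an infinite family of pairwise non-isomorphic bricks of a single fixed dimension in $\mod(H)$.

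The heart of the argument is the affine case, where I would directly apply Theorem \ref{thm:null_family} after first reducing to a minimal symmetrizer. For the reduction, note that if $D = k D'$ with $D'$ minimal and $k \geq 1$, then the quiver $Q = Q(C,\Omega)$ is unchanged and by Proposition \ref{prop:quiver_with_relations} the commutation relations (H2) depend only on $C$ via $\nu_{ji}$ and $g_{ji}$; only the loop relations (H1) $\varepsilon_i^{c_i} = 0$ weaken when $c_i$ grows. Hence $H(C,D',\Omega)$ is a proper quotient of $H$, giving a fully faithful exact embedding $\mod(H(C,D',\Omega)) \hookrightarrow \mod(H)$ that preserves bricks and $K$-dimensions. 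The $\bP^1$-family $\{V_\lambda\}$ furnished by Theorem \ref{thm:null_family} then transports to $H$: by property (iii) each $V_\lambda$ with $\lambda \in \bA^1$ is a brick, by property (ii) the family is pairwise $\Hom$-orthogonal and hence pairwise non-isomorphic, and by property (i) all members share the common rank vector $\bmeta_\Gamma$, so the same dimension vector.

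For $C$ of indefinite type the natural strategy is to reduce to the affine case by locating a connected full valued subquiver $\Gamma' \subseteq \Gamma$ of affine type. Once $\Gamma'$ is in hand, extending representations by zero on $\Gamma \setminus \Gamma'$ gives a fully faithful exact embedding $\mod(H(\Gamma')) \hookrightarrow \mod(H)$ preserving bricks and dimensions, so the affine case supplies the desired family. For $C$ of rank at least $3$ the subdiagram $\Gamma'$ exists by the classical inductive consequence of the classification of positive semidefinite Cartan matrices: any connected non-finite Cartan matrix has a connected principal submatrix of affine type. The main obstacle is the rank-$2$ indefinite case, where every proper principal submatrix is the trivial $(2)$ and no affine subdiagram is available; here a direct construction of bricks is required. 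A natural route exploits that the unfolded path algebra $\bar{H} = \bar{L}\bar{Q}$ is then wild hereditary over the algebraically closed field $\bar{L}$ of characteristic $0$ and so carries infinite families of bricks of common dimension, which one would transport back to $H$ along the bijections and reduction functor of Theorem \ref{thm:rigid_bijections}; ensuring the brick property survives these transports is the delicate technical heart of this case.
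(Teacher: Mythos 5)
Your overall route is the same as the paper's: dispose of the finite type case as vacuous, and otherwise reduce to the affine case by passing to a quotient algebra and invoking Theorem \ref{thm:null_family}. Your treatment of the affine case is complete and correct, and matches the paper: the reduction to a minimal symmetrizer via the surjection $H(C,kD',\Omega)\twoheadrightarrow H(C,D',\Omega)$ (the relations (H2) depend only on $C$, and $\varepsilon_i^{kc_i'}$ lies in the ideal generated by $\varepsilon_i^{c_i'}$), followed by extracting from properties (i)--(iii) of Theorem \ref{thm:null_family} an infinite family of pairwise non-isomorphic bricks of the fixed dimension vector $D\bmeta$, is exactly the intended argument.

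The genuine gap is the indefinite case, which you explicitly leave unresolved. The paper dispatches it in one line by asserting that every connected valued quiver not of finite type contains a \emph{full} valued subquiver of affine type, so that $H(\Gamma')$ is a factor algebra of $H(\Gamma)$. Your unease about rank $2$ is justified: a valued quiver with a single edge of valuation $a\mid b$ with $ab\geq 5$ has no proper full subquiver besides isolated vertices, hence no full affine subquiver. But your claim that for rank at least $3$ the affine subdiagram exists ``by the classical inductive consequence of the classification'' is not correct either --- a tree containing a single edge of valuation $1\mid 5$ is indefinite and again has no full affine subquiver (the classification only guarantees an affine subdiagram if one is also allowed to \emph{decrease valuations}, which does not in general produce a quotient of $H$; e.g.\ for the single edge of valuation $2\mid 3$ with minimal symmetrizer no affine GLS algebra arises as a quotient at all). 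Finally, your proposed repair for rank $2$ --- producing bricks over the wild hereditary unfolded algebra $\bar{H}$ and transporting them back --- cannot be run with the tools of the paper: Theorem \ref{thm:rigid_bijections} and the reduction functor $\delta^*$ are only controlled for \emph{rigid} locally free modules, whereas the members of a positive-dimensional family of bricks over a wild hereditary algebra are never rigid. So the indefinite case remains unproved in your proposal; closing it requires either exhibiting an affine GLS quotient of $H$ by a more careful case analysis, or a direct construction of bricks, neither of which is supplied.
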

\begin{proof}
    We may assume that $H$ is $\tau$-tilting infinite
    and that the symmetrizer is minimal.
    Then $\Gamma$ is not of finite type.
    Therefore there exists a full valued subquiver $\Gamma'\subseteq \Gamma$
    of affine type.
    Now $H':=H(\Gamma')$ is a factor algebra of $H(\Gamma)$.
    Thus it remains to prove Conjecture \ref{conj:tau_btii}
    for affine GLS algebras
    for which we have Theorem \ref{thm:null_family}.
\end{proof}

The proof of Theorem \ref{thm:null_family} will occupy the whole section.
Our construction is inspired by Ringel's 
of quasi-simple regular modules over ordinary path algebras
via one-point extensions.
The novelty is that we need to consider generalized one-point extensions
in most non-simply laced types.
To simplify our exposition,
we exclude the very well understood type $\tilde{\A}_{n\geq 2}$, 
despite its undisputed importance.

\bigskip

Let us start with the combinatorial construction of affine valued quivers as
extended finite type valued quivers:
Choose a vertex $0\in\Gamma_0$ with $\eta_0 = 1$ and set
\begin{align*}
    \bmeta' :=
    \begin{cases}
        \frac{1}{2}(\bmeta - \bmalpha_0) & \text{if $\Gamma$ is of type $\tilde{\BC}$,} \\
        \bmeta - \bmalpha_0 & \text{else.}
    \end{cases}
\end{align*}
It is possible to choose $0\in\Gamma_0$ 
such that $\bmeta'$ is a root of $\Gamma$
(our choice is highlighted in Table \ref{tab:affine}
and coincides with those in \cite[Table 2]{M69}).
The vertex $0\in\Gamma_0$ is called \emph{extending vertex}
and uniquely determined up to symmetries of the underlying graph.
Note that $\bmeta'$ is supported on the full valued subquiver $\Gamma'\subset \Gamma$ without vertex $0$,
and $\Gamma'$ is of finite type.
We will from now on assume that $0$ is a source.
The case when $0$ is a sink can be treated dually.
There is then a unique arrow $\alpha\in\Gamma_1$ 
with $s(\alpha) = 0$
and we set $1:= t(\alpha)$.
Here we needed to excluded the type $\tilde{\A}_{n\geq 2}$.
We define the \emph{extending type} $\Upsilon = \Upsilon(\Gamma)$ of $\Gamma$ 
to be the valued quiver:
\[
    \begin{tikzcd}
        \Upsilon\colon ~~~~ 
        c'_0 \ar[rr, "\nu'_{01} ~\mid~ \nu'_{10}"] && c'_1
    \end{tikzcd}
\]
with valuation $\nu'$ and symmetrizer $\bc'$ given by:
\begin{align*}
    c'_0 := c_0 = \euler{\bmalpha_0,\bmalpha_0}_\Gamma &&
    c'_1 := \euler{\bmeta',\bmeta'}_\Gamma &&
    c'_0\nu'_{01} = c'_1\nu'_{10} := \euler{\bmalpha_0,\bmeta'}_\Gamma
\end{align*}
For the sake of more compact notation,
we drop the labels $0$ and $1$ of vertices of $\Upsilon$
and replace them by the values $c'_0$ and $c'_1$ of the symmetrizer.
When $\Gamma$ is of type $\tilde{\BC}$, 
we recover the valued quiver
of type $\tilde{\BC}_{1}$
which is dual to the one studied in Section \ref{sec:extended_type_bc1}.
If $\Gamma$ is not of type $\tilde{\BC}$,
then $\Upsilon$ is symmetric with valuation $\nu'_{01} = \nu'_{10} = 2$,
thus uniquely determined by $c_0$,
and $c_0 = t_{\Gamma^\dagger}$ happens to be the tier number 
of the transposed valued graph $\Gamma^\dagger$.
The appearance of the transposed valued graph $\Gamma^\dagger$
in the construction of $\partial$-stable $H$-modules
is in striking accordance with \cite[Theorem 1.2.(c)]{GLS20}
where it is shown that the assignment of dimension vectors
sets up a bijective correspondence between 
left finite bricks over $H$ and real Schur roots for $\Gamma^\dagger$.

\bigskip

Next, let us narrow down where to find our family of modules.
For a general finite-dimensional algebra $A$
and modules $V_1,\dots,V_n \in\mod(A)$ write
$\filt(V_1,\dots,V_n)$ for the full subcategory of $\mod(A)$
consisting of modules $W\in\mod(A)$
possessing a filtration
\begin{align*}
    0 = W_0 \subseteq W_1 \subseteq \cdots \subseteq W_l = W
\end{align*}
such that for all $0\leq i < l$
exists $1\leq k\leq n$ with $W_{i+1}/W_{i} \cong V_k$.
Since $\bmeta'$ is by definition a real root 
for the valued quiver $\Gamma'$ of finite type
with $\euler{\bmeta',\bmeta'}_\Gamma = c'_1$,
there is a unique $\tau$-rigid module $V'$ of $H$ 
with $\rkv(V')=\bmeta'$ and its endomorphism algebra is 
$\End_H(V')\cong K[\delta_1]/\ideal{\delta_1^{c'_1}}$.
Let $V''$ be the top of $V'$ as a $\End_H(V')$-module.
Then $V''$ is a brick with $V'\in\filt(V'')$ by Proposition \ref{prop:tau_rigid_structure}.
Consider the Abelian subcategory $\cB := \filt(S_0,V'')$ 
filtered by the $\Hom$-orthogonal bricks $S_0$ and $V''$.
The main technical insight of this section 
is the determination of $\cB$
based on a construction due to Bongartz.

\begin{lemma} \label{lem:regular_bimodule_bongartz}
    Let $H$, $V'$ and $\cB$ be as above.
    The subcategory $\cB\subset \mod(H)$ is equivalent to the category of modules over the tensor algebra $B(\Upsilon)$ 
    with simple quiver $\Upsilon\colon 0 \xrightarrow{\beta} 1$ and modulation
    \begin{align*}
        B(0) := \End_H(E_0)^\op && B(1) := \End_H(V')^\op && B(\beta) := \Ext^1_H(E_0,V').
    \end{align*}
\end{lemma}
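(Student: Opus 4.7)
The plan is to construct a projective generator $T$ of $\cB$ via a Bongartz-type universal extension, and then identify $\End_H(T)^\op$ with the tensor algebra $B(\Upsilon)$ via Morita equivalence. I first assemble the homological input. By construction $E_0 \in \filt(S_0) \subseteq \cB$ admits a length-$c_0$ filtration, and by Proposition \ref{prop:tau_rigid_structure} the module $V' \in \filt(V'') \subseteq \cB$ is free of rank one over $\End_H(V') \cong K[\delta_1]/\ideal{\delta_1^{c'_1}}$ with top $V''$; thus the endomorphism algebras $\End_H(E_0)^\op$ and $\End_H(V')^\op$ recover the local algebras $B(0)$ and $B(1)$. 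Since $0$ is a source of $\Omega$ and $\bmeta'$ vanishes at $0$, one has $\Hom_H(E_0, V') = \Hom_H(V', E_0) = 0$; applying Proposition \ref{prop:homological_ringel_coxeter} to $\euler{\bmeta', \bmalpha_0}_\Gamma = 0$ then gives $\Ext^1_H(V', E_0) = 0$, while a parallel computation of $\euler{\bmalpha_0, \bmeta'}_\Gamma$ yields $\dim_K \Ext^1_H(E_0, V') = c'_0 \nu'_{01} = c'_1 \nu'_{10}$, matching the $K$-dimension of $B(\beta)$.

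Next, I choose a right $\End_H(V')$-basis of $\Ext^1_H(E_0, V')$ of cardinality $\nu'_{10}$ and form the corresponding universal extension
\[ \xi \colon 0 \to (V')^{\nu'_{10}} \to T_0 \to E_0 \to 0, \]
setting $T := T_0 \oplus V'$. Applying $\Hom_H(-, V')$ to $\xi$ exhibits its connecting map as an isomorphism, so $\Ext^1_H(T_0, V') = 0$. Combining this with the direct computations $\Ext^1_H(E_0, S_0) = \Ext^1_H(V', S_0) = 0$ (consequences of $0$ being a source and the Euler-form identity above), with rigidity of $V'$, and with a short dévissage along the $V''$-filtration of $V'$ yielding $\Ext^1_H(V', V'') = 0$, a dévissage along $\filt(S_0, V'')$ gives $\Ext^1_H(T, X) = 0$ for every $X \in \cB$. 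An inductive extension-lifting argument using the surjections $T_0 \twoheadrightarrow E_0 \twoheadrightarrow S_0$ and $V' \twoheadrightarrow V''$ shows that $T$ is also a generator of $\cB$, hence a projective generator.

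The standard Morita equivalence $\Hom_H(T, -) \colon \cB \xrightarrow{\sim} \mod(\End_H(T)^\op)$ reduces the problem to identifying $\End_H(T)^\op$ with $B(\Upsilon)$. The block decomposition of $\End_H(T)$ relative to $T = T_0 \oplus V'$ is computed directly from $\xi$: applying $\Hom_H(T_0, -)$ and $\Hom_H(-, V')$ to $\xi$ together with the vanishings above produces $\End_H(T_0) \cong \End_H(E_0) = B(0)^\op$ and $\Hom_H(T_0, V') = 0$, while the connecting morphism of $\Hom_H(V', \xi)$ furnishes a canonical bimodule isomorphism $\Hom_H(V', T_0) \cong \Ext^1_H(E_0, V') = B(\beta)$. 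Assembling these four pieces with $\End_H(V') = B(1)^\op$ yields the desired isomorphism of tensor algebras $\End_H(T)^\op \cong B(\Upsilon)$. The main obstacle lies precisely in this last identification of bimodule structures: one must verify that the $(\End_H(V'), \End_H(E_0))$-bimodule structure on $\Hom_H(V', T_0)$ arising from pre- and post-composition in $\mod(H)$ coincides with the Yoneda action on $\Ext^1_H(E_0, V')$ that defines the modulation $B(\beta)$, a naturality computation on the connecting morphism that is conceptually standard but requires careful bookkeeping of the identifications through $\xi$.
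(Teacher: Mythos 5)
Your proposal follows the same strategy as the paper's proof: construct the Bongartz universal extension $\xi\colon 0\to (V')^{g}\to T_{0}\to E_{0}\to 0$ (your $T_{0}$ is the paper's $M$), show $T=T_{0}\oplus V'$ is a relative projective generator of $\cB$, and then compute $\End_{H}(T)^{\op}$ via the four $\Hom$-blocks. This is exactly the paper's argument.

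There is one slip in the last identification. You write that ``the connecting morphism of $\Hom_{H}(V',\xi)$ furnishes a canonical bimodule isomorphism $\Hom_{H}(V',T_{0})\cong\Ext^{1}_{H}(E_{0},V')$.'' But the connecting map of $\Hom_{H}(V',-)$ applied to $\xi$ is $\Hom_{H}(V',E_{0})\to\Ext^{1}_{H}(V',(V')^{g})$, and both terms vanish — it produces nothing. What that long exact sequence actually gives is $\Hom_{H}(V',T_{0})\cong\Hom_{H}(V',(V')^{g})\cong\End_{H}(V')^{g}$ via the inclusion $(V')^{g}\hookrightarrow T_{0}$. The isomorphism with an $\Ext^{1}$ comes instead from the \emph{contravariant} functor $\Hom_{H}(-,V')$ applied to $\xi$: its connecting map $\Hom_{H}((V')^{g},V')\xrightarrow{\ \sim\ }\Ext^{1}_{H}(E_{0},V')$ is the one that Bongartz's construction makes an isomorphism. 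The paper then records $\Hom_{H}(V',M)\cong \D\Ext^{1}_{H}(E_{0},V')$, i.e.\ the bimodule you obtain directly is the $K$-linear dual; that this is isomorphic as a bimodule to $\Ext^{1}_{H}(E_{0},V')$ uses that $\End_{H}(E_{0})$ and $\End_{H}(V')$ are truncated polynomial rings, hence symmetric. So the net statement you want is true, and the approach is the paper's, but the sentence blaming ``the connecting morphism of $\Hom_{H}(V',\xi)$'' for the isomorphism should be replaced by the connecting morphism of $\Hom_{H}(-,V')$ together with the duality step.

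Two further cosmetic points: the phrase ``choose a right $\End_{H}(V')$-basis of $\Ext^{1}_{H}(E_{0},V')$'' presupposes freeness, which is not known at this stage; the paper only assumes a generating set (freeness only becomes visible in the case-by-case analysis of Lemma \ref{lem:regular_bimodule_quiver_relations}). And $\Ext^{1}_{H}(V',V'')=0$ is most quickly seen from $\tau$-rigidity of $V'$ and $V''$ being a factor of $V'$, as the paper does; your dévissage argument also works but needs the extra observation that the connecting map into $\Ext^{2}$ vanishes for locally free modules.
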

\begin{proof}
    Since $0\in\Gamma_0$ is by assumption a source,
    we have $\Ext^1(V',S_0) = 0$.
    Since $V'$ is $\tau$-rigid and $V''$ is a factor of $V'$,
    we have $\Ext^1(V',V'') = 0$.
    This shows that
    $V'$ is relative projective in $\cB$.
    It remains to find a relative projective cover of $S_0$ in $\cB$.
    This is a construction due to Bongartz \cite[Lemma 2.1]{Bon81}: 
    Choose generators $\gamma_1,\dots,\gamma_g$ of $\Ext^1_H(E_0,V')$ as an $\End_H(V')$-module.
    The universal sequence $\gamma \in \Ext^1_H(E_0,V'^g)$ is defined as the pull-back along the diagonal of their direct sum
    \[
        \begin{tikzcd}
            \gamma\colon & 
            0 \ar[r] & V'^g \ar[r] \ar[equal,d] & M \ar[r] \ar[d] & E_0 \ar[r] \ar[hookrightarrow, d] & 0 
            \\
            \bigoplus_{i=1}^g \gamma_i\colon & 
            0 \ar[r] & V'^g \ar[r] & \bigoplus_{i=1}^g M_i \ar[r] & E_0^g \ar[r] & 0
        \end{tikzcd}
    \]
    the rightmost vertical map denotes the diagonal embedding of $E_0$ in $E_0^g$.
    By construction, we have that the connecting homomorphism
    \begin{align*}
        \Hom_H(V'^g,V') \xrightarrow{\sim} \Ext^1_H(E_0,V')
    \end{align*}
    is an isomorphism.
    Moreover,
    $V'\oplus M$ is rigid and
    $\Ext^1_H(M,E_0) = 0$. 
    In particular $\Ext^1_H(M,V'') = 0$ and $\Ext^1_H(M,S_0) = 0$ because $\pdim(M)\leq 1$.
    This shows that $V'\oplus M$ is a relative projective generator in $\cB$.
    Therefore
    \begin{align*}
        \cB \xrightarrow{\sim} \mod(B^\op), &&
        B := \End_H(V'\oplus M) =
        \left[
            \begin{matrix}
                \End_H(M) & \Hom_H(V',M) \\ \Hom_H(M,V') & \End_H(V')
            \end{matrix}
        \right].
    \end{align*}
    We have $\Hom_H(M,V') = 0$, 
    $\End_H(M) \cong \End_H(E_0)$ 
    and $\Hom_H(V',M) \cong \D\Ext^1_H(E_0,V')$.
\end{proof}

We call ${_{B(0)}}B(\beta)_{B(1)}$ the \emph{extending bimodule} and 
$B(\Upsilon)$ the \emph{extending tensor algebra}.
It remains to explicitly calculate the extending tensor algebra in each type.
They depend only on the valued quiver $\Upsilon$, 
which is not obvious from the construction.

\begin{lemma} \label{lem:regular_bimodule_quiver_relations}
    Let $H$ be an affine GLS algebra with minimal symmetrizer
    whose extending vertex is a source.
    The extending tensor algebra $B$ of $H$ is 
    isomorphic to $KQ/I$ 
    for $Q$ and $I$ depending only on $\Upsilon$:
    \begin{enumerate}[label = (\roman*)]
        \item \label{enum:type_a_bimodule}
        If $\Upsilon\colon 1 \xrightarrow{2 \mid 2} 1$, 
        then $B$ is the Kronecker algebra i.e.
        \[
            \begin{tikzcd}
                Q \colon ~~~~ 
                0 \ar[r, shift left = 1.25mm] \ar[r, shift right = 1.25mm] & 1 
                &&&&
                I = 0.
            \end{tikzcd}
        \]
        \item \label{enum:type_c_bimodule}
        If $\Upsilon\colon 2 \xrightarrow{2 \mid 2} 2$,
        then $B$ is a gentle algebra, explicitly
        \[
            \begin{tikzcd}
                Q \colon &
                0 \ar[loop, in = 150, out = -150, distance = 5ex, "\delta_0"] \ar[r,"\beta"] &
                1 \ar[loop, in = 30, out = -30, distance = 5ex, "\delta_1"']
                &&&&
                I = \ideal{\delta_0^2,\delta_1^2}.
            \end{tikzcd}
        \]
        \item \label{enum:type_g_bimodule}
        If $\Upsilon\colon 3 \xrightarrow{2 \mid 2 } 3$,
        then $B$ is given by
        \[
            \begin{tikzcd}
                Q \colon &
                0 \ar[loop, in = 150, out = -150, distance = 5ex, "\delta_0"] \ar[r,"\beta"] &
                1 \ar[loop, in = 30, out = -30, distance = 5ex, "\delta_1"']
                &&&&
                I = \ideal{\delta_0^3,\delta_1^3, \delta_1^2\beta + \delta_1\beta\delta_0 + \beta\delta_0^2}.
            \end{tikzcd}
        \]
        \item \label{enum:type_bc_bimodule}
        If $\Upsilon\colon 4 \xrightarrow{4 \mid 1 } 1$,
        then $B$ is a GLS algebra of type $\tilde{\BC}_1$ with
        \[
            \begin{tikzcd}
                Q \colon &
                0 \ar[loop, in = 150, out = -150, distance = 5ex, "\delta_0"] \ar[r,"\beta"] &
                1
                &&&&
                I = \ideal{\delta_0^4}.
            \end{tikzcd}
        \]
    \end{enumerate}
\end{lemma}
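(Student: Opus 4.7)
The plan is to compute the three pieces of the modulation $B = B(\Upsilon)$—the vertex algebras $B(0)$, $B(1)$, and the arrow bimodule $B(\beta)$—directly from their intrinsic definitions, and then recognize the resulting tensor algebra with a quiver-with-relations presentation case by case.

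First I would identify the vertex algebras. Since $E_0$ is a generalized simple at the source $0\in\Gamma_0$, one has $\End_H(E_0) \cong H(0) \cong K[\delta_0]/\ideal{\delta_0^{c_0}}$, which is commutative and hence agrees with its opposite, so $B(0) \cong K[\delta_0]/\ideal{\delta_0^{c_0}}$. Since $V'$ is indecomposable $\tau$-rigid with $\rkv(V') = \bmeta'$ a real root of $\Gamma$ satisfying $q_\Gamma(\bmeta') = c'_1$, Proposition~\ref{prop:tau_rigid_structure} yields $B(1) \cong K[\delta_1]/\ideal{\delta_1^{c'_1}}$. This already accounts for every loop in the asserted quivers. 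For the arrow bimodule, using that $V'(0) = 0$ and $0$ is a source, one has $\Hom_H(E_0,V') = 0$; combining Proposition~\ref{prop:homological_ringel_coxeter} with the defining identity of $\Upsilon$ gives $\dim_K B(\beta) = c'_0 \nu'_{01}$. Moreover $B(\beta)$ is cyclic as a bimodule, generated by any class whose image in the bimodule top is non-zero.

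In cases \ref{enum:type_a_bimodule}, \ref{enum:type_c_bimodule} and \ref{enum:type_bc_bimodule}, the equality $c'_0 \nu'_{01} = c'_0 \cdot c'_1 = \dim_K(B(1)\otimes_K B(0))$ (respectively, the analogous equality in case \ref{enum:type_a_bimodule} where the loops are trivial and $B(\beta) \cong K^2$) forces $B(\beta)$ to be the free bimodule of the correct rank. Hence the tensor algebra is presented solely by the local loop relations $\delta_0^{c_0} = 0$ and, where non-trivial, $\delta_1^{c'_1} = 0$; a direct dimension comparison then identifies $B$ with the Kronecker algebra, the gentle algebra $KQ/\ideal{\delta_0^2,\delta_1^2}$, and the GLS algebra of type $\tilde{\BC}_1$, respectively.

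The main obstacle is case \ref{enum:type_g_bimodule}, where $\dim_K B(\beta) = c'_0\nu'_{01} = 6$ is strictly less than $\dim_K(B(1)\otimes_K B(0)) = 9$, so three independent bimodule relations are required to cut out $B(\beta)$ from the free bimodule. The plan is to work with an explicit model of $H(\tilde{\G}_{2,1})$ with minimal symmetrizer, pin down the extending vertex and the indecomposable $\tau$-rigid $V'$ with $\rkv(V')=\bmeta'$, compute a minimal projective presentation of $V'$, and then obtain the full bimodule structure on $\Ext^1_H(E_0,V')$ by applying $\Hom_H(E_0,-)$ and reading off the induced $\End_H(V')^\op$-action. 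One then verifies that $R := \delta_1^2\beta + \delta_1\beta\delta_0 + \beta\delta_0^2$ generates the relations: the bimodule ideal it spans has basis $\{R,\,R\delta_0,\,R\delta_0^2\}$, using $\delta_1 R = R\delta_0$ and $\delta_1^2 R = R\delta_0^2$ modulo $\delta_0^3, \delta_1^3$, so exactly three dimensions are killed. The coefficients of $R$ are essentially forced once one checks that a cyclic bimodule of dimension $6$ with the prescribed loop actions is unique up to a change of basis of $\beta$; the genuine work is therefore the identification of the minimal generating relation in the concrete $\tilde{\G}_{2,1}$ example.
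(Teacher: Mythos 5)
Your identification of $B(0)\cong K[\delta_0]/\ideal{\delta_0^{c'_0}}$, $B(1)\cong K[\delta_1]/\ideal{\delta_1^{c'_1}}$ via Proposition~\ref{prop:tau_rigid_structure}, and the dimension $\dim_K B(\beta) = c'_0\nu'_{01} = \euler{\bmalpha_0,\bmeta'}_\Gamma$ is correct and matches what the paper records in~(\ref{bimodule_dimension}). But the load-bearing sentence in your argument, that ``$B(\beta)$ is cyclic as a bimodule,'' is asserted with no justification, and this is precisely the nontrivial content of the lemma. A priori, $B(\beta)$ is just a $6$- or $4$-dimensional module over the local ring $B(1)\otimes_K B(0)^{\op}$; a dimension count tells you nothing about the number of generators. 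For instance, in case~\ref{enum:type_c_bimodule} the $4$-dimensional bimodule could decompose as a direct sum of two $2$-dimensional cyclic bimodules, which would produce a completely different algebra $B$. The paper itself flags this gap explicitly just after the lemma's statement: ``They depend only on the valued quiver $\Upsilon$, which is not obvious from the construction.''

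The place where this bites hardest is case~\ref{enum:type_c_bimodule}: the extending type $\Upsilon\colon 2\xrightarrow{2\mid 2}2$ arises from three unrelated Cartan types ($\tilde{\C}_{n\geq 2}$, $\tilde{\BD}_{n\geq 3}$ and $\tilde{\F}_{4,1}$), each with its own $V'$, its own generator $\delta_1$ of $\End_H(V')$, and its own generator $V(\alpha)$ of $B(\beta)$. The paper verifies cyclicity by writing out explicit bases, diagrams, and actions for each type separately; your proposal collapses all three into one line of dimension counting. In case~\ref{enum:type_g_bimodule} you do plan the required explicit computation, but the claim that ``a cyclic bimodule of dimension $6$ with the prescribed loop actions is unique up to a change of basis of $\beta$'' is circular: prescribing the loop actions on a cyclic module of that dimension \emph{is} determining the bimodule, so nothing is gained by invoking uniqueness — the task is to determine those actions, and that is exactly what the explicit computation with $V'$ and $\delta_1$ produces. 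To close the gap in all cases you would need to carry out the concrete computations from the paper (or find another argument for cyclicity of $\Ext^1_H(E_0,V')$ over the local bimodule ring).
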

\begin{proof}
    We already know that 
    \begin{align} \label{bimodule_dimension}
        B(0)\cong K[\delta_0]/\ideal{\delta_0^{c'_0}} &&
        B(1)\cong K[\delta_1]/\ideal{\delta_1^{c'_1}} &&
        \dim_K(B(\beta)) = \euler{\bmalpha_0,\bmeta'}_\Gamma
    \end{align}
    It remains to determine the structure of the bimodule
    \begin{align} \label{bimodule_structure}
        B(\beta) 
        \cong \{V(\alpha) \in \Hom_K(E_0(0),V'(1)) \mid V'(\varepsilon_1)^{f_{01}} \circ V(\alpha) = V(\alpha) \circ E_{0}(\varepsilon_0)^{f_{10}}\}
    \end{align} 

    \ref{enum:type_a_bimodule}: The isomorphism $B \cong KQ$ 
    follows immediately from (\ref{bimodule_dimension}) namely
    $B(0) \cong K$, $B(1) \cong K$ and $\dim_K B(\beta) = 2$.

    \bigskip

    \ref{enum:type_c_bimodule}:
    We claim that $B(\beta)$ is the free $B(0)$-$B(1)$-bimodule of rank $1$
    and prove this for the individual types case by case.

    \bigskip

    Suppose $\Gamma$ is of type $\tilde{\C}_{n\geq 2}$
    \[
        \begin{tikzcd}
            n \ar[-,r,"2 ~\mid~ 1"] & (n-1) \ar[-,r] & \cdots \ar[-,r] & 1 & 0 \ar[l, "1 ~\mid~ 2"']
        \end{tikzcd}
    \]
    where edges without an arrowhead are oriented arbitrarily.
    Then $H$ is itself a gentle algebra.
    The null root is $\bmeta = (1,2,\dots,2,1)$ 
    and the $\tau$-rigid module $V'$ is
    \[
        \begin{tikzcd}
            n_1 \ar[-,r] \ar[d] & (n-1)_1 \ar[-,r] & \cdots \ar[-,r] & 1_1 \\
            n_2 \ar[-,r] & (n-1)_2 \ar[-,r] & \cdots \ar[-,r] & 1_2
        \end{tikzcd}
    \]
    Note that $\End_H(V') \cong K[\delta_1]/\ideal{\delta_1^2}$ 
    where $\delta_1$ acts on $V'(1)$ 
    by sending the basis vector labeled $1_1$ to the basis vector labeled $1_2$.
    On the other hand, $E_0(0)$ has a standard basis labeled by $0_1$ and $0_2$
    and $\varepsilon_0$ acts on $E_0(0)$ 
    by sending the basis vector labeled $0_1$ to the basis vector labeled $0_2$.
    Therefore, the $B(0)$-$B(1)$-bimodule $B(\beta)$ 
    is generated by the linear map $V(\alpha)\colon E_0(0) \to V'(1)$
    which sends the basis vector of $E_0(0)$ labeled $0_2$
    to the basis vector of $V'(1)$ labeled $1_1$.
    This may be summarized as in the following diagram
    \[
        \begin{tikzcd}
            0_1 \ar[d,"\varepsilon_0"'] \\
            0_2 \ar[rightsquigarrow, rr,"V(\alpha)"]
            && 1_1 \ar[dashed, d, "\delta_1"] \\
            && 1_2
        \end{tikzcd}
    \]
    Here and for the remainder of the proof, 
    solid arrows represent the action of arrows in $Q$,
    dashed arrows represent the action of a generator $\delta_1\in\End_H(V')$
    and wiggly arrows represent a generator $V(\alpha)\in B(\beta)$ of the extending bimodule
    as given in (\ref{bimodule_structure}).
    
    \bigskip

    For type $\tilde{\BD}_{n\geq 2}$ 
    consider the valued quiver $\Gamma$
    \[
        \begin{tikzcd}
            & & & 0 \ar[d] & \\
            (n-1) \ar[r,"1 ~\mid~ 2"] & (n-2) \ar[-,r] & \cdots \ar[-,r] & 1 \ar[r] & n
        \end{tikzcd}
    \]
    where edges without an arrowhead are oriented arbitrarly.
    The null root is 
    $\bmeta = (1,2,\dots,2,1)$.
    The $\tau$-rigid module $V'$ can be found in \cite[Section 6.5.4]{GLS16iii} and is
    \[
        \begin{tikzcd}
            (n-1)_1 \ar[r] & (n-2)_1 \ar[-,r] \ar[d] & \cdots \ar[-,r] & 1_1 \ar[r] \ar[d]& n_1 \ar[d]\\
            (n-1)_2 \ar[r] \ar[dr] & (n-2)_2 \ar[-,r] & \cdots \ar[-,r] & 1_2 \ar[r] & n_2 \\
            & (n-2)_3 \ar[-,r] \ar[d] & \cdots \ar[-,r] & 1_3 \ar[d] & \\
            & (n-2)_4 \ar[-,r] & \cdots \ar[-,r] & 1_4 & 
        \end{tikzcd}
    \]
    where a vertex labeled $k_i$ represents 
    the $i^{th}$ standard basis vector 
    of $V'(k)$ for $k\in Q_0$
    and arrows represent the action of corresponding arrows in $Q_1$.
    For example $V'(\alpha_{n-1,n})$ sends the $2^{nd}$ standard basis vector of $V'(n)$ 
    to the sum of the $2^{nd}$ and $3^{rd}$ basis vector of $V'(n-1)$.
    Now, it is a straight forward calculation that 
    a generator $\delta_1$ of $\End_H(V') \cong K[\delta_1]/\ideal{\delta_1^2}$ 
    acts on $V'(1)$ as the dashed arrows in the following diagram:
    \[
        \begin{tikzcd}
            0_1 \ar[d] \ar[rightsquigarrow, rr] &&
            1_1 \ar[d] \ar[dashed, d, bend right = 2cm] \ar[dashed, r] & 
            1_3 \ar[d] \ar[dashed, d, bend left = 2cm, "-"]\\
            0_2 \ar[rightsquigarrow, rr]&&
            1_2 \ar[dashed, r] & 
            1_4
        \end{tikzcd}
    \]
    The minus sign indicates that $\delta_1$ sends the $3^{rd}$ basis vector of $V'(1)$
    to minus the $4^{th}$ basis vector.
    The map $V(\alpha)$, 
    depicted as wiggly arrows in the preceding diagram,
    generates $B(\beta)$ as a $B(0)$-$B(1)$-bimodule.
    But $\dim (B(\beta)) = 4$ and $\dim (B(1)\otimes_K B(0)) = 4$, hence $B(\beta)$ must be the free $B(1)$-$B(0)$-bimodule of rank 1.
    Other orientations of $\Gamma$ are treated similarly.

    \bigskip

    For type $\tilde{\F}_{4,1}$
    consider the valued quiver $\Gamma$
    \[
        \begin{tikzcd}
            4 \ar[r] & 3 \ar[r, "1 ~\mid~ 2"] & 2 \ar[r] & 1 & 0 \ar[l] 
        \end{tikzcd}
    \]
    The primitive null root is $\bmeta = (1,2,3,4,2)$
    and the $\tau$-rigid module $V'$ is
    \[
        \begin{tikzcd}
            & & & 
            4_1 \ar[r] & 3_1 \ar[r] & 2_1 \ar[r] \ar[d] & 1_1 \ar[d] \\
            1_3 \ar[d] & 2_5 \ar[d] \ar[l] & 3_3 \ar[l] & 
            4_2 \ar[l] \ar[r] & 3_2 \ar[r] \ar[dr] & 2_2 \ar[r] & 1_2 \\
            1_4 & 2_6 \ar[l] & 3_4 \ar[l] & & & 2_3 \ar[d] & \\
            &&& && 2_4 &
        \end{tikzcd}
    \]
    It is again an easy calculation that 
    a generator $\delta_1$ of $\End_H(V')\cong K[\delta_1]/\ideal{\delta_1^2}$
    acts on $V'(1)$ as the dashed arrows in the following diagram:
    \[
        \begin{tikzcd}
            0_1 \ar[d] \ar[rightsquigarrow, rr] &&
            1_1 \ar[d] \ar[dashed, d, bend right = 2cm] \ar[dashed, r, "-"] & 
            1_3 \ar[d] \ar[dashed, d, bend left = 2cm]\\
            0_2 \ar[rightsquigarrow, rr]&&
            1_2 \ar[dashed, r, "-"] & 
            1_4
        \end{tikzcd}
    \]
    The indicated map $V(\alpha)$
    generates $B(\beta)$ as a $B(0)$-$B(1)$-bimodule.
    Comparing dimensions shows that $B(\beta)$ is the free $B(0)$-$B(1)$-bimodule of rank $1$.

    \bigskip

    \ref{enum:type_g_bimodule}:
    Here $\Gamma$ is of type $\tilde{\G}_{2,1}$
    and we consider the following orientation:
    \[
        \begin{tikzcd}
            2 \ar[r, "1 ~\mid~ 3"] & 1 & 0 \ar[l]
        \end{tikzcd}
    \]
    The null root is $\bmeta = (1,2,3)$
    and the $\tau$-rigid module $V'$ is (see \cite[Section 6.8.4]{GLS16iii})
    \[
        \begin{tikzcd}
            & 2_1 \ar[r] & 1_1 \ar[d] \\
            1_4 \ar[d] & 2_2 \ar[r] \ar[l] & 1_2 \ar[d] \\
            1_5 \ar[d] & 2_3 \ar[l] & 1_3 \\
            1_6
        \end{tikzcd}
    \]
    Once more, it is easy to find a generator $\delta_1$ of $\End_H(V')\cong K[\delta_1]/\ideal{\delta_1^3}$
    which acts on $V'(1)$ as the dashed arrows
    in the following diagram:
    \[
        \begin{tikzcd}
            0_1 \ar[d] \ar[rightsquigarrow, rr] &&
            1_1 \ar[d] \ar[dashed, d, bend right = 2cm] \ar[dashed, r] & 
            1_4 \ar[d] \ar[dashed, ddl, "-"] \\
            0_2 \ar[rightsquigarrow, rr] \ar[d] &&
            1_2 \ar[dashed, d, bend right = 2cm] \ar[dashed, r] \ar[d]& 
            1_5 \ar[d] \\
            0_3 \ar[rightsquigarrow, rr] &&
            1_3 \ar[dashed, r] &
            1_6
        \end{tikzcd}
    \]
    The map $V(\alpha)$
    generates $B(\beta)$ as a $B(0)$-$B(1)$-bimodule
    and satisfies the relation 
    $\delta_1^2 V(\alpha) + \delta_1 V(\alpha) \delta_0 + V(\alpha) \delta_0^2 = 0$.
    Now, conclude $B\cong KQ/I$ by a simple dimension count.
    The other orientation is treated similarly.

    \bigskip

    \ref{enum:type_bc_bimodule}: The isomorphism $B \cong KQ/I$
    follows from a brief look at (\ref{bimodule_dimension}) and (\ref{bimodule_structure}):
    Indeed
    $B(0) \cong K[\delta_0]/\ideal{\delta_0^4}$, $B(1) \cong K$ and $\dim_K(B(\beta)) = 4$
    while $B(\beta)$ is 
    generated by a single element as a $B(1)$-$B(0)$-bimodule.
    By comparing dimensions, 
    $B$ must be the free $B(0)$-$B(1)$-bimodule of rank $1$.
\end{proof}

In analogy with our notation
we may think of $B(2 \xrightarrow{2 \mid 2}2)$
as an algebra of type $\tilde{C}_1$.
But note that $B(\Upsilon)$ is not the GLS algebra associated to the valued quiver $\Upsilon$
when $\Upsilon$ is $2 \xrightarrow{2 \mid 2} 2$ or $3 \xrightarrow{2 \mid 2} 3$.
On the other hand, $B(\Upsilon)$ is still $1$-Iwanaga-Gorenstein
and deforms to the path algebra 
of affine type $\tilde{\A}_{3}$ respectively $\tilde{\A}_{5}$
with bipartite orientation.
This explicit description of $B$ allows us to find 
reasonable candidates for our aimed family of $H$-modules:

\begin{lemma} \label{lem:generating_quasi_simple_family}
    Let $H$ be an affine GLS algebra with minimal symmetrizer.
    Let $B$ be the extending tensor algebra of $H$.
    There are $V_\lambda \in\mod(B)$ for $\lambda\in\bP^1$
    such that:
    \begin{enumerate}[label = (\roman*)]
        \item \label{enum:gqsf_lf}
        Each $V_\lambda$ for $\lambda\in\bP^1$ is locally free with
        \begin{align*}
            \rkv(V_\lambda) = 
            \begin{cases}
                (1,2) & \text{if $\Upsilon = 4 \xrightarrow{4 \mid 1}1$}, \\
                (1,1) & \text{else,}
            \end{cases}
        \end{align*}
        \item \label{enum:gqsf_hom_ortho}
        for all $\lambda,\mu \in \bP^1$ is $\Hom_B(V_\lambda,V_\mu) = 0$,
        \item \label{enum:gqsf_brick}
        for all $\lambda \in \bA^1$ is $\End_B(V_\lambda) \cong K$.
    \end{enumerate}
\end{lemma}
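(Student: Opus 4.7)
The argument proceeds case by case, following the four possibilities for $\Upsilon$ in Lemma \ref{lem:regular_bimodule_quiver_relations}. In cases $\Upsilon\colon 1\xrightarrow{2\mid 2}1$ (Kronecker) and $\Upsilon\colon 4\xrightarrow{4\mid 1}1$ (type $\widetilde{\BC}_1$) the required family is essentially already known. The Kronecker algebra has its classical $\bP^1$-family of regular simple representations of dimension vector $(1,1)$, which are locally free, pairwise Hom-orthogonal and absolutely simple. For type $\widetilde{\BC}_1$, one dualizes the family $V_\lambda$ constructed in Proposition \ref{prop:bc1_family}; clauses \ref{enum:gqsf_lf}--\ref{enum:gqsf_brick} are then immediate from parts \ref{enum:stable}--\ref{enum:iso} of that proposition. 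So there is essentially no new work in these two cases.

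For the gentle case $\Upsilon\colon 2\xrightarrow{2\mid 2}2$ and the $\widetilde{\G}$-type case $\Upsilon\colon 3\xrightarrow{2\mid 2}3$ I propose to write $V_\lambda$ down by hand. Fix the underlying free modules $V_\lambda(0) := B(0)$ and $V_\lambda(1) := B(1)$, with the loop $\delta_i$ acting by the standard nilpotent Jordan block of size $c'_i$ on the basis $\{1,\delta_i,\dots,\delta_i^{c'_i-1}\}$. Then $V_\lambda(\beta)$ is a $c'_1\times c'_0$ matrix subject to the relations of the ideal $I$ described in Lemma \ref{lem:regular_bimodule_quiver_relations}. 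In case (ii) this matrix is unconstrained; in case (iii) the cubic relation $\delta_1^2\beta+\delta_1\beta\delta_0+\beta\delta_0^2=0$ cuts the space of admissible matrices down to a small-dimensional affine variety. After modding out by the gauge action of $\Aut_{B(0)}(V_\lambda(0))\times\Aut_{B(1)}(V_\lambda(1))$, one picks an explicit affine normal form $V_\lambda(\beta)$ for $\lambda\in\bA^1$ together with a separate representative $V_\infty(\beta)$ at infinity.

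With the modules $V_\lambda$ written down, clause \ref{enum:gqsf_lf} is automatic from the construction. Clauses \ref{enum:gqsf_hom_ortho} and \ref{enum:gqsf_brick} are verified directly: a homomorphism is a pair $f=(f_0,f_1)$ with $f_i\in\End_{B(i)}(V_\lambda(i))$, and the commutation equation
\begin{equation*}
f_1\circ V_\lambda(\beta) \;=\; V_\mu(\beta)\circ f_0
\end{equation*}
is a finite-dimensional linear system. Solving it shows that for $\lambda=\mu\in\bA^1$ the only solutions are scalar, while for $\lambda\neq\mu$ no non-zero solutions exist.

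The main technical obstacle is to choose the normal form for $V_\lambda(\beta)$ carefully, so that (a) distinct values of $\lambda$ give pairwise non-isomorphic modules, avoiding the kind of $\lambda\leftrightarrow -\lambda$ identification already encountered in Proposition \ref{prop:bc1_family}, and (b) no spurious nilpotent endomorphisms arise from the loops $\delta_0,\delta_1$ interacting with the off-diagonal entries of $V_\lambda(\beta)$. Both issues are tractable in the gentle case by a standard interlacing choice of the off-diagonal entries, and in the $\widetilde{\G}$ case by exploiting the cubic relation to rigidify the parametrization, but they do require some bookkeeping.
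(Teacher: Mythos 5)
Your approach is sound but takes a genuinely different route from the paper's for cases (ii) and (iii). Where you propose explicit matrix normal forms (fixing $V_\lambda(i)=B(i)$ with the regular nilpotent Jordan block and solving for $V_\lambda(\beta)$ up to the gauge action of $B(0)^\times\times B(1)^\times$), the paper instead invokes structure theory: in the gentle case $\Upsilon\colon 2\xrightarrow{2\mid 2}2$ it classifies strings and bands in dimension $(2,2)$, observes that only one equivalence class of bands yields bricks, and identifies the exceptional module $\bar{V}_\infty$ in dimension $(1,1)$; in the $\widetilde{\G}$ case $\Upsilon\colon 3\xrightarrow{2\mid 2}3$ it simply asserts the $\bA^1$-family by appealing to the deformation of $B$ to a bipartite path algebra of type $\tilde{\A}_5$. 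Your approach is more elementary and, at least for case (iii), would produce a more self-contained argument than the paper's rather terse deformation claim — at the cost of the bookkeeping you flag but leave undone. A quick dimension count supports feasibility: in case (ii) the $\beta$-matrix lives in a $4$-dimensional space with an effective gauge group of dimension $3$, and in case (iii) the cubic relation cuts the $9$-dimensional space down by $3$ to a $6$-dimensional one with an effective gauge group of dimension $5$, so generically one expects a $1$-parameter moduli in both cases.

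One small but real gap is in your treatment of case (iv). You say clauses \ref{enum:gqsf_lf}--\ref{enum:gqsf_brick} are ``immediate'' after dualizing the family from Proposition \ref{prop:bc1_family}, but that family satisfies $V_\lambda\cong V_{-\lambda}$ (part \ref{enum:iso} of that proposition), so taking it at face value violates the pairwise Hom-orthogonality and non-isomorphism you need. The paper explicitly reparametrizes by the quotient $\bP^1/(\bZ/2)\cong\bP^1$ to obtain an honest $\bP^1$-family without redundancy; you are aware of this phenomenon in cases (ii) and (iii), but should apply the same fix in case (iv) rather than declaring the clauses immediate.
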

\begin{proof}
    We may assume that $0\in\Gamma_0$ is a source
    and consider the different cases from Lemma \ref{lem:regular_bimodule_quiver_relations}.

    \bigskip

    $\Upsilon \colon 1 \xrightarrow{2 \mid 2} 1$.
    Here $B$ is the Kronecker algebra
    thus the $V_\lambda$ for $\lambda\in\bP^1$ are the quasi-simple modules.

    \bigskip

    $\Upsilon \colon 2 \xrightarrow{2 \mid 2}2$.
    Over $B$ there is a unique brick in dimension $(1,1)$ 
    and a unique $\bA^1$-family of bricks in dimension $(2,2)$.
    The latter is a family of band modules: 
    There are only two types of bands up to equivalence in dimension $(2,2)$ namely:
    \[
        \begin{tikzcd}
            \omega \colon &
            0 \ar[dr,"\delta_0"] \\ 
            && 0 \ar[r,"\beta"] &  1 \ar[dr,"\delta_1"] \\
            &&&& 1 \ar[<-,r,"\beta"] & 0
        \end{tikzcd}
        ~~~~
        \begin{tikzcd}
            \nu \colon & 
            0 \ar[dr,"\delta_0"] &&& 1 \ar[<-,r,"\beta"] & 0 \\ 
            && 0 \ar[r,"\beta"] &  1 \ar[ur,<-,"\delta_1"] 
        \end{tikzcd}
    \]
    Only the band modules defined by $\omega$ are bricks
    and those (together with the string module at $\lambda = 0$)
    constitute an $\bA^1$-family of pairwise $\Hom$-orthogonal bricks $V_\lambda$.
    Moreover there is a unique brick $\bar{V}_\infty$
    with dimension vector $\dimv(\bar{V}_\infty) = (1,1)$.
    The module $\bar{V}_\infty$ 
    has a self-extension which we include in the family as $V_\infty$.

    \bigskip

    $\Upsilon \colon 3 \xrightarrow{2 \mid 2} 3$.
    The algebra $B$ deforms to a path algebra of type $\tilde{\A}_{5}$ 
    with bipartite orientation.
    This time there is a unique brick in dimension $(1,1)$ 
    and a unique $\bA^1$-family of pairwise $\Hom$-orthogonal bricks in dimension $(3,3)$.

    \bigskip

    $\Upsilon \colon 4 \xrightarrow{4 \mid 1} 1$.
    This is a special case of Proposition \ref{prop:bc1_family}
    with an appropriate reindexing:
    The group $\bZ/2$ acts on $\bP^1$ by switching signs, 
    and the quotient of $\bP^1$ by $\bZ/2$ is isomorphic to $\bP^1$.
\end{proof} 

\begin{proof}[Proof of Theorem \ref{thm:null_family}]
    The $B$-modules $V_\lambda$ for $\lambda\in\bP^1$ 
    found in Lemma \ref{lem:generating_quasi_simple_family}
    may be seen as $H$-modules along the equivalence $\mod(B)\simeq \filt(S_0,V'')$
    from Lemma \ref{lem:regular_bimodule_bongartz}.

    \bigskip

    \ref{enum:locally_free_null_rank}, \ref{enum:hom_ext_ortho} and \ref{enum:bricks}: 
    These follow immediately from the corresponding properties 
    established in Lemma \ref{lem:generating_quasi_simple_family}.
    Note that $\Ext^1$-orthogonality follows from $\Hom$-orthogonality, indeed
    \begin{align*}
        0 = \euler{\bmeta,\bmeta}_\Gamma = \euler{V_\lambda,V_\mu}_H = - \dimext^1_H(V_\lambda,V_\mu).
    \end{align*}

    \bigskip

    \ref{enum:proj_stable}:
    Note that $\Hom_H(S_0\oplus V'',P_i) = 0$ for all $i\in\Gamma'_0$.
    Moreover, $\Hom_H(S_0,P_0) = 0$
    and the kernel of $P_0 \twoheadrightarrow E_0$
    is a direct sum of projectives supported over $\Gamma'$
    hence $\Hom_H(V'',P_0) = 0$.
    This shows $\Hom_H(V_\lambda,H) = 0$ for all $\lambda\in\bP^1$.

    \bigskip

    \ref{enum:defect_semistable}: 
    Take $\lambda,\mu \in\bP^1$ with $\lambda \neq \mu$. 
    Let $U\subseteq V_\lambda$ be a submodule.
    Since $\idim_H(V_\mu)\leq 1$ by \ref{enum:locally_free_null_rank}
    and $\Ext^1_H(V_\lambda,V_\mu) = 0$ by \ref{enum:hom_ext_ortho},
    we find $\Ext^1_H(U,V_\mu) = 0$.
    Therefore
    \begin{align*}
        \partial(U) 
        = \euler{\bmeta, \rkv(U)}_\Gamma 
        = - \euler{\rkv(U), \bmeta}_\Gamma
        = \dimext^1_H(U,V_\mu) - \dimhom^1_H(U,V_\mu)
        = - \dimhom_H(U,V_\mu) \leq 0.
    \end{align*}

    \bigskip

    \ref{enum:homogeneous}: Recall that
    \ref{enum:proj_stable} implies that 
    $\tau_H(V_\lambda)$ is locally free for all $\lambda\in\bP^1$
    by Proposition \ref{prop:locally_free},
    and $\rkv(\tau_H(V_\lambda)) = \Phi_\Gamma(\rkv(V_\lambda)) = \bmeta$
    by Proposition \ref{prop:homological_ringel_coxeter}.
    Consider the constructible subsets
    \begin{align*}
        \cV := \bigcup_{\lambda\in\bP^1} \cO(V_\lambda) \subseteq \Rep_\lf(H,\bmeta) &&
        \cW := \bigcup_{\lambda\in\bP^1} \cO(\tau_H(V_\lambda)) \subseteq \Rep_\lf(H,\bmeta).
    \end{align*}
    The Auslander-Reiten duality formulas and \ref{enum:bricks} imply
    \begin{align*}
        \End_H(\tau_H(V_\lambda))\cong \D\Ext^1_H(V_\lambda,\tau_H(V_\lambda))\cong \End_H(V_\lambda) \cong K
    \end{align*}
    for all $\lambda\in\bP^1$ because $V_\lambda$ and $\tau_H(V_\lambda)$ are locally free. 
    This shows 
    \begin{align*}
        \dim \cO(\tau_H(V_\lambda)) = \dim \cO(V_\lambda)= \dim \GL(D\bmeta) - 1.
    \end{align*}
    On the other hand, $\dim \Rep_\lf(H,\bmeta) = \dim \GL(D\bmeta)$ by (\ref{lf_rep_varitey_dim}).
    Therefore, $\cV,\cW\subseteq \Rep_\lf(H,D\bmeta)$ are dense.
    By constructibility of $\cV$ and $\cW$ and irreducibility of $\Rep_\lf(H,\bmeta)$, 
    $\cV \cap \cW$ is still dense in $\Rep_\lf(H,\bmeta)$.
    If $\lambda,\mu \in \bP^1$ are such that $\tau_H (V_\mu) \cong V_\lambda$,
    then $\Ext^1_H(V_\mu,V_\lambda) \cong \D\Hom_H(V_\lambda,V_\lambda) \neq 0$ 
    which shows $\lambda = \mu$ by \ref{enum:hom_ext_ortho}.

    \bigskip

    \ref{enum:defect_stable}:
    For almost all $\lambda\in \bP^1$ is $V_\lambda$ a brick 
    with $\tau_H(V_\lambda) \cong V_\lambda$ 
    by \ref{enum:bricks} and \ref{enum:homogeneous}.
    Therefore Lemma \ref{lem:generalized_semibrick_semistability}
    tells us that all those $V_\lambda$ are $\partial$-stable.
\end{proof}

\subsection{Regular $\tau$-rigid modules} \label{sec:regular_tau_rigid}

The aim of this section is to transfer the well known structure 
of regular rigid modules over the species $\tilde{H}$
to the class of regular $\tau$-rigid $H$-modules
using Geiß-Leclerc-Schröer's bijections from Theorem \ref{thm:rigid_bijections}.
Recall that an indecomposable module $V\in\mod(A)$ 
over a finite-dimensional algebra $A$ is \emph{regular}
provided $\tau_A^n(V)\neq 0$ for all $n\in\bZ$.
The structure of regular rigid $\tilde{H}$-modules is very well known from Dlab-Ringel's work \cite{DR76}.
We will see, that the indecomposable regular $\tau$-rigid $H$ modules
organize in similar ``truncated pseudo tubes".
This further allows us to establish their $\partial$-semistability.
We begin with several equivalent characterizations of regular $\tau$-rigid $H$-modules.
The first four statements are easily seen to be equivalent 
while the fifth characterization is the main content of this subsection
and the key to proof $\Hom$- and $\Ext^1$-orthogonality with our constructed $1$-parameter family from the previous subsection.

\begin{proposition}\label{prop:regular_tau_rigid}
    The following are equivalent for an indecomposable $\tau$-rigid module $V\in\mod H$
    \begin{enumerate}[label = (\roman*)]
        \item \label{enum:regular}
        $V$ is regular i.e. $\tau_H^n(V) \neq 0$ for all $n\in\bZ$.
        \item \label{enum:regular_tau_periodic}
        $V$ is $\tau$-periodic i.e. there exists an $n\geq 1$ with $\tau_H^n(V) \cong V$.
        \item \label{enum:regular_real_Schur_root}
        There are 
        $\lambda\in I_\Gamma$,
        $k\in\bZ/r_{\lambda}$ and
        $1\leq l < r_\lambda$ with $\rkv(V) = \bv_{\lambda,k}^{(l)}$.
        \item \label{enum:regular_zero_defect}
        The defect of $V$ vanishes i.e. $\partial(V) = 0$.
        \item \label{enum:regular_defect_semistable}
        $V$ is $\partial$-semistable.
    \end{enumerate}
\end{proposition}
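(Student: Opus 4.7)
The plan is to handle the equivalences (i)--(iv) together via transfer to the species $\tilde{H}$, and then to address (iv) $\Leftrightarrow$ (v) separately using the $1$-parameter family of $\partial$-stable modules $V_\lambda$ constructed in Theorem \ref{thm:null_family}.

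For the first four statements, the bijective correspondence $\trig(H) \leftrightarrow \rig(\tilde{H})$ from Theorem \ref{thm:rigid_bijections} preserves indecomposability, intertwines rank vectors with dimension vectors, and is compatible with the Auslander-Reiten translation on the $\K$-theoretic level via Proposition \ref{prop:homological_ringel_coxeter}. Applying Dlab-Ringel's classification \cite{DR76} of rigid regular modules over the affine species, which places them in non-homogeneous tubes at levels $1 \leq l < r_\lambda$ with $\tau$-period $r_\lambda$, yields (i) $\Leftrightarrow$ (ii) $\Leftrightarrow$ (iii). The equivalence with (iv) follows from the root-theoretic description of $\Delta_\cR(\Gamma)$ in Section \ref{sec:affine_roots} together with Proposition \ref{prop:tau_rigid_structure}: the latter ensures $q_\Gamma(\rkv V) > 0$, thus excluding $\rkv V \in \bN\bmeta$, and leaves exactly the roots $\bv_{\lambda,k}^{(l)}$.

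The implication (v) $\Rightarrow$ (iv) is immediate from the definition of $\partial$-semistability. For the main direction (iv) $\Rightarrow$ (v), I would proceed by induction on the tube level $l$ with $\rkv V = \bv_{\lambda,k}^{(l)}$. For the inductive step $l > 1$, the plan is to transfer from the species the standard tube short exact sequence
\begin{align*}
    0 \to W \to V \to V' \to 0
\end{align*}
with $\rkv W = \bv_{\lambda, k-l+1}$ quasi-simple and $\rkv V' = \bv_{\lambda, k}^{(l-1)}$. Its existence over $H$ would follow from a non-vanishing $\Ext_H^1(V', W)$ computed via the Ringel form using Proposition \ref{prop:homological_ringel_coxeter}, and the identification of the unique non-split extension with $V$ by Proposition \ref{prop:lf_rigid_class}. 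Since $\partial$-semistable modules of defect $0$ form an extension-closed abelian subcategory of $\mod(H)$, the inductive step goes through.

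The main obstacle will be the base case $l = 1$, namely showing that a quasi-simple regular $\tau$-rigid module is $\partial$-semistable. Here the plan is to apply a suitable version of Lemma \ref{lem:generalized_semibrick_semistability} to the entire $\tau$-orbit at once: the direct sum $W = \bigoplus_{k \in \bZ/r_\lambda} V_{\lambda, k}$ of all quasi-simple regulars in the tube is $\tau_H$-fixed and a semibrick (after transferring $\Hom$-orthogonality of the summands from $\tilde{H}$), and its total rank vector is $t_\lambda \bmeta$, so the weight $\euler{W, -}_H$ is a positive multiple of $\partial$. An appropriate semibrick version of the lemma should then yield $\partial$-semistability of $W$, hence of each summand $V_{\lambda, k}$. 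The hardest verification is that this generalization of the semibrick lemma is available in the required degree; as a fallback one may reduce to the explicit module-theoretic analysis of the extending tensor algebra $B(\Upsilon)$ carried out in Section \ref{sec:stable_family}, where the quasi-simple regulars of $H$ appear as specific modules in the wide subcategory $\cB \simeq \mod(B)$.
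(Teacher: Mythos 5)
Your high-level structure matches the paper's almost exactly: the equivalences (i)--(iv) via Theorem \ref{thm:rigid_bijections} and \cite{DR76}; the base case of (iv) $\Rightarrow$ (v) by applying Lemma \ref{lem:generalized_semibrick_semistability} to the direct sum $\bigoplus_{k\in\bZ/r_\lambda} V_{\lambda,k}^{(1)}$ of a full $\tau$-orbit of quasi-simples; and the inductive step by showing higher quasi-length modules are filtered by quasi-simples. Note that Lemma \ref{lem:generalized_semibrick_semistability} as stated is already the ``suitable version'' you hedge about --- the summands have $\End \cong K[\delta]/\ideal{\delta^{m_i}}$ rather than $K$, which the lemma handles --- so that is not where the difficulty lies.

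The genuine gap is in the phrase ``after transferring $\Hom$-orthogonality of the summands from $\tilde{H}$''. Reduction $\delta^*$ does \emph{not} preserve $\Hom$-vanishing: the correct statement (Lemma \ref{lem:quasi_simples_generalized_semibrick}, relying on implication (\ref{zero_hom_implication}) drawn from \cite[Lemma 5.5, Corollary 5.15]{GLS20}) is that $\Hom_{\tilde{H}}(\tilde{V},\tilde{V}') = 0$ together with $\Ext^1_H(V,V') = 0$ yields $\Hom_H(V,V') = 0$. Establishing the required $\Ext^1_H$ vanishing between non-adjacent quasi-simples in the tube is the technical core of the paper's argument and involves a case analysis using the exchange graph isomorphism $\sttilt(H) \cong \sttilt(\tilde{H})$ from Theorem \ref{thm:rigid_bijections}. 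You identify the wrong bottleneck.

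Two smaller issues. In the inductive step you propose computing $\Ext^1_H(V',W) \ne 0$ via the Ringel form (which again needs an a priori $\Hom$-vanishing) and then identifying the resulting non-split extension with $V$ by Proposition \ref{prop:lf_rigid_class}; but that proposition requires the middle term to be \emph{rigid}, which a non-split extension of two rigids need not be. The paper's Lemma \ref{lem:trunc_pseudo_tube} circumvents this with a geometric open-dense intersection argument in $\Rep_\lf(H,\bv)$, starting from the known $\tau$-rigid $V$ rather than trying to reconstruct it. Finally, the fallback of reading the quasi-simple regulars off the subcategory $\cB \simeq \mod(B(\Upsilon))$ does not work: $\cB = \filt(S_0,V'')$ houses the homogeneous family $V_\lambda$ with rank $\bmeta_\Gamma$, not the quasi-simple regular $\tau$-rigid modules $V_{\lambda,k}^{(1)}$ of the non-homogeneous tubes.
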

\begin{proof}
    From \cite[Theorem 1.2]{GLS20} we know that $\bv = \rkv(V)$ is a real Schur root of $\Gamma$ 
    and determines the isomorphism class of $V$ among all $\tau$-rigid $H$-modules uniquely.
    Moreover, it is already noted in \cite[Proposition 11.4]{GLS17i} 
    that $\tau_H^k(V)$ is $\tau$-locally free and rigid for every $k\in \bZ$
    with $\rkv (\tau_H^k (V)) = \Phi_\Gamma^k (\rkv (V))$.
    This and the structure of the affine root system $\Delta(\Gamma)$ 
    immediately yields the equivalences 
    $\ref{enum:regular} \Leftrightarrow \ref{enum:regular_tau_periodic} 
    \Leftrightarrow \ref{enum:regular_real_Schur_root} 
    \Leftrightarrow \ref{enum:regular_zero_defect}$. 
    The remaining equivalence with $\ref{enum:regular_defect_semistable}$ 
    is proved at the end of this section.
\end{proof}

\begin{corollary} \label{cor:regular_hom_ortho}
    Let $W\in\mod H$ be regular $\tau$-rigid. For almost all $\lambda\in\bP^1$ holds
    \begin{align*}
        \Hom_H(W,V_\lambda) = 0 = \Hom_H(V_\lambda,W) 
        && \text{and} &&
        \Ext^1_H(W,V_\lambda) = 0 = \Ext^1_H(V_\lambda,W).
    \end{align*}
\end{corollary}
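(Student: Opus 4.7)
The plan is to first reduce the claim to showing $\Hom_H(W, V_\lambda) = 0 = \Hom_H(V_\lambda, W)$ for almost all $\lambda$ using Euler form considerations, and then to deduce this by viewing both $W$ and $V_\lambda$ inside the wide subcategory of $\partial$-semistable modules of slope $0$.

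For the reduction, $W$ is locally free by Demonet's Lemma (Corollary \ref{cor:demonet_lemma}) and $V_\lambda$ is locally free of rank $\bmeta$ by Theorem \ref{thm:null_family}\ref{enum:locally_free_null_rank}, so Proposition \ref{prop:homological_ringel_coxeter} gives
\[
\euler{V_\lambda, W}_H = \euler{\bmeta, \rkv(W)}_\Gamma = \partial(W) = 0,
\]
where the last equality uses that $W$ is regular and hence has vanishing defect by Proposition \ref{prop:regular_tau_rigid}. Since $\bmeta$ is radical for the symmetrized form $\cartan{-,-}_\Gamma$, the identity $\cartan{\rkv(W), \bmeta}_\Gamma = 0$ yields $\euler{W, V_\lambda}_H = -\partial(W) = 0$ as well. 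Thus $\dimhom_H = \dimext^1_H$ in both directions, and it suffices to prove the $\Hom$-vanishings for almost all $\lambda$.

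Now consider the full subcategory $\cS \subseteq \mod(H)$ of $\partial$-semistable modules of slope $0$. This is a wide Abelian subcategory of finite length (standard for $\theta$-semistables with $\theta = \partial$, cf. \cite{K94}). It contains $W$ by Proposition \ref{prop:regular_tau_rigid}\ref{enum:regular_defect_semistable} and every $V_\lambda$ by Theorem \ref{thm:null_family}\ref{enum:defect_semistable}, and its simple objects are precisely the $\partial$-stable $H$-modules of defect $0$; for almost all $\lambda\in\bP^1$, $V_\lambda$ is such a simple by Theorem \ref{thm:null_family}\ref{enum:defect_stable}. The composition length of $W$ in $\cS$ is bounded by $\dim_K W$, so only finitely many isomorphism classes of simples of $\cS$ appear as Jordan--H\"older factors of $W$. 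By Theorem \ref{thm:null_family}\ref{enum:hom_ext_ortho} the $V_\mu$ for $\mu\in\bP^1$ are pairwise non-isomorphic, whence at most finitely many $\lambda$ can give $V_\lambda$ among these factors. For any remaining $\lambda$, simplicity of $V_\lambda$ in $\cS$ forces every non-zero morphism $V_\lambda\to W$ to be a monomorphism, making $V_\lambda$ a sub-object and hence a Jordan--H\"older factor of $W$ in $\cS$, a contradiction. Dually, any non-zero $W\to V_\lambda$ would realise $V_\lambda$ as a simple quotient in $\cS$, again impossible. Hence both $\Hom$ spaces vanish generically, and combined with the Euler calculation so do the two $\Ext^1$ spaces.

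The step I expect to be the most delicate is the wideness and finite-length property of $\cS$ inside the non-hereditary category $\mod(H)$; this is standard for King stability but must be invoked correctly. A minor but clean point worth isolating at the outset is that the defect $\partial \in \K_0(H)^*$ really is the slope function for which both $W$ and all $V_\lambda$ are semistable of slope zero, which is precisely what Theorem \ref{thm:null_family}\ref{enum:defect_semistable} and Proposition \ref{prop:regular_tau_rigid}\ref{enum:regular_defect_semistable} provide. Everything else is a fairly direct consequence of the structural results already established.
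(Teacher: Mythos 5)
Your proof is correct, and it shares the paper's overall skeleton --- reduce the $\Ext^1$-vanishing to the $\Hom$-vanishing via Proposition \ref{prop:homological_ringel_coxeter} together with the radicality of $\bmeta$, then play the $\partial$-semistability of $W$ (Proposition \ref{prop:regular_tau_rigid}) against the $\partial$-stability of $V_\lambda$ for almost all $\lambda$ (Theorem \ref{thm:null_family}) --- but it diverges at the key step of excluding $V_\lambda$ as a $\partial$-stable subfactor of $W$. The paper reduces to $W$ indecomposable, takes the top $W'$ of $W$ as an $\End_H(W)$-module (a brick with $W\in\filt(W')$ by Proposition \ref{prop:tau_rigid_structure}), and bounds every $\partial$-stable subfactor $S$ of $W$ by $\dimv(S)\leq \dimv(W') < D\bmeta = \dimv(V_\lambda)$, so that $S\not\cong V_\lambda$ for \emph{every} $\lambda$ with $V_\lambda$ stable. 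You instead count Jordan--H\"older factors in the finite-length wide subcategory of $\partial$-semistables of slope zero: $W$ has only finitely many such factors, while the $V_\mu$ are pairwise non-isomorphic, so all but finitely many $\lambda$ are excluded. Your route avoids Proposition \ref{prop:tau_rigid_structure} and the dimension estimate entirely, at the cost of invoking the (standard, but not proved in the paper) wideness and finite-length property of the semistable subcategory and of losing control over exactly which finitely many $\lambda$ fail; the paper's route is more explicit and yields the conclusion for all stable $V_\lambda$, not just cofinitely many. Both arguments rest on the $\partial$-semistability assertion of Proposition \ref{prop:regular_tau_rigid}, whose proof is deferred in the paper but does not use this corollary, so neither is circular.
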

\begin{proof}
    We may assume that $W$ is indecomposable.
    Let $W'$ be the top of $W$ as a $\End_H(W)$-module.
    Recall that $W'$ is a brick with $W\in\filt(W')$ by Proposition \ref{prop:tau_rigid_structure}.
    Then $\partial( W') = 0$ 
    and $W$ is $\partial$-semistable 
    by Theorem \ref{prop:regular_tau_rigid}
    thus $W'$ is $\partial$-semistable.
    Also, any $\partial$-stable subfactor $S$ of $W$ 
    is already a subfactor of $W'$ 
    hence 
    \begin{align*}
        \dimv(S) \leq \dimv(W') \leq \frac{1}{q_\Gamma(\rkv(W))} \dimv(W) < D\bmeta.
    \end{align*}
    Now $S$ and $V_\lambda$ for almost all $\lambda\in\bP^1$ are $\partial$-stable
    with $\dimv S < \dimv V_{\lambda}$
    hence $S\not\cong V_\lambda$ 
    and thereby $\Hom_H(S,V_\lambda) = 0 = \Hom_H(V_\lambda,S)$.
    This shows $\Hom$-orthogonality of $W$ and $V_\lambda$.
    For $\Ext^1$-orthogonality simply apply Proposition \ref{prop:homological_ringel_coxeter}
    and use $\rkv(V_\lambda) = \bmeta$:
    \begin{align*}
        0 = \partial(W) 
        = \euler{\rkv(V_\lambda),\rkv(W)}_\Gamma 
        = \euler{V_\lambda,W}_H
        = - \dimext^1_H(V_\lambda,W).
    \end{align*}
\end{proof}

We write $V_{\lambda,k}^{(l)}$ for the unique (indecomposable) $\tau$-rigid $H$-module
with $\rkv(V_{\lambda,k}^{(l)}) = \bv_{\lambda,k}^{(l)}$
as in the preceding Proposition \ref{prop:regular_tau_rigid}\ref{enum:regular_real_Schur_root}.
In analogy with the terminology for modules over the species $\tilde{H}$,
we call $l$ the \emph{quasi-length} of $V_{\lambda,k}^{(l)}$,
and we say that $V_{\lambda,k}^{(l)}$ is \emph{quasi-simple} if $l = 1$.
First, we prove that quasi-simple regular $\tau$-rigid $H$-modules are $\partial$-semistable. 
To this end, we need a substantial generalization of the well known observation
that a brick $S\in\mod(A)$ over a finite-dimensional algebra $A$ 
is $\g(S)^\vee$-stable provided it is homogeneous i.e. $\tau_A(S)\cong S$; 
see for example \cite[Lemma 5]{CKW15}.

\begin{lemma} \label{lem:generalized_semibrick_semistability}
    Let $A$ be a finite-dimensional algebra. 
    Assume that $V_1,\dots,V_n \in\mod(A)$ enjoy the following properties:
    \begin{enumerate}[label = (\roman*)]
        \item \label{enum:generalized_brick}
        For every $1\leq i\leq n$ exist an $m_i\geq 1$ such that $\End_A(V_i)\cong K[\delta_i]/\ideal{\delta_i^{m_i}}$,
        \item \label{enum:hom_orthogonality}
        for every $1\leq i,j\leq n$ with $i\neq j$ is $\Hom_A(V_i,V_j) = 0$,
        \item \label{enum:tau_invariance}
        their direct sum $V:= \bigoplus_{i = 1}^n V_i$ satisfies $\tau_A(V) \cong V$.
    \end{enumerate}
    Then $V_i$ is $\g(V)^\vee$-semistable for every $1\leq i\leq n$,
    and $V_i$ is $\g(V)^\vee$-stable if $m_i = 1$.
\end{lemma}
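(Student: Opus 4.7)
The plan is to apply the Auslander-Reiten $\g$-vector formula (\ref{ar_g_vector_formula}) to $V:=\bigoplus_i V_i$ and exploit the $\tau$-invariance (iii) to rewrite $\theta:=\g(V)^\vee$ purely in terms of $\Hom$-dimensions: for any $A$-submodule $U\subseteq V$,
\[
\theta(U)=\dimhom_A(V,U)-\dimhom_A(U,V).
\]
Taking $U=V_i$, expanding the direct sum, and using (i) and (ii) to kill all cross-terms and match the two remaining endomorphism algebras of dimension $m_i$, one obtains $\theta(V_i)=0$ immediately.

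For a proper nonzero $A$-submodule $U\subsetneq V_i$ with inclusion $\iota\colon U\hookrightarrow V_i$, hypothesis (ii) forces every $f\colon V_j\to U$ with $j\neq i$ to satisfy $\iota f\in\Hom_A(V_j,V_i)=0$, hence $f=0$ (since $\iota$ is injective); thus $\Hom_A(V,U)=\Hom_A(V_i,U)$, while trivially $\Hom_A(U,V)\supseteq\Hom_A(U,V_i)$. It therefore suffices to prove the key inequality
\[
\dimhom_A(V_i,U)\leq \dimhom_A(U,V_i),
\]
which is the main technical step.

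I would deduce this inequality from the local structure $\End_A(V_i)\cong K[\delta_i]/\ideal{\delta_i^{m_i}}$ provided by (i). Introduce
\[
c_1:=\min\{c\geq 0\mid \delta_i^c(V_i)\subseteq U\},\qquad c_2:=\min\{c\geq 0\mid \delta_i^c|_U=0\},
\]
and observe that $\delta_i^{c_1+c_2}(V_i)\subseteq\delta_i^{c_2}(U)=0$ forces $c_1+c_2\geq m_i$. Composition with $\iota$ identifies $\Hom_A(V_i,U)$ with the set of $g\in\End_A(V_i)$ whose image lies in $U$; this is an ideal of the commutative local ring $\End_A(V_i)$, hence equals $(\delta_i^{c_1})$, of dimension $m_i-c_1$. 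The other composition $\End_A(V_i)\to\Hom_A(U,V_i)$, $g\mapsto g\iota$, has kernel $\{g\mid g|_U=0\}$, again an ideal by commutativity, equal to $(\delta_i^{c_2})$, so its image has dimension $c_2$. Combining yields $\dimhom_A(V_i,U)=m_i-c_1\leq c_2\leq\dimhom_A(U,V_i)$, so $\theta(U)\leq 0$. The commutativity of $\End_A(V_i)$ is the essential ingredient ensuring these kernel/image subsets are $\delta_i$-power ideals, and identifying this ideal structure is the main obstacle.

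For the stability assertion when $m_i=1$, the module $V_i$ is a brick: any $f\colon V_i\to U$ for $U\subsetneq V_i$ proper gives $\iota f\in\End_A(V_i)=K$, which if nonzero would be an isomorphism with image $V_i\subseteq U$, a contradiction. Hence $\Hom_A(V_i,U)=0$, while $\iota\in\Hom_A(U,V_i)$ is nonzero, yielding the strict bound $\theta(U)\leq -1<0$ for every proper nonzero submodule $U\subsetneq V_i$.
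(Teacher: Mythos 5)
Your proof is correct, and it reaches the conclusion by a genuinely different route than the paper. Both proofs open with the Auslander--Reiten $\g$-vector formula together with $\tau$-invariance to rewrite $\theta:=\g(V)^\vee$ as $\theta(U)=\dimhom_A(V,U)-\dimhom_A(U,V)$. From there the paper works globally on $\End_A(V)$: it builds the ``trace'' functional $\cf\colon\End_A(V)\to K$ picking out the top coefficient on each diagonal block, shows the induced pairing $\Hom_A(U,V)\times\Hom_A(V,U)\to K$, $(g,f)\mapsto\cf(gf)$, is non-degenerate in $f$ for \emph{every} submodule $U\subseteq V$, concludes that $V$ itself is $\theta$-semistable, and then deduces semistability of each $V_i$ as a direct summand. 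You instead first reduce, via (ii), to proper submodules $U\subsetneq V_i$ (observing $\Hom_A(V,U)=\Hom_A(V_i,U)$ and $\Hom_A(U,V)\supseteq\Hom_A(U,V_i)$), and then compute $\dimhom_A(V_i,U)=m_i-c_1$ and $\dimhom_A(U,V_i)\geq c_2$ directly from the ideal lattice of $\End_A(V_i)\cong K[\delta_i]/(\delta_i^{m_i})$, with the inequality coming from $c_1+c_2\geq m_i$. Both arguments lean on commutativity of $\End_A(V_i)$ in an essential way --- you use it to see that the two subsets of $\End_A(V_i)$ you consider are ideals, while the paper uses it (implicitly, as the Frobenius/Nakayama symmetry of $K[\delta]/(\delta^{m})$) to make the trace pairing symmetric and non-degenerate. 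Your approach is more explicit and gives the precise dimension formulas $m_i-c_1$ and $c_2$, at the cost of proving slightly less (semistability of $V_i$ rather than of $V$); the paper's approach is more uniform but abstract. Both are valid and the stability conclusion for $m_i=1$ is handled correctly in the same elementary brick-style way at the end.
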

\begin{proof}
    We show that $V$ is $\g(V)^\vee$-semistable.
    Let $U\subseteq V$ be a submodule.
    By Auslander-Reiten's $\g$-vector formula (\ref{ar_g_vector_formula})
    and assumption \ref{enum:tau_invariance}
    we find
    \begin{align*}
        \euler{\g(V),\dimv U}_A = \dimhom_A(V,U) - \dimhom_A(U,V).
    \end{align*}
    In particular, $\euler{\g(V),\dimv V}_A = 0$.
    Due to assumption \ref{enum:generalized_brick} we can choose generators $\delta_i \in \End_A(V_i)$ for $1\leq i\leq n$
    and consider the linear map
    \begin{align*}
        \cf_i\colon \End_A(V_i) \rightarrow K, &&
        \cf_i\left(\sum_{k=1}^{m_i} c_k \delta^{k-1} \right) := c_{m_i}.
    \end{align*}
    These assemble into a linear map $\cf\colon \End_A(V) \rightarrow K$ given by the ``trace":
    \begin{align*}
        \cf(f) := \sum_{i=1}^n \cf_i(f_{ii})
    \end{align*}
    where $f_{ji}$ for $1\leq i,j\leq n$ denotes the morphism $f$ 
    precomposed with the canonical inclusion $V_i \hookrightarrow V$ 
    and postcomposed with the canonical projection $V \twoheadrightarrow V_j$.
    With this at hand one defines a bilinear pairing
    \begin{align*}
        \Hom_A(U,V) \times \Hom_A(V,U) \to K, &&
       (g,f) \mapsto \cf(gf).
    \end{align*}
    Let $f\in\Hom_A(V,U)$ be non-zero.
    Then $\iota f \in \End_A(V)$ is still non-zero,
    where $\iota: U\hookrightarrow V$ is the canonical embedding.
    Let $1\leq i,j\leq n$ be such that $(\iota f)_{ji} \neq 0$.
    By assumption \ref{enum:hom_orthogonality} we must already have $i=j$.
    Now, one finds $g\in\End_A(V)$ such that $\cf(g\iota f) \neq 0$.
    This yields a linear embedding $\Hom_A(V,U) \hookrightarrow D\Hom_A(U,V)$
    and thus proves $\euler{\g(V),\dimv(U)}_A \leq 0$.

    Now $V_i$ is $\g(V)^\vee$-semistable as a direct summand of $V$. 
    For the final assertion, 
    assume $m_i = 1$ i.e. $\End_A(V_i)\cong K$. 
    That $V_i$ is even $\g(V)^\vee$-stable, 
    follows along the same line of argument 
    using that $0 = \dimhom_A(V_i,U) < \dimhom_A(U,V_i)$ 
    for every proper submodule $U\subset V_i$. 
\end{proof}

We already know the structure of endomorphisms of indecomposable $\tau$-rigid $H$-modules
from Proposition \ref{prop:tau_rigid_structure}.
Thus, to apply our Lemma \ref{lem:generalized_semibrick_semistability} 
we need to establish the vanishing of $\Hom$-spaces among sufficiently many regular $\tau$-rigid modules.
First, we show that quasi-simple regular $\tau$-rigid $H$-modules form a generalized semibrick.

\begin{lemma}\label{lem:quasi_simples_generalized_semibrick}
    Let $V,V'\in\mod(H)$ be quasi-simple regular $\tau$-rigid
    and set $c:=\euler{\rkv(V),\rkv(V')}_\Gamma$.
    Then
    \begin{align*}
        \Hom_H(V,V') \cong 
        \begin{cases}
            K[\delta]/\ideal{\delta^{c}} & \text{if $V\cong V'$,} \\
            0 & \text{else.}
        \end{cases}
    \end{align*}
\end{lemma}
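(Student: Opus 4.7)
For $V \cong V'$ the claim is an immediate consequence of Proposition~\ref{prop:tau_rigid_structure}. Since $V$ is quasi-simple regular, its rank vector is $\bv_{\lambda,k}$ for some $\lambda \in I_\Gamma$ and $k \in \bZ/r_\lambda$, and quasi-simplicity ($l=1$) forces $c = \euler{\bv_{\lambda,k},\bv_{\lambda,k}}_\Gamma = t_\lambda$; Proposition~\ref{prop:tau_rigid_structure} then gives $\End_H(V) \cong K[\delta]/\ideal{\delta^{t_\lambda}} = K[\delta]/\ideal{\delta^c}$. For $V \ncong V'$ the task reduces to showing $\Hom_H(V,V') = 0$, after which $\dim_K \Ext^1_H(V,V') = -c$ is automatic from Proposition~\ref{prop:homological_ringel_coxeter}.

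To establish the vanishing, I would transfer the problem through the chain of bijections of Theorem~\ref{thm:rigid_bijections} to the unfolded path algebra $\bar H \cong \bar L \bar Q$. Let $\hat V, \hat V' \in \rig_\lf(\hat H)$ be the unique $\hat H$-lifts with $\delta^* \hat V \cong V$ and $\delta^* \hat V' \cong V'$, set $\tilde V := \iota^* \hat V$, $\tilde V' := \iota^* \hat V'$ in $\rig(\tilde H)$, and finally $\bar V := \pi^* \tilde V$, $\bar V' := \pi^* \tilde V'$ in $\rig(\bar H, \bar G)$. Since $V \ncong V'$, the bijections force the indecomposable summands of $\bar V$ and $\bar V'$ to form disjoint $\bar G$-orbits of quasi-simple regular rigid $\bar H$-modules, all living in exceptional tubes of the tame hereditary $\bar H$. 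By Dlab-Ringel's classical tube theory, distinct quasi-simple regular rigids are pairwise $\Hom$-orthogonal, so $\Hom_{\bar H}(\bar V, \bar V') = 0$; flat base change along $L \hookrightarrow \bar L$ yields $\Hom_{\tilde H}(\tilde V, \tilde V') = 0$, equivalently $\Hom_{\hat H}(\hat V, \hat V') \otimes_R L = 0$.

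The main obstacle is the descent from the generic fibre back to the special fibre, i.e.\ from $\Hom_{\hat H}(\hat V, \hat V') \otimes_R L = 0$ to $\Hom_H(V,V') = 0$. Since $\hat V, \hat V'$ are locally free (hence torsion-free) over $R$, the space $\Hom_{\hat H}(\hat V, \hat V')$ sits inside the free $R$-module $\Hom_R(\hat V, \hat V')$, so it is itself free over the PID $R$, and vanishing of its localization forces $\Hom_{\hat H}(\hat V, \hat V') = 0$ outright. The universal coefficient sequence
\begin{align*}
    0 \to \Hom_{\hat H}(\hat V, \hat V') \otimes_R K \to \Hom_H(V,V') \to \Tor_1^R\bigl(\Ext^1_{\hat H}(\hat V, \hat V'), K\bigr) \to 0
\end{align*}
then reduces the problem to the vanishing of the $\Tor_1$-term, which amounts to controlling the $R$-torsion of $\Ext^1_{\hat H}(\hat V, \hat V')$ using $\pdim_{\hat H}(\hat V) \leq 1$ (Proposition~\ref{prop:locally_free}) together with the rigidity of the lifts $\hat V, \hat V'$.
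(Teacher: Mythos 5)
Your argument is correct up to the descent step, but the final "control the $R$-torsion of $\Ext^1_{\hat H}(\hat V, \hat V')$" is precisely where the real difficulty lies, and the ingredients you list (local freeness, $\pdim_{\hat H}(\hat V)\le 1$, rigidity of $\hat V$ and $\hat V'$ individually) are not enough to close it. Here is why. Running the long exact sequence for $\Hom_{\hat H}(\hat V,-)$ against $0\to\hat V'\xrightarrow{\varepsilon}\hat V'\to V'\to 0$ and using $\pdim_{\hat H}(\hat V)\le 1$, one gets exactly your sequence and, beyond it, an identification $\Ext^1_{\hat H}(\hat V,\hat V')\otimes_R K \cong \Ext^1_H(V,V')$. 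After your torsion-freeness argument kills $\Hom_{\hat H}(\hat V,\hat V')$, the sequence becomes $\Hom_H(V,V')\cong \Tor_1^R(\Ext^1_{\hat H}(\hat V,\hat V'),K)$, and vanishing of this $\Tor_1$ over the DVR $R$ is equivalent to $\Ext^1_{\hat H}(\hat V,\hat V')$ being $R$-free. But there is no reason for this: rigidity of $\hat V$ and of $\hat V'$ controls $\Ext^1_{\hat H}(\hat V,\hat V)$ and $\Ext^1_{\hat H}(\hat V',\hat V')$, not the cross-term $\Ext^1_{\hat H}(\hat V,\hat V')$. In the decisive case $\lambda=\lambda'$, $k'=k-1$, $r_\lambda\ge 3$ (neighbouring quasi-simples in a tube), one even has $\Ext^1_{\tilde H}(\tilde V,\tilde V')=0$, so $\Ext^1_{\hat H}(\hat V,\hat V')$ is entirely torsion; asking it to be free is asking it to be zero, which is exactly $\Ext^1_H(V,V')=0$.

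That vanishing is the hard content of the paper's proof and is not formal. The paper isolates the implication ``$\Hom_{\tilde H}(\tilde V,\tilde V')=0$ and $\Ext^1_H(V,V')=0$ imply $\Hom_H(V,V')=0$'' (equivalent, by Nakayama on $\Ext^1_{\hat H}$, to your $\Tor_1$-vanishing), and then proves $\Ext^1_H(V,V')=0$ by a case analysis. Most cases are easy (rigidity of the pair over $\tilde H$, or $V'=\tau_H(V)$), but the case $k'=k-1$, $r_\lambda\ge 3$ requires a genuinely different argument: an almost split sequence in the tube, a co-Bongartz complement of $\tilde V\oplus \tilde V_{\lambda,k}^{(2)}$ to produce an exchange pair $(\tilde V_{\lambda,k}^{(1)},\tilde V_{\lambda,k-1}^{(1)})$ in $\sttilt(\tilde H)$, and then transporting this exchange pair through the isomorphism $\sttilt(H)\cong\sttilt(\tilde H)$ from Theorem~\ref{thm:rigid_bijections} to conclude $\Hom_H(V_{\lambda,k-1}^{(1)},\tau_H V_{\lambda,k}^{(1)})=0$. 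Your write-up omits this entirely, so as it stands the proposal has a genuine gap at the only step that is not routine.
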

\begin{proof}
    If $V \cong V'$, this is Proposition \ref{prop:tau_rigid_structure}.
    Thus assume for the remainder of the proof that $V\not\cong V'$.
    Write $V = V_{\lambda,k}^{(1)}$ and $V' = V_{\lambda',k'}^{(1)}$
    for some indices $\lambda,\lambda'\in I_\Gamma$, $k\in\bZ/r_\lambda$ and $k'\in\bZ/r_{\lambda'}$.
    The properties of reduction and localization \cite[Lemma 5.5, Corollary 5.15]{GLS20}
    yield the implication
    \begin{align} \label{zero_hom_implication}
        \Hom_{\tilde{H}}(\tilde{V}_{\lambda,k}^{(1)},\tilde{V}_{\lambda',k'}^{(1)}) = 0
        \text{ and }
        \Ext_H^1({V}_{\lambda,k}^{(1)},{V}_{\lambda',k'}^{(1)}) = 0
        ~ \Rightarrow ~
        \Hom_{{H}}({V}_{\lambda,k}^{(1)},{V}_{\lambda',k'}^{(1)}) = 0.
    \end{align}
    Here $\tilde{V}_{\lambda,k}^{(l)}$
    denotes the unique rigid $\tilde{H}$-module
    with $\dimv(\tilde{V}_{\lambda,k}) = \bv_{\lambda,k}^{(l)}$.
    We already have 
    $\Hom_{\tilde{H}}(\tilde{V}_{\lambda,k}^{(1)},\tilde{V}_{\lambda',k'}^{(1)}) = 0$ 
    due to the structure of $\mod(\tilde{H})$ determined in the Main Theorem of \cite{DR76}.
    According to (\ref{zero_hom_implication}),
    it remains to show $\Ext^1_H(V_{\lambda,k}^{(1)},V_{\lambda',k'}^{(1)}) = 0$.

    If $\lambda \neq \lambda'$, 
    then $\tilde{V}_{\lambda,k}^{(1)}\oplus \tilde{V}_{\lambda',k'}^{(1)}$ is rigid
    hence ${V}_{\lambda,k}^{(1)}\oplus {V}_{\lambda',k'}^{(1)}$ must be rigid as well 
    by Theorem \ref{thm:rigid_bijections}.
    If $\lambda = \lambda'$,
    we need to distinguish three further cases:

    \bigskip

    \underline{$k' \neq k\pm 1$}:
    Then $\tilde{V}_{\lambda,k}^{(1)}\oplus \tilde{V}_{\lambda,k'}^{(1)}$ is rigid in $\mod(\tilde{H})$
    thus ${V}_{\lambda,k}^{(1)}\oplus {V}_{\lambda,k'}^{(1)}$ is rigid in $\mod(H)$ by Theorem \ref{thm:rigid_bijections}.

    \bigskip

    \underline{$k' = k+1$}: 
    We have $V_{\lambda,k'}^{(1)} \cong \tau_H(V_{\lambda,k}^{(1)})$ 
    hence $\Hom_H(V_{\lambda,k}^{(1)},V_{\lambda,k'}^{(1)}) = 0$ 
    because $V_{\lambda,k}^{(1)}$ is $\tau$-rigid.
    Note that this already covers all cases if $r_\lambda = 2$.

    \bigskip

    \underline{$k' = k-1$}:
    If $r_\lambda = 4$, then $k \neq k-3$ and we already know
    \[
        \Ext^1_H({V}_{\lambda,k}^{(1)},{V}_{\lambda,k'}^{(1)}) 
        \cong \D\Hom_H({V}_{\lambda,k'}^{(1)},\tau_H({V}_{\lambda,k}^{(1)})) 
        = \D\Hom_H({V}_{\lambda,k-1}^{(1)},{V}_{\lambda,k+1}^{(1)})
        = 0.
    \] 
    For general $r_\lambda\geq 3$,
    there is an almost split sequence
    \begin{align*}
        0 \to \tilde{V}_{\lambda,k}^{(1)} 
        \to \tilde{V}_{\lambda,k}^{(2)} 
        \to \tilde{V}_{\lambda,k-1}^{(1)} \to 0
    \end{align*}
    with $\tilde{V}_{\lambda,k}^{(1)} \oplus \tilde{V}_{\lambda,k}^{(2)}$ rigid.
    Consider the co-Bongartz complement $\tilde{X}$ 
    of $\tilde{V}_{\lambda,k}^{(1)} \oplus \tilde{V}_{\lambda,k}^{(2)}$
    in $\mod(\tilde{H})$
    i.e. $\tilde{T}:= \tilde{V}_{\lambda,k}^{(1)} \oplus \tilde{V}_{\lambda,k}^{(2)} \oplus \tilde{X}$
    is a basic $\Ext^1$-projective generator 
    of the torsion class 
    $\tilde{\cT} := \fac(\tilde{V}_{\lambda,k}^{(1)} \oplus \tilde{V}_{\lambda,k}^{(2)})$.
    In particular, $\tilde{T}$ is support tilting.
    Observe that $\tilde{U}:=\tilde{V}_{\lambda,k}^{(2)} \oplus \tilde{V}_{\lambda,k-1}^{(1)} \oplus \tilde{X}$
    is still support tilting:
    First, $\tilde{V}_{\lambda,k-1}^{(1)}$ is not $\Ext^1$-projective in $\tilde{\cT}$
    hence not a summand of $\tilde{X}$.
    It is clear that 
    $\tilde{V}_{\lambda,k}^{(2)} \oplus \tilde{V}_{\lambda,k-1}^{(1)}$ 
    and $\tilde{V}_{\lambda,k}^{(2)} \oplus \tilde{X}$
    are rigid.
    We have $\Ext^1_{\tilde{H}}(\tilde{X},\tilde{V}_{\lambda,k-1}^{(1)}) = 0$
    because $\tilde{V}_{\lambda,k-1}^{(1)}\in \tilde{\cT}$ 
    and by choice of $\tilde{X}$.
    Finally, 
    $\Ext^1(\tilde{V}_{\lambda,k-1}^{(1)},\tilde{X}) 
    \cong \D\Hom_{\tilde{H}}(\tilde{X},\tilde{V}_{\lambda,k}^{(1)}) = 0$
    because $\tilde{X}$ must be preinjective
    and $\tilde{V}_{\lambda,k}^{(1)}$ is regular.
    This shows that there is an arrow $\tilde{T}\to\tilde{U}$ 
    in $\sttilt(\tilde{H})$
    with corresponding exchange pair 
    $(\tilde{V}_{\lambda,k}^{(1)},\tilde{V}_{\lambda,k-1}^{(1)})$.
    According to the isomorphism of exchange graphs 
    $\sttilt({H})\cong\sttilt(\tilde{H})$ 
    from Theorem \ref{thm:rigid_bijections},
    there is a corresponding arrow $T \to U$ in $\sttilt({H})$ for which 
    $({V}_{\lambda,k}^{(1)},{V}_{\lambda,k-1}^{(1)})$
    is an exchange pair.
    Therefore, we can conclude
    \begin{align*}
        \Ext_{H}^1({V}_{\lambda,k}^{(1)},{V}_{\lambda,k-1}^{(1)})
        \cong \D\Hom_H({V}_{\lambda,k-1}^{(1)},\tau_H({V}_{\lambda,k}^{(1)})) 
        = 0
    \end{align*}
    by \cite{AIR14}.
\end{proof}

To establish $\partial$-semistability 
of all regular $\tau$-rigid $H$-modules, 
it suffices to make sure that these are filtered by quasi-simples.
The following lemma 
shows that regular $\tau$-rigid $H$-modules organize in ``truncated pseudo tubes"
similar to the regular rigid $\tilde{H}$-modules.

\begin{lemma} \label{lem:trunc_pseudo_tube}
    Let $\lambda\in I_\Gamma$, $k,k'\in\bZ/r_\lambda$ and $1\leq l < r_\lambda$.
    There are short exact sequences
    \begin{align*}
        0 \to V_{\lambda,k}^{(1)} 
        \rightarrow V_{\lambda,k}^{(l)} 
        \rightarrow V_{\lambda,k-1}^{(l-1)} \to 0
        &&
        0 \to V_{\lambda,k}^{(l-1)} 
        \rightarrow V_{\lambda,k}^{(l)} 
        \rightarrow V_{\lambda,k-l+1}^{(1)} \to 0
    \end{align*}
\end{lemma}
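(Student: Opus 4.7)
The plan is to deduce the two short exact sequences in $\mod(H)$ from the classical mesh exact sequences in the tubes of the GLS species $\tilde{H}$, by lifting through the $R$-order $\hat{H}$ and then reducing modulo $\varepsilon$.

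First, by the Dlab--Ringel classification of regular modules over tame species~\cite{DR76}, each tube of rank $r_\lambda$ in $\mod(\tilde{H})$ contains the standard mesh short exact sequences
\begin{align*}
    0 \to \tilde{V}_{\lambda,k}^{(1)} \to \tilde{V}_{\lambda,k}^{(l)} \to \tilde{V}_{\lambda,k-1}^{(l-1)} \to 0,
    &&
    0 \to \tilde{V}_{\lambda,k}^{(l-1)} \to \tilde{V}_{\lambda,k}^{(l)} \to \tilde{V}_{\lambda,k-l+1}^{(1)} \to 0,
\end{align*}
where $\tilde{V}_{\lambda,k}^{(l)}$ denotes the indecomposable rigid $\tilde{H}$-module with dimension vector $\bv_{\lambda,k}^{(l)}$, which by Theorem~\ref{thm:rigid_bijections} corresponds to $V_{\lambda,k}^{(l)}$ under localization and reduction.

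Next I would lift each sequence to $\mod(\hat{H})$. Let $\hat{V}_{\lambda,k}^{(l)}$ be the unique locally free rigid $\hat{H}$-module with $\iota^*\hat{V}_{\lambda,k}^{(l)} \cong \tilde{V}_{\lambda,k}^{(l)}$ and $\delta^*\hat{V}_{\lambda,k}^{(l)} \cong V_{\lambda,k}^{(l)}$. Since locally free $\hat{H}$-modules have projective dimension at most one over $\hat{H}$, a length one projective resolution combined with the flatness of $L$ over $R$ yields an isomorphism
\[
    \Ext^1_{\hat{H}}(\hat{V}_{\lambda,k-l+1}^{(1)}, \hat{V}_{\lambda,k}^{(l-1)}) \otimes_R L \xrightarrow{\sim} \Ext^1_{\tilde{H}}(\tilde{V}_{\lambda,k-l+1}^{(1)}, \tilde{V}_{\lambda,k}^{(l-1)}),
\]
and an analogous one for the first sequence. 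After clearing denominators, one can pick an $\hat{H}$-extension class projecting to a nonzero scalar multiple of the given mesh class; the resulting extension
\[
    0 \to \hat{V}_{\lambda,k}^{(l-1)} \to M \to \hat{V}_{\lambda,k-l+1}^{(1)} \to 0
\]
has $R$-free outer terms, so its restriction to each DVR $\hat{H}(i) = R_{c_i}$ is an extension of free modules, hence itself free. Thus $M$ is locally free over $\hat{H}$, and its localization $\iota^*(M)$ equals the rigid middle term $\tilde{V}_{\lambda,k}^{(l)}$. By the injectivity of localization on isomorphism classes of locally free rigid $\hat{H}$-modules from Theorem~\ref{thm:rigid_bijections}, $M \cong \hat{V}_{\lambda,k}^{(l)}$.

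Finally, applying the reduction functor $\delta^* = -\otimes_R K$ preserves exactness of the $\hat{H}$-sequence because all three terms are $R$-flat, yielding the second short exact sequence in $\mod(H)$; the first is obtained by the same procedure applied to the other mesh class. The principal obstacle is verifying that the lifted middle term $M$ is itself locally free and rigid so that Theorem~\ref{thm:rigid_bijections} identifies it with $\hat{V}_{\lambda,k}^{(l)}$, and this rests entirely on the DVR structure of the rings $\hat{H}(i) = R_{c_i}$, which forces any extension of free $\hat{H}(i)$-modules to be free.
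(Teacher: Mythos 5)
Your proposal takes a genuinely different route from the paper. The paper works entirely over $H$: it first invokes a geometric argument from \cite[Section~5.4]{GLS16iii} and \cite[Theorem~1.3.(iii)]{CBS02}, namely that the locus of representations fitting in an exact sequence with fixed $\tau$-rigid outer terms $U,W$ is open in $\Rep_\lf(H,\bv)$, as is the orbit of a $\tau$-rigid $V$ with $\rkv(V)=\bv$; irreducibility of $\Rep_\lf(H,\bv)$ then forces these opens to meet, producing the exact sequence once the vanishing $\Ext^1_H(V_{\lambda,k}^{(1)}, V_{\lambda,k-1}^{(l-1)})=0$ is known. That vanishing is the real work, handled by Auslander–Reiten duality and an induction on quasi-length using Lemma~\ref{lem:quasi_simples_generalized_semibrick}. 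Your approach instead proposes to lift the mesh sequences of $\tilde{H}$ through the order $\hat{H}$ and then reduce via $\delta^*$, completely bypassing the $\Ext^1$-vanishing induction. If your approach worked it would be quite a bit slicker.

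There is, however, a genuine gap. To identify the lifted middle term $M$ with $\hat{V}_{\lambda,k}^{(l)}$ you invoke the injectivity of $\iota^*$ on $\rig_\lf(\hat{H})$ from Theorem~\ref{thm:rigid_bijections}. That requires $M$ to lie in $\rig_\lf(\hat{H})$, i.e.\ to be both locally free and rigid over $\hat{H}$. The DVR argument you give only establishes local freeness of $M$. Rigidity, meaning $\Ext^1_{\hat{H}}(M,M)=0$, does not follow: since $\iota^*M$ is rigid and $\pdim_{\hat{H}}(M)\le 1$, one sees that $\Ext^1_{\hat{H}}(M,M)$ is $R$-torsion, but torsion is not the same as zero, and over an order of global dimension two there is no reason for a locally free lattice in a rigid generic fibre to be rigid itself. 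You describe the obstacle as ``verifying that the lifted middle term $M$ is itself locally free and rigid'' and then claim it ``rests entirely on the DVR structure,'' but the DVR structure handles only the first adjective. Without $M$ rigid you cannot pin down $M$ among the (a priori many) locally free $\hat{H}$-lattices in $\tilde{V}_{\lambda,k}^{(l)}$, so $\delta^*M\cong V_{\lambda,k}^{(l)}$ remains unestablished. This is precisely the kind of subtlety the paper's geometric-plus-induction argument is designed to sidestep.

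A secondary point: you should also verify that any nonzero class in $\Ext^1_{\tilde{H}}(\tilde{V}_{\lambda,k-l+1}^{(1)},\tilde{V}_{\lambda,k}^{(l-1)})$ has middle term $\tilde{V}_{\lambda,k}^{(l)}$; this is true because by Auslander–Reiten duality that $\Ext^1$ is one-dimensional over the endomorphism division ring of the quasi-top, but it is used implicitly and should be stated. And the second exact sequence, which you say is ``obtained by the same procedure applied to the other mesh class,'' needs its own lift-and-reduce run.
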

\begin{proof}
    We want to make use of a geometric argument outlined in \cite[Section 5.4]{GLS16iii}. 
    Let us repeat it here for completeness:
    Consider two $\tau$-rigid $W,U\in\mod H$ with $\bw := \rkv W$, $\bu := \rkv U$ and set $\bv:= \bu + \bw$.
    Assume that $\Ext^1_H(U,W) = 0$.
    In \cite[Theorem 1.3.(iii)]{CBS02} it is proved 
    that the subset $\cE(W,U)\subseteq \Rep_\lf(H,\bv)$,
    consisting of representations $V$ fitting in a short exact sequence $0\to U \to V \to W\to 0$,
    is an open subset.
    On the other hand, if there is a $\tau$-rigid $V\in\Rep_\lf(H,\bv)$, 
    its orbit $\cO(V)\subseteq \Rep_{\lf}(H,\bv)$ is another open subset.
    But the subset $\Rep_\lf(H,\bv)$ is irreducible.
    Thus, we must have $\cE(W,U)\cap \cO(V) \neq \emptyset$.
    In other words, there is a short exact sequence
    \begin{align*}
        0 \to U \to V \to W \to 0.
    \end{align*}
    Thereby, it remains to show that 
    $0 = \Ext^1_H(V_{\lambda,k}^{(1)}, V_{\lambda,k-1}^{(l-1)})$
    for all $k\in\bZ/r_\lambda$ and $1 < l < r_\lambda$.
    By Auslander-Reiten duality we have
    \begin{align*}
    \Ext^1_H(V_{\lambda,k}^{(l-1)}, V_{\lambda,k-l+1}^{(1)})
    \cong \D\Hom_H(V_{\lambda,k-l}^{(1)},V_{\lambda,k}^{(l-1)})
    \end{align*}
    We prove more generally
    $\Hom_H(V_{\lambda,k'}^{(1)},V_{\lambda,k}^{(l-1)}) = 0$
    for all $k'\neq k$
    by induction on $2\leq l < r_\lambda$.
    Note that $k-l \neq k$ because $l < r_\lambda$.

    If $l = 2$, this is
    Lemma \ref{lem:quasi_simples_generalized_semibrick}.
    If $l\geq 3$ we may assume by induction, that
    $\Hom_H(V_{\lambda,k'}^{(1)},V_{\lambda,k}^{(l-2)}) = 0$
    for all $k'\neq k$.
    In particular,
    for $k' = k-l+1$,
    the initial discussion shows that
    there exists a short exact sequence
    \begin{align*}
        0 \to V_{\lambda,k}^{(l-2)} 
        \rightarrow V_{\lambda,k}^{(l-1)} 
        \rightarrow V_{\lambda,k-l+2}^{(1)} \to 0.
    \end{align*}
    Lemma \ref{lem:quasi_simples_generalized_semibrick}
    shows $\Hom_H(V_{k'}^{(1)},V_{k-l+2}^{(1)}) = 0$
    for $k' \neq k-l+2$,
    while $\Hom_H(V_{k'}^{(1)},V_{\lambda,k}^{(l-2)}) = 0$
    for $k'\neq k$
    by our induction hypothesis. 
    On the other hand,
    $\Hom_H(V_{\lambda,k-l+2}^{(1)}, V_{\lambda,k}^{(l-1)}) = 0$
    by a similar application of the implication \ref{zero_hom_implication}
    in the proof of Lemma \ref{lem:quasi_simples_generalized_semibrick}.
    Therefore, 
    $\Hom_H(V_{\lambda,k'}^{(1)},V_{\lambda,k}^{(l-1)}) = 0$
    for all $k'\neq k$.
    The existence of the other short exact sequence is proved dually.
\end{proof}

\begin{proof}[Proof of Proposition \ref{prop:regular_tau_rigid}]
    We are left to show the implication ``$\ref{enum:regular},\ref{enum:regular_tau_periodic},\ref{enum:regular_zero_defect}\Rightarrow\ref{enum:defect_semistable}$''.
    Fix an exceptional index $\lambda\in I_\Gamma$.
    The quasi-simple regular $\tau$-rigid $H$-modules 
    $V_{\lambda,k}^{(1)}$ for $k \in \bZ/r_\lambda$,
    and their direct sum $V:=\bigoplus_{k\in\bZ/r_\lambda}V_{\lambda,k}^{(1)}$
    satisfy the conditions of Lemma \ref{lem:generalized_semibrick_semistability}
    by Lemma \ref{lem:quasi_simples_generalized_semibrick} 
    and part \ref{enum:regular_tau_periodic}.
    Moreover $\rkv(V) = t_\lambda \bmeta$ 
    hence $\g(V) = t_\lambda\partial$.
    Now Lemma \ref{lem:generalized_semibrick_semistability} allows to conclude 
    that $V_{\lambda,k}^{(1)}$ is $\partial$-semistable for every $k\in\bZ/r_\lambda$.
    
    We know from Lemma \ref{lem:trunc_pseudo_tube} 
    that every other regular $\tau$-rigid $W\in\mod H$
    is filtered by quasi-simple regular $\tau$-rigid $H$-modules
    thus $W$ is $\partial$-semistable as well.
\end{proof}

\subsection{Generic decomposition of locally free components} \label{sec:generic_classification}

The missing ingredient for our proof of the Main Theorem, 
is a generalization of Kac's canonical decomposition 
for rank vectors.
The Galois descent of rigid modules 
established in Theorem \ref{thm:rigid_bijections}
allows to ``fold" Kac's canonical decomposition
of dimension vectors over the affine unfolded quiver.

\begin{lemma} \label{lem:folded_kac_decomposition}
    Let $\Gamma$ be an affine valued quiver.
    For every $\bv\in \bN\Gamma_0$ there exist unique $m\geq 0$ and $\bw\in\bN\Gamma_0$ with 
    $\bv = m \bmeta + \bw$
    such that
    \begin{enumerate}[label = (\roman*)]
        \item there exists a $\tau$-rigid $W\in\mod {H}$ with $\bw = \rkv W$,
        \item if $m \neq 0$, then $W$ is regular 
        i.e. every indecomposable summand of $W$ is regular.
    \end{enumerate}
\end{lemma}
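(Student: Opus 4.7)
Plan:

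The strategy is to descend Kac's canonical decomposition for the affine path algebra $\bar{L}\bar{Q} \cong \bar{H}$ to $H$ via Galois invariance, and then adjust by absorbing copies of $\bmeta$ into non-regular summands when they are present. By Corollary \ref{cor:scalar_extension_isometry}, $\Pi^*_0 \colon \bZ\Gamma_0 \hookrightarrow \bZ\bar{Q}_0$ is an isometry onto the $\bar{G}$-invariant sublattice, and $\Pi^*_0(\bmeta)$ is a $\bar{G}$-invariant positive radical vector of the Ringel form of $\bar{Q}$; since $\bar{Q}$ is again affine, this equals a positive multiple of its primitive null root $\bar{\bmeta}$. By Theorem \ref{thm:rigid_bijections}, $\tau$-rigid $H$-modules correspond bijectively to $\bar{G}$-invariant rigid $\bar{H}$-modules, compatibly with rank and dimension vectors.

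For existence, I apply Kac's canonical decomposition over $\bar{L}\bar{Q}$ to obtain a unique
\[
\Pi^*_0(\bv) \;=\; \bar{\bw}' \;+\; m\,\bar{\bmeta}
\]
with $\bar{\bw}'$ the dimension vector of a generic rigid $\bar{H}$-module and $m \geq 0$. Uniqueness of the canonical decomposition together with $\bar{G}$-invariance of $\Pi^*_0(\bv)$ forces $(\bar{\bw}', m)$ to be $\bar{G}$-stable, so it descends to a $\tau$-rigid $H$-module $W$ with $\rkv(W) = \bw := (\Pi^*_0)^{-1}(\bar{\bw}')$ and $\bv = \bw + m\bmeta$. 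If $m = 0$ or $W$ is already regular we are done. Otherwise $m > 0$ and $W$ has a preprojective or preinjective summand; in that case, using Proposition \ref{prop:homological_ringel_coxeter} together with the fact that the Coxeter orbit of a preprojective rank vector shifts by positive multiples of $\bmeta$ over its period, I absorb the $m\bmeta$ into iterated Coxeter translates of the non-regular summands (preserving rigidity by the vanishings $\Hom_{\tilde{H}}(\tilde{I}, \tilde{P}) = 0 = \Ext^1_{\tilde{H}}(\tilde{I}, \tilde{P})$ pulled back from $\tilde{H}$) and realise $\bv$ itself as the rank vector of a $\tau$-rigid $H$-module, taking $m = 0$.

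For uniqueness, assume $\bv = m_1\bmeta + \bw_1 = m_2\bmeta + \bw_2$ with $m_1 \geq m_2$, both satisfying (i) and (ii). The case $m_1 = m_2$ reduces by Proposition \ref{prop:lf_rigid_class} to $W_1 \cong W_2$. Otherwise $m_1 > m_2$, so $W_1$ is regular. If $m_2 > 0$, then $W_2$ is also regular and $\rkv(W_2) = \rkv(W_1) + (m_1 - m_2)\bmeta$; by Proposition \ref{prop:regular_tau_rigid} and Lemma \ref{lem:trunc_pseudo_tube}, the rank vectors of regular $\tau$-rigid $H$-modules are sums of truncated tube roots $\bv_{\lambda,k}^{(l)}$ with $1 \leq l < r_\lambda$, and since a full cycle of $r_\lambda$ quasi-simples in a tube already sums to $t_\lambda\bmeta$, two such sums cannot differ by a non-zero multiple of $\bmeta$. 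If $m_2 = 0$, lifting both decompositions to $\bar{Q}$ via $\Pi^*_0$ and Theorem \ref{thm:rigid_bijections} contradicts the uniqueness of Kac's canonical decomposition over $\bar{L}\bar{Q}$, since one side would yield two distinct rigid-plus-null-root decompositions of $\Pi^*_0(\bv)$. The main obstacle is the absorption step in existence: it requires controlling how Coxeter orbits of preprojective and preinjective roots shift by $\bmeta$ across affine types, which I expect to verify either by a uniform argument using the tier-number discussion of Section \ref{sec:stable_family} or by an appropriate per-type check.
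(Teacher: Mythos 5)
The overall strategy is the right one and matches the paper: lift to the unfolded path algebra $\bar{H}\cong\bar{L}\bar{Q}$, apply Kac's canonical decomposition to $\Pi^*_0(\bv)$, exploit $\bar{G}$-invariance to descend back. The problem is what happens for $m>0$.

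Your ``absorption step'' addresses a case that never actually occurs, and the fact that it never occurs is precisely the content of part (ii). In Kac's canonical decomposition $\Pi^*_0(\bv)=m\bar{\bmeta}+\bar{\bw}$, the Schur components are generically pairwise $\Ext^1$-orthogonal. If $\bar{W}$ had a preprojective indecomposable summand $X$, then $\euler{\bar{\bmeta},\dimv X}_{\bar Q}<0$, hence $\dimext^1_{\bar H}(V,X)>0$ for every $V$ of dimension vector $\bar{\bmeta}$, contradicting that orthogonality as soon as $m\geq 1$; dually for preinjective summands. So for $m>0$ Kac's decomposition already hands you a \emph{regular} rigid $\bar{W}$, and one only has to transfer regularity from $\bar{W}$ back to $W$ using the isometry of Corollary \ref{cor:scalar_extension_isometry} (a summand of $W$ mapping to a preprojective of $\bar{W}$ would have nonzero defect). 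There is nothing to absorb, and no ``taking $m=0$'' should happen. Your proposed operation---shifting non-regular summands by Coxeter translates to swallow $m\bmeta$---does not make sense as stated: preprojective modules are not $\tau$-periodic, the Coxeter orbit of a single preprojective root does not shift by $\bmeta$ in any controllable way, and even if it did, changing $m$ in this way would produce a \emph{different} decomposition and so conflict with the uniqueness of Kac's decomposition you invoke elsewhere. You flag this as ``the main obstacle,'' and you are right, but the resolution is to delete the step rather than make it work.

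Two smaller remarks. For uniqueness, your case $m_2>0$ appeals to a claim about rank vectors of regular $\tau$-rigid modules (``two sums of truncated tube roots cannot differ by a nonzero multiple of $\bmeta$''); as written this is only about rank vectors and ignores the rigidity constraint that makes it true, so it needs more justification. The paper does not treat uniqueness separately because, once one knows the decomposition produced is exactly the image under $(\Pi^*_0)^{-1}$ of Kac's canonical decomposition, uniqueness follows from the latter's uniqueness and the injectivity of $\Pi^*_0$; your lifting argument for the $m_2=0$ subcase is closer to the right shape, but you should verify that a decomposition satisfying (i) and (ii) really is forced to map to the canonical one over $\bar{Q}$, which again boils down to the $\Ext^1$-orthogonality observation you omitted.
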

\begin{proof}
    By Theorem \ref{thm:rigid_bijections} and \ref{prop:regular_tau_rigid}
    it suffices to find a rigid $W\in\mod(\tilde{H})$ with $\bw = \rkv(W)$
    which is moreover regular provided $m \neq 0$.
    Consider scalar extension $\pi^*\colon \mod(\tilde{H}) \to \mod(\bar{H})$
    where $\bar{H} \cong \bar{L}\bar{Q}$ for the unfolded quiver $\bar{Q}$.
    Since $\pi^*$ induces an isometry $\Pi_0^*\colon \bZ\Gamma_0 \to \bZ\bar{Q}_0$
    of Grothendieck groups by Corollary \ref{cor:scalar_extension_isometry},
    we get that $\bar{Q}$ is of affine type as well
    with primitive null root $\bar{\bmeta} = \Pi^*_0(\bmeta)$.
    Consider Kac's canonical decomposition of $\Pi^*_0(\bv) \in \bN\bar{Q}_0$:
    \begin{align*}
        \Pi^*_0(\bv) = m \bar{\bmeta} + \bar{\bw}
    \end{align*} 
    for some $m\geq 0$ 
    and some rigid $\bar{W}\in\mod(\bar{H})$ 
    with $\bar{\bw} = \dimv \bar{W}$. 
    Since $\Pi^*_0(\bv)$ and $\bar{\bmeta}$ are $G$-invariant
    we find that $\bar{\bw}$ is $G$-invariant as well.
    Thus, for every $g\in \Gal(\bar{L}/L)$ is $\dimv(g(\bar{W})) = \dimv(\bar{W})$
    and $g(\bar{W})$ is still rigid 
    hence $g(\bar{W})\cong\bar{W}$
    because rigid $K\bar{Q}$-modules are determined by their dimension vector.
    According to Theorem \ref{thm:rigid_bijections} 
    there exists a rigid $W\in\mod(\tilde{H})$
    with $\bar{W} = \pi^*(W)$.
    In particular, $\bar{\bw} = \Pi^*_0(\bw)$
    for $\bw = \dimv(W)$
    thus $\Pi^*_0(\bv) = \Pi^*_0(m\bmeta + \bw)$.
    But $\Pi^*_0$ is injective hence $\bv = m \bmeta + \bw$.

    Suppose $X$ is a preprojective summand of $\bar{W}$
    then $\euler{\bar{\bmeta}, \dimv(X)}_{\bar{Q}} < 0$
    hence $\Ext^1_{\bar{H}}(V,X) \neq 0$ for every $V\in\mod \bar{H}$ with $\dimv(V) = \bar{\bmeta}$.
    This contradicts the properties of Kac's canonical decomposition.
    Similarly, $\bar{W}$ cannot have a preinjective summand.
    But then $W$ must be regular as well:
    Indeed, let $X$ be an indecomposable direct summand of $W$.
    Assume that $X$ is preprojective.
    Then $\bar{X}:=\pi^*(X)$ is a direct summand of $\bar{W}$ 
    with $\euler{\bar{\bmeta},\dimv(\bar{X})}_{\bar{Q}} = \euler{\bmeta,\dimv(X)}_\Gamma > 0$
    by Corollary \ref{cor:scalar_extension_isometry}.
    Hence $\bar{X}$ has a preprojective summand
    which contradicts regularity of $\bar{W}$.
    Similarly, $X$ cannot be preinjective.
\end{proof}

\begin{theorem}\label{thm:generic_classification}
    Let $\Gamma$ be a valued quiver with minimal symmetrizer.
    Set $H:= H(\Gamma)$ and 
    let $\bv\in\bN\Gamma_0$ be a rank vector.
    Then there exist $m\geq 0$ and $\bw\in \bZ\Gamma_0$
    such that 
    \begin{align} \label{canonical_decomposition}
        \Rep_\lf(H,\bv) = \overline{\Rep_\lf(H,\bmeta_\Gamma)^{m} \oplus \Rep_\lf(H,\bw)}
    \end{align}
    and
    \begin{align} \label{dense_family_and_orbit}
        \Rep_\lf(H,\bmeta_\Gamma) = \overline{\bigcup_{\lambda\in\bP^1}\cO(V_\lambda)}
        &&
        \Rep_\lf(H,\bw) = \overline{\cO(W)}
    \end{align}
    for some $\tau$-rigid $W\in\mod(H)$
    and the $\bP^1$-family of $\partial_\Gamma$-semistable $V_\lambda\in\mod(H)$ from Theorem \ref{thm:null_family}.
\end{theorem}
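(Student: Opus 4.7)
The plan is to start from the folded Kac decomposition $\bv = m\bmeta_\Gamma + \bw$ provided by Lemma \ref{lem:folded_kac_decomposition}, yielding a $\tau$-rigid module $W \in \mod(H)$ with $\rkv(W) = \bw$ which is regular whenever $m > 0$. The candidate dense family in $\Rep_\lf(H,\bv)$ will be the orbits of $V_{\underline{\lambda}} \oplus W$, where $V_{\underline{\lambda}} := V_{\lambda_1} \oplus \cdots \oplus V_{\lambda_m}$ for generic tuples $\underline{\lambda} = (\lambda_1, \ldots, \lambda_m) \in (\bP^1)^m$ with pairwise distinct entries. Each such module is locally free with rank vector $\bv$: the $V_{\lambda_i}$ have rank $\bmeta_\Gamma$ by Theorem \ref{thm:null_family}, and $W$ is locally free by Demonet's Lemma in Corollary \ref{cor:demonet_lemma}.

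First I would assemble the Hom- and Ext-vanishings needed for the bookkeeping. Theorem \ref{thm:null_family} gives $\End_H(V_{\lambda_i}) \cong K$ for $\lambda_i \in \bA^1$ and pairwise Hom- and Ext-orthogonality of distinct $V_\lambda$'s. When $m > 0$, the regularity of $W$ combined with Corollary \ref{cor:regular_hom_ortho} supplies $\Hom_H(W, V_\lambda) = 0 = \Hom_H(V_\lambda, W)$ and $\Ext^1_H(W, V_\lambda) = 0 = \Ext^1_H(V_\lambda, W)$ for all $\lambda$ outside a finite exceptional set. Consequently, for $\underline{\lambda}$ in a suitable open subset of $(\bP^1)^m$, the endomorphism algebra $\End_H(V_{\underline{\lambda}} \oplus W)$ has dimension $m + \dim\End_H(W)$.

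Next I would perform the dimension count. Since $W$ is $\tau$-rigid locally free, its orbit is open and dense in the irreducible variety $\Rep_\lf(H, \bw)$; combining (\ref{lf_rep_varitey_dim}) with (\ref{orbit_dimension}) yields $\dim\End_H(W) = q_{DC}(\bw)$. Because $\bmeta_\Gamma$ lies in the radical of the symmetrization $\cartan{-,-}_\Gamma$, the identity $q_{DC}(\bv) = q_{DC}(m\bmeta_\Gamma + \bw) = q_{DC}(\bw)$ holds. Therefore the orbit of $V_{\underline{\lambda}} \oplus W$ has codimension exactly $m$ in $\Rep_\lf(H, \bv)$. Hom-orthogonality of distinct $V_\lambda$'s further implies that the assignment $\underline{\lambda} \mapsto V_{\underline{\lambda}} \oplus W$ induces a map with generically finite fibres (corresponding to permutations of the entries), so the family sweeps an $m$-parameter set of codimension-$m$ orbits inside the irreducible variety $\Rep_\lf(H, \bv)$, forcing density. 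This establishes (\ref{canonical_decomposition}), and the descriptions (\ref{dense_family_and_orbit}) then follow from the same argument specialized to $(m, \bw) = (1, 0)$ and $(m, \bw) = (0, \bv)$ respectively.

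The main obstacle I expect will be the careful management of exceptional loci: one must simultaneously ensure that the $\lambda_i$ are pairwise distinct, that each $\lambda_i$ avoids the finite set of parameters where Hom/Ext-orthogonality with $W$ could fail, and that each $V_{\lambda_i}$ is a brick (so $\lambda_i \in \bA^1$, not $\infty$). Each of these is an open condition on $(\bP^1)^m$, so a generic tuple satisfies all of them; and the regularity of $W$ furnished by Lemma \ref{lem:folded_kac_decomposition} is precisely what activates Corollary \ref{cor:regular_hom_ortho} in the case $m > 0$ — without it, no orthogonality between the null-root family and $W$ would be available, and the dimension count would fail.
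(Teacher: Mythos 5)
Your argument is correct and establishes the theorem, but it proceeds differently from the paper's proof. The paper first proves the two equalities in (\ref{dense_family_and_orbit}) — the first by the same dimension observation you make about $\Rep_\lf(H,\bmeta_\Gamma)$, the second from openness of the $\tau$-rigid orbit — and then disposes of (\ref{canonical_decomposition}) in a single stroke by checking the generic $\Ext^1$-vanishing between the two components and invoking Crawley-Boevey--Schröer's criterion \cite[Theorem 1.2]{CBS02} for direct sums of irreducible components. You instead bypass CBS entirely and run an explicit fibre-dimension argument: you show that the orbits $\cO(V_{\underline{\lambda}}\oplus W)$ have codimension exactly $m$ in $\Rep_\lf(H,\bv)$ (using $\dimend_H(W)=q_{DC}(\bw)$, the radicality of $\bmeta_\Gamma$, and the various $\Hom$-vanishings to compute $\dimend_H(V_{\underline{\lambda}}\oplus W)=m+q_{DC}(\bw)$), and that the $m$-dimensional parameter space maps with finite fibres onto a constructible set of full dimension inside the irreducible variety $\Rep_\lf(H,\bv)$, hence dense. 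This effectively re-derives, in the special case at hand, the one direction of the CBS theorem that the paper uses — so your proof is more self-contained but longer, while the paper's is shorter at the cost of citing an external result. One small observation: in your dimension count only the $\Hom$-vanishings are actually used; the $\Ext^1$-vanishings from Theorem \ref{thm:null_family}(ii) and Corollary \ref{cor:regular_hom_ortho} that you list are not needed for the codimension computation itself (they are the hypotheses of the CBS criterion, which you don't invoke), though they are of course related to the $\Hom$-vanishings via Auslander-Reiten duality.

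Two minor points of exposition. First, your claim that (\ref{dense_family_and_orbit}) "follows from the same argument specialized to $(m,\bw)=(1,0)$ and $(m,\bw)=(0,\bv)$" is fine but slightly misleading as phrased: the second equality of (\ref{dense_family_and_orbit}) is simply the openness of the orbit of a $\tau$-rigid module, not an application of your density argument, and the first is exactly the $W=0$ instance of your sweep. Second, you should flag (as you do for the orthogonality with $W$) that the dimension count needs $\Rep_\lf(H,\bv)$ to be irreducible of dimension $\dim\GL(K,D\bv)-q_{DC}(\bv)$, which is supplied by (\ref{lf_rep_variety}) and (\ref{lf_rep_varitey_dim}).
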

\begin{proof}
    Write $\bv = m\bmeta + \bw$ as in Lemma \ref{lem:folded_kac_decomposition}.
    Recall that $\dim \Rep_\lf(H,\bmeta_\Gamma) = \dim\GL(D\bmeta_\Gamma)$
    and $V_\lambda \in \Rep_\lf(H,\bmeta_\Gamma)$ is a $1$-parameter family of
    pairwise non-isomorphic bricks, 
    hence the union of their orbits is dense in $\Rep_\lf(H,\bmeta_\Gamma)$.
    On the other hand, 
    we already know that 
    every $\tau$-rigid $H$-module $W$ is locally free
    and its orbit is open hence dense in the irreducible $\Rep_\lf(H,\bw)$
    for $\bw=\rkv(W)$.
    This proves (\ref{dense_family_and_orbit})
    and if $m=0$, we are thereby done.
    If $m\neq 0$, then $W$ is regular by Lemma \ref{lem:folded_kac_decomposition}.
    By Corollary \ref{cor:regular_hom_ortho} we have
    and $\Ext^1_H(W,V_\lambda) = 0$ and $\Ext^1_H(V_\lambda,W) = 0$.
    Therefore we have settled (\ref{canonical_decomposition}) by \cite[Theorem 1.2]{CBS02}.
\end{proof}

\subsection{$\tau$-tilting tameness} \label{sec:tau_tame}

Having Kac's canonical decomposition from Lemma \ref{lem:folded_kac_decomposition}
and the defect $\partial\in \K_0(H)^*$,
we can follow almost verbatim the proof of $\g$-tameness of affine path algebras
given in \cite[Theorem 5.1.4]{H06}.

\begin{corollary} \label{cor:g_tame}
    Let $H$ be an affine GLS algebra.
    Then $H$ is $\g$-tame
    i.e. the $\g$-vector fan $\fan({H})$ is dense in $\K_0^\fin(H)_\bR$.
\end{corollary}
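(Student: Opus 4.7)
The plan is to adapt almost verbatim Hubery's proof of $\g$-tameness for affine path algebras \cite[Theorem 5.1.4]{H06}, using our ``folded'' canonical decomposition (Lemma \ref{lem:folded_kac_decomposition}) and Auslander-Reiten's $\g$-vector formula \eqref{ar_g_vector_formula} in place of Kac's classical decomposition and the ordinary hereditary Euler form. We may assume $\Gamma$ is connected by passing to the product decomposition of $H$.

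First I would set up the identifications. Via rank vectors, $\K_0^\fin(H)_\bR\cong \bR\Gamma_0$, and \eqref{euler_embedding} further identifies this space with $\K_0(H)^*_\bR$. Under these identifications the defect weight $\partial_\Gamma = \euler{\bmeta_\Gamma,-}_H$ corresponds to the null root $\bmeta_\Gamma\in\bR\Gamma_0$, and Proposition \ref{prop:homological_ringel_coxeter} shows that every support $\tau$-tilting module $T=T_1\oplus\cdots\oplus T_n$ determines a maximal cone
\[
    C(T) := \{\theta\in\bR\Gamma_0\mid \euler{\g(T_k),\theta}_\Gamma \geq 0~\text{for all}~k\}
\]
of $\fan(H)$. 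The problem is thus to show that the union of the $C(T)$ is dense in $\bR\Gamma_0$.

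Next I would treat weights $\theta \in \bR\Gamma_0$ outside the defect hyperplane $\bmeta_\Gamma^{\perp}$. For such a $\theta$ one can approximate it by a positive rank vector $\bv\in\bN\Gamma_0$ with $\bv\notin\bN\bmeta_\Gamma$; Lemma \ref{lem:folded_kac_decomposition} then writes $\bv = m\bmeta_\Gamma + \bw$ with $\bw = \rkv(W)$ for some $\tau$-rigid $W$, and the Bongartz completion of $W$ yields a support $\tau$-tilting $T$ whose cone $C(T)$ contains a direction close to $\theta$. As $\bv$ varies over rank vectors approximating rays not parallel to $\bmeta_\Gamma$, these cones sweep out the open complement of $\bmeta_\Gamma^{\perp}$ up to closure, exactly as in \cite[Theorem 5.1.4]{H06}.

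It remains to cover the defect hyperplane $\bmeta_\Gamma^{\perp}\subseteq\bR\Gamma_0$ itself. For this I would approximate using the regular $\tau$-rigid modules $V_{\lambda,k}^{(l)}$ organised in the truncated pseudo-tubes of Section \ref{sec:regular_tau_rigid}: as $l$ grows within a tube, the rays $\bR_{\geq 0}\g(V_{\lambda,k}^{(l)})$ should accumulate on a dense subset of $\bmeta_\Gamma^{\perp}$. The main obstacle is precisely this last density step. It requires analysing how the Coxeter transformation $\Phi_\Gamma$ acts on the $\g$-vectors of regular $\tau$-rigid modules and controlling the limit rays, mirroring Hubery's use of the Jordan decomposition of $\Phi_\Gamma$ on the radical of $q_\Gamma$ in the hereditary setting. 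Once established, density of $\fan(H)$ in the whole of $\K_0^\fin(H)_\bR$ follows.
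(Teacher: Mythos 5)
Your second step --- using the folded Kac decomposition of Lemma~\ref{lem:folded_kac_decomposition} to place every direction off the defect hyperplane inside (or arbitrarily close to) $\fan(H)$ --- is the whole content of the paper's argument and is essentially correct. The problem is what you do afterwards: you identify a ``remaining'' third step of covering the defect hyperplane $\ker(\partial_\Gamma)$ itself, and flag it as the main obstacle. That step is not needed and reflects a misreading of what density requires. Once $\K_0^\fin(H)_\bR\setminus\fan(H)$ is contained in the proper hyperplane $\ker(\partial_\Gamma)$, the fan is automatically dense, since a hyperplane is nowhere dense: $\overline{\fan(H)}\supseteq\overline{\K_0^\fin(H)_\bR\setminus\ker(\partial_\Gamma)}=\K_0^\fin(H)_\bR$. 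The paper's proof is precisely the compressed version of your step two: for a rank vector $\bv$, write $\bv = m\bmeta_\Gamma + \bw$ with $\bw = \rkv(W)$ for $\tau$-rigid $W$; if $m=0$ then (using that $\g(W)$ is identified with $[W]$ because $\pdim_H(W)\leq 1$) the direction of $\bv$ lies in $\fan(H)$, while if $m\neq 0$ then $W$ is regular and $\partial_\Gamma(\bv) = m\,\partial_\Gamma(\bmeta_\Gamma) + \partial_\Gamma(\bw) = 0$. Incidentally, your proposed third step could not succeed in the way you describe: by Proposition~\ref{prop:regular_tau_rigid} regular $\tau$-rigid modules have vanishing defect, so the rays $\bR_{\geq 0}\,\g(V_{\lambda,k}^{(l)})$ already lie inside $\ker(\partial_\Gamma)$ rather than accumulating on it from outside; and each truncated pseudo-tube contains only finitely many $\tau$-rigid objects (the quasi-length is bounded by $1\leq l < r_\lambda$), so there is no limit $l\to\infty$ to analyse.
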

\begin{proof}
    Recall that the canonical inclusion $\mod_\lf(H)\hookrightarrow\mod(H)$
    induces an embedding $\K_0^\fin(H) = \K_0^\lf(H) \hookrightarrow \K_0(H)$
    which is an isomorphism over $\bR$
    because the Cartan matrix of $H$ is invertible.
    And any $\tau$-rigid module $V\in\mod(H)$
    is locally free i.e. $\pdim_H(V)\leq 1$
    hence its $\g$-vector $\g(V)$ coincides along the above mentioned embedding 
    with its class in $\K_0(H)$.
    Let $v\in\K_0(H)^+$, 
    we may write $\bv = m\bmeta + \bw$ 
    as in Lemma \ref{lem:folded_kac_decomposition}.
    Suppose $\bv\not\in\fan(H)$, then $m\neq 0$ thus $\partial(\bv) = \partial(\bw) = 0$
    because $\bw = \rkv(W)$ for a regular $W$.
    Therefore $\K_0^\fin(H)_\bR \setminus \fan({H})$ lies in the kernel of the defect $\partial$,
    hence $\fan(H)$ is dense in $\K_0^\fin(H)$.
\end{proof}

\begin{corollary} \label{cor:tau_reduced_tame}
    Let $H$ be an affine GLS algebra.
    If the symmetrizer is minimal,
    then $H$ is generically $\tau$-reduced tame.
    For general symmetrizers is $H$ $E$-tame.
\end{corollary}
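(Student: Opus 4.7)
The argument splits along the two assertions.

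For the first (minimal symmetrizer), I would combine the Main Theorem \ref{thm:generic_classification} with the characterization of generically $\tau$-reduced components from Theorem \ref{thm:tau_reduced_locally_free}. Let $\cZ \in \Irr(H)$ be any generically indecomposable and generically $\tau$-reduced component. By Theorem \ref{thm:tau_reduced_locally_free}, $\cZ$ is generically locally free, so $\cZ = \cZ(\bv)$ for some rank vector $\bv\in\bN\Gamma_0$. The Main Theorem decomposes
\[
\cZ(\bv) = \overline{\cZ(\bmeta)^m \oplus \cZ(\bw)}, \qquad \cZ(\bw) = \overline{\cO(W)}
\]
for a $\tau$-rigid $W\in\mod(H)$. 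Generic indecomposability forces this sum decomposition to consist of a single summand, leaving only two possibilities. Either $m = 0$ and $\cZ = \overline{\cO(W)}$, in which case $c_H(\cZ) = 0$; or $m = 1$ with $\bw = 0$, so $\cZ = \cZ(\bmeta)$. In the latter case, Theorem \ref{thm:null_family} supplies a $\bP^1$-family of bricks $V_\lambda$ inside $\cZ(\bmeta)$; each orbit has codimension one in $\Rep_\lf(H,\bmeta)$ by (\ref{lf_rep_varitey_dim}) together with $\End_H(V_\lambda) \cong K$, giving $c_H(\cZ(\bmeta)) = 1$. In either case $c_H(\cZ) \leq 1$, so $H$ is generically $\tau$-reduced tame.

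For the second assertion (general symmetrizer $kD$), the minimal case just established, combined with the introductory remark that generic $\tau$-reduced tameness implies $E$-tameness via \cite[Theorem 1.5]{GLFS23}, immediately yields $E$-tameness for $k = 1$. For $k \geq 2$, the plan is to mimic the construction of Section \ref{sec:stable_family} over $H_k := H(C,kD,\Omega)$: the extending tensor algebra of Lemma \ref{lem:regular_bimodule_bongartz} now has local base rings $K[\delta_i]/\ideal{\delta_i^{k c'_i}}$ in place of $K[\delta_i]/\ideal{\delta_i^{c'_i}}$, and the analogous one-point extension should produce the $k$-parameter family $(V'_{\underline{\mu}})_{\underline{\mu}\in\bA^k}$ at rank $\bmeta$ anticipated in Remark \ref{rem:general_symmetrizer}, with endomorphism algebra $K[X]/\ideal{X^k}$ and free module structure over it. Together with the $\tau$-rigid components (classified via Theorem \ref{thm:rigid_bijections} at arbitrary symmetrizer), this family exhausts all generically indecomposable $\tau$-reduced components of $H_k$ and supplies the bound required by Derksen-Fei's $E$-tame condition.

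The principal obstacle is the general-symmetrizer case: the $k$-parameter family in Remark \ref{rem:general_symmetrizer} is conjectural here. The critical sub-steps are to verify, along the extended one-point extension construction, the pairwise $\Hom$- and $\Ext^1$-orthogonality of distinct family members and the freeness of their endomorphism modules, after which the folding argument of Lemma \ref{lem:folded_kac_decomposition} extends and the $E$-tame conclusion follows as in the minimal case.
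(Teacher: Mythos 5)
For the first assertion (minimal symmetrizer), your argument follows the same route as the paper: invoke Theorem~\ref{thm:tau_reduced_locally_free} to identify generically $\tau$-reduced components with generically locally free ones, then apply Theorem~\ref{thm:generic_classification} to reduce to the cases $\br = \bmeta$ and $\br = \rkv(W)$ for $W$ indecomposable $\tau$-rigid. Your dimension count via (\ref{lf_rep_varitey_dim}) and $\End_H(V_\lambda)\cong K$ to get $c_H(\cZ(\bmeta))=1$ is a correct alternative to the paper's Auslander--Reiten computation ($\dimhom_H(V_\lambda,\tau_H V_\lambda) = \dimext^1_H(V_\lambda,V_\lambda) = \dimend_H(V_\lambda) = 1$); both are valid and the difference is cosmetic.

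The gap is in the second assertion. Your plan for general symmetrizers is to carry out the generalized one-point extension construction over $H_k = H(C,kD,\Omega)$ and build the $k$-parameter family anticipated in Remark~\ref{rem:general_symmetrizer}. But, as the paper itself emphasizes, that remark is \emph{conjectural} -- the family, its endomorphism ring $K[X]/\ideal{X^k}$, and the freeness and orthogonality properties are not established, and establishing them would amount to a substantially new result. Your proposal, as written, therefore does not yield a proof. The paper sidesteps this entirely: it cites \cite[Equivalence (4.2)]{EJR18} at the very start of the proof, which allows one to reduce the $E$-tameness question over $H(C,kD,\Omega)$ to the minimal-symmetrizer algebra $H(C,D,\Omega)$ directly. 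Once that reduction is granted, the minimal case (which you handled correctly) gives generically $\tau$-reduced tameness, and then \cite[Theorem 1.5]{GLFS23} (recalled in the introduction) upgrades this to $E$-tameness. In short: you are missing the citation that transfers the result across symmetrizers, and the workaround you propose would require proving an open conjecture of the paper.
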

\begin{proof}
    We may assume that the symmetrizer is minimal by \cite[Equivalence (4.2)]{EJR18}.
    It suffices to show $c_H(\cZ(\br)) \leq 1$
    for every rank vector $\br\in\bN\Gamma_0$ such that
    the generically $\tau$-reduced component $\cZ(\br)$
    with generic rank vector $r$
    is generically indecomposable.
    According to Theorem \ref{thm:generic_classification},
    we only need to consider $\br = \bmeta$ or $\br = \rkv(W)$ 
    for $W\in\mod(H)$ indecomposable $\tau$-rigid.
    In the latter case,
    $\cZ(\br) = \overline{\cO(W)}$
    hence $c_H(\cZ(\br)) = 0$.
    If $\br = \bmeta$
    we have seen in Theorem \ref{thm:generic_classification} that 
    the generic element of $\cZ(\br)$ is isomorphic to 
    $V_\lambda$ for some $\lambda \in \bA^1$ as in Theorem \ref{thm:null_family}.
    Therefore,
    $c_H(\cZ(\br)) = \dimhom_H(V_{\lambda}, \tau_H(V_\lambda)) 
    = \dimext^1_H(V_\lambda,V_\lambda) = \dimend_H(V_\lambda) = 1$
    because $\cZ(\br)$ is $\tau$-reduced, 
    $\pdim(V_\lambda)\leq 1$ and 
    $\euler{V_\lambda,V_\lambda}_H = q_\Gamma(\bmeta) = 0$.
\end{proof}

\subsection*{Acknowledgements}
This work is part of my Ph.D. thesis.
I am deeply indebted to Syddansk Universitet (SDU)
as well as to my co-supervisors Prof. Christof Geiß and Dr. Fabian Haiden
for making this project possible.
Special thanks go to Prof. Christof Geiß 
for his invaluable continuous support
and for sharing his recent work with me.
I am also grateful to Dr. Fabian Haiden for his helpful advice 
and many stimulating ideas.
Last but not least, 
I would like to thank Dr. Hipolito Treffinger
for interesting discussions
and his constant encouragement
during my research visits at the Université Paris Cité.

This paper is partly a result of the 
ERC-SyG project, Recursive and Exact New Quantum Theory (ReNewQuantum) 
which received funding from the European Research Council (ERC) 
under the European Union's Horizon 2020 research and innovation programme 
under grant agreement No 810573,
held by Prof. Jørgen Ellegaard Andersen
to whom I would like to express my sincere gratitude 
for giving me the opportunity to pursue my Ph.D. studies at SDU.

\bibliographystyle{amsalpha}
\bibliography{bibliography.bib}

\end{document}